\newcommand{\R}{\mathds R}
\newcommand{\N}{\mathds N}
\newcommand{\dist}{\mathrm{dist}}
\newcommand{\cat}{\mathrm{cat}}
\newcommand{\Dim}{\mathrm{dim}}
\newcommand{\cl}{\mathrm{cuplenght}}
\newcommand{\second}{\mathrm I\!\mathrm I}
\newcommand{\Ical}{\mathcal I}
\newcommand{\Fcal}{\mathcal F}
\newcommand{\Vcal}{\mathcal V}
\newcommand{\Rcal}{\mathcal R}
\newcommand{\Ccal}{\mathcal C}
\newcommand{\Scal}{\mathcal S}
\newcommand{\Dcal}{\mathcal D}
\newcommand{\Hcal}{\mathcal H}
\newcommand{\Lcal}{\mathcal L}
\newcommand{\Ucal}{\mathcal U}
\newcommand{\Mcal}{\mathfrak M}
\newcommand{\Ncal}{\mathfrak N}
\newcommand{\Dgot}{\mathfrak D}
\newcommand{\Ddt}{\tfrac{\mathrm D}{\mathrm dt}}
\newcommand{\Dds}{\tfrac{\mathrm D}{\mathrm ds}}
\numberwithin{equation}{section}
\title[Existence of orthogonal geodesic chords]%
{Existence of orthogonal geodesic chords on Riemannian manifolds with concave boundary\\
and homeomorphic to the $N$-dimensional disk}
\author[R.\ Giamb\`o ,\ F.\ Giannoni]{Roberto Giamb\`o, Fabio Giannoni}
\address{Dipartimento di Matematica e Informatica \hfill\break\indent
Universit\`a di Camerino\hfill\break\indent Italy}
\email{roberto.giambo@unicam.it, fabio.giannoni@unicam.it}
\author[P. Piccione]{Paolo Piccione}
\address{Departamento de Matem\'atica\hfill\break\indent Instituto de
Matem\'atica e Estat\'\i stica
\hfill\break\indent Universidade de S\~ao Paulo
\hfill\break\indent Brazil}
\email{piccione.p@gmail.com}
\subjclass[2000]{37C29, 37J45, 58E10}
\date{March 3rd, 2010 (revised version)}
\begin{document}


\theoremstyle{plain}\newtheorem{teo}{Theorem}[section]
\theoremstyle{plain}\newtheorem{prop}[teo]{Proposition}
\theoremstyle{plain}\newtheorem{lem}[teo]{Lemma}
\theoremstyle{plain}\newtheorem{cor}[teo]{Corollary}
\theoremstyle{definition}\newtheorem{defin}[teo]{Definition}
\theoremstyle{remark}\newtheorem{rem}[teo]{Remark}
\theoremstyle{definition}\newtheorem{example}[teo]{Example}

\theoremstyle{plain}\newtheorem*{teon}{Theorem}


\begin{abstract}
In this paper we give a proof of the existence of an orthogonal geodesic chord
on a Riemannian manifold homeomorphic to a closed disk and with concave boundary.
This kind of study is motivated by the link (proved in \cite{GGP1}) of the multiplicity
problem with the famous Seifert conjecture (formulated in 1948 (ref.\ \cite{seifert})
about multiple brake orbits for a class of Hamiltonian systems at a fixed energy level.
\end{abstract}

\maketitle
\renewcommand{\contentsline}[4]{\csname nuova#1\endcsname{#2}{#3}{#4}}
\newcommand{\nuovasection}[3]{\medskip\hbox to \hsize{\vbox{\advance\hsize by -1cm\baselineskip=12pt\parfillskip=0pt\leftskip=3.5cm\noindent\hskip -2cm #1\leaders\hbox{.}\hfil\hfil\par}$\,$#2\hfil}}
\newcommand{\nuovasubsection}[3]{\medskip\hbox to \hsize{\vbox{\advance\hsize by -1cm\baselineskip=12pt\parfillskip=0pt\leftskip=4cm\noindent\hskip -2cm #1\leaders\hbox{.}\hfil\hfil\par}$\,$#2\hfil}}

\tableofcontents

\section{Introduction}\label{sec:intro}
We first recall the famous
conjecture formulated by H. Seifert in 1948 (ref.\ \cite{seifert})
concerning the number of brake orbits of a certain Hamiltonian
system in $\R^{2N}$.

\subsection{Brake orbits for a Hamiltonian system and statement of the Seifert conjecture}
Denote by $(q,p)=(q_1,\ldots,q_N,p_1,\ldots,p_N)$ the canonical coordinates
in $\R^{2N}$, and consider a $C^2$ Hamiltonian function $H:\R^{2N}\to \R$ of the form
\begin{equation}\label{eq:Ham}
H(q,p)=\frac12\sum_{i,j=1}^N a^{ij}(q)p_ip_j+V(q),
\end{equation}
where $q\mapsto(a^{ij}(q))_{i,j}$ is a map of class $C^2$ taking value in
the space of (symmetric) positive definite $N\times N$ matrices and $V:\R^N\to\R$
is the potential energy.
The corresponding Hamiltonian system is:
\begin{equation}\label{eq:HS}
\left\{
\begin{aligned}
&\dot p=-\frac{\partial H}{\partial q}\\
&\dot q=\frac{\partial H}{\partial p},
\end{aligned}
\right.
\end{equation}
where the dot denotes differentiation with respect to time. Since
the system \eqref{eq:HS} is  time independent, the total energy $E$
is constant along each solution, and there exists a large amount of
literature concerning the study of periodic solutions of autonomous
Hamiltonian systems with prescribed energy (see e.g.\ \cite{LZZ,Long}
and references therein).

For all $q\in\R^N$, denote by $\Lcal(q):\R^N\to \R^N$ the linear
isomorphism whose matrix with respect to the canonical basis is
$(a_{ij}(q))$, the inverse of $(a^{ij}(q))$. It is easily seen that,
if $(q,p)$ is a $C^1$ solution of \eqref{eq:HS}, then $q$ is
actually a map of class $C^2$ and
\begin{equation}\label{eq:p}
p=\Lcal(q)\dot q.
\end{equation}

A special class of periodic solutions of \eqref{eq:HS} consists of the so
called \emph{brake orbits}. A brake orbit for the system \eqref{eq:HS} is a
nonconstant solution $(q,p):\R\to\R^{2N}$ such that $p(0)=p(T)=0$
for some $T>0$; since $H$ is even in the momenta $p$, then a brake orbit is $2T$--periodic.
Moreover, if $E$ is the energy of a brake orbit $(q,p)$, then $V(q(0))=V(q(T))=E$.

Let $E>\inf V$ be fixed; we will make the following assumptions:
\begin{equation}\label{eq:regolare}
E \text{ is a regular value of }V, \text{ namely d}V(q)\not=0 \text{ for
all }q\in V^{-1}(E),
\end{equation}
\begin{equation}\label{eq:diffeo-disco}
\text{the closed sublevel }
V^{-1}(\left]-\infty,E\right]) \text{ is homeomorphic to the }N\text{--disk }
\mathds D^N,
\end{equation}
(here $\text d V(q)$ denotes the differential of $V$).
We refer the reader e.g. to
\cite{BK,GZ,LZZ,S,W} for multiplicity results of brake orbits.

In \cite{BK} and \cite{GZ}, under assumptions \eqref{eq:regolare} and \eqref{eq:diffeo-disco}, it is proved the existence of at least $N$ brake orbits, but using a very strong (ad hoc) assumption on the energy integral, to ensure that different minimax critical levels correspond to geometrically distinct brake orbits. (By geometrically distinct curves we mean curves whose images are distinct sets).

Papers \cite{LZZ,S,W} deal with more general Hamiltonian systems not necessarily of the form \eqref{eq:Ham}.
In \cite{S} it is proved that if $H$ is even in the variable $p$, and $H^{-1}(1)$ bounds
a star-shaped region and satisfies a suitable geometric condition (that allows once again to obtain geometrically distinct brake orbits), then there are at least $N$ brake orbits on $H^{-1}(1)$. In \cite{W} the author prove that for any $h$ sufficiently close to $H(z_0)$ with $z_0$ nondegenerate local minimum for $H$,
there are $N$ geometrically distinct brake orbits in $H^{-1}(h)$. This result generalize (in the case of a minimum point) a classical theorem of Lyapunov for nondegenerate critical points having the eigenvalues of the Hessian matrix with non integer ratio.

In \cite{LZZ} if $H$ is convex and even in both variables $p$ and $q$ the authors prove the existence of at least 2 geometrically distinct brake orbits on
$H^{-1}(1)$, provided that it is regular and compact.

In the paper \cite{seifert}, under assumptions \eqref{eq:regolare} and \eqref{eq:diffeo-disco}, Seifert proved the existence of a brake orbit, and he also conjectured the existence of at least
$N$ geometrically distinct brake orbits.
Up to the present day, the conjecture has been neither proved nor disproved.
\smallskip

It is well known that the lower estimate for the number of brake orbits
given in the Seifert conjecture cannot be improved in any dimension $N$.
Indeed, consider the Hamiltonian:
\[
H(q,p)=\frac12|p|^2+\sum_{i=1}^N\lambda_i^2 q_i^2,\qquad (q,p)\in\R^{2N},
\]
where $\lambda_i\not=0$ for all $i$. If $E>0$ and the squared ratios
$\left({\lambda_i}/{\lambda_j}\right)^2$ are irrational for all $i\ne j$,
then the only periodic solutions of \eqref{eq:HS} with energy
$E$ are the $N$ brake orbits moving along the axes of the ellipsoid with equation
\[
\sum_{i=1}^N\lambda_i^2q_i^2=E.
\]

In \cite{GGP1} it has been pointed out that the problem of finding brake orbits is equivalent to find
orthogonal geodesic chords on manifold with concave boundary.

In this paper we use a suitable functional approach
to prove the existence of one orthogonal geodesic chords for a manifold with concave boundary homeomorphic to the $N$--dimensional disk. This is the first step in view of obtaining multiplicity results for orthogonal geodesic chords in this kind of situation, and a new way to study Seifert conjecture.
\medskip

Let us now recall a few basic facts and notations from \cite{GGP1}.

\subsection{Geodesics in Riemannian Manifolds with Boundary}\label{sub:geodes}
Let $(M,g)$ be a smooth
Riemannian manifold with $\Dim(M)=N\ge2$. The symbol $\nabla$ will
denote the covariant derivative of the Levi-Civita connection of
$g$, as well as the gradient differential operator for smooth maps
on $M$. The Hessian $\mathrm H^f(q)$ of a smooth map $f:M\to\R$ at
a point $q\in M$ is the symmetric bilinear form $\mathrm
H^f(q)(v,w)=g\big((\nabla_v\nabla f)(q),w\big)$ for all $v,w\in
T_qM$; equivalently, $\mathrm H^f(q)(v,v)=\frac{\mathrm
d^2}{\mathrm ds^2}\big\vert_{s=0} f(\gamma(s))$, where
$\gamma:\left]-\varepsilon,\varepsilon\right[\to M$ is the unique
(affinely parameterized) geodesic in $M$ with $\gamma(0)=q$ and
$\dot\gamma(0)=v$. We will denote by $\Ddt$ the covariant
derivative along a curve, in such a way that $\Ddt\dot x=0$ is the
equation of the geodesics. A basic reference on the background material
for Riemannian geometry is \cite{docarmo}.

Let $\Omega\subset M$ be an open subset;
$\overline\Omega=\Omega\bigcup\partial \Omega$ will denote its
closure. There are several notion of convexity and concavity in
Riemannian geometry, extending the usual ones for subsets of the
Euclidean space $\R^N$. In this paper we will use a somewhat
strong concavity assumption for $\overline\Omega$, that we
will call "strong concavity" below, and which is stable by
$C^2$-small perturbations of the boundary. Let us first recall the
following definition (cf also \cite{BFG,GM}):

\begin{defin}\label{thm:defconvexity}
$\overline\Omega$ is said to be {\em convex\/} if every geodesic
 $\gamma:[a,b]\to \overline\Omega$ whose endpoints
$\gamma(a)$ and $\gamma(b)$ are in $\Omega$ has image entirely
contained in $\Omega$. Likewise, $\overline\Omega$ is said to be
{\em concave\/} if its complement $M\setminus\overline\Omega$ is
convex.
\end{defin}

If $\partial \Omega$ is a smooth embedded submanifold of $M$, let
$\second_{\mathfrak n}(x):T_x(\partial\Omega)\times
T_x(\partial\Omega)\to\R$ denote the {\em second fundamental form
of $\partial\Omega$ in the normal direction $\mathfrak n\in
T_x(\partial\Omega)^\perp$}. Recall that $\second_{\mathfrak
n}(x)$ is a symmetric bilinear form on $T_x(\partial\Omega)$
defined by:
\[\phantom{\qquad v,w\in T_x(\partial\Omega),}\second_{\mathfrak n}(x)(v,w)=g(\nabla_vW,
\mathfrak n),\qquad v,w\in T_x(\partial\Omega),\] where $W$ is any
local extension of $w$ to a smooth vector field along
$\partial\Omega$.

\begin{rem}\label{thm:remphisecond}
Assume that it is given a smooth function $\phi:M\to\R$ with the
property that $\Omega=\phi^{-1}\big(\left]-\infty,0\right[\big)$
and $\partial\Omega=\phi^{-1}(0)$, with $\nabla\phi\ne0$ on
$\partial\Omega$. \footnote{For example one can choose $\phi$ such that
$\vert\phi(q)\vert=\dist(q,\partial\Omega)$ for all $q$ in a
(closed) neighborhood of $\partial\Omega$, where
$\dist$ denotes the distance function on $M$ induced by $g$.}
The following equality
between the Hessian $\mathrm H^\phi$ and the second fundamental
form\footnote{%
Observe that, with our definition of $\phi$, $\nabla\phi$ is
a normal vector to $\partial\Omega$ pointing {\em outwards\/} from
$\Omega$.} of $\partial\Omega$ holds:
\begin{equation}\label{eq:seches}
\phantom{\quad x\in\partial\Omega,\ v\in
T_x(\partial\Omega);}\mathrm H^\phi(x)(v,v)=
-\second_{\nabla\phi(x)}(x)(v,v),\quad x\in\partial\Omega,\ v\in
T_x(\partial\Omega);\end{equation} Namely, if
$x\in\partial\Omega$, $v\in T_x(\partial\Omega)$ and $V$ is a
local extension around $x$ of $v$ to a vector field which is
tangent to $\partial\Omega$, then $v\big(g(\nabla\phi,V)\big)=0$
on $\partial\Omega$, and thus:
\[\mathrm H^\phi(x)(v,v)=v\big(g(\nabla\phi,V)\big)-g(\nabla\phi,\nabla_vV)=-\second_{\nabla\phi(x)}(x)(v,v).\]

For convenience, we will fix throughout the paper a function $\phi$
as above. We observe that, although the second fundamental form
is defined intrinsically, there is no a canonical choice for a
function $\phi$ describing the boundary of $\Omega$ as above.
\end{rem}

\begin{defin}\label{thm:defstrongconcavity}
We will say that that $\overline\Omega$
is {\em strongly concave\/} if $\second_{\mathfrak n}(x)$ is
negative definite for all $x\in\partial\Omega$ and all inward
pointing normal direction $\mathfrak n$.
\end{defin}

Note that if $\overline\Omega$ is  strongly concave, then geodesics
starting on $\partial\Omega$ tangentially to        $\partial\Omega$
locally move \emph{inside} $\Omega$.

\begin{rem}\label{rem:1.4}
Strong concavity is evidently a {\em $C^2$-open condition}.
Then, if $\overline\Omega$ is compact, there exists $\delta_0>0$
such that $H^\phi(q)[v,v]<0$ for any $q$ such that $\phi(q)\in[-\delta_0,\delta_0]$
and $g(\nabla\phi(q),v)=0$.
A simple contradiction argument based on Taylor expansion shows that
under the above conditions there are not critical points of $\phi$ for all
$q\in\phi^{-1}\big([-\delta_0,\delta_0]\big)$. In particular
the gradient flow of $\phi$ gives a strong deformation
retract of  $\phi^{-1}\big([-\delta_0,\delta_0]\big)$
onto $\phi^{-1}(0)=\partial\Omega$.
\end{rem}

\begin{rem}\label{rem:1.4bis}
The strong concavity condition gives us the following property, that will
be systematically used throughout the paper. Let $\delta_0$ be as in Remark~\ref{rem:1.4};
then:
\begin{equation}\label{eq:1.1bis}
\begin{matrix}
\text{for any non constant geodesic $\gamma:[a,b]\to\overline\Omega$ with $\phi(\gamma(a)=\phi(\gamma(b))=0$}\\
\text{and $\phi(\gamma(s))<0\,\forall s\in ]a,b[$, there exists $\overline s\in ]a,b[$ such that $\phi(\gamma(\overline s))<-\delta_0$.}
\end{matrix}
\end{equation}
Such property is proved easily by a contradiction argument obtained by looking
at the minimum point of the map $s\mapsto\phi(\gamma(s))$.
\end{rem}

The main objects of our study are geodesics in $M$ having image in
$\overline\Omega$ and with endpoints orthogonal to
$\partial\Omega$, that will be called {\em orthogonal geodesic chords}:

\begin{defin}\label{thm:defOGC}
A geodesic $\gamma:[a,b]\to M$ is called a {\em geodesic chord\/}
in $\overline\Omega$ if
$\gamma\big(\left]a,b\right[\big)\subset\Omega$ and
$\gamma(a),\gamma(b)\in\partial\Omega$; by a {\em weak geodesic
chord\/} we will mean a geodesic $\gamma:[a,b]\to M$ with image in
$\overline\Omega$, endpoints
$\gamma(a),\gamma(b)\in\partial\Omega$ and such that $\gamma(s_0) \in \partial \Omega$
for some $s_0 \in ]a,b[$. A (weak) geodesic chord is
called {\em orthogonal\/} if $\dot\gamma(a^+)\in
(T_{\gamma(a)}\partial\Omega)^\perp$ and $\dot\gamma(b^-)\in
(T_{\gamma(b)}\partial\Omega)^\perp$, where
$\dot\gamma(\,\cdot\,^\pm)$ denote the lateral derivatives (see
Figure~\ref{fig:WOGC}).
\end{defin}

\begin{figure}
\begin{center}
\psfull \epsfig{file=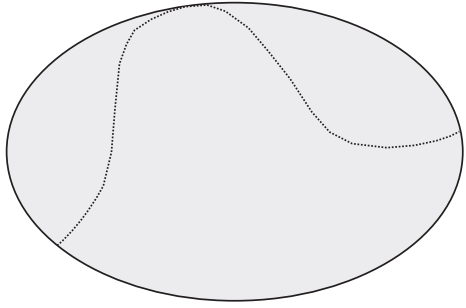, height=4cm} \caption{A weak
orthogonal geodesic chord (WOGC) $\gamma$
in~~$\overline\Omega$.}\label{fig:WOGC}
\end{center}
\end{figure}

For shortness, we will write \textbf{OGC} for ``orthogonal
geodesic chord'' and \textbf{WOGC} for ``weak orthogonal geodesic
chord''. Although the general class of weak orthogonal geodesic
chords are perfectly acceptable solutions of our initial geometrical
problem, our suggested construction of a variational setup works well
only in a situation where one can
exclude {\em a priori\/} the existence in $\overline\Omega$ of
orthogonal geodesic chords $\gamma:[a,b]\to\overline\Omega$ for
which there exists $s_0\in\left]a,b\right[$ such that
$\gamma(s_0)\in\partial\Omega$.

One does not lose generality in assuming
that there are no such WOGC's in $\overline\Omega$ by recalling the
following result from \cite{GGP1}:

\begin{prop}
\label{thm:noWOGC}
Let $\Omega\subset M$ be an open set
whose boundary $\partial\Omega$ is smooth and compact and with $\overline\Omega$
strongly concave.
Assume that there are only a finite number of orthogonal
geodesic chords in $\overline\Omega$. Then, there exists an
open subset $\Omega'\subset\Omega$ with the following properties:
\begin{enumerate}
\item\label{itm:nowogcs1} $\overline{\Omega'}$ is diffeomorphic to $\overline\Omega$
and it has smooth boundary;
\item\label{itm:nowogcs2} $\overline{\Omega'}$ is strongly concave;
\item\label{itm:nowogcs3} the number of   OGC's in $\overline{\Omega'}$  is less than
or equal to the number of   OGC's in $\overline\Omega$ ;
\item\label{itm:nowogcs4} there are not WOGC's in $\overline{\Omega'}$.
\end{enumerate}
\end{prop}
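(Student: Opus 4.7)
The natural candidate is to set $\Omega'=\Omega_\epsilon:=\phi^{-1}\bigl((-\infty,-\epsilon)\bigr)$ for an $\epsilon>0$ to be chosen sufficiently small, and to verify the four properties in turn. For $\epsilon\in(0,\delta_0)$, Remark~\ref{rem:1.4} ensures that $\partial\Omega_\epsilon=\phi^{-1}(-\epsilon)$ is a smooth compact hypersurface and that the flow of $\nabla\phi$ provides a diffeomorphism $\overline\Omega\to\overline{\Omega_\epsilon}$, settling item (1). Since the inward unit normal to $\partial\Omega_\epsilon$ is a positive multiple of $-\nabla\phi$, identity \eqref{eq:seches} combined with the pointwise estimate $\mathrm H^\phi(q)[v,v]<0$ valid on $\phi^{-1}([-\delta_0,\delta_0])$ for $v\perp\nabla\phi(q)$ (again Remark~\ref{rem:1.4}) yields negative definiteness of the second fundamental form of $\partial\Omega_\epsilon$ in every inward direction, which is item (2).

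The heart of the matter is items (3) and (4); I would prove them simultaneously by a compactness/contradiction argument. Assume that no small $\epsilon$ works: then one can find $\epsilon_n\downarrow 0$ together with geodesics $\gamma_n$ in $\overline{\Omega_{\epsilon_n}}$ that are either WOGCs of $\overline{\Omega_{\epsilon_n}}$, or else OGCs whose total count strictly exceeds the number of OGCs of $\overline\Omega$. Using the uniform strong concavity established in (2) and the smooth convergence $\partial\Omega_{\epsilon_n}\to\partial\Omega$, one extracts (after suitable reparameterization) a $C^2$-convergent subsequence $\gamma_n\to\gamma_\infty$, where $\gamma_\infty:[a,b]\to\overline\Omega$ is a geodesic with endpoints on $\partial\Omega$ orthogonal to it. The non-triviality of $\gamma_\infty$ is forced by \eqref{eq:1.1bis}, which furnishes a uniform depth bound for non-constant geodesic chords, so $\gamma_\infty$ is an OGC or a WOGC of $\overline\Omega$.

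It remains to derive a contradiction from this limiting object. If the $\gamma_n$ are OGCs, then either a subsequence converges to the same limit $\gamma_\infty$, forcing $\gamma_\infty$ to be a degenerate critical point of the length functional and violating the finiteness assumption (after, if needed, a generic perturbation of $\phi$); or, by pigeonhole applied to the at most $\#\mathrm{OGC}(\overline\Omega)$ possible limits, at least one of the limiting chords is in fact a WOGC of $\overline\Omega$, reducing to the second case. If the $\gamma_n$ are WOGCs, the interior contacts $\gamma_n(s_n^0)\in\partial\Omega_{\epsilon_n}$ accumulate by continuity of $\phi$ at an interior contact of $\gamma_\infty$ with $\partial\Omega$, so $\gamma_\infty$ is a WOGC of $\overline\Omega$; a local splitting-and-minimization at the tangency $\gamma_\infty(s_0)$, controlled by the Hessian estimate of Remark~\ref{rem:1.4bis}, produces an OGC of $\overline\Omega$ distinct from those in the original finite family, again a contradiction. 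I expect the most delicate step to be precisely this last extraction of an extra OGC from a limiting WOGC, since it requires a careful transversality analysis at the tangency point to avoid cancellation in the minimax count and to guarantee that the localized variation actually yields a critical point lying in $\Omega$ and not on $\partial\Omega$.
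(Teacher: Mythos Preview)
The paper itself does not prove this proposition; it simply refers to \cite[Proposition~2.6]{GGP1}. So there is no ``paper's own proof'' to compare against here, and your attempt must be judged on its own merits.

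Your choice $\Omega'=\Omega_\epsilon=\phi^{-1}((-\infty,-\epsilon))$ is the right one, and your treatment of items (1) and (2) is fine. The difficulties are in (3) and (4), and there your compactness/contradiction scheme does not close.

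For (3), the step ``two distinct OGCs in $\overline{\Omega_{\epsilon_n}}$ converging to the same $\gamma_\infty$ force $\gamma_\infty$ to be degenerate, violating finiteness'' is not justified: you have not set up any nondegeneracy/implicit function statement for OGCs with moving boundary, finiteness of OGCs does \emph{not} imply their nondegeneracy, and the parenthetical ``generic perturbation of $\phi$'' is not permitted since $\Omega$ is fixed. For (4), obtaining a WOGC $\gamma_\infty$ of $\overline\Omega$ in the limit is \emph{not} a contradiction: nothing in the hypotheses excludes WOGCs in $\overline\Omega$. Your proposed ``splitting-and-minimization'' at the tangency does not produce an OGC (the piece obtained has one endpoint tangent, not orthogonal, to $\partial\Omega$), and there is no mechanism by which a WOGC manufactures an OGC outside the given finite list.

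The missing idea is elementary but decisive: take $\phi$ to be the \emph{signed distance} to $\partial\Omega$ near $\partial\Omega$ (cf.\ the footnote in Remark~\ref{thm:remphisecond}). Then the integral curves of $\nabla\phi$ are geodesics orthogonal to \emph{every} level set $\phi^{-1}(-\epsilon)$. Consequently, any (W)OGC $\gamma$ of $\overline{\Omega_\epsilon}$ extends, by following $\nabla\phi$ from each endpoint, to a geodesic $\tilde\gamma$ hitting $\partial\Omega$ orthogonally; since $\overline{\Omega_\epsilon}\subset\Omega$, one checks $\tilde\gamma((a',b'))\subset\Omega$, so $\tilde\gamma$ is an \emph{OGC} of $\overline\Omega$. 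This extension is injective, which gives (3) for every $\epsilon\in(0,\delta_0)$ without any limiting argument. For (4), each of the finitely many OGCs $\tilde\gamma_i$ of $\overline\Omega$ has only finitely many interior local maxima of $\phi\circ\tilde\gamma_i$ in the strip $\phi\in(-\delta_0,0)$ (they are strict by Remark~\ref{rem:1.4}); call the set of their values $F\subset(-\delta_0,0)$. If $\gamma$ were a WOGC of $\overline{\Omega_\epsilon}$, its extension would be some $\tilde\gamma_i$ and the interior tangency of $\gamma$ would force $-\epsilon\in F$. Choosing $\epsilon\in(0,\delta_0)\setminus(-F)$ therefore rules out WOGCs.
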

\begin{proof}
See \cite[Proposition~2.6]{GGP1}
\end{proof}

In the central result of this paper we shall give the proof of the existence of at least an OGC.
Note that, whenever $\partial\Omega$ is not connected the existence problem has a very simple solution
without assumptions on the geometry of $\overline\Omega$. Indeed it suffices to minimize the energy functional on the curves joining two different components of $\partial\Omega$. Whenever $\partial\Omega$ is connected the problem in general has no solution. A simple and famous counterexample can be found in
\cite{bos}.

The situation is different if we assume $\overline\Omega$ to be convex or concave.
In the convex case we wish first to recall the result proved by Bos in \cite{bos}: if $\partial\Omega$ is smooth,
$\overline\Omega$ convex and homeomorphic to the $N$-dimensional
disk, then there are at least $N$ geometrically distinct OGC's for $\overline\Omega$.
Such a result is a generalization
of the classical one of Ljusternik and Schnirelman (see \cite{LustSchn}),
where they treated convex subsets of $\R^N$
endowed with the Euclidean metric. Always in the convex case, in \cite{GiaMa}
it is studied the effect of the topology of $\overline\Omega$ on the number of OGC's.

\medskip

In this paper we prove the existence result in the concave case:
\begin{teon}
Let $\Omega$ be an open subset of $M$ with smooth boundary $\partial\Omega$,
such that $\overline\Omega$ is strongly concave and homeomorphic to
an $N$--dimensional disk. Suppose that there are not WOGC's.
Then there is at least an
orthogonal geodesic chord in $\overline\Omega$.
\end{teon}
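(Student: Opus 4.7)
I would realize the desired orthogonal geodesic chord as a critical point of the energy functional on a Hilbert manifold of paths, located by a minimax argument that uses the homeomorphism of $\overline\Omega$ with the $N$--disk. The natural framework is the manifold
\[
\mathcal H=\bigl\{x\in H^1([0,1],M):x(0),\,x(1)\in\partial\Omega,\ x([0,1])\subset\overline\Omega\bigr\},
\]
endowed with the energy $E(x)=\tfrac12\int_0^1 g(\dot x,\dot x)\,dt$; its critical points are exactly the (weak) OGC's. Since the pointwise constraint $x([0,1])\subset\overline\Omega$ is not smooth, I would work on the unconstrained path space with endpoints on $\partial\Omega$ and add a penalization $\varepsilon^{-1}\int_0^1\Psi(\phi(x(t)))\,dt$, with $\Psi$ smooth, non-negative, convex, and vanishing on $(-\infty,0]$. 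Strong concavity (Remarks~\ref{rem:1.4}--\ref{rem:1.4bis}) guarantees that limits as $\varepsilon\downarrow0$ of critical points with bounded energy are (weak) geodesic chords in $\overline\Omega$.

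\textbf{Minimax class.}
Let $h:\mathds D^N\to\overline\Omega$ be a homeomorphism sending $S^{N-1}$ onto $\partial\Omega$, and consider the family
\[
\sigma:S^{N-1}\longrightarrow\mathcal H,\qquad\sigma(y)(t)=h\bigl((2t-1)\,y\bigr).
\]
One checks that $\sigma$ is $\mathds Z_2$-equivariant for the antipodal action on $S^{N-1}$ and the path-reversal action $x\mapsto x(1-\cdot)$ on $\mathcal H$: indeed $\sigma(-y)(t)=\sigma(y)(1-t)$. The endpoint evaluation $y\mapsto(\sigma(y)(0),\sigma(y)(1))=(h(-y),h(y))$ has image in the complement of the diagonal of $\partial\Omega\times\partial\Omega$; equivariantly, $\sigma$ descends to a map $\mathds{RP}^{N-1}\to\mathcal H/\mathds Z_2$ which is not null-homotopic. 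I would then set
\[
c=\inf_{f\sim\sigma}\ \sup_{y\in S^{N-1}}E\bigl(f(y)\bigr),
\]
the infimum taken over continuous $\mathds Z_2$-equivariant $f:S^{N-1}\to\mathcal H$ homotopic to $\sigma$ through such maps.

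\textbf{Positivity, compactness, conclusion.}
If $\sup_y E(f(y))<\tfrac12\delta^2$ for $\delta$ small, every $f(y)$ has length less than $\delta$, so its image lies in a tubular $\delta$-neighborhood of $\partial\Omega$. Normal projection onto $\partial\Omega$ retracts $f$ onto the subspace $\partial\Omega\subset\mathcal H$ of constant paths; equivariance forces the retracted endpoint map to take values in the diagonal (a $\mathds Z_2$-fixed stratum), which is incompatible with the equivariant class of $\sigma$ whose endpoints lie always off the diagonal. Hence $c>0$. A standard deformation lemma, coupled with a Palais--Smale analysis in which Remark~\ref{rem:1.4bis} is used to separate non-trivial chords from the level set $\{E=0\}\cong\partial\Omega$ and the absence of WOGC's is used to rule out splitting limits, then produces a critical point $x_c$ of $E$ at level $c$, which is a non-constant geodesic meeting $\partial\Omega$ orthogonally at both endpoints; by the no-WOGC hypothesis $x_c$ is an honest OGC.

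\textbf{Main obstacle.}
The technical heart of the argument is the Palais--Smale analysis: sequences $(x_n)$ with $dE(x_n)\to0$ and $E(x_n)\to c$ could a priori degenerate by (i) converging uniformly to a constant curve on $\partial\Omega$, or (ii) developing an interior boundary crossing and breaking into a chain of shorter boundary-tangent pieces---precisely a WOGC in the limit. The uniform penetration depth $\delta_0$ of Remark~\ref{rem:1.4bis}, together with the positivity $c>0$, rules out (i); the no-WOGC assumption, combined with strong concavity, rules out (ii). Implementing these exclusions rigorously, so that the min-max recipe actually delivers a smooth OGC rather than a generalized critical configuration, is the step I expect to demand the greatest care.
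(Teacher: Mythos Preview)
Your overall minimax strategy and the topological input (an $\mathds Z_2$-equivariant $S^{N-1}$-family of chords detecting a nontrivial class, positivity of the minimax level via the retraction argument) are in the same spirit as the paper. The serious gap is in the variational setup itself, and it is exactly the point where concavity makes the problem different from the convex one.

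You assert that the critical points of $E$ on
\[
\mathcal H=\bigl\{x\in H^1([0,1],M):x(0),x(1)\in\partial\Omega,\ x([0,1])\subset\overline\Omega\bigr\}
\]
are the (weak) OGC's, and that your penalization limits produce such curves. In the concave case this is false. The obstacle problem on $\mathcal H$ (equivalently, the $\varepsilon\to0$ limit of your penalized functional) yields curves that are critical with respect to \emph{inward} variations, i.e.\ $\mathcal V^-$-critical in the paper's terminology. By Lemmas~\ref{thm:lem5.3} and~\ref{thm:lem4.16} such curves are $C^1$ and satisfy $-\Dds\dot y=\mu\,\nu(y)$ with $\mu\le0$; on any arc lying in $\partial\Omega$ one has $\mu=H^\phi[\dot y,\dot y]/g(\nu,\nabla\phi)<0$ by strong concavity. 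Thus a $\mathcal V^-$-critical curve may consist of free geodesic arcs in $\Omega$ glued $C^1$-smoothly to \emph{constrained} geodesic arcs on $\partial\Omega$: start orthogonally, run into $\Omega$, return tangentially to $\partial\Omega$, follow $\partial\Omega$ for a while, leave tangentially, and so on. These mixed objects are \emph{not} geodesics of $M$, hence neither OGC's nor WOGC's, and the no-WOGC hypothesis says nothing about them. Your Palais--Smale dichotomy (i)/(ii) simply does not cover this case, and strong concavity, far from excluding it, is precisely what makes the Lagrange multiplier on the boundary arc have the ``right'' sign for $\mathcal V^-$-criticality.

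This is exactly why the paper abandons the obstacle/penalization viewpoint and works instead with \emph{outward} variations $\mathcal V^+$ on a path space $\mathfrak M$ of curves allowed to exit $\overline\Omega$, tracking only their maximal portions inside $\overline\Omega$. With $\mathcal V^+$, Lemma~\ref{thm:leminsez4.4} shows that a variationally critical portion is \emph{constant} (not a $\partial\Omega$-geodesic) on each boundary-contact interval; the resulting zoo of irregular critical portions (Proposition~\ref{thm:irregcritpt}) is then disposed of by the bespoke deformation Lemmas of Sections~\ref{sec:first}--\ref{sec:firstdeflemma}. If you want to salvage your approach, you would have to prove that no $\mathcal V^-$-critical curve at the minimax level can contain a nontrivial $\partial\Omega$-arc; there is no obvious reason this should follow from the hypotheses, and the paper's entire architecture is built to circumvent this difficulty rather than confront it. A secondary issue: your map $\sigma(y)(t)=h((2t-1)y)$ need not be $H^1$ when $h$ is only a homeomorphism; compare the broken-geodesic construction in Lemma~\ref{thm:corde}.
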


\smallskip

Existence and multiplicity of OGC's in the case of compact manifolds having convex
boundary is typically proven by applying a curve-shortening argument.
From an abstract viewpoint, the curve-shortening process can be seen
as the construction of a flow in the space of paths, along whose trajectories
the length or energy functional is decreasing.

In this paper we will follow the same procedure, with the difference
that both the space of paths and the shortening flow have
to be defined appropriately.

Shortening a curve having image in a closed convex subset $\overline\Omega$
of a Riemannian manifold produces another curve in $\overline\Omega$; in
this sense, we think of the shortening flow as being ``inward pushing''
in the convex case.
As opposite to the convex
case, the shortening flow in the concave case will be ``outwards pushing'', and
this fact requires that one should consider only those portions of a curve
that remain inside $\overline\Omega$ when it is stretched outwards.

Unfortunately this kind of approach produces a lot of technical difficulties.
For this reason we shall try to describe the main ideas of the paper
before to face the main difficulties.

Obviously the concavity condition plays a central role in the variational setup of our
construction. ``Variational criticality'' relatively to the energy
functional will be defined in terms of ``outwards pushing'' infinitesimal
deformations of the path space (see Definition~\ref{thm:defvariatcrit}).
The class of variationally critical portions contains properly the set of
portions of paths consisting of  OGC's.
In order to construct the shortening flow, an accurate analysis of all
possible variationally critical portions is required (Section~\ref{sub:description}),
and the concavity condition will guarantee that such paths are
\emph{well behaved} (see Lemma~\ref{thm:leminsez4.4}, Proposition~\ref{thm:regcritpt}
and Proposition~\ref{thm:irregcritpt}). Thanks their good behavior, it will be possible
to move far away from critical portions which are not OGC's (choosing a suitable class of admissible homotopies).

\subsection{The functional framework}\label{sub:varframe}

Throughout the paper, $(M,g)$ will denote a Riemannian manifold
of class $C^2$;
all our constructions will be made in suitable (relatively) compact
subsets of $M$, and for this reason it will not be restrictive
to assume, as we will, that $(M,g)$ is complete.
Furthermore, we will work mainly in open subsets $\Omega$ of $M$ whose closure
is  homeomorphic to an $N$-dimensional disk, and
in order to simplify the exposition we will assume that, indeed,
$\overline\Omega$ is embedded topologically in $\R^N$, which will
allow to use an auxiliary linear structure on a  neighborhood
of $\overline\Omega$.
We will also assume that $\overline\Omega$ is strongly concave in $M$.

Recall that we have fixed a smooth function $\phi:M\to\R$ such that
$\Omega=\phi^{-1}\big(\left]-\infty,0\right[\big)$, $\nabla\phi\ne0$ on $\partial
\Omega$. The strong concavity of $\overline\Omega$ means
that the Hessian $\mathrm H^\phi$ is negative definite on $T(\partial\Omega)$.
As observed in Remark \ref{rem:1.4} there exists $\delta_0 > 0$ such that $\nabla \phi \neq 0$ and $H^{\phi}$ is negative definite in
$\phi^{-1}([-\delta_0,\delta_0])$.

\medskip

The symbol $H^1\big([a,b],\R^N\big)$ will denote the Sobolev
space of all absolutely continuous curves $x:[a,b]\to\R^N$
whose weak derivative is square integrable.
By $H^{1}_{0}\big([a,b],\R^N\big)$ we will denote the subspace of $H^1\big([a,b],\R^N\big)$
consisting of curves $x$ such that $x(a) = x(b) = 0$.
The Hilbert space norm of $H^1\big([a,b],\R^N\big)$ will be denoted
by $\Vert\cdot\Vert_{a,b}$:
\begin{equation}\label{eq:1.8fabio}
\Vert x\Vert_{a,b}=
\frac{1}{\sqrt 2}
\left(\max(\Vert x(a)\Vert^2_E,\Vert x(b)\Vert^2_E)+\int_a^b\Vert\dot x\Vert^2_E\right)^{1/2}
\end{equation}
where $\Vert\cdot\Vert_E$ denotes the Euclidean norm. Note that a simple computation shows that the above norm is equivalent to the usual one
in $H^1\big([a,b],\R^N\big)$. Note also that, setting \[\Vert x\Vert_{L^\infty([a,b],\mathbb{R}^N)}=\sup_{s\in[a,b]}\Vert x(s)\Vert_E,\]
the following inequality holds:
\begin{equation}\label{eq:1.10bis}
\Vert x\Vert_{L^\infty([a,b],\mathbb{R}^N)}\le \Vert x \Vert_{a,b}.
\end{equation}
 We shall use also the space $H^{2,\infty}$
which consists of curves having second derivative in $L^\infty$.

\begin{rem}\label{rem:rep}
In the development of our results, we will have to deal with
curves $x$ with variable domains $[a,b]\subset[0,1]$.
In this situation, by $H^1$-convergence (resp., weak $H^1$-convergence,
uniform convergence) of a sequence $x_n:[a_n,b_n]\to M$
to a curve $x:[a,b]\to M$ we will mean that $a_n$ tends to $a$, $b_n$ tends
to $b$ and  $\widehat x_n:[a,b]\to M$
is $H^1$-convergent (resp., weakly $H^1$-convergent,
uniformly convergent) to $x$ on $[a,b]$ as $n\to\infty$, where
$\widehat x_n$ is the unique affine reparameterization of $x$ on the interval
$[a,b]$.
\end{rem}

In the rest of the paper it will be useful to consider
the  flows $\eta^{+}(\tau,x)$ and $\eta^{-}(\tau,x)$ on the Riemannian manifold $M$ defined by
\begin{equation}\label{eq:flusso+}
\left\{\begin{array}{l}\dfrac{\mathrm d}{\mathrm d\tau}{\eta}^{+}(\tau)=
\dfrac{\nabla \phi(\eta^+)}{\Vert \nabla \phi(\eta^+) \Vert^{2}} \\[.5cm]
{\eta}^{+}(0)=x \in\big\{y \in M: -\delta_0 \leq \phi(y) \leq \delta_0\big\}, \end{array} \right.
\end{equation}

and

\begin{equation}\label{eq:flusso-}
\left\{\begin{array}{l}\dfrac{\mathrm d}{\mathrm d\tau}{\eta}^{-}(\tau)=
\dfrac{-\nabla \phi(\eta^-)}{\Vert \nabla \phi(\eta^-) \Vert^{2}} \\[.5cm]
{\eta}^{-}(0)=x \in\big\{y \in M: -\delta_0 \leq \phi(y) \leq \delta_0\big\}. \end{array} \right.
\end{equation}

Note that $\eta^{+}(\tau,x)$ and $\eta^{-}(\tau,x)$ are certainly well defined on $\phi^{-1}([-\delta_0,\delta_0])$, where $\nabla \phi \not=0$.

We conclude this section with the introduction of the following constant:
\begin{equation}\label{eq:1.9fabio}
K_0=\max_{x\in\phi^{-1}(]-\infty,\delta_0]}\Vert\nabla\phi(x)\Vert,
\end{equation}
where $\Vert\cdot\Vert$ is the norm induced by $g$, and $\delta_0$ is as in Remark \ref{rem:1.4}.

\section{Path space and maximal intervals}
\label{sec:pathspace}
In this section we shall describe the space of curves $\mathfrak M$ which will be the environment of our minimax approach
and the set $\mathfrak C \subset \mathfrak M$ homeomorphic
to  $\mathds{S}^{N-1} \times \mathds{S}^{N-1}$, carrying on all the topological information of $\mathfrak M$.

\bigskip

For $\mathcal C\subset\R^N$ and $a<b$, define:
\begin{equation}\label{eq:defH1abA}
H^1\big([a,b],\mathcal C\big)=\big\{x\in H^1\big([a,b],\R^N\big):x(s)\in \mathcal C\
\text{for all $s\in[a,b]$}\big\}.
\end{equation}

Let $\delta_0>0$ be as in Remark \ref{rem:1.4}.
Consider first the following set of paths
\begin{equation}\label{eq:defM}
\mathfrak M_0=\Big\{x\in
H^1\big([0,1],\phi^{-1}(\left]-\infty,\delta_0\right[)\big):
\phi(x(0))\ge 0,\,\phi(x(1))\ge 0\Big\},
\end{equation}
see Figure \ref{fig:3bis}.
\begin{figure}
\begin{center}
\psfull \epsfig{file=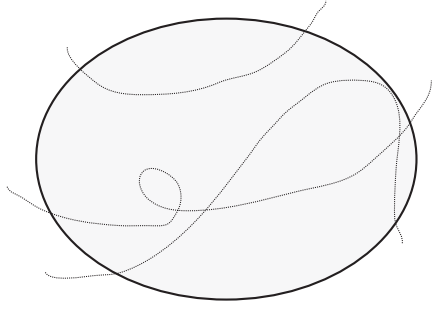, height=4cm} \caption{
Curves representing typical elements of the path space $\mathfrak
M_0$.}\label{fig:3bis}
\end{center}
\end{figure}
This is a subset of the Hilbert space $H^1\big([0,1],\R^N\big)$, and
it will be topologized with the induced metric.

The following result will be used systematically throughout the
paper:
\begin{lem}\label{thm:lem2.3}
If $x\in\mathfrak M_0$ and $[a,b]\subset[0,1]$ is such that
$x(a)\in\partial\Omega$ and there exists $\bar s\in[a,b]$ such that
$\phi(x(\bar s))\le-\delta<0$, then:
\begin{equation}\label{eq:2.9fabio}
b-a\ge\frac{\delta^2}{K_0^2}\left(\int_a^bg(\dot x,\dot x)\,\mathrm
ds\right)^{-1} ,
\end{equation}
where $K_0$ is defined in \eqref{eq:1.9fabio}.
\end{lem}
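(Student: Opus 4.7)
The plan is to compare the change in the value of $\phi$ along the curve with an upper bound obtained from the Cauchy--Schwarz inequality applied to the $H^1$-norm of the curve.

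Since $x(a)\in\partial\Omega$, we have $\phi(x(a))=0$, and by hypothesis $\phi(x(\bar s))\le-\delta$. Writing the difference as an integral along the curve, which lies in $\phi^{-1}(]-\infty,\delta_0[)$, I would first estimate
\begin{equation*}
\delta \le |\phi(x(\bar s))-\phi(x(a))| = \left|\int_a^{\bar s} g\bigl(\nabla\phi(x(s)),\dot x(s)\bigr)\,\mathrm ds\right| \le \int_a^{\bar s}\|\nabla\phi(x(s))\|\,\|\dot x(s)\|\,\mathrm ds.
\end{equation*}
Since $x(s)\in\phi^{-1}(]-\infty,\delta_0[)\subset\phi^{-1}(]-\infty,\delta_0])$ for all $s\in[a,b]$, the bound $\|\nabla\phi(x(s))\|\le K_0$ from \eqref{eq:1.9fabio} applies pointwise.

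Next I would apply the Cauchy--Schwarz inequality in $L^2([a,\bar s])$ to the product $1\cdot\|\dot x(s)\|$, obtaining
\begin{equation*}
\int_a^{\bar s}\|\dot x(s)\|\,\mathrm ds \le (\bar s-a)^{1/2}\left(\int_a^{\bar s}g(\dot x,\dot x)\,\mathrm ds\right)^{1/2} \le (\bar s-a)^{1/2}\left(\int_a^{b}g(\dot x,\dot x)\,\mathrm ds\right)^{1/2}.
\end{equation*}
Combining with the previous estimate gives
\begin{equation*}
\delta \le K_0\,(\bar s-a)^{1/2}\left(\int_a^{b}g(\dot x,\dot x)\,\mathrm ds\right)^{1/2}.
\end{equation*}

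Squaring and rearranging yields
\begin{equation*}
\bar s - a \ge \frac{\delta^2}{K_0^2}\left(\int_a^b g(\dot x,\dot x)\,\mathrm ds\right)^{-1},
\end{equation*}
and since $\bar s\le b$, we conclude $b-a\ge \bar s-a$, which is the desired inequality \eqref{eq:2.9fabio}. There is no real obstacle: the argument is essentially two applications of Cauchy--Schwarz combined with the uniform bound on $\|\nabla\phi\|$ provided by the definition of $K_0$; the only subtlety is noting that $x$ takes values in the region where this bound holds, which is precisely built into the definition of $\mathfrak M_0$.
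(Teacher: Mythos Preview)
Your proof is correct and follows essentially the same approach as the paper: both start from $\delta\le|\phi(x(\bar s))-\phi(x(a))|$, bound $\|\nabla\phi\|$ by $K_0$, and apply the Cauchy--Schwarz inequality. The only cosmetic difference is that the paper extends the integration domain to $[a,b]$ before applying Cauchy--Schwarz, whereas you apply it on $[a,\bar s]$ and then use $\bar s\le b$ at the end; your version actually yields the slightly stronger intermediate bound on $\bar s-a$.
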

\begin{proof}
Since $\phi(x(a))=0$, if $\bar s$ is such that $\phi(x(\bar
s))=-\delta<0$, it is:
\[\begin{split}
\delta\le&\;\vert\phi(x(\bar s))-\phi(x(a))\vert\le\int_a^{\bar
s}\vert g\big(\nabla\phi(x(\sigma)),\dot x(\sigma)\big)\vert\,
\mathrm d\sigma \le\int_a^b            \vert
g\big(\nabla\phi(x(\sigma)),\dot x(\sigma)\big)\vert\, \mathrm
d\sigma\\& \le K_0\int_a^bg(\dot x,\dot x)^{\frac12}  \mathrm
d\sigma\le K_0\sqrt{b-a} \left(  \int_a^bg(\dot x,\dot x) \,\mathrm
d\sigma\right)^{\frac12},
\end{split}\]
from which \eqref{eq:2.9fabio} is easily deduced.
\end{proof}

For all $x\in\mathfrak M_0$, let $\mathcal I_{x}^{0}, \mathcal I_x$
denote the following collections of closed subintervals of $[0,1]$:
\[
\mathcal I_{x}^{0} = \big\{[a,b]\subset [0,1]: x([a,b]) \subset
\overline\Omega, x(a),x(b) \in \partial\Omega \big\},
\]
\[
\mathcal I_x=\big\{[a,b]\in \mathcal I_{x}^{0}
\text{ and $[a,b]$ is maximal with respect to this
property}\big\}.
\]
\smallskip

Although we will only prove an existence result, in this paper we will employ an
equivariant approach that aims also at multiplicity results, to be developed in the future.

To this aim we shall consider
the map $\Rcal:\mathfrak M_0 \to \mathfrak M_0 $:
\begin{equation}\label{eq:2.4}
\Rcal x(t)=x(1-t).
\end{equation}
We say that $\Ncal\subset \mathfrak M_0$ is
\emph{$\Rcal$--invariant} if $\Rcal(\Ncal)=\Ncal$; note that
$\mathfrak M_0$ is $\Rcal$-invariant. If $\Ncal$ is $\mathcal
R$-invariant, a homotopy $h:[0,1]\times\Ncal\to\mathfrak M_0$ is
called \emph{$\Rcal$--equivariant} if
\begin{equation}\label{eq:2.5}
h(\tau,\Rcal x)=\Rcal h(\tau,x),\quad\forall x\in\Ncal,\,\forall\tau\in[0,1].
\end{equation}

\medskip

The following Lemma allows to describe an $\Rcal$--invariant subset $\mathfrak C$ of ${\mathfrak M}_{0}$
which carries on  all the topological properties of ${\mathfrak M}_{0}$.

\begin{lem}\label{thm:corde} There exists there a continuous map $G:\partial\Omega\times\partial\Omega\to H^1([0,1],\overline\Omega)$ such that
\begin{enumerate}
\item\label{corde1} $G(A,B)(0)=A,\,\,G(A,B)(1)=B$;
\item\label{corde2} $A\not=B\,\Rightarrow\, G(A,B)(s)\in\Omega\,\forall s\in ]0,1[$;
\item\label{corde3} $G(A,A)(s)=A\,\forall s\in [0,1]$;
\item\label{corde4} $\Rcal G(A,B)=G(B,A)$, namely $G(A,B)(1-s)=G(B,A)(s)$ for all $s$, and for all $A,B$.
\end{enumerate}
\end{lem}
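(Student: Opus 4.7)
The plan is to reduce the construction of $G$ to straight-line chords in the standard $N$-disk via a smooth parameterization of $\overline\Omega$.

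First, I would fix a $C^1$-diffeomorphism $\Phi:\mathds{D}^N \to \overline\Omega$ with $\Phi(S^{N-1}) = \partial\Omega$. Such a $\Phi$ exists because $\overline\Omega$ is a compact smooth $N$-manifold with boundary embedded in $\R^N$ and homeomorphic to the standard disk: the smooth compact hypersurface $\partial\Omega$ is diffeomorphic to $S^{N-1}$, a smooth collar neighborhood extends this diffeomorphism to an annular neighborhood of $S^{N-1}$ inside $\mathds{D}^N$, and an interior construction (e.g.\ radial from a chosen interior point $p_0\in\Omega$ combined with smoothing) completes the diffeomorphism.

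Second, for $A,B \in \partial\Omega$, let $a = \Phi^{-1}(A)$, $b = \Phi^{-1}(B) \in S^{N-1}$ and set
\[
G(A,B)(s) = \Phi\bigl((1-s)a + s b\bigr), \qquad s \in [0,1].
\]
Properties \ref{corde1} and \ref{corde3} are immediate: $\Phi(a) = A$, $\Phi(b) = B$, and when $A=B$ the segment $(1-s)a+sa = a$ is constant, so $G(A,A)\equiv A$. Property \ref{corde4} follows from the identity $(1-(1-s))a + (1-s)b = sa + (1-s)b$, which gives $G(A,B)(1-s) = G(B,A)(s)$. For property \ref{corde2}, if $A \neq B$ then $a \neq b$ are distinct unit vectors, so $\langle a, b\rangle < 1$ and
\[
\|(1-s)a + sb\|^2 = (1-s)^2 + s^2 + 2s(1-s)\langle a,b\rangle < 1
\]
for all $s \in (0,1)$. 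Hence the open segment lies in the open unit ball $\mathds{D}^N\setminus S^{N-1}$, whose $\Phi$-image is $\Omega$.

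Third, continuity of $G$ from $\partial\Omega\times\partial\Omega$ into $H^1([0,1],\overline\Omega)$ follows because the map $(a,b) \mapsto c_{a,b}$, with $c_{a,b}(s)=(1-s)a+sb$, is Lipschitz into $H^1([0,1],\mathds{D}^N)$ (it is affine in $(a,b)$ with derivative $b-a$), and post-composition with the $C^1$ map $\Phi$ preserves $H^1$-continuity: if $(a_n,b_n)\to(a,b)$, then $c_{a_n,b_n}\to c_{a,b}$ uniformly and their derivatives converge uniformly, so $D\Phi(c_{a_n,b_n})(b_n-a_n)\to D\Phi(c_{a,b})(b-a)$ uniformly, hence in $L^2$. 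The continuity of $(A,B)\mapsto (a,b)$ via $\Phi^{-1}|_{\partial\Omega}$ completes the argument. The main obstacle is the first step, namely producing a sufficiently regular $\Phi$; this is where the smooth-boundary and topological-disk hypotheses are used, via smoothing theory in high dimensions (elementary for $N\le 3$, and achievable by a self-contained collar-plus-radial construction in the general case).
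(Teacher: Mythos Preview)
Your construction is exactly the paper's argument in the case where $\overline\Omega$ is \emph{diffeomorphic} to $\mathds D^N$, which the paper handles in one line. The difficulty is that the standing hypothesis is only that $\overline\Omega$ is \emph{homeomorphic} to $\mathds D^N$ (with smooth boundary in $M$), and your first step --- producing a $C^1$-diffeomorphism $\Phi:\mathds D^N\to\overline\Omega$ --- is not the routine matter you suggest. Even granting that $\partial\Omega$ is diffeomorphic to $\mathds S^{N-1}$ (which already requires the $h$-cobordism theorem for $N\ge 6$), extending over the interior is delicate: for $N=4$ the assertion that every smooth compact $4$-manifold with boundary homeomorphic to $\mathds D^4$ is diffeomorphic to $\mathds D^4$ is tied to the smooth $4$-dimensional Schoenflies and Poincar\'e problems and is open. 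A ``collar-plus-radial'' argument does not settle this; the radial part presupposes exactly the global trivialization you are trying to build. So as written the argument has a genuine gap at $N=4$ and, in higher dimensions, relies on much heavier machinery than the lemma warrants.

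The paper avoids all of this. Starting from a mere homeomorphism $\psi$, the curves $c_{A,B}(s)=\psi^{-1}\big((1-s)\psi(A)+s\psi(B)\big)$ are only continuous, so the authors (i) replace each $c_{A,B}$ by a broken geodesic $\gamma_{A,B}$ in $(M,g)$ through a fixed number of sample points, which is $H^1$ and depends continuously on $(A,B)$; (ii) push $\gamma_{A,B}$ back into $\overline\Omega$ using the flow $\eta^-$ of \eqref{eq:flusso-}; and (iii) apply $\eta^-$ once more with a factor $s(1-s)$ to force the open arc into $\Omega$ while fixing the endpoints. All four properties survive each step, and the only ingredients are compactness of $\overline\Omega$, the positive injectivity radius, and $\nabla\phi\ne 0$ near $\partial\Omega$.
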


\begin{proof}
If $\overline\Omega$ is {\sl diffeomorphic} to the $N$--dimensional
disk $\mathds D^N=\{q\in\R^N\,:\,\Vert q \Vert_{E}\le 1\}$ and
$\psi:\overline\Omega \to \mathds D^N$ is a diffeomorphism, we set
\[
G(A,B)(s) = \psi^{-1}\big((1-s)\psi(A)+s\psi(B)\big),\,A,B\in \overline\Omega.
\]

In general, if $\overline\Omega$ is only homeomorphic to the
disk $\mathds D^N$, then the above definition produces curves
that in principle are only continuous. In order to produce curves
with an $H^1$-regularity, we use a broken geodesic approximation
argument. More precisely, choose a homeomorphism $\psi:\overline\Omega\to\mathds D^N$ and
consider for any $A,B\in \overline\Omega$ the curve
\[
c_{A,B}(s) = \psi^{-1}\big((1-s)\psi(A)+s\psi(B)\big)
\]
and the set
\[
\mathfrak C'= \{c_{A,B}: \,A,B \in \overline\Omega\}.
\]
Denote by $\varrho(\overline\Omega,g)$ the
infimum of the injectivity radii of all points of
 $\overline\Omega$ relatively
to the metric $g$ (cf. \cite{docarmo}). By the compactness of $\mathfrak C'$, there exists
$N_0\in\N$ with the property that
$\dist\big(c_{A,B}(a),c_{A,B}(b)\big)\le\varrho(\overline\Omega,g)$ whenever
 $\vert a-b\vert\le\frac1{N_0}$ for all
$c_{A,B}\in\mathfrak C'$. Finally, for all $c_{A,B}\in\mathfrak C'$, denote by
$\gamma_{A,B}$ the broken geodesic obtained as concatenation of the
curves $\gamma_k:[\frac{k-1}{N_0},\frac{k}{N_0}]\to M$ given by
the unique minimal geodesic in $(M,g)$ from $c_{A,B}(\frac{k-1}{N_0})$
to $c_{A,B}(\frac{k}{N_0})$, $k=1,\ldots,N_0$.

Note that, by compactness, $N_0$ can be chosen large enough to have
\[
\phi(\gamma_{A,B}(s)) < \delta_0 \text{ for any }s \in [0,1], \text{ for any }A,B \in \overline\Omega.
\]
Since the minimal geodesic in any convex normal neighborhood depend
continuously (with respect to the $C^2$--norm) on its endpoints, $\gamma_{A,B}$ depends continuously by $(A,B)$ in the $H^{1}$--norm.
Moreover $\gamma_{A,B}$ satisfies \eqref{corde1}, \eqref{corde3} and \eqref{corde4} of the statement.

Now, using the flow $\eta^{-}$ defined by \eqref{eq:flusso-}, we can set
\[
\hat\gamma(A,B)(s) = \eta^{-}(\max(0,\phi\big(\gamma_{A,B}(s)\big),\gamma_{A,B}(s)),
\]
so that
\[
\hat\gamma(A,B)(s) \in \overline\Omega \text{ for any }s \in [0,1], \text{ for any }A,B \in \overline\Omega.
\]
Since $\gamma(A,B)(0)=A$ and $\gamma(A,B)(1)=B$, it is $\phi(\gamma(A,B)(0))=\phi(\gamma(A,B)(1))=0$ and therefore also
$\hat\gamma_{A,B}$ satisfies \eqref{corde1}, \eqref{corde3} and \eqref{corde4} of the statement. Finally,
\[
G(A,B)(s) = \eta^{-}\Big(s(1-s)\max\big(0,\frac{\delta_0}2+\phi(\hat\gamma_{A,B}(s)\big),\hat\gamma_{A,B}(s)\Big),
\]
is the desired map.
\end{proof}

We set
\begin{equation}\label{eq:2.6bis}
\begin{split}
&\mathfrak C=\big\{G(A,B)\,:\,A,B\in\partial\Omega\big\},\\
&\mathfrak C_0=\{G(A,A)\,:\,A\in\partial\Omega\}.\end{split}\end{equation}

\begin{rem}\label{rem:conseguenze-homeo}
Note  that $\mathfrak C$ is homeomorphic to $\mathds{S}^{N-1}\times \mathds{S}^{N-1}$ by a homeomorphism
mapping $\mathfrak C_0$ onto $\{(A,A)\,:\,A\in \mathds{S}^{N-1}\}$, and that $\Rcal$
induces (by such homeomorphism) an action $\Scal$ on $\mathds{S}^{N-1}\times \mathds{S}^{N-1}$ given by
\begin{equation}\label{eq:2.6ter}
\Scal(A,B)=(B,A).
\end{equation}
\end{rem}

\medskip

Define now the following constant:
\begin{equation}\label{eq:defM0}
M_0=\sup_{x\in\mathfrak C}\int_0^1g(\dot x,\dot x)\,\mathrm dt.
\end{equation}
Since $\mathfrak C$ is compact and the integral in
\eqref{eq:defM0} is continuous in the $H^1$-topology, then
$M_0<+\infty$.
\bigskip
Finally we define the following subset of $\mathfrak M_0$:
\begin{equation}\label{eq:defMM}
\mathfrak M=\Big\{x\in\mathfrak M_0: \frac12\int_a^bg(\dot x,\dot
x)\,\mathrm dt< M_0 \quad \forall [a,b] \in \mathcal I_x \Big\}.
\end{equation}

We shall work in $\mathfrak M$ constructing flows in
$H^1\big([0,1],\R^N\big)$ for which $\mathfrak M$ is invariant.
We shall often use the notation
\begin{equation}\label{eq:fab}
f_{a,b}(x)=\frac12\int_a^b g(\dot x,\dot x)\,\mathrm dt.
\end{equation}

\section{Geometrically critical values and variationally
critical portions}\label{sec:functional}

In this section we will introduce two different notions
of {\em criticality\/} for curves in $\Mcal$.

\begin{defin}\label{thm:defgeomcrit}
A number $c\in\left]0,M_0\right[$ will be called a {\em geometrically critical
value} if there exists an OGC $\gamma$ parameterized in $[0,1]$ such that
$\frac12\int_0^1g(\dot\gamma,\dot\gamma)\,\text dt=c$.
A number which is not geometrically critical will be called  {\em geometrically regular value}.
\end{defin}

\bigskip

It is important to observe that, in view to obtain multiplicity results,
distinct geometrically critical values yield
geometrically distinct orthogonal geodesic chords:
\begin{prop}\label{thm:distinct}
Let $c_1\ne c_2$, $c_1,c_2>0$ be distinct geometrically critical
values  with corresponding OGC $x_1,x_2$. Then
$x_1\big([0,1]\big)\ne x_2\big([0,1]\big)$.
\end{prop}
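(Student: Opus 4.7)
The plan is to first reduce the problem to comparing the lengths of $x_1$ and $x_2$. Since each $x_i:[0,1]\to M$ is an affinely parametrized geodesic, $g(\dot x_i,\dot x_i)$ is constant in $t$, so $c_i=\tfrac12 g(\dot x_i,\dot x_i)$ and the $g$-length of $x_i$ equals $\sqrt{2c_i}$. Thus distinct critical values correspond to orthogonal geodesic chords of distinct lengths, and it suffices to deduce from $x_1([0,1])=x_2([0,1])$ that $x_1$ and $x_2$ have the same length.

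To this end, I would reparametrize each $x_i$ by arc length, obtaining a unit-speed geodesic $\gamma_i:[0,L_i]\to M$ with $L_i=\sqrt{2c_i}$, $\gamma_i(\left]0,L_i\right[)\subset\Omega$ and $\gamma_i(0),\gamma_i(L_i)\in\partial\Omega$. The key observation is that $\dot\gamma_i(0)$ is forced to be the unique \emph{inward} unit normal to $\partial\Omega$ at $\gamma_i(0)$: it has norm $1$ and is orthogonal to $T_{\gamma_i(0)}\partial\Omega$ by the OGC condition, and it must point into $\Omega$, since otherwise $\phi\circ\gamma_i$ would be positive for small $t>0$, contradicting $\gamma_i(\left]0,L_i\right[)\subset\Omega$. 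Analogously $\dot\gamma_i(L_i)$ is the outward unit normal at $\gamma_i(L_i)$, and $L_i$ is characterized intrinsically as the first positive time at which the geodesic issuing from $\gamma_i(0)$ with this inward initial velocity returns to $\partial\Omega$.

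Finally, since $\gamma_i(\left]0,L_i\right[)\subset\Omega$, the only points of the common image $x_1([0,1])=x_2([0,1])$ lying on $\partial\Omega$ are its two endpoints, so $\{x_1(0),x_1(1)\}=\{x_2(0),x_2(1)\}$. Replacing $x_2$ by its reverse $t\mapsto x_2(1-t)$ if necessary (which preserves image, length and the OGC property), I may assume $x_1(0)=x_2(0)=:A$. By the previous paragraph, $\gamma_1$ and $\gamma_2$ both solve the geodesic equation with the same initial condition at $A$ (namely, the inward unit normal), hence coincide on their common domain by ODE uniqueness; in particular $L_1=L_2$. Therefore $c_1=\tfrac12 L_1^2=\tfrac12 L_2^2=c_2$, contradicting the hypothesis $c_1\neq c_2$. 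The argument is essentially immediate; the only delicate point is the inward/outward determination of the boundary velocities, but this follows at once from the sign of $\phi\circ\gamma_i$ near the endpoints.
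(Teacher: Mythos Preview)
Your proof is correct and follows essentially the same approach as the paper's: assume the images coincide, match up the endpoints (possibly after reversing one curve), use the orthogonality condition plus the requirement $x_i(\left]0,1\right[)\subset\Omega$ to pin down the initial velocity, invoke ODE uniqueness for geodesics, and derive a contradiction from the first-return-to-$\partial\Omega$ time. The only cosmetic difference is that you normalize to unit speed first (so the initial data literally coincide and $L_1=L_2$ follows directly), whereas the paper keeps the affine parametrization, writes $\dot x_2(0)=\lambda\dot x_1(0)$ with $\lambda\ne1$, and observes that $x_2$ then exits $\overline\Omega$ after time $1/\lambda$.
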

\begin{proof}
The OGC's $x_1$ and $x_2$ are parameterized in the interval $[0,1]$. Assume by contradiction,
$x_1([0,1])=x_2([0,1])$. Since
\[
x_{i}(]0,1[) \subset \Omega \text{ for any }i=1,2,
\]
we have
\[\{x_1(0),x_1(1)\}=
\{x_2(0),x_2(1)\}.\]
Up to reversing the orientation of $x_2$,  we can assume $x_1(0)=x_2(0)$.
Since $x_1$ and $x_2$ are OGC's, $\dot x_1(0)$ and $\dot x_2(0)$
are parallel, but the condition $c_1\ne c_2$ says that $\dot x_1(0)\ne\dot x_2(0)$.
Then  there exists $\lambda > 0, \lambda\not=1$ such that $\dot x_2(0) =\lambda \dot x_1(0)$ and therefore,
by the uniqueness of the Cauchy problem for geodesics we have $x_2(s)=x_1(\lambda s)$. Up to exchanging $x_1$ with $x_2$,
we can assume $\lambda > 1$. Since $x_2(\frac{1}{\lambda}) = x_1(1) \in\partial\Omega$, the transversality of
$\dot x_2(0)$ to $\partial\Omega$ implies the existence of $\bar s \in ]\frac{1}{\lambda},1]$ such that
$x_2(\bar s) \not\in\overline\Omega$, getting a contradiction.
\end{proof}

A notion of criticality will now be given in terms of
variational vector fields.
For $x\in \Mcal$, let $\mathcal V^+(x)$ denote
the following closed convex cone of $T_x H^1\big([0,1],\R^N\big)$:
\begin{equation}\label{eq:defV+(x)}
\mathcal V^+(x)=\big\{V\in T_x H^1\big([0,1],\R^N\big):g\big(V(s),
\nabla\phi\big(x(s)\big)\big)\ge0\ \text{for $x(s)\in\partial\Omega$}\big\};
\end{equation}
vector fields in $\mathcal V^+(x)$ are interpreted as
infinitesimal variations of $x$ by curves stretching ``outwards''
from the set $\overline\Omega$.

\begin{defin}\label{thm:defvariatcrit}
Let $x\in\mathfrak M$ and $[a,b] \subset [0,1]$; we say that
$x_{\vert[a,b]}$ is a $\mathcal V^{+}$--\emph{\em variationally critical portion\/} of
$x$ if $x_{\vert[a,b]}$ is not constant and if
\begin{equation}
\label{eq:varcriticality}\int_a^bg\big(\dot x,\Ddt V\big)\,\mathrm
dt\ge0, \quad\forall\,V\in\mathcal V^+(x).
\end{equation}
\end{defin}

Similarly, for $x\in\Mcal$ we
define the cone:
\begin{equation}\label{eq:defV-(x)}
\mathcal V^-(x)=\big\{V\in T_x H^1\big([0,1],\R^N\big):g\big(V(s),
\nabla\phi\big(x(s)\big)\big)\le0\ \text{for $x(s)\in
\partial \Omega$}\big\},
\end{equation}
and we give the following

\begin{defin}\label{thm:defvariatcrit-}
Let $x\in\mathfrak M$ and $[a,b] \subset [0,1]$; we say that
$x_{\vert[a,b]}$ is a $\mathcal V^{-}$--\emph{\em variationally critical portion\/} of
$x$ if $x_{\vert[a,b]}$ is not constant and if
\begin{equation}
\label{eq:varcriticality-}\int_a^bg\big(\dot x,\Ddt V\big)\,\mathrm
dt\ge0, \quad\forall\,V\in\mathcal V^-(x).
\end{equation}
\end{defin}

The integral in \eqref{eq:varcriticality-} gives precisely the
first variation of the geodesic action functional in $(M,g)$ along
$x_{\vert[a,b]}$. Hence, variationally critical portions are
interpreted as those curves $x_{\vert[a,b]}$ whose geodesic energy
is {\em not decreased\/} after  infinitesimal variations by curves
stretching outwards from the set $\overline\Omega$. The motivation
for using outwards pushing infinitesimal variations is due to the
concavity of $\overline\Omega$. Indeed in the convex case it is customary
to use a curve shortening method in $\overline \Omega$,
that can be seen as the use of a flow constructed by
infinitesimal variations of $x$ in $\mathcal V^-(x)$,
keeping the endpoints of $x$ on $\partial\Omega$.

Flows obtained as integral flows of convex combinations of vector fields in $\mathcal V^+(x)$ are, in a certain sense,
the protagonists of our variational approach. However we shall use also integral flows
of convex combinations of vector fields in $\mathcal V^-(x)$ to avoid certain variationally critical portions that do not correspond
to OGC's.
\smallskip

Clearly, we are interested in determining existence of
geometrically critical values. In order to use a variational
approach we will first have to keep into consideration the more general
class of $\mathcal V^{+}$--variationally critical portions. A central issue in our
theory consists in studying the relations between $\mathcal V^{+}$--variational
critical portions $x_{\vert[a,b]}$ and OGC's. From now on
$V^{+}$--variationally critical portions, will be simple called
variationally critical portions.

\section{Classification of variationally critical portions}
\label{sub:description} Let us now take a look at how variationally
critical portions look like. We will interested to variationally
critical portion $x_{|[a,b]}$ such that $[a,b]\in \mathcal I_x^0$.

In first place, let us show
that smooth variationally critical points are OGC's; we need
a preparatory result:

\begin{lem}\label{thm:leminsez4.4}
Let $x\in\mathfrak M$ be fixed, and let $[a,b]\in \mathcal I_x^0$ be such
that $x_{\vert[a,b]}$ is a (non--constant) variationally critical
portion of $x$. Then:
\begin{enumerate}
\item\label{itm:freegeo} if $[\alpha,\beta] \subset [a,b]$ is such that $x(]\alpha,\beta[) \subset \Omega $,
then $x_{|[\alpha,\beta]}$ is a geodesic;
\item\label{itm:numfinint} $x^{-1}\big(\partial\Omega)\cap[a,b]$ consists of
a finite number of closed intervals and isolated points;
\item\label{itm:constconcomp} $x$ is constant on each connected
component of $x^{-1}\big(\partial\Omega)\cap[a,b]$;
\item\label{itm:piecC2} $x_{\vert[a,b]}$ is piecewise $C^2$, and the discontinuities of
$\dot x$ may occur only at points in $\partial\Omega$;
\item\label{itm:portgeo} each $C^2$ portion of $x_{\vert[a,b]}$ is a geodesic
in $\overline\Omega$.
\item\label{itm:addizionale} $\inf\{\phi(x(s))\,:\,s\in[a,b]\}<-\delta_0$.
\end{enumerate}
\end{lem}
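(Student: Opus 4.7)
The plan is to establish the six assertions in the order (1), (3), (2), (4)--(5), (6), exploiting that the later statements follow from the structural picture built by the earlier ones. Throughout I denote by $J$ a generic connected component of $x^{-1}(\partial\Omega)\cap[a,b]$.

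For (1), I would test the variational inequality \eqref{eq:varcriticality} against vectors $V\in H^1_0([\alpha,\beta],\R^N)$ extended by zero to $[0,1]$. Any such $V$ vanishes at every $s$ with $x(s)\in\partial\Omega$, hence both $V$ and $-V$ belong to $\mathcal V^+(x)$. Applying \eqref{eq:varcriticality} to both signs yields $\int_\alpha^\beta g(\dot x,\Ddt V)\,ds=0$, which is the weak geodesic equation on $[\alpha,\beta]$; standard regularity for geodesics then gives that $x_{|[\alpha,\beta]}$ is a smooth geodesic.

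For (3), suppose for contradiction that $J=[\alpha,\beta]$ is nondegenerate and $x$ is not constant there. Since $\phi\circ x\equiv 0$ on $J$, differentiation gives $g(\dot x,\nabla\phi(x))=0$ a.e., so $\dot x(t)\in T_{x(t)}\partial\Omega$ a.e. For any $f\in C^\infty_c(]\alpha,\beta[)$ with $f\ge 0$, the vector field $V=f\cdot\nabla\phi(x)$ (extended by zero) lies in $\mathcal V^+(x)$ because $g(V,\nabla\phi(x))=f\,\Vert\nabla\phi(x)\Vert^2\ge 0$ on $x^{-1}(\partial\Omega)$. Expanding and using $g(\dot x,\nabla\phi(x))=0$,
\[
\int_\alpha^\beta g(\dot x,\Ddt V)\,ds=\int_\alpha^\beta f\,\mathrm H^\phi(x)(\dot x,\dot x)\,ds.
\]
By strong concavity of $\overline\Omega$ together with \eqref{eq:seches} (recall $\nabla\phi$ is outward, so $-\nabla\phi$ is inward and $\second_{-\nabla\phi}<0$, whence $\mathrm H^\phi=-\second_{\nabla\phi}$ is negative definite on $T\partial\Omega$), $\mathrm H^\phi(x(t))(\dot x(t),\dot x(t))<0$ on the positive-measure set where $\dot x\ne 0$. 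Choosing $f$ supported on that set gives a strictly negative integral, contradicting \eqref{eq:varcriticality}. Hence $x$ is constant on $J$.

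For (2), the closed set $x^{-1}(\partial\Omega)\cap[a,b]$ has open complement $\bigsqcup_i(c_i,d_i)$ with $x(c_i),x(d_i)\in\partial\Omega$ and $x(]c_i,d_i[)\subset\Omega$. By (1), $x_{|[c_i,d_i]}$ is a geodesic which must be nonconstant (else $x(c_i)\in\Omega\cap\partial\Omega=\emptyset$). The concavity property \eqref{eq:1.1bis} then produces $\bar s_i\in\,]c_i,d_i[$ with $\phi(x(\bar s_i))<-\delta_0$, so Lemma~\ref{thm:lem2.3} applied on $[c_i,\bar s_i]$ (recalling $\bar s_i-c_i\le 1$) yields
\[
\int_{c_i}^{d_i} g(\dot x,\dot x)\,ds\ge\int_{c_i}^{\bar s_i}g(\dot x,\dot x)\,ds\ge\frac{\delta_0^2}{K_0^2(\bar s_i-c_i)}\ge\frac{\delta_0^2}{K_0^2}.
\]
Since $[a,b]\in\mathcal I_x^0$ is contained in some maximal interval $[a',b']\in\mathcal I_x$, the defining condition of $\mathfrak M$ forces $\sum_i\int_{c_i}^{d_i}g(\dot x,\dot x)\,ds\le 2f_{a',b'}(x)<2M_0$, bounding the number of intervals by $2M_0K_0^2/\delta_0^2$. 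Therefore $x^{-1}(\partial\Omega)\cap[a,b]$ is the complement of a finite disjoint union of open intervals, hence a finite union of closed intervals and isolated points.

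Claims (4) and (5) follow immediately: by (2) and (3), $[a,b]$ is a concatenation of finitely many pieces, each either a constant arc in $\partial\Omega$ or (by (1)) a smooth geodesic arc in $\overline\Omega$; discontinuities of $\dot x$ can occur only at the junctions, which lie in $\partial\Omega$. Finally (6) follows because nonconstancy of $x_{|[a,b]}$ together with (3) forces at least one excursion interval $(c_i,d_i)$ to exist, and the argument of (2) already produced a point $\bar s_i$ in it with $\phi(x(\bar s_i))<-\delta_0$.

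The main obstacle is step (3): one must turn the one-sided variational inequality \eqref{eq:varcriticality} into a pointwise sign contradiction on $\mathrm H^\phi(\dot x,\dot x)$ even though $x$ is a priori only $H^1$ on $J$, and one must correctly orient $\nabla\phi$ as the outward normal so that strong concavity yields the sign actually needed. Everything else (including the finiteness count in (2)) is a bookkeeping consequence of the structural lemma (3), claim (1), and the concavity-plus-Lemma~\ref{thm:lem2.3} energy estimate.
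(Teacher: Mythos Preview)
Your proof is correct and, for items (1), (2), (4), (5), (6), essentially identical to the paper's. For item (3) you take a genuinely more direct route. The paper first tests with vector fields tangent to $\partial\Omega$ to deduce that $x_{|J}$ is a geodesic \emph{on} $\partial\Omega$, hence $C^2$ with $\Ddt\dot x=\lambda\,\nabla\phi(x)$; it then differentiates $\phi\circ x\equiv0$ twice to get $\lambda>0$ from strong concavity, and only then integrates by parts against $V=\sin\bigl(\tfrac{s-\alpha}{\beta-\alpha}\pi\bigr)\nabla\phi(x)$ to reach the contradiction. You bypass all of this: with $V=f\,\nabla\phi(x)$ you compute $g(\dot x,\Ddt V)=f'\,g(\dot x,\nabla\phi(x))+f\,\mathrm H^\phi(x)(\dot x,\dot x)$ directly, the first term vanishing a.e.\ since $\phi\circ x\equiv0$ on $J$. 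No integration by parts and no preliminary regularity beyond $H^1$ are needed, which is a clean simplification. The price is that you do not obtain the intermediate structural fact that $x_{|J}$ is a constrained geodesic on $\partial\Omega$, but that fact is not used elsewhere.

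One cosmetic point: your phrase ``choosing $f$ supported on that set'' is imprecise, since a positive-measure set need not contain an open interval. Simply take any $f\in C^\infty_c(]\alpha,\beta[)$ with $f>0$ on all of $]\alpha,\beta[$; the integrand $f\,\mathrm H^\phi(x)(\dot x,\dot x)$ is nonpositive a.e.\ and strictly negative on the positive-measure set where $\dot x\ne0$, so the integral is strictly negative.
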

\begin{proof}
Let $[\alpha,\beta]\subset[a,b]$ be such that $x([\alpha,\beta])\subset
\Omega$. In this case the set of the
restrictions to $[\alpha,\beta]$ of the vector fields in $\mathcal V^+(x)$ contains
the Hilbert space $H^1_0\big([\alpha,\beta],\R^N)$.
Variational criticality for $x$ implies then:
\[\int_\alpha^\beta g\big(\dot x,\Dds V)\,\mathrm ds\ge0,\quad\text{and}
\int_\alpha^\beta g\big(\dot x,-\Dds V)\,\mathrm ds\ge0,\] i.e.,
$\int_\alpha^\beta g\big(\dot x,\Dds V)\,\mathrm ds=0$ for all
$V\in H^1_0\big([\alpha,\beta],\R^N\big)$, which tells us that
$x_{\vert[\alpha,\beta]}$ is a geodesic in $\Omega$.
Similarly, if $[\alpha,\beta]\subset[a,b]$ is such that
$x\big(\left]\alpha,\beta\right[\big)\subset\Omega$ and
$x(\alpha),x(\beta)\in\partial\Omega$, by a limit argument we get
that $x_{\vert[\alpha,\beta]}$ is a geodesic proving \eqref{itm:freegeo}. Now observe that
the strong concavity assumption on $\overline\Omega$ implies (see
(\ref{eq:1.1bis})) that there exists $\bar
t\in\left]\alpha,\beta\right[$ such that $\phi(x(\bar
t))<-\delta_0$. By Lemma~\ref{thm:lem2.3},
$\beta-\alpha\ge\frac{\delta_0^2}{K_0^2}\left(\int_\alpha^\beta
g(\dot x,\dot x)\,\mathrm dt\right)^{-1}$ and hence the number of
such intervals $[\alpha,\beta]$ must be finite, which proves
part~\eqref{itm:numfinint} of the thesis.

Now, let us prove that if $[\alpha,\beta]\subset[a,b]$ is such
that $x([\alpha,\beta])\subset\partial\Omega$, then $x$ is
constant on $[\alpha,\beta]$.

The set of restrictions to
$[\alpha,\beta]$ of vector fields in $\mathcal V^+(x)$ clearly
contains (properly) the Hilbert subspace of
$H^1_{0}\big([\alpha,\beta],\R^N\big)$ consisting of those $V$
such that $V(s)\in T_{x(s)}\partial\Omega$ for any $s \in [\alpha,\beta]$. Variational
criticality in this case implies that
\[
\int_\alpha^\beta g(\dot x, \Dds V)\,\mathrm ds\ge 0
\quad\text{and}\quad \int_\alpha^\beta g(\dot x, -\Dds V)\,\mathrm
ds\ge 0
\]
for any $V$ such that $V(s)\in T_{x(s)}\big(\partial\Omega\big)$
for all $s\in[\alpha,\beta]$, and satisfying $V(\alpha)=V(\beta)=0$.
Then $x_{\vert[\alpha,\beta]}$ is a geodesic in $\partial\Omega$
with respect to $g$, so there exists $\lambda\in\mathcal
C^0([\alpha,\beta],\R)$ such that
\begin{equation}\label{eq:4.13bis}
\Dds\dot x(s)=\lambda(s)\nabla\phi(\gamma(s)),\qquad\forall
s\in[\alpha,\beta].
\end{equation}
Now consider the second derivative of $\phi(x(s))\equiv 0$ in $[\alpha,\beta]$. We obtain
\begin{multline*}
0=H^\phi(x(s))[\dot x(s),\dot x(s)]+g(\Dds \dot x(s),\nabla\phi(x(s)))=\\
=H^\phi(x(s))[\dot x(s),\dot
x(s)]+\lambda(s)\,g(\nabla\phi(x(s)),\nabla\phi(x(s))),
\end{multline*}
for all $s\in[\alpha,\beta]$. Suppose by contradiction that
$x_{\vert[\alpha,\beta]}$ is not constant. Then, by strong
concavity condition $H^\phi(x(s))[\dot x(s),\dot x(s)]<0$ so
\begin{equation}\label{eq:4.13ter}
\lambda(s)>0,\qquad\forall\, s\in[\alpha,\beta].
\end{equation}
But, choosing
\[
V(s)=
\begin{cases}
\sin\left(\frac{s-\alpha}{\beta-\alpha}\pi\right)\nabla\phi(x(s)) & \text{if\ } s\in[\alpha,\beta]\\
0 & \text{otherwise,}
\end{cases}
\]
then $V\in\mathcal V^+(x)$ and
\begin{multline*}
0\le \int_\alpha^\beta g(\dot x,\Dds V)\,\mathrm ds=-\int_\alpha^\beta g(\Dds x, V)\,\mathrm ds=\\
-\int_\alpha^\beta\lambda(s)\sin\left(\frac{s-\alpha}{\beta-\alpha}\pi\right)g(\nabla\phi(x(s))
,\nabla\phi(x(s)))\,\mathrm ds,
\end{multline*}
in contradiction with \eqref{eq:4.13ter}. Then
$x_{\vert[\alpha,\beta]}$ is constant and \eqref{itm:constconcomp}
is proven.

Since, as we have seen above, $x$ is a geodesic when it does not touch the boundary,
\eqref{itm:freegeo},
\eqref{itm:numfinint}
\eqref{itm:constconcomp}
imply
\eqref{itm:piecC2} and
\eqref{itm:portgeo}.

Finally, since $x_{|[a,b]}$ is not constant, it is not included in $\partial \Omega$, therefore by
\eqref{itm:freegeo} and \eqref{eq:1.1bis} we obtain \eqref{itm:addizionale}.
\end{proof}

We are now ready for the proof of the following:
\begin{prop}\label{thm:regcritpt}
Assume that there is no WOGC   in $\overline\Omega$. Let $x\in\Mcal$ and $[a,b] \in\mathcal I_x^0$
be such that:
\begin{itemize}
\item$x_{\vert[a,b]}$ is a variationally
critical portion of $x$,
\item
the restriction of $x$ to
$[a,b]$ is of class $C^1$.
\end{itemize}
Then, $x_{\vert[a,b]}$ is a orthogonal
geodesic chord in $\overline\Omega$ (with
$x\big(\left]a,b\right[\big)\subset \Omega$).
\end{prop}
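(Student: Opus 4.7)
The plan is to prove this in three stages: first use the $C^1$ hypothesis to collapse the piecewise description from Lemma~\ref{thm:leminsez4.4} into a single smooth geodesic; then extract orthogonality at the endpoints from the variational inequality; finally rule out interior contacts with $\partial\Omega$ using the no-WOGC assumption.

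For the first stage, Lemma~\ref{thm:leminsez4.4} already tells me that $x^{-1}(\partial\Omega)\cap[a,b]$ decomposes into finitely many isolated points together with finitely many closed intervals on which $x$ is constant, that $x_{|[a,b]}$ is piecewise $C^2$ geodesic, and that any jump of $\dot x$ is located on $\partial\Omega$. The $C^1$ hypothesis rules out jumps of $\dot x$. An interval component $[\alpha,\beta]$ with $\alpha<\beta$ cannot occur: constancy of $x$ on $[\alpha,\beta]$ together with $C^1$-continuity forces $\dot x$ to vanish at the endpoints of the adjoining $C^2$-geodesic portions, which are then constant by uniqueness of the geodesic Cauchy problem; iterating makes $x_{|[a,b]}$ itself constant, contradicting Definition~\ref{thm:defvariatcrit}. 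At an isolated interior contact $s_0$ with $x(s_0)\in\partial\Omega$, the two adjoining $C^2$-geodesic arms share both position and velocity at $s_0$, so they coincide by uniqueness. Hence $x_{|[a,b]}$ is a single smooth geodesic with $\Ddt\dot x\equiv 0$ and constant non-zero speed.

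For the second stage, since $\Ddt\dot x\equiv 0$, integration by parts reduces \eqref{eq:varcriticality} to
\[
g\big(\dot x(b),V(b)\big)-g\big(\dot x(a),V(a)\big)\ge 0\qquad\forall\,V\in\Vcal^+(x).
\]
The admissibility of $V$ imposes pointwise constraints only at $s=a$, $s=b$ and at the (finitely many) isolated interior contacts $s_i$; the interior constraints are trivially met by prescribing $V(s_i)=0$, which leaves the boundary values unaffected. I can therefore let $V(a)$ and $V(b)$ range independently in the respective half-spaces $\{w\in\R^N:g(w,\nabla\phi(x(a)))\ge 0\}$ and $\{w\in\R^N:g(w,\nabla\phi(x(b)))\ge 0\}$. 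Taking $V(a)=0$ and $V(b)=\pm w$ for arbitrary $w\in T_{x(b)}\partial\Omega$ forces $\dot x(b)\perp T_{x(b)}\partial\Omega$, and taking $V(b)=\nabla\phi(x(b))$ forces $g(\dot x(b),\nabla\phi(x(b)))\ge 0$. The symmetric argument at $a$ (with reversed sign) shows $\dot x(a)\perp T_{x(a)}\partial\Omega$, pointing inward. Non-constancy of $x_{|[a,b]}$ forces $\dot x(a),\dot x(b)\ne 0$, so the endpoints are genuinely orthogonal.

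For the third stage, if some $s_0\in\,]a,b[\,$ had $x(s_0)\in\partial\Omega$, then combining the previous two stages would exhibit $x_{|[a,b]}$ as precisely a WOGC in the sense of Definition~\ref{thm:defOGC}, contradicting the standing hypothesis; hence $x(\,]a,b[\,)\subset\Omega$ and $x_{|[a,b]}$ is an OGC. The most delicate part is the first stage, where the propagation of constancy through a $C^1$ seam via the geodesic uniqueness theorem must be handled carefully in order to eliminate interval components of $x^{-1}(\partial\Omega)$; the variational computation of the second stage and the WOGC exclusion of the third are comparatively routine consequences.
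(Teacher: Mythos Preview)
Your proof is correct and follows essentially the same three-stage route as the paper's own argument: Lemma~\ref{thm:leminsez4.4} plus $C^1$-regularity to eliminate interval components and glue the geodesic pieces, integration by parts in \eqref{eq:varcriticality} for endpoint orthogonality, and the no-WOGC hypothesis to exclude interior boundary contacts. The paper's proof is much terser (it merely asserts that $C^1$-regularity together with parts~\eqref{itm:numfinint}--\eqref{itm:portgeo} of Lemma~\ref{thm:leminsez4.4} does the job), whereas you spell out the propagation-of-constancy argument and the choice of test vectors explicitly; these are exactly the details one would supply if asked to expand the paper's sketch.
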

\begin{proof}
$C^1$--regularity, \eqref{itm:numfinint} and
\eqref{itm:constconcomp} of Lemma \ref{thm:leminsez4.4} shows that
$x^{-1}(\partial\Omega)\cap[a,b]$ consists only of a finite number
of isolated points. Then, by the $C^1$--regularity on
$[a,b]$ and parts \eqref{itm:piecC2}--\eqref{itm:portgeo} of Lemma
\ref{thm:leminsez4.4}, $x$ is a geodesic on the whole interval
$[a,b]$. Moreover an integration by parts shows that $\dot x(a)$
and $\dot x(b)$ are orthogonal to $T_{x(a)}\partial\Omega$ and
$T_{x(b)}\partial\Omega$ respectively. Finally, since there are
not any  WOGC on $\overline\Omega$, $x_{\vert[a,b]}$ is an OGC.
\end{proof}

Variationally critical portions $x_{\vert[a,b]}$ of class
$C^1$ will be called {\em regular variationally critical
portions}; those critical portions that do not belong to this
class will be called {\em irregular}. Irregular variationally
critical portions of curves $x\in\Mcal$ are further divided into
two subclasses, described below.

Assume that there are not
WOGC's in $\overline\Omega$.
\begin{prop}\label{thm:irregcritpt}
Let $x\in\Mcal$ and let $[a,b]\in \mathcal I_x^0$ be such that $x_{\vert[a,b]}$ is an irregular variationally
critical portion of $x$.

Then, there exists a subinterval $[\alpha,\beta]\subset
[a,b]$ such that $x_{\vert[a,\alpha]}$ and $x_{\vert[\beta,b]}$
are constant (in $\partial\Omega$), $\dot x(\alpha^+)\in
T_{x(\alpha)}(\partial\Omega)^\perp$, $\dot x(\beta^-)\in
T_{x(\beta)}(\partial\Omega)^\perp$, and one of the two mutually
exclusive situations occur:
\begin{enumerate}
\item \label{itm:cusp} there exists a finite number of intervals
$[t_1,t_2]\subset\left]\alpha,\beta\right[$ such that
$x\big([t_1,t_2]\big)\subset\partial\Omega$ and that are maximal
with respect to this property; moreover, $x$ is constant on each
such interval $[t_1,t_2]$, and $\dot x(t_1^-)\ne\dot x(t_2^+)$;\footnote{%
When $t_1<t_2$ an easy partial integration argument shows that both $\dot x(t_1^-)$
and $\dot x(t_2^+)$ are orthogonal to $\partial\Omega$.}
\item \label{itm:stop} $x_{\vert[\alpha,\beta]}$ is an OGC in
$\overline\Omega$.
\end{enumerate}
\end{prop}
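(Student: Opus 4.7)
The plan is to read off the structure of $x_{\vert[a,b]}$ from Lemma~\ref{thm:leminsez4.4}, single out $\alpha,\beta$ as the right endpoint of the initial constant arc on $\partial\Omega$ and the left endpoint of the terminal one, and then extract the orthogonality conditions at the ``corners'' of $x$ by testing the variational inequality \eqref{eq:varcriticality} against compactly supported vector fields tangent to $\partial\Omega$ at the corner. Concretely, Lemma~\ref{thm:leminsez4.4} gives that $E:=x^{-1}(\partial\Omega)\cap[a,b]$ is a finite disjoint union of closed subintervals and isolated points, that $x$ is constant on each component, that each $C^2$ piece of $x$ is a geodesic in $\overline\Omega$ whose interior lies in $\Omega$, and that $\dot x$ may jump only at points of $E$. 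Setting
\[
\alpha=\sup\{t\in[a,b]:x_{\vert[a,t]}\text{ is constant}\},\quad\beta=\inf\{t\in[a,b]:x_{\vert[t,b]}\text{ is constant}\},
\]
both $x_{\vert[a,\alpha]}$ and $x_{\vert[\beta,b]}$ are constant on $\partial\Omega$, and the non-constancy of $x_{\vert[a,b]}$ forces $\alpha<\beta$.

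For the orthogonality at $\alpha$, I fix $w\in T_{x(\alpha)}\partial\Omega$ and $\rho\in C^\infty_c([\alpha-\varepsilon,\alpha+\varepsilon])$ with $\rho(\alpha)=1$, taking $\varepsilon$ small enough that $x_{\vert[\alpha,\alpha+\varepsilon]}$ is a single geodesic arc. Define $V=\rho W$, where $W$ is the parallel transport of $w$ along $x$ on $[\alpha,\alpha+\varepsilon]$ and any smooth extension on $[\alpha-\varepsilon,\alpha]$ (where $x$ is constant). Because $w$ is tangent to $\partial\Omega$ at $x(\alpha)$ and $x(s)=x(\alpha)$ throughout the constant piece, $g(V(s),\nabla\phi(x(s)))\equiv 0$ whenever $x(s)\in\partial\Omega$, so $V\in\mathcal V^+(x)$. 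Using $\Ddt\dot x\equiv 0$ on the geodesic piece and $\dot x\equiv 0$ on the constant piece, one integration by parts collapses \eqref{eq:varcriticality} to $-g(\dot x(\alpha^+),w)\ge 0$; applying the same with $-w$ in place of $w$ gives the reverse inequality, so $\dot x(\alpha^+)\in T_{x(\alpha)}(\partial\Omega)^\perp$. The mirror construction at $\beta$, and the same localised construction at each endpoint of a non-degenerate component of $E\cap{]\alpha,\beta[}$, produces all the remaining orthogonality statements.

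It remains to settle the dichotomy. Let $m$ be the number of maximal components of $E\cap{]\alpha,\beta[}$, finite by Lemma~\ref{thm:leminsez4.4}\eqref{itm:numfinint}. If $m=0$, then $x(]\alpha,\beta[)\subset\Omega$, and Lemma~\ref{thm:leminsez4.4}\eqref{itm:portgeo} together with the orthogonality already established exhibits $x_{\vert[\alpha,\beta]}$ as a single OGC, giving case~\eqref{itm:stop}. If $m\ge 1$, we are in case~\eqref{itm:cusp}: for each non-degenerate component $[t_1,t_2]$ (i.e.\ $t_1<t_2$), the incoming velocity $\dot x(t_1^-)$ is a positive multiple of $\nabla\phi(x(t_1))$ (the incoming geodesic lies in $\{\phi\le 0\}$ and reaches $0$ at $t_1$), while $\dot x(t_2^+)$ is a negative multiple of $\nabla\phi(x(t_2))$ (the outgoing geodesic re-enters $\Omega$), so they belong to opposite rays of the normal line and in particular $\dot x(t_1^-)\ne\dot x(t_2^+)$.

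I expect the genuinely delicate point to be the case of an isolated interior intersection $t_1=t_2=t_0$. There the natural two-sided localised variation at $t_0$ only yields that the jump $\dot x(t_0^-)-\dot x(t_0^+)$ is normal to $\partial\Omega$, rather than orthogonality of each one-sided velocity, and a priori the two could coincide. The absence of WOGC's is what rules this out: the equality $\dot x(t_0^-)=\dot x(t_0^+)$ would make $x$ a $C^1$ (hence smooth) geodesic through $t_0$ which, staying inside $\overline\Omega$ near $t_0$, must be tangent to $\partial\Omega$ there by strong concavity; fusing the two geodesic arcs adjacent to $t_0$ into a single piece reduces $m$, and iterating this reduction either leaves a list of genuine corners---which is precisely the claim of case~\eqref{itm:cusp}---or collapses $x_{\vert[\alpha,\beta]}$ to a single smooth geodesic with an interior tangent contact to $\partial\Omega$, i.e.\ a WOGC, contradicting our standing hypothesis.
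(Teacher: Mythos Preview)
Your approach is exactly the one the paper has in mind (its own proof is the one-liner ``totally analogous to Lemma~\ref{thm:leminsez4.4} and Proposition~\ref{thm:regcritpt}''), and the orthogonality arguments at $\alpha$, $\beta$ and at the endpoints of nondegenerate interior components are correct. One small slip: on the constant piece $[a,\alpha]$ you cannot take ``any smooth extension'' of $W$, since every $s$ there satisfies $x(s)\in\partial\Omega$ and $\mathcal V^+(x)$ demands $g(V(s),\nabla\phi(x(s)))\ge 0$; the constant extension $W\equiv w$ is what you want (and since $\dot x\equiv 0$ there, the integral is unaffected either way).

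The more substantive issue is your final iteration. You defined $m$ as the number of maximal components of $E\cap{]\alpha,\beta[}$, but fusing the two geodesic arcs at a smooth contact $t_0$ does \emph{not} reduce $m$: the point $t_0$ remains a component of $E\cap{]\alpha,\beta[}$, only your count of $C^1$-pieces drops. What your argument genuinely proves is that if $x$ were $C^1$ at \emph{every} interior boundary contact, then $x_{\vert[\alpha,\beta]}$ would be a single orthogonal geodesic with an interior tangency to $\partial\Omega$, i.e.\ a WOGC, which is excluded. This gives at least one cusp interval with $\dot x(t_1^-)\ne\dot x(t_2^+)$, which together with your treatment of the nondegenerate case suffices for everything the paper actually uses later (Corollary~\ref{thm:cor4.11bis} and Remark~\ref{rem:4.17} only need the existence of one genuine cusp). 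To get the literal statement of case~\eqref{itm:cusp}---that \emph{every} cusp interval carries a jump---you would need to argue further: the maximal $C^1$-piece $[c_1,c_2]$ through a hypothetical smooth contact $t_0$ is a geodesic with $x(c_1),x(t_0),x(c_2)\in\partial\Omega$, and one must check that both $\dot x(c_1^+)$ and $\dot x(c_2^-)$ are orthogonal (this is immediate if $c_1,c_2$ are $\alpha,\beta$ or endpoints of nondegenerate components, but not if they are isolated jump points), so that $x_{\vert[c_1,c_2]}$ itself is a WOGC. The paper does not spell out this step either.
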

(Note that in the above statement it may happen $\alpha = a$ or
$\beta = b$.)
\begin{proof}
Totally analogous to the proof of Lemma~\ref{thm:leminsez4.4} and
Proposition~\ref{thm:regcritpt}.
\end{proof}

Irregular variationally critical portions in the class described
in part  \eqref{itm:cusp} will be called {\em of first type},
those described in part \eqref{itm:stop} will be called {\em of
second type}. An interval $[t_1,t_2]$ as in part \eqref{itm:cusp}
will be called a {\em cusp interval\/} of the irregular variational critical
portion $x_{|[a,b]}$ (see Figure~\ref{fig:irreg})
also in the degenerate case $t_1=t_2$.

\begin{figure}
\begin{center}
\psfull \epsfig{file=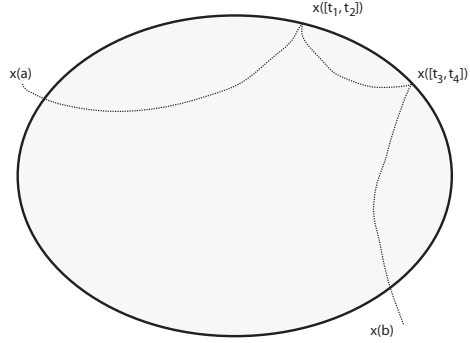, height=4.5cm}
\caption{Irregular
critical portions of curves in $\Mcal$, with cusp intervals
$[t_1,t_2]$ and $[t_3,t_4]$.
} \label{fig:irreg}
\end{center}
\end{figure}

\begin{rem}\label{rem:rem4.10}
We observe here that, due to the strict concavity assumption, if
$x_{|[a,b]}$ is an irregular variationally critical portion of
first  type and $[t_1,t_2],[s_1,s_2]$ are cusp intervals for $x$
contained in $[a,b]$ with $t_2<s_1$,    then
\[\text{there exists}\ s_0\in\left]t_2,
s_1\right[\text{\ with\ }\phi(x(s_0))< -\delta_0,\] (see
(\ref{eq:1.1bis})). This implies that the number of cusp intervals
of irregular variationally critical portions $x_{\vert[a,b]}$ is
uniformly bounded (cf. Lemma \ref{thm:lem2.3} and recall that $x \in
\mathfrak M$ and $[a,b] \in \mathcal I_x$ implies
$\frac12\int_a^bg(\dot x,\dot x)\,\mathrm dt< M_0$).

If $x_{\vert[a,b]}$ is a irregular critical portion of the first
type, and if $[t_1,t_2]$ is a cusp interval of $x_{\vert[a,b]}$, we will set
\begin{equation}\label{eq:4.15}
\Theta_x(t_1,t_2) = \text{the (non oriented) angle between the
vectors\ } \dot x(t_1^-) \text{\ and\ } \dot x(t_2^+).
\end{equation}
Observe that $\Theta_x(t_1,t_2)\in\left]0,\pi\right]$, (since
$x([a,b]) \subset \overline\Omega)$.
\end{rem}

\begin{rem}\label{thm:rem4.11bis}
We observe that if $[t_1,t_2]\subset[a,b]\in\mathcal I_x^0$ is a
cusp interval of $x_{\vert[a,b]}$, then the tangential components of $\dot
x(t_1^-)$ and of $\dot x(t_2^+)$ along $\partial\Omega$ are equal;
this is is easily obtained by an integration by parts. It follows
that if $\Theta_x(t_1,t_2)>0$, then $\dot x(t_1^-)$ and $\dot
x(t_2^+)$ cannot be both tangent to $\partial\Omega$.
\end{rem}

We will denote by $\mathcal Z$ the set of all curves having variationally critical portions:
\[\mathcal Z=\big\{x\in\Mcal:\exists\,[a,b]\subset[0,1]\ \text{such that
$x_{\vert[a,b]}$ is a variationally critical portion of $x$}
\big\}.\]
The following compactness property holds for $\mathcal
Z$:
\begin{prop}\label{thm:Zrcompatto}
Let $x_n$ be a sequence in $\mathcal Z$, $[a_n,b_n]\in \mathcal
I_{x_n}^0$ for any $n$ such that $x_{n \vert[a_n,b_n]}$ is a (non
constant) variationally critical portion of $x_n$. Then, up to
subsequences, $a_n$ converges to some $a$, $b_n$ converges to some
$b$, with $0\le a<b\le1$, and the sequence
$x_n:[a_n,b_n]\to\overline\Omega$ is $H^1$-convergent (in the sense
of Remark \ref{rem:rep}) to some curve $x:[a,b]\to\overline \Omega$
as $n\to\infty$, $[a,b] \in \mathcal I_x^0$, and $x_{\vert[a,b]}$ is
a variationally critical portion of $x$.
\end{prop}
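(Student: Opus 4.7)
The plan is to combine the uniform energy bound built into the definition of $\mathfrak M$ with the structural results of Section~\ref{sub:description} on variationally critical portions.

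\emph{Step 1 (length bound and $a<b$).} Since $[a_n,b_n]\in\mathcal I_{x_n}^0$ lies inside a maximal interval belonging to $\mathcal I_{x_n}$, the definition of $\mathfrak M$ yields $\int_{a_n}^{b_n}g(\dot x_n,\dot x_n)\,\mathrm dt\le 2M_0$ for all $n$. After extracting a subsequence we have $a_n\to a$ and $b_n\to b$ in $[0,1]$. Lemma \ref{thm:leminsez4.4}\eqref{itm:addizionale} furnishes, for each $n$, some $\bar s_n\in[a_n,b_n]$ with $\phi(x_n(\bar s_n))<-\delta_0$; since $\phi(x_n(a_n))=0$, Lemma \ref{thm:lem2.3} gives $b_n-a_n\ge\delta_0^2/(2K_0^2M_0)>0$, hence $a<b$.

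\emph{Step 2 (weak $H^1$ and uniform limit).} Let $\hat x_n:[a,b]\to\overline\Omega$ denote the affine reparameterization of $x_n|_{[a_n,b_n]}$. By Step~1 together with $\hat x_n([a,b])\subset\overline\Omega$ (compact), the sequence $\hat x_n$ is bounded in $H^1([a,b],\R^N)$. Pass to a further subsequence so that $\hat x_n\rightharpoonup x$ weakly in $H^1$ and uniformly; then $x([a,b])\subset\overline\Omega$ and $x(a),x(b)\in\partial\Omega$, so $[a,b]\in\mathcal I_x^0$, and $x$ is non-constant since the lower length estimate passes to the limit.

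\emph{Step 3 (strong $H^1$: the main technical obstacle).} By Lemma \ref{thm:leminsez4.4}\eqref{itm:freegeo}--\eqref{itm:portgeo}, each $x_n|_{[a_n,b_n]}$ decomposes into finitely many smooth geodesic arcs in $\overline\Omega$ separated by (possibly degenerate) intervals mapped into $\partial\Omega$. The crucial observation is that the number of such arcs is bounded independently of $n$: strong concavity via \eqref{eq:1.1bis} forces $\phi$ to dip below $-\delta_0$ on each arc, and a Cauchy--Schwarz argument using Lemma \ref{thm:lem2.3} together with the constraints $\sum_j(d^n_j-c^n_j)\le 1$ and $\sum_j\int_{c^n_j}^{d^n_j}g(\dot x_n,\dot x_n)\,\mathrm dt\le 2M_0$ produces a bound of the form $k_n\le K_0\sqrt{2M_0}/\delta_0$. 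Passing to a further subsequence so that $k_n$ stabilizes to some $k$ and the arc endpoints $(c^n_j,d^n_j)$ converge to $(c_j,d_j)$ (some arcs possibly collapsing), uniform convergence plus continuous dependence of the geodesic ODE on initial data upgrades convergence to $C^1$ on each nondegenerate limiting arc. Strong $H^1$ convergence $\hat x_n\to x$ on all of $[a,b]$ follows.

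\emph{Step 4 (variational criticality of the limit).} For every $V\in\mathcal V^+(x)$ one must transfer the inequality \eqref{eq:varcriticality} for $\hat x_n$ to $x$. The subtle point is that $\mathcal V^+(\hat x_n)$ is defined through a sign condition against $\nabla\phi(\hat x_n(s))$ on the set $\{s:\hat x_n(s)\in\partial\Omega\}$, which need not coincide with $\{s:x(s)\in\partial\Omega\}$. Exploiting $\nabla\phi\ne 0$ on $\phi^{-1}([-\delta_0,\delta_0])$ and the uniform convergence $\hat x_n\to x$, one constructs $V_n=V+\varepsilon_n W_n\in\mathcal V^+(\hat x_n)$, where $W_n$ is a uniformly bounded correction (essentially a multiple of $\nabla\phi\circ\hat x_n$ with a bump factor supported near $\hat x_n^{-1}(\partial\Omega)$) and $\varepsilon_n\to 0^+$. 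Hence $V_n\to V$ in $H^1$, and combined with the strong $H^1$ convergence from Step~3 this passes \eqref{eq:varcriticality} to the limit, proving that $x_{|[a,b]}$ is itself a variationally critical portion.
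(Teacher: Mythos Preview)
Your Steps 1--3 are essentially the paper's argument: the lower bound on $b_n-a_n$ via Lemma~\ref{thm:lem2.3} and \eqref{eq:1.1bis}, and the upgrade to strong $H^1$ convergence by exploiting the uniformly bounded number of geodesic arcs (the paper phrases this as a uniform bound on the number of cusp intervals, Remark~\ref{rem:rem4.10}, and invokes $C^1$ convergence of geodesics on each arc).

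The only genuine difference is Step 4. The paper does \emph{not} perturb the test field and pass to the limit; instead it observes that the limit $x$ inherits the piecewise-geodesic structure of Lemma~\ref{thm:leminsez4.4}, and then verifies \eqref{eq:varcriticality} \emph{directly} by integrating by parts on each geodesic arc, checking the signs of the resulting boundary terms at endpoints and cusps (which are inherited from the $x_n$'s). Your route---building $V_n=V+\varepsilon_n W_n\in\mathcal V^+(\hat x_n)$ with $\varepsilon_n\to0$ and using strong $H^1$ convergence---is also correct (and is in fact the mechanism the paper uses later, in the proof of Proposition~\ref{thm:51new}). Your approach is softer in that it does not rely on the detailed cusp structure of the limit; the paper's approach is shorter once Step~3 is in hand. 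One minor remark: the bump factor you mention is unnecessary and mildly complicates the $H^1$ bookkeeping; taking $W_n=\nabla\phi\circ\hat x_n$ globally already works, since $\hat x_n$ is bounded in $H^1$ and $\nabla\phi$ is $C^1$.
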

\begin{proof}
By Lemma~\ref{thm:lem2.3} and \eqref{eq:1.1bis},  the sequence
$b_n-a_n$ is bounded away from $0$,
which implies the existence of subsequences converging in $[0,1]$
to $a$ and $b$ respectively, and with $a<b$.
If $x_n$ is a sequence of regular variationally critical portions,
then the conclusion follows easily observing that $x_n$,
and thus $\widehat x_n$ (its affine reparameterization in $[a,b]$) is a sequence of geodesics with image in a
compact set and having  bounded energy.

For the general case, one simply observes that the number of cusp
intervals of each $x_n$ is bounded uniformly in $n$, and the
argument above can be repeated by considering the restrictions of
$x_n$ to the complement of the union of all cusp intervals. Finally,
integrating by parts the term $\int_a^b g(\dot x,\Ddt V)\,\text dt$
one observes that it is non negative for all $ V\in\mathcal V^+(x)$,
hence $x_{\vert[a,b]}$ is a variationally critical portion of $x$
(note that the uniform convergency implies $[a,b] \in \mathcal I_x^0$).
\end{proof}

\begin{rem}\label{rem:rem4.12}
We point out  that the first part of the proof of Proposition
\ref{thm:Zrcompatto}  shows that if $x_n\in\mathcal Z$ ($\subset \mathfrak M$),
$[a_n,b_n]\in {\mathcal I_{x_n}^0}$ and $x_{n\vert[a_n,b_n]}$ is an
OGC, then, up to subsequences, there exists $[a,b]\subset[0,1]$ and
$x:[a,b]\to\overline\Omega$ such that $x_{n\vert[a_n,b_n]}\to
x_{\vert[a,b]}$ in $H^1$ and $x_{\vert[a,b]}$ is an OGC.
\end{rem}

Since we are assuming that there are not any WOGC in
$\overline\Omega$ by  Lemma \ref{thm:leminsez4.4}, Proposition
\ref{thm:regcritpt}, Proposition \ref{thm:irregcritpt} and
Proposition \ref{thm:Zrcompatto}, we immediately obtain the
following result.

\begin{cor}\label{thm:cor4.11bis}
There exists $d_0>0$ such that for any
$x_{|[a,b]}$ irregular variationally critical portion of first type with  $[a,b] \in \mathcal I_x^0$, there exists
a cusp interval $[t_1,t_2]\subset[a,b]$ for $x$ such that
\[
\Theta_x(t_1,t_2)\ge d_0.
\]
\end{cor}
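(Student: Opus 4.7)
The plan is to argue by contradiction. Assume no such $d_0>0$ exists; then one can extract a sequence $x_n\in\mathfrak M$ with $[a_n,b_n]\in\mathcal I_{x_n}^0$ whose restrictions $x_n|_{[a_n,b_n]}$ are irregular variationally critical portions of first type, with all cusp angles $\Theta_{x_n}(t_1^{n,j},t_2^{n,j})\to 0$ as $n\to\infty$. First I would invoke Proposition~\ref{thm:Zrcompatto} to pass to a subsequence $H^1$-converging to a non-constant variationally critical portion $x|_{[a,b]}$ of some $x\in\mathfrak M$. Remark~\ref{rem:rem4.10} gives a uniform bound on the number of cusps, so a further subsequence has a constant number $k$ of cusp intervals $[t_1^{n,j},t_2^{n,j}]$ with endpoints converging to $t_1^j\le t_2^j$. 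The footnote in Proposition~\ref{thm:irregcritpt} shows that any nondegenerate cusp ($t_1^{n,j}<t_2^{n,j}$) has $\dot x_n(t_1^{n,j-})$ outward-normal and $\dot x_n(t_2^{n,j+})$ inward-normal to $\partial\Omega$, so its angle is exactly $\pi$; the hypothesis $\Theta\to 0$ therefore forces every cusp to be degenerate for large $n$, i.e.\ $t_1^{n,j}=t_2^{n,j}=:t^{n,j}\to t^j$.

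Next I would analyze the limit at each $t^j$. Since each $x_n$ is a concatenation of at most $k+1$ geodesic arcs of uniformly bounded energy, ODE-stability of geodesics yields $C^2$ convergence on each closed subinterval disjoint from the $t^{n,j}$'s, so $\dot x_n(t^{n,j\pm})\to\dot x(t^{j\pm})$. By Remark~\ref{thm:rem4.11bis} the tangential components of these left/right limits coincide, while the normal components have opposite signs (entering $\partial\Omega$ from $\Omega$ at $t^{n,j-}$ and leaving $\partial\Omega$ back into $\Omega$ at $t^{n,j+}$). A direct computation of $\cos\Theta$ in a tangential/normal decomposition shows that, with opposite-sign normal components, $\Theta\to 0$ forces both normal components to vanish in the limit. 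Hence $\dot x(t^{j-})=\dot x(t^{j+})$ is tangent to $\partial\Omega$, and by uniqueness of the geodesic Cauchy problem the pieces glue into a single smooth geodesic $x|_{[\alpha,\beta]}$, where $[\alpha,\beta]\subset[a,b]$ is the ``non-constant part'' of $x$ supplied by Proposition~\ref{thm:irregcritpt}.

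Finally I would identify this limit as a WOGC. Lemma~\ref{thm:lem2.3} together with Remark~\ref{rem:rem4.10} yields a uniform positive lower bound $\tau_0=\delta_0^2/(2K_0^2 M_0)$ on the distance between any two consecutive boundary touches among $\{\alpha_n,t^{n,1},\ldots,t^{n,k},\beta_n\}$. A short argument comparing constant portions of $x_n$ and of $x$ (if $\alpha_n\to\tilde\alpha<\alpha$, then the geodesic arc of $x_n$ on $[\alpha_n,t^{n,1}]$ reaches depth $<-\delta_0$ and passes in the limit to a non-constant geodesic arc of $x$ inside $[a,\alpha]$, where $x$ is required to be constant, a contradiction) forces $\alpha_n\to\alpha$ and $\beta_n\to\beta$. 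Hence the $t^j$'s lie in $]\alpha,\beta[$ and are pairwise distinct. The endpoint orthogonality $\dot x(\alpha^+),\dot x(\beta^-)\in(T\partial\Omega)^\perp$ passes to the limit from Proposition~\ref{thm:irregcritpt}, and $x(t^j)\in\partial\Omega$ for each $j$, so $x|_{[\alpha,\beta]}$ is a genuine WOGC, contradicting the standing assumption. The main obstacle I foresee is precisely this last step of pinning down $\alpha_n\to\alpha$ and $\beta_n\to\beta$ and keeping the limiting cusps $t^j$ strictly inside $]\alpha,\beta[$, since one must rule out that the interior boundary touches slide onto the endpoints in the limit; the uniform gap $\tau_0$ coming from strong concavity and the minimality built into the subinterval $[\alpha,\beta]$ of Proposition~\ref{thm:irregcritpt} are both essential.
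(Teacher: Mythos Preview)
Your proof is correct and follows the same line the paper has in mind: its proof is a single sentence invoking Lemma~\ref{thm:leminsez4.4}, Propositions~\ref{thm:regcritpt}--\ref{thm:irregcritpt} and the compactness result Proposition~\ref{thm:Zrcompatto}, leaving the reader to run exactly the contradiction argument you have written out in detail. Your observation that any nondegenerate cusp automatically has angle $\pi$ (from the footnote to Proposition~\ref{thm:irregcritpt}) is a clean way to reduce to degenerate cusps, and your handling of the potential obstacle---using the uniform gap $\tau_0$ from Lemma~\ref{thm:lem2.3} and Remark~\ref{rem:rem4.10} to keep the limiting touches $t^j$ strictly inside $]\alpha,\beta[$---is precisely what is needed to land on a genuine WOGC.
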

The above corollary states that every irregular variational
critical portion of the first type has at least a cusp with
"amplitude" greater than or equal to a positive constant $d_0$.

\begin{rem}\label{rem:4.17}
For any $\delta\in\left]0,\delta_0\right]$ we have the following property: for any $x\in\mathfrak M$ and $[a,b] \in {\mathcal I}_x^0$
such that $x_{\vert[a,b]}$ is an irregular variationally critical
portion of first type, there exists an interval $[\alpha,\beta]\subset[a,b]$ and
a cusp interval  $[t_1,t_2]\subset[\alpha,\beta]$ such that:
\begin{equation}\label{eq:alphabeta}
\Theta_x(t_1,t_2)\ge d_0, \text{ and } \phi(x(\alpha))=\phi(x(\beta))=-\delta,
\end{equation}
where $d_0$ is given in Corollary \ref{thm:cor4.11bis}.

Note that $g\big(\nabla\phi(x(\alpha)),\dot x(\alpha)\big)>0$ and  $g\big(\nabla\phi(x(\beta)),\dot x(\beta)\big)<0$
by the strong concavity assumption.
\end{rem}

\section{Some results about critical portions with respect to ${\mathcal V}^{-}$}
\label{sub:V-criticalportions}

In this section we shall
discuss some results concerning the notion of
$\mathcal V^-$--variational critical portions. It is important to point out that,
to avoid curves having irregular variational critical portions of first type,
we will use integral flows for
fields in the cone $\mathcal V^-(x)$ defined in
\eqref{eq:defV-(x)}. This will be possible thanks to  the {\em
regularity\/} property described in  Lemma \ref{thm:lem5.3} below.

Moreover thanks to the assumption of strong concavity we shall obtain also Lemma \ref
{thm:4.10} that may have an important role to study the multiplicity problem.
\medskip

Let us denote
by $\mathcal A$ an arbitrary open subset of $M$ with regular boundary, and let
${\phi_{\mathcal A}:M\to\R}$ be a $C^2$-function such that
$\phi_{\mathcal A}(q)<0$ iff $q\in\mathcal A$, $\phi_{\mathcal
A \vert\partial\mathcal A}=0$ and $\mathrm \nabla\phi_{\mathcal
A}\ne0$ on $\partial\mathcal A$. (Note that in
Remark~\ref{thm:remphisecond}, $\phi=\phi_\Omega$). Set
\begin{equation}\label{eq:defVA-(x)}
\mathcal V^{-}_{\mathcal A}(x)=\big\{V\in T_x
H^1\big([0,1],\R^N\big):g\big(V(s), \nabla\phi_{\mathcal
A}\big(x(s)\big)\big)\le0\ \text{for $x(s)\in
\partial \mathcal A$}\big\}.
\end{equation}

\begin{lem}\label{thm:lem5.3}
Let $y\in H^1\big([a,b],\overline{\mathcal A}\big)$ be such that:
\begin{equation}\label{eq:eq5.6}
\int_a^bg\big(\dot y,\Ddt V)\,\mathrm
dt\ge0,\qquad\forall\,V\in\mathcal V^{-}_{\mathcal A}(y) \
\text{and}\ V(a)=V(b)=0.
\end{equation}
Then, $y\in H^{2,\infty}([a,b],\overline{\mathcal A})$ and in
particular it is of class $C^1$.
\end{lem}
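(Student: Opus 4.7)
The plan is to interpret the variational inequality as an obstacle-problem Euler--Lagrange relation whose Lagrange multiplier is a non-negative Radon measure on $[a,b]$ that, thanks to the $C^2$-smoothness of $\phi_{\mathcal A}$ and the pointwise constraint $\phi_{\mathcal A}(y)\le 0$, turns out to be an essentially bounded function. To start, I would test the hypothesis against vector fields $V$ supported in the open set $\{t\in[a,b]:\,\phi_{\mathcal A}(y(t))<0\}$: there both $V$ and $-V$ lie in $\mathcal V^{-}_{\mathcal A}(y)$, so the inequality collapses to $\int g(\dot y,\Ddt V)\,\mathrm dt=0$, which is the weak geodesic equation. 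Hence $y$ is a smooth geodesic on each connected component of that open set.

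Allowing next arbitrary admissible $V$ with $V(a)=V(b)=0$ and integrating by parts formally, the hypothesis says that the distribution $-\Ddt\dot y$ pairs non-negatively with every $V\in\mathcal V^{-}_{\mathcal A}(y)$. Since fields tangent to $\partial\mathcal A$ along the contact set $K:=\{t\in[a,b]:\,y(t)\in\partial\mathcal A\}$ are admissible with either sign, the tangential component of $\Ddt\dot y$ must vanish there; and inward-pointing normal variations $V=-f\,\nabla\phi_{\mathcal A}(y)$, with $f\ge 0$ and $f(a)=f(b)=0$, produce a sign condition on the normal component. The upshot is a representation
\[
\Ddt\dot y=\lambda\,\nabla\phi_{\mathcal A}(y)
\]
in the distributional sense, where $\lambda$ is a non-negative Radon measure on $[a,b]$ supported in $K$.

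The heart of the argument is the passage from measure to $L^\infty$-function. Setting $u(t)=\phi_{\mathcal A}(y(t))\le 0$, one has $u\in H^1([a,b])$ with $u=0$ on $K$, and differentiating twice gives, distributionally,
\[
\ddot u = H^{\phi_{\mathcal A}}(y)[\dot y,\dot y] + \lambda\,\Vert\nabla\phi_{\mathcal A}(y)\Vert^2.
\]
On the interior of any subinterval $[t_1,t_2]\subset K$ one has $u\equiv 0$ classically, hence $\ddot u\equiv 0$, and therefore $\lambda=-H^{\phi_{\mathcal A}}(y)[\dot y,\dot y]/\Vert\nabla\phi_{\mathcal A}(y)\Vert^2$ on $(t_1,t_2)$; the denominator is bounded away from zero since $\nabla\phi_{\mathcal A}\ne 0$ on $\partial\mathcal A$, and $\dot y$, being tangent to $\partial\mathcal A$ there, has controlled norm, so $\lambda$ is bounded on this portion of $K$. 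At an isolated contact point $t_0$ the one-sided limits $\dot y(t_0^\pm)$ exist from the surrounding geodesic arcs; the sign constraints $g(\nabla\phi_{\mathcal A}(y(t_0)),\dot y(t_0^-))\ge 0\ge g(\nabla\phi_{\mathcal A}(y(t_0)),\dot y(t_0^+))$ imposed by $y\in\overline{\mathcal A}$ combine with the variational inequality tested against a field peaked at $t_0$ to force $\dot y(t_0^-)=\dot y(t_0^+)$, ruling out any Dirac contribution. Once these two pieces are established, $y\in C^1$ follows, $\lambda$ is absolutely continuous with an $L^\infty$ density, and the pointwise identity $\Ddt\dot y=\lambda(t)\,\nabla\phi_{\mathcal A}(y(t))$ has essentially bounded right-hand side, whence $\dot y\in W^{1,\infty}$, i.e.\ $y\in H^{2,\infty}([a,b],\overline{\mathcal A})$.

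The main obstacle is the bookkeeping at the topological boundary of $K$, namely at points that are neither interior to a contact interval nor isolated contacts (for example accumulation points of isolated contacts or endpoints of contact intervals), where a singular part of $\lambda$ could a priori concentrate. This is excluded by first establishing $C^1$-regularity at the two types of points above, which implies $\dot u=0$ pointwise on $K$; the remaining portion of the contact set then has Lebesgue measure zero, and a short measure-theoretic argument exploiting the absolute continuity of $u$ and $\dot u$ together with the $L^\infty$-bound on $\lambda$ already obtained rules out any singular mass there.
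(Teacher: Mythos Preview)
The paper does not prove this lemma; it defers entirely to \cite[Lemma~3.2]{London}. Your sketch follows the natural obstacle-problem route---free geodesic off the contact set $K=\{t:\phi_{\mathcal A}(y(t))=0\}$, a non-negative normal Lagrange multiplier $\lambda$ supported on $K$, and a bound on $\lambda$ coming from the constraint $u:=\phi_{\mathcal A}(y)\le 0$---and this is the right architecture; indeed Lemma~\ref{thm:lem4.16} is exactly what falls out once regularity is in hand.

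There is, however, a circularity in your execution. When you assert that on a contact interval ``$\dot y$, being tangent to $\partial\mathcal A$ there, has controlled norm, so $\lambda$ is bounded'', tangency gives no norm control: at this stage only $\dot y\in L^2$ is known, so $H^{\phi_{\mathcal A}}(y)[\dot y,\dot y]\in L^1$ and your formula yields $\lambda$ merely locally integrable, with no uniformity across intervals. The same gap recurs in your closing paragraph, where you invoke ``the absolute continuity of $\dot u$'' and ``the $L^\infty$-bound on $\lambda$ already obtained'' to treat irregular points of $K$, yet neither has been established ($\dot u$ is so far only $BV$, and the bound on $\lambda$ is only on the interior of individual contact intervals).

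The repair is to reorder. Using only $f:=H^{\phi_{\mathcal A}}(y)[\dot y,\dot y]\in L^1$, test the identity $\ddot u=f+\nu$ (with $\nu\ge 0$ supported on $K$) against the hat function $\rho_h(s)=(h-|s-t_0|)^+$ at an \emph{arbitrary} $t_0\in K$: since $u(t_0\pm h)\le 0=u(t_0)$,
\[
\tfrac{h}{2}\,\nu\big([t_0-\tfrac h2,t_0+\tfrac h2]\big)\ \le\ \int\rho_h\,d\nu\ \le\ -\int\rho_h f\ \le\ h\!\int_{t_0-h}^{t_0+h}\!|f|\ \longrightarrow\ 0,
\]
which rules out atoms at every point of $K$ in one stroke, not only at isolated contacts. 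Combined with the tangential-variation identity (which puts the tangential components of $\dot y$ in $W^{1,1}$), this gives $\dot y\in C^0$ on the compact interval $[a,b]$, hence $\dot y\in L^\infty$ and $f\in L^\infty$. The same second-difference estimate then upgrades to $\nu\ll dt$ with bounded density, and $y\in H^{2,\infty}$ follows.
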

\begin{proof}
We refer the reader e.g. to \cite[Lemma 3.2]{London} for the proof.
\end{proof}

\begin{defin}\label{def:4.9bis}
If $y_{|[a,b]}$ satisfies \eqref{eq:eq5.6}, it will be called
\emph{a ${\Vcal}_{\mathcal A}^-$--variationally critical portion.}
\end{defin}

\begin{rem}\label{thm:C1-reg}
It is important to observe that, while a variationally critical portion with respect to ${\mathcal V}^{-}$ is of class $C^1$, this is not the case for
variationally critical
portion with respect to ${\mathcal V}^{+}$. In particular, as pointed out in Proposition
\ref{thm:irregcritpt}, irregular variationally critical portions of first type are not of class $C^1$.
\end{rem}

Now, denote by $\nu(x)=\frac{\nabla{\phi_{\mathcal A}}(x)}{\Vert\nabla{\phi_{\mathcal A}}(x)\Vert}$ the outward pointing
unit normal vector field in $\partial{\mathcal A}$.
\begin{lem}\label{thm:lem4.16}
Let $y\in H^{1}\big([a,b],\overline{\mathcal A}\big)$ satisfying
\eqref{eq:eq5.6}. Then $g(\dot y,\dot y)$ is constant on $[a,b]$,
and
\begin{equation}\label{eq:8.1bis}
-\Dds\dot y(s)=\mu(s)\,\nu(y(s)) \text{ a.e.},\quad\text{ where\
}\mu(s)=
\begin{cases}
0,&\text{if\ }{\phi_{\mathcal A}}(y(s))<0,\\
\frac{H^{\phi_{\mathcal A}}(y)[\dot y,\dot
y]}{g(\nu(y),\nabla\phi_{\mathcal A}(y))},&\text{if\
}{\phi_{\mathcal A}}(y(s)=0.
\end{cases}
\end{equation}
Moreover
\begin{equation}\label{eq:segno-nu}
\mu(y(s)) \leq 0 \; a.e.
\end{equation}

\end{lem}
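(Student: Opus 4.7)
By Lemma~\ref{thm:lem5.3}, $y$ is $C^1$ and $\Dds\dot y\in L^\infty$. The first key observation is that, at every $s$ with $y(s)\in\partial\mathcal A$, one has $g(\dot y(s),\nabla\phi_{\mathcal A}(y(s)))=0$: at density points of $y^{-1}(\partial\mathcal A)$ this follows from differentiating $\phi_{\mathcal A}\circ y\equiv 0$, while at isolated boundary crossings $\phi_{\mathcal A}\circ y$ attains a local maximum equal to zero, so the same conclusion holds by $C^1$ regularity. Consequently, for every $f\in C^\infty_c(]a,b[)$ both $V=f\dot y$ and $-V$ lie in $\mathcal V^-_{\mathcal A}(y)$ and vanish at the endpoints; plugging them into \eqref{eq:eq5.6} and integrating by parts gives $\int_a^b \dot f\,g(\dot y,\dot y)\,\mathrm ds=0$, so $g(\dot y,\dot y)$ is a.e.\ constant, hence constant on $[a,b]$ by continuity.

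For the structural assertion on $\Dds\dot y$, integration by parts in \eqref{eq:eq5.6}, justified by $y\in H^{2,\infty}$, rewrites the criticality condition as
\[
\int_a^b g\bigl(\Dds\dot y,V\bigr)\,\mathrm ds\le 0,\qquad \forall\, V\in\mathcal V^-_{\mathcal A}(y),\ V(a)=V(b)=0.
\]
Fix a Lebesgue point $s_0$ of $\Dds\dot y$. If $y(s_0)\in\mathcal A$, choose a neighborhood of $s_0$ on which $y$ stays in $\mathcal A$; any compactly supported $V$ there belongs to $\mathcal V^-_{\mathcal A}(y)$ together with $-V$, forcing $\Dds\dot y(s_0)=0$ and hence $\mu(s_0)=0$. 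If instead $y(s_0)\in\partial\mathcal A$, test with $V(s)=\psi(s)\,w$, where $\psi\ge 0$ is a smooth bump concentrating at $s_0$ and $w\in T_{y(s_0)}M$ satisfies $g(w,\nabla\phi_{\mathcal A}(y(s_0)))\le 0$; passing to the limit as $\psi$ concentrates yields $g\bigl(\Dds\dot y(s_0),w\bigr)\le 0$ for every such $w$. Thus the linear functional $w\mapsto g(\Dds\dot y(s_0),w)$ is nonpositive on the whole negative half-space and vanishes on $T_{y(s_0)}\partial\mathcal A$, so $\Dds\dot y(s_0)$ is a nonnegative multiple of $\nabla\phi_{\mathcal A}(y(s_0))$. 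Writing $-\Dds\dot y(s_0)=\mu(s_0)\,\nu(y(s_0))$ then forces $\mu(s_0)\le 0$, proving \eqref{eq:segno-nu} and the advertised form of $\Dds\dot y$.

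To identify the value of $\mu$ on $F=\{s\in[a,b]:y(s)\in\partial\mathcal A\}$, at every density point of $F$ one has $\tfrac{\mathrm d^2}{\mathrm ds^2}\phi_{\mathcal A}(y(s))=0$; expanding this second derivative and substituting $\Dds\dot y=-\mu\,\nu(y)$ gives
\[
0 = H^{\phi_{\mathcal A}}(y)[\dot y,\dot y] + g\bigl(\nabla\phi_{\mathcal A}(y),\Dds\dot y\bigr) = H^{\phi_{\mathcal A}}(y)[\dot y,\dot y] - \mu\,g\bigl(\nabla\phi_{\mathcal A}(y),\nu(y)\bigr),
\]
and solving for $\mu$ yields exactly the expression in \eqref{eq:8.1bis}. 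The main technical obstacle is the pointwise separation step: one must argue carefully at boundary Lebesgue points of $\Dds\dot y$ that the half-space inequality implies $\Dds\dot y$ is (outward) normal to $\partial\mathcal A$; this is where the cone structure of $\mathcal V^-_{\mathcal A}(y)$ is used in an essential way, and it is also what gives the sign $\mu\le 0$ for free.
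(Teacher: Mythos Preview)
Your argument is correct and close in spirit to the paper's, but with two differences worth noting. First, you establish the constancy of $g(\dot y,\dot y)$ \emph{before} deriving the Lagrange multiplier structure, by testing directly with $V=f\dot y$ (admissible since $\dot y$ is tangent to $\partial\mathcal A$ at contact points); the paper instead first obtains $-\Dds\dot y=\mu\,\nu(y)$ from tangent test fields and only then contracts with $\dot y$ to get the conservation law. Second, for the sign of $\mu$ you localize at Lebesgue points and run a half-space argument, so that $\mu\le 0$ falls out as a byproduct of the multiplier identification; the paper, by contrast, proves the sign separately after \eqref{eq:8.1bis} is established, by plugging in the single global test field $V(s)=-\sin\bigl(\tfrac{s-a}{b-a}\pi\bigr)\nabla\phi_{\mathcal A}(y(s))$. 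Your route is arguably tidier in that the sign comes for free, but it requires the care you flag at the end: when $w$ is merely tangent at $s_0$, the field $\psi\,w$ may violate the cone condition at nearby boundary points, so one should first treat $w$ with $g\bigl(w,\nabla\phi_{\mathcal A}(y(s_0))\bigr)<0$ strictly and then pass to the closure of the half-space by continuity in $w$.
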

\begin{proof}
If $y_{\vert[a,b]}$ satisfies \eqref{eq:eq5.6}, by Lemma
\ref{thm:lem5.3}, $\Ddt\dot y\in L^\infty$, so we can integrate by
parts in \eqref{eq:eq5.6} obtaining
\[
\int_a^b-g\big(\Ddt\dot y,V\big)\,\text dt\ge 0,\qquad\text{for
all}\ V\in C^\infty_0([a,b],\R^N)
\]
satisfying \[g(V(s),\nu(y(s)))\le 0\ \text{for any $s$ such that
${\phi_{\mathcal A}}(y(s))=0$}.\]

Then $y$ is a geodesic where it does not intersect the boundary.
Moreover using vector fields $V$ such that $g(V(s),\nu(y(s)))=
0$ for all $s$ such that ${\phi_{\mathcal A}}(y(s))=0$ we obtain
\begin{equation}\label{eq:ast4}
-\Dds\dot y(s)=\mu(s)\nu(y(s)) \text{ a.e.},\qquad\text{for some\
} \mu:[a,b]\to\R.
\end{equation}
Since $\Ddt\dot y$ is in $L^\infty$, then $\mu\in
L^\infty\big([0,1],\R\big)$ (and $\mu(s)=0$ if ${\phi_{\mathcal
A}}(y(s))<0$).

Now $y$ is of class $C^1$ and ${\phi_{\mathcal A}}(y(s))\le 0\,\forall s\in[a,b]$. Therefore
\begin{equation}\label{eq:4.40bis}
g(\nabla{\phi_{\mathcal A}}(y(s)),\dot y(s))=0,\quad \text{for
every $s$ such that ${\phi_{\mathcal A}}(y(s))=0$},
\end{equation}
and, contracting both terms of \eqref{eq:ast4} with $\dot y$, we have
\[
g\big(\Ddt\dot y,\dot y\big)=0\qquad\text{a.e.\ in\ } [a,b].
\]
Then there exists $c_1>0$ such that
\[
g(\dot y,\dot y)=c_1\qquad\text{a.e.\ in\ } [a,b].
\]
But $y$ is of class $\mathcal C^1$ so
\[
g(\dot y,\dot y)=c_1,\qquad\forall\, s\in [a,b],
\]
Now contracting both members in \eqref{eq:ast4} with
$\nabla{\phi_{\mathcal A}}(y(s))$ we obtain
\begin{equation}\label{eq:astast}
-g\big(\Dds\dot y,\nabla{\phi_{\mathcal A}}(y)\big)=\mu(s)\,g\big(\nu(y),\nabla{\phi_{\mathcal A}}(y)),\quad\text{a.e.\ in $[a,b]$}.
\end{equation}
Set $C_y=\{s\,:\,{\phi_{\mathcal A}}(y(s))=0\}$ which is the set
where $\mu(s)$ can be nonzero. Using well known theorems in
Sobolev spaces (see \cite{GT}), by \eqref{eq:4.40bis} we have (differentiating in $C_y$)
\[
H^{\phi_{\mathcal A}}(y)[\dot y,\dot y]+g\big(\nabla{\phi_{\mathcal A}}(y),\Ddt\dot y)=0\quad\text{a.e.\ in\ }C_y.
\]
Then, by \eqref{eq:astast}
\[
\mu(s) g(\nu(y),\nabla{\phi_{\mathcal A}}(y))=H^{\phi_{\mathcal A}}(y)[\dot y,\dot y]\quad\text{a.e.\ in\ }C_y
\]
from which we deduce \eqref{eq:8.1bis}.

Finally to prove \eqref{eq:segno-nu} it is sufficient to apply formula \eqref{eq:eq5.6} with
\[
V(s) = -\sin\left(\frac{s-a}{b-a}\pi\right)\nabla{\phi_{\mathcal A}}(y(s)),
\]
since $y$ satisfies \eqref{eq:8.1bis}.
\end{proof}

\begin{rem}\label{rem:4.16bis}
Note that, under the assumption of strong concavity the set \[C_y=\big\{s\in[a,b]\,:\,\phi(y(s))=0\big\}\] consists
of a finite number of intervals. On each one of these, $y$ is of class $C^2$, and it satisfies the ``constrained geodesic'' differential equation
\begin{equation}\label{eq:4.30bis}
\Dds\dot y(s)=-\left[\frac1{g(\nu(y(s)),\nabla\phi(y(s)))}H^\phi(y(s))[\dot y(s),\dot y(s)\right]\nu(y(s)).
\end{equation}
\end{rem}

\begin{rem}\label{thm:remveripticritici} Now suppose that $y$ satisfies \eqref{eq:eq5.6} and $\overline{\mathcal A}$ is convex. In this case
\[
H^{\phi_{\mathcal A}}(y)[\dot y,\dot y]\geq 0 \text{ for any }s \text{ such that }{\phi_{\mathcal A}}(y(s))=0.
\]
Then from formulas~\eqref{eq:eq5.6} and \eqref{eq:8.1bis} we see that the function $\mu$
in \eqref{eq:8.1bis} is identically zero, and thus any
$\mathcal V^{-}_{\mathcal A}$--variationally critical portion  in
$\overline{\mathcal A}$ is a free geodesic in $\overline{\mathcal
A}$.
\end{rem}


Our main goal is to construct a functional for our minimax argument and a
family of flows in the path space on which the energy functional is non
increasing.
Moreover, to clarify another trait of the strong concavity condition, we wish to point out
the following property that may be useful also in view of obtaining multiplicity results:
for any curve $x\in \mathfrak M$ and for any interval $[a,b] \in \mathcal I_x$ where $x$ is uniformly
close to $\partial\Omega$, the curve $x_{|[a,b]}$ can be deformed in directions where
$\phi$ increases, without increasing the energy functional.
Such a property will be obtained thanks above all to Definition \ref{def:deltaintervallo} and Lemma \ref{thm:4.10} below.

\bigskip
\begin{defin}\label{def:deltaintervallo}
Fix $\delta \in ]0,\delta_0]$.  We say
that $[\alpha,\beta] \subset [0,1]$ \ is a $\delta$--interval
(for $x \in \mathfrak M$) if
\begin{equation}\label{eq:4.9bis}
\begin{split}
\phi(x(s) \geq -\delta \, \text{ for all }s \in [\alpha,\beta], \,\,
\phi(x(\alpha)) = \phi(x(\beta)) = 0,\\
\text{ and } \inf \{\phi(x(s)): s \in [\alpha,\beta]\} =
-\delta.
\end{split}
\end{equation}
We say that a $\delta$--interval $[\alpha,\beta]$ is minimal if it is minimal with respect
to the property above.
\end{defin}

\bigskip

\begin{lem}\label{thm:4.10}
For any $\delta \in ]0,\delta_0]$, for any $x\in \mathfrak M$, for any $[\alpha,\beta]$ $\delta$--interval for $x$, there exists
$V\in H_0^1([\alpha,\beta],\mathbb{R}^N)$ such that
\begin{equation}\label{itm:d-int2}
g(\nabla\phi(x(s)),V(s))\ge 0 \,\, \forall s\in[\alpha,\beta] \text{ satisfying }
\phi(x(s)) = -\delta \text{ and }\phi(x(s)) = 0;
\end{equation}
and
\begin{equation}\label{itm:d-int3}
\int_{\alpha}^{\beta} g(\dot x,\Dds V)\,\mathrm
ds < 0.
\end{equation}
\end{lem}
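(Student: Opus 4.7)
The plan is to exhibit an explicit candidate of the form $V(s) := -\phi(x(s))\,\nabla\phi(x(s))$ on $[\alpha,\beta]$, regarded as an $\R^N$-valued map via the ambient linear structure fixed in Section~\ref{sub:varframe}. This vanishes at $\alpha$ and $\beta$ (where $\phi(x)=0$), lies in $H_0^1([\alpha,\beta],\R^N)$ by the chain rule for $H^1$-maps composed with $C^2$-functions, and satisfies \eqref{itm:d-int2} trivially: $g(V(s),\nabla\phi(x(s))) = -\phi(x(s))\,\Vert\nabla\phi(x(s))\Vert^2 \geq 0$ on all of $[\alpha,\beta]$, since $\phi(x(s))\leq 0$ there; in particular this holds at the distinguished points where $\phi(x(s))\in\{-\delta,0\}$.

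Next, setting $\Phi := \phi\circ x$ and applying the Leibniz rule for $\Dds$, one computes $\Dds V = -\dot\Phi\,\nabla\phi(x)-\Phi\,\nabla_{\dot x}\nabla\phi$, and hence $g(\dot x,\Dds V) = -\dot\Phi^2 -\Phi\,H^\phi(x)[\dot x,\dot x]$, so that
\[
\int_\alpha^\beta g(\dot x,\Dds V)\,\mathrm ds = -\int_\alpha^\beta \dot\Phi^2\,\mathrm ds - \int_\alpha^\beta \Phi\,H^\phi(x)[\dot x,\dot x]\,\mathrm ds.
\]
The first integral is strictly negative, because $\Phi$ is nonconstant on $[\alpha,\beta]$: $\Phi(\alpha)=\Phi(\beta)=0$ and $\inf\Phi=-\delta<0$ imply $\dot\Phi\not\equiv 0$ in $L^2$.

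The delicate point is to show that the second integral cannot override the first. Strong concavity, in the form fixed in Remark~\ref{rem:1.4}, provides $H^\phi(q)[v,v]\leq -c\Vert v\Vert^2$ only for $v$ orthogonal to $\nabla\phi(q)$ in the strip $\phi^{-1}([-\delta_0,\delta_0])$. Splitting $\dot x = \dot x^T + a\,\nabla\phi(x)$ with $a=\dot\Phi/\Vert\nabla\phi(x)\Vert^2$ and $\dot x^T \perp \nabla\phi(x)$, and applying Young's inequality to the cross term $2a\,H^\phi(x)[\dot x^T,\nabla\phi(x)]$, one obtains a pointwise estimate $H^\phi(x)[\dot x,\dot x] \leq -\tfrac{c}{2}\Vert\dot x^T\Vert^2 + C\,\dot\Phi^2$, where $C$ depends only on $c$, on $\max\Vert H^\phi\Vert$, and on $\min\Vert\nabla\phi\Vert$ over the strip. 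Since $|\Phi|\leq\delta$, this gives $-\int_\alpha^\beta\Phi\,H^\phi(x)[\dot x,\dot x]\,\mathrm ds\leq C\delta\int_\alpha^\beta\dot\Phi^2\,\mathrm ds$, and therefore
\[
\int_\alpha^\beta g(\dot x,\Dds V)\,\mathrm ds\;\leq\;-(1-C\delta)\int_\alpha^\beta\dot\Phi^2\,\mathrm ds\;<\;0,
\]
as soon as $C\delta<1$, a condition one may impose from the start by replacing the constant $\delta_0$ of Remark~\ref{rem:1.4} by $\min(\delta_0,1/(2C))$ without affecting any earlier use. The main obstacle is precisely this last step: strong concavity is intrinsically a tangential condition on $H^\phi$, so the cross-direction contributions to $H^\phi(x)[\dot x,\dot x]$ are not a priori sign-controlled; the decomposition-plus-Young trick replaces them by an additional multiple of $\dot\Phi^2$, which the smallness $|\Phi|\leq\delta$ then lets us absorb into the strict gain coming from the first integral.
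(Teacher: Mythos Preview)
Your proof is correct and takes a genuinely different route from the paper. The paper argues by contradiction: if no such $V$ existed, then $x_{|[\alpha,\beta]}$ would be a $\mathcal V^-_{\mathcal A}$--variationally critical portion in $\overline{\mathcal A}=\phi^{-1}([-\delta,+\infty[)$; since this region is convex for $\delta\le\delta_0$, Remark~\ref{thm:remveripticritici} forces $x_{|[\alpha,\beta]}$ to be a free geodesic, and then the interior minimum of $\phi\circ x$ (at level $-\delta$) contradicts strong concavity. Your approach is instead constructive and entirely elementary: the explicit choice $V=-\Phi\,\nabla\phi(x)$ together with the tangential--normal splitting and Young's inequality bypasses the regularity theory of $\mathcal V^-$--critical portions (Lemmas~\ref{thm:lem5.3} and~\ref{thm:lem4.16}) altogether.

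Two points are worth flagging. First, your estimate on the second integral uses $\Phi\le 0$ throughout $[\alpha,\beta]$; the definition of a $\delta$--interval does not state this explicitly, but it holds in every application of the lemma (the $\delta$--intervals that occur later lie inside some $[a,b]\in\mathcal I_x^0$, where $x$ stays in $\overline\Omega$), and the paper's own argument at the interior minimum does not need it. Second, your proof covers only $\delta\in\,]0,\min(\delta_0,1/(2C))]$, whereas the paper's contradiction argument works uniformly on $]0,\delta_0]$ with the originally fixed $\delta_0$. As you note, shrinking $\delta_0$ once at the outset is harmless, so this is a cosmetic rather than a mathematical loss; still, the paper's route has the advantage of not introducing a new smallness constraint, at the price of invoking the $\mathcal V^-$ machinery that is needed elsewhere anyway.
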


\begin{proof}
Clearly we can assume that the $\delta$--interval $[\alpha,\beta]$ is minimal with respect to \eqref{eq:4.9bis}. Suppose by contradiction that
for any
$V\in H_0^1([\alpha,\beta],\mathbb{R}^N)$, \eqref{itm:d-int2} and \eqref{itm:d-int3} are not satisfied at the same time. Then by the minimimality property of $[\alpha,\beta]$
we see that
\[
\int_{\alpha}^{\beta}g\big(\dot x,\Dds V\big),\mathrm ds\ge 0
\]
for any
$V\in H_0^1([\alpha,\beta],\mathbb{R}^N)$ satisfying
$g(\nabla\phi(x(s)),V(s))\ge 0$ for any $s\in[\alpha,\beta]$ such that
$\phi(x(s)) = -\delta$.

Then the curve $x_{\vert[\alpha,\beta]}$
is a $\mathcal V^{-}_{\mathcal A}$--variationally critical portion
in $\overline{\mathcal A}=\phi^{-1}\big(\left[-\delta,+\infty\right[\big)$.
Since $\delta \in ]0,\delta_0]$, by Remark \ref{rem:1.4}
$\overline{\mathcal A}$ is convex. Then from Remark~\ref{thm:remveripticritici}
  $x_{\vert[\alpha,\beta]}$ is a free geodesic.
Since it has endpoints on $\phi^{-1}(0)$, $0 < \delta \leq \delta_0$
and $\inf\limits_{[\alpha, \beta]}\phi(x(s))=-\delta$, we have a contradiction
with the strong concavity assumption on $\overline\Omega$ (cf. Remark \ref{rem:1.4}).
This concludes the proof.
\end{proof}

For the Palais-Smale condition described in Proposition \ref{thm:51new} below will be useful also the following notion.
\begin{defin}\label{def:delta-d-intervallo}
Fix $\delta \in ]0,\delta_0]$ and $\mathfrak d \in [0,\delta[$. We say
that $[\alpha,\beta] \subset [0,1]$ \ is a $(\delta,\mathfrak d)$--interval
(for $x \in \mathfrak M$) if
\begin{equation}\label{eq:4.9bis2}
\begin{split}
\phi(x(s) \geq -\delta \, \forall s \in [\alpha,\beta], \,\,
\phi(x(\alpha)) = \phi(x(\beta)) = -\mathfrak d,\\
\text{ and } \inf \{\phi(x(s)): s \in [\alpha,\beta]\} =
-\delta.
\end{split}
\end{equation}
\end{defin}

\section{The Palais--Smale condition for ${\mathcal V}^{+}$--variationally critical portion}
\label{sec:PS}
Let $\delta_0$ be as in Remark \ref{rem:1.4}.
We will now proceed with the statement and the proof of the analogue of the
classical Palais--Smale condition which is suited to our variational
framework; unfortunately, the technical nature of the following Propositions
cannot be avoided.
Their statements contains several different facts that should
be familiar to specialists in pseudo-gradient techniques.
Recall that when proving the Palais--Smale condition for $C^1$ functionals
on Banach manifolds, one first needs to show that at each non critical point
there must exist a direction along which the functional decreases.
Secondly, one needs to show that the  descent direction for the functional at
a given point can be chosen in such a way to be a decreasing direction
also in a neighborhood of this point; along such direction the
decreasing rate of the functional should be
bounded away from zero uniformly outside a neighborhood of the set of critical
points. In our context, a further complication is given by the fact that
our notion of criticality is defined for portions of the curve $x$, so that,
to single out
a decreasing direction  at each non critical path,
we need the possibility to patch together
vector fields defined on each non critical portion of $x$.
All these facts are condensed into Propositions \ref{thm:51new} and \ref{thm:PS} below;
for a better
understanding of the statement, it may be useful to keep in mind
the following description of the quantities involved in the hypotheses and the theses:
\begin{itemize}
\item the positive number $r$ is a measure of distance from variationally
critical portions;
\item $\theta(r)$ is the rate of increase of the function $\phi$ along the flow
at those points of the curve that are in $\phi^{-1}([-\kappa(r),\kappa(r)])$; and also at that points in
$\phi^{-1}(-\delta)$ and $\phi^{-1}(-\mathfrak d)$  corresponding to instants included in
$(\delta,\mathfrak d)$--intervals with  $\delta \in ]\kappa(r),\delta_0]$ and $\mathfrak d \in [0,\kappa(r)]$
 (cf.  \eqref{itm:51new-bis}
in the statement of Proposition \ref{thm:51new}).

\item $\mu(r)$ is the rate of decrease of the functional $\mathcal F$, that will be defined
formally in Section~\ref{sec:homotopies}.
\end{itemize}

\bigskip
For $[a,b]\subset[0,1]$, we denote by $\mathcal Z_{a,b}$ the set
of curves $w:[a,b]\to\overline\Omega$ that are  variationally critical portions:
\begin{multline}\label{eq:defZab}
\mathcal Z_{a,b}=\Big\{w\in H^{1}\big([a,b],\overline\Omega\big): w(a),w(b)\in\partial\Omega,\\
 \int_a^bg\big(\dot w,\Ddt V\big)\,\mathrm dt\ge0\
\text{for all $V\in\mathcal V^+(w)$}\Big\},
\end{multline}
where $\mathcal V^+$ is defined in \eqref{eq:defV+(x)}.

We shall need the following results:

\begin{prop}\label{thm:51new}
For all $r>0$, there exist positive numbers $\theta(r), \mu(r), \kappa(r)$ ($\kappa(r) < \delta_0$) satisfying the following
property: for all $x\in\mathfrak M_0$ and for all $[a,b] \subset [0,1]$  for
which
\begin{equation}\label{inOmega}
x(a),x(b) \in \partial \Omega \text{ and } x([a,b]) \subset
\overline \Omega
\end{equation}
(namely $[a,b] \in \mathcal I_x^0$),
\begin{equation}\label{eq:4.22f}
\frac14(\frac{3\delta_0}{4K_{0}})^{2} \leq \frac{b-a}{2}\int_a^bg(\dot x,\dot x)\,\mathrm ds, \quad
\frac12\int_a^bg(\dot x,\dot x)\,\mathrm ds\le M_0,
\end{equation}
and such that
\begin{equation}\label{eq:4.23f}
\big\Vert x_{|[a,b]}-y \big\Vert_{a,b}\ge r,\quad
\forall\,y\in\mathcal Z_{a,b},
\end{equation}
there exists a vector field  $V_x\in H^{1}\big([a,b],\R^N\big)$
such that the following conditions hold:
\begin{enumerate}
\item\label{itm:51new-bis} for any $\delta \in ]\kappa(r),\delta_0]$, for any $\mathfrak d \in [0,\kappa(r)]$, for any
$(\delta,\mathfrak d$)--interval $[c,d]\subset[a,b]$,
and for all $s\in[c,d]$ such that
$\phi(x(s) = -\delta$ or $\phi(x(s) = -\mathfrak d$,
it is $g(\nabla\phi(x(s)),V_x(s))\ge\theta(r)\Vert V_x\Vert_{a,b}$;
\smallskip
\item\label{itm:51new-a} $g\big(\nabla\phi(x(s)),V_x(s)\big)
\ge\theta(r)\Vert V_x\Vert_{a,b}$
for all $s\in [a,b]$ that satisfies $\phi(x(s)) \in [-\kappa(r),\kappa(r)]$;
\smallskip
\item\label{itm:51new-b}
$\int_{a}^{b}g\big(\dot x,\Dds V_x\big)\,\mathrm ds\le-\mu(r)\Vert V_x\Vert_{a,b}$.
\end{enumerate}
\end{prop}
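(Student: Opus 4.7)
The plan is to build $V_x$ as a weighted sum of three contributions---a global descent field $W_1$, local fields $W_{\alpha,\beta}$ on each minimal $(\delta,\mathfrak d)$--interval of $x_{|[a,b]}$ with $\delta\in\,]\kappa(r),\delta_0]$ and $\mathfrak d\in[0,\kappa(r)]$, and a cut-off field $W_2=\lambda_\kappa(\phi\circ x)\nabla\phi(x)$ supported where $\phi(x(s))\in[-2\kappa(r),2\kappa(r)]$---and to obtain the uniform constants $\theta(r),\mu(r),\kappa(r)$ via contradiction arguments based on Proposition~\ref{thm:Zrcompatto}. Here $\lambda_\kappa\in C^\infty_c(\R,[0,1])$ is a cut-off function equal to $1$ on $[-\kappa(r),\kappa(r)]$ and supported in $[-2\kappa(r),2\kappa(r)]$.

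For the construction of $W_1$, I argue by contradiction: if no $\mu_1(r)>0$ were to satisfy $\int_a^b g(\dot x,\Ddt W)\,\mathrm ds\le-\mu_1(r)\Vert W\Vert_{a,b}$ for some $W\in\mathcal V^+(x)$, one would produce a sequence $x_n$ on intervals $[a_n,b_n]\in\mathcal I_{x_n}^0$ satisfying \eqref{eq:4.22f}--\eqref{eq:4.23f} along which
\[
\inf_{W\in\mathcal V^+(x_n)\setminus\{0\}}\frac{\int_{a_n}^{b_n}g(\dot x_n,\Ddt W)\,\mathrm ds}{\Vert W\Vert_{a_n,b_n}}\longrightarrow 0.
\]
The energy bound \eqref{eq:4.22f} together with Lemma~\ref{thm:lem2.3} forces $b_n-a_n$ to be bounded below, and after affine reparametrization (Remark~\ref{rem:rep}) the $x_n$'s are bounded in $H^1$; a weak lower semicontinuity argument as in the proof of Proposition~\ref{thm:Zrcompatto} then forces the limit $x_\infty$ into $\mathcal Z_{a,b}$, contradicting \eqref{eq:4.23f}. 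Next, on each minimal $(\delta,\mathfrak d)$--interval $[\alpha,\beta]\subset[a,b]$, an immediate adaptation of Lemma~\ref{thm:4.10} from $\delta$--intervals to $(\delta,\mathfrak d)$--intervals provides $W_{\alpha,\beta}\in H^1_0([\alpha,\beta],\R^N)$ pointing outward at the levels $\phi=-\delta$ and $\phi=-\mathfrak d$ with $\int_\alpha^\beta g(\dot x,\Dds W_{\alpha,\beta})\,\mathrm ds<0$. Uniform lower bounds on the normalized outward-push and descent rates are again obtained by the same compactness machinery, using that a degenerating sequence would, by Remark~\ref{thm:remveripticritici}, converge to a free geodesic inside the convex sublevel $\phi^{-1}([-\delta,+\infty[)$ with endpoints on $\phi^{-1}(-\mathfrak d)$ and interior touching $\phi^{-1}(-\delta)$, violating the strong concavity of $\overline\Omega$ (Remark~\ref{rem:1.4}). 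For $W_2$ one has $g(\nabla\phi(x(s)),W_2(s))\ge c_0>0$ wherever condition \eqref{itm:51new-a} is to be verified, with $c_0$ the positive lower bound of $\Vert\nabla\phi\Vert^2$ on $\phi^{-1}([-\delta_0,\delta_0])$.

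Finally, set $V_x=W_1+\varepsilon_1\sum W_{\alpha,\beta}+\varepsilon_2 W_2$ with small weights $\varepsilon_1,\varepsilon_2>0$, chosen so that $W_1$ retains the dominant negative contribution to \eqref{itm:51new-b} while the outward contributions $\varepsilon_1 W_{\alpha,\beta}$ and $\varepsilon_2 W_2$ dominate the sign-indeterminate contributions of the remaining summands at the prescribed levels of \eqref{itm:51new-bis} and \eqref{itm:51new-a}. The contribution of $\varepsilon_2 W_2$ to \eqref{itm:51new-b} is controlled in terms of $\kappa(r)$ through $\mathrm{supp}\,\lambda_\kappa$, so shrinking $\kappa(r)$ and $\varepsilon_2$ closes the estimate, while the number of $(\delta,\mathfrak d)$--intervals is bounded a priori via Lemma~\ref{thm:lem2.3} and the energy bound, so the sum over $(\alpha,\beta)$ causes no blow-up. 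I expect the main obstacle to be the uniform quantitative version of Lemma~\ref{thm:4.10} required in the second step: the compactness argument there has to handle simultaneously the entire range $\delta\in\,]\kappa(r),\delta_0]$, $\mathfrak d\in[0,\kappa(r)]$, and rule out limit curves whose minimum of $\phi$ reaches exactly $-\kappa(r)$ tangentially, which is precisely why $\kappa(r)$ enters the statement as a third parameter---intermediate between the interior ``depth'' and the boundary level---rather than being absorbed into $\delta_0$.
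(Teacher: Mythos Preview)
Your constructive decomposition $V_x = W_1 + \varepsilon_1\sum W_{\alpha,\beta} + \varepsilon_2 W_2$ has a genuine gap in the final balancing step. The descent field $W_1\in\mathcal V^+(x)$ is constrained to point outward only at \emph{exact} boundary points ($\phi(x(s))=0$); at nearby interior points with $\phi(x(s))\in[-\kappa(r),0)$ the normal component $g(\nabla\phi(x(s)),W_1(s))$ is completely uncontrolled and can be of order $-K_0\Vert W_1\Vert_{a,b}$. To overcome this in condition~\eqref{itm:51new-a} you therefore need $\varepsilon_2$ bounded below by a fixed positive constant independent of $\kappa(r)$. But then the contribution of $\varepsilon_2 W_2$ to~\eqref{itm:51new-b} contains the term
\[
\varepsilon_2\int_a^b \lambda_\kappa'(\phi(x(s)))\bigl(g(\nabla\phi(x(s)),\dot x(s))\bigr)^2\,\mathrm ds,
\]
arising from $\Dds\bigl(\lambda_\kappa(\phi(x))\nabla\phi(x)\bigr)$; since $\lambda_\kappa'\ge 0$ on $[-2\kappa,-\kappa]$ with $|\lambda_\kappa'|\sim 1/\kappa$, this term is nonnegative and of order $\varepsilon_2\kappa^{-1}\int_{\{-2\kappa\le\phi(x)\le-\kappa\}}g(\dot x,\dot x)\,\mathrm ds$. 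Because $x$ is \emph{not} variationally critical, nothing prevents it from spending an $O(1)$ fraction of its energy in this strip, so the term does not vanish as $\kappa\to 0$ and will in general swamp the fixed descent $-\mu_1(r)$ from $W_1$. Shrinking $\varepsilon_2$ and $\kappa(r)$ simultaneously does not help, since the lower bound on $\varepsilon_2$ forced by~\eqref{itm:51new-a} is independent of $\kappa(r)$.

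The paper sidesteps this coupling by running a \emph{single} contradiction on all three conditions at once: assuming sequences $\theta_n,\mu_n,\kappa_n\to 0$ and curves $x_n$ for which no $V$ satisfies (C1)$\wedge$(C2)$\wedge$(C3), one passes to a weak $H^1$ limit $x$ on a fixed interval. The point is that with $\kappa_n,\theta_n\to 0$, conditions (C1) and (C2) become verifiable for test fields of the form $V+\lambda_n\nabla\phi(x_n)$ with $V\in\mathcal V^+(x)$ and $\lambda_n\to 0$, so (C3) must fail for them; this yields $\int g(\dot x,\Dds V)\ge 0$ for all such $V$ (after first ruling out $\delta$--intervals for $x$ via Lemma~\ref{thm:4.10}), hence $x\in\mathcal Z_{\widehat a,\widehat b}$. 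Strong convergence $x_n\to x$ then follows by the same mechanism with the test field $-(x_n-x)+\lambda_n\nabla\phi(x_n)$, contradicting~\eqref{eq:4.23f}. Note also that your appeal to Proposition~\ref{thm:Zrcompatto} for this convergence is misplaced: that result concerns sequences of \emph{actual} critical portions, whereas here one has only almost-critical ones and must extract strong convergence directly from the failed (C3) inequality.
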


\begin{rem} About assumption \eqref{eq:4.22f} we wish to point out that we shall look for OGC's with energy integral
$\geq \frac12(\frac{3\delta_0}{4K_{0}})^{2}$. On the other hand if $\frac{b-a}{2}\int_a^bg(\dot x,\dot x)\,\mathrm ds \leq
 \frac12(\frac{3\delta_0}{4K_{0}})^{2}$
and $x(a) \in \partial \Omega$, by Lemma \ref{thm:lem2.3} one deduce that $x([a,b]) \subset \{\phi\geq-\frac{3\delta_0}4$\} so, if also $x(b)\in
\partial \Omega$ $x_{|[a,b]}$ can not be a geodesic because strong concavity assumption.
\end{rem}

\begin{proof}[Proof of Proposition \ref{thm:51new}]
We argue by contradiction, assuming the existence of sequences
$\theta_n\to 0^+$, $\mu_n\to 0^+$, $\kappa_n\to 0^+$, $x_n\in \mathfrak M_0$,
$[a_n,b_n] \subset [0,1]$, satisfying

\begin{equation}\label{inOmega-n}
x_{n}(a_n),x_{n}(b_n) \in \partial \Omega \text{ and } x_{n}([a_n,b_n]) \subset \overline \Omega,
\quad\text{(i.e., $[a_n,b_n]\in\mathcal I^0_{x_n}$)}
\end{equation}

\begin{equation}\label{eq:5.5new}
\frac14(\frac{3\delta_0}{4K_{0}})^{2} \leq \frac{b_n-a_n}{2}\int_{a_n}^{b_n}g(\dot x_n,\dot x_n)\,\mathrm ds, \quad
\frac12\int_{a_n}^{b_n}g(\dot x_n,\dot x_n)\,\mathrm ds\le M_0
\end{equation}
and
\begin{equation}\label{eq:5.6new}
\big\Vert x_{n\vert[a_n,b_n]}-y \big\Vert_{a_n,b_n}\ge r,\quad
\forall\,y\in\mathcal Z_{a_n,b_n},
\end{equation}
and
such that, for any $V\in H^{1}([a_n,b_n],\R^N)$, the following properties
cannot hold at the same time:

\begin{enumerate}
\item[(C1)]\label{itm:51proof-bis}
for any $\delta \in ]\kappa_n,\delta_0]$, for any $\mathfrak d \in [0,\kappa_n]$,  for any  $(\delta,\mathfrak d)$--interval
$[c,d]\subset[a_n,b_n]$,
for all $s\in[c,d]$ such that
$\phi(x_n(s)) = -\delta$ or $\phi(x_n(s)) = -\mathfrak d$, it is
\[g(\nabla\phi(x_n(s)),V(s))\ge\theta_n\Vert V\Vert_{a_n,b_n};\]

\item[(C2)]\label{itm:51proof-a} $g\big(\nabla\phi(x_n(s)),V(s)\big)
\ge\theta_n\Vert V\Vert_{a_n,b_n}$ for all $s\in [a_n,b_n]$ that satisfies
$\phi(x_n(s)) \in [-\kappa_n,\kappa_n]$;

\item[(C3)]\label{itm:51proof-b}
$\int_{a_n}^{b_n}g\big(\dot x_{n},\Dds V\big)\,\mathrm ds\le-\mu_n\Vert V\Vert_{a_n,b_n}$.
\end{enumerate}

Note that, by \eqref{eq:5.5new},
$$ b_n-a_n\ge\frac{1}{4M_0}(\frac{3\delta_0}{4K_{0}})^{2}. $$
Then, up to subsequences, we can assume the existence of the limits
\[ 0\le\widehat a=\lim_{n\to\infty}a_n<\widehat b=\lim_{n\to\infty}b_n\le 1,\]
and up to affine reparameterizations we can assume $a_n=\widehat a,\,b_n=\widehat b$ for all $n$.

Moreover, up to subsequences again, we can assume that the
reparameterization $x_{n\vert[\widehat a,\widehat b]}$
is weakly $H^1$--convergent (and therefore  uniformly)
to some curve $x\in H^1([\widehat a,\widehat b],\R^N)$,
because $x_n$ satisfies \eqref{eq:5.5new} and $x_n(\widehat a)\in\partial\Omega$ is bounded.

To carry out the proof we shall show that
\begin{equation}\label{eq:5.7new}
x_{\vert[\widehat a,\widehat b]}\in\mathcal Z_{\widehat a,\widehat b},
\end{equation}
and
\begin{equation}\label{eq:5.8new}
x_{n\vert[\widehat a,\widehat b]}\to x_{\vert[\widehat a,\widehat b]}\quad\text{strongly in $H^1$,}
\end{equation}
obtaining a contradiction with \eqref{eq:5.6new}.

To prove \eqref{eq:5.7new}, suppose that there exists  $\delta \in ]0,\delta_0]$, and
a minimal $\delta$--interval for $x$,  $[c,d] \subset
[\widehat a,\widehat b]$.
Take $V\in\mathcal V^+(x)$ such that $V=0$ outside $[c,d]$ and such that
\begin{equation}\label{51proof-further-bis}
g\big(\nabla\phi(x(s)),V(s)\big) \geq 0
\text{ for all } s\in [c,d] : \phi(x(s)) = -\delta.
\end{equation}

Thanks to the minimality property of $[c,d]$, we can take
$V_n=V+\lambda_n\nabla\phi(x_n)$ with $\lambda_n\to 0^+$ and
such that $V_n$ satisfies (C1) and (C2)
above
with $V$ replaced by
$V_n$.
This can be proved since $x_{n\vert[\widehat a,\widehat b]}$
is included in $\overline \Omega$,
it is bounded in $H^1$, $x_n$ uniformly converges to $x$, $\theta_n$ and $\kappa_n$ are infinitesimals,
and $\nabla\phi$ does not vanish on $\phi^{-1}([-\delta_0,\delta_0])$.

Then property (C3)
cannot hold, so
\begin{equation}\label{eq:5.10new}
\int_{\widehat a}^{\widehat b}g\big(\dot x_n,\Dds V_n\big)\,\mathrm ds>-\mu_n\Vert V_n\Vert_{\widehat a,\widehat b}.
\end{equation}
Since $\Vert V_n\Vert_{\widehat a,\widehat b}$ is bounded, \eqref{eq:5.10new} gives
\[
\liminf_{n\to\infty}\int_{\widehat a}^{\widehat b} g\big(\dot x_n,\Dds V_n\big)\,\mathrm ds\ge 0.
\]
Since $\lambda_n\to 0$ and $x_n$ is bounded in $H^1$, $V_n$ is strongly $H^{1}$--convergent to $V$. Moreover
$\dot x_n$ is weakly $L^2$--convergent to $\dot x$ on $[\widehat a,\widehat b]$, so we have
\begin{equation}\label{eq:5.critport}
0\le\lim_{n\to\infty}\int_{\widehat a}^{\widehat b}g\big(\dot x_n,\Dds V_n\big)\,\mathrm ds=
\int_{\widehat a}^{\widehat b} g\big(\dot x,\Dds V\big)\,\mathrm ds=\int_{c}^{d} g\big(\dot x,\Dds V\big)\,\mathrm ds,
\end{equation}
because $V=0$ outside $[c,d]$.
Since $V \in \mathcal V^{+}(x)$ satisfying \eqref{51proof-further-bis} is arbitrary, by Lemma \ref{thm:4.10},
we obtain a contradiction. Then there are not $\delta$--intervals for $x$ in $[\widehat a, \widehat b]$ and we can therefore choose any $V \in \mathcal V^{+}(x)$ to construct $V_n$ as above proving that
$x_{\vert [\widehat a, \widehat b]}$
satisfies \eqref{eq:5.critport} for any $V \in \mathcal V^{+}(x)$. Then $x_{\vert [\widehat a, \widehat b]}$ satisfies \eqref{eq:5.7new}.

\bigskip
To prove \eqref{eq:5.8new}, let us choose a vector field $V_n$ along
$x_{n\vert[\widehat a,\widehat b]}$ of the form
\[ V_n=-\gamma_n+\lambda_n\nabla\phi(x_n),\]
where $\gamma_n=x_n-x$, and $\lambda_n$
can be chosen such that
$\lambda_n\to 0^+$ and $V_n$ satisfies properties (C1) and
(C2)
above with $V$ replaced by $V_n$. Then
property (C3)
cannot hold. Therefore it must be
\[
\liminf_{n\to\infty}\int_{\widehat a}^{\widehat b}g\big(\dot x_n,\Dds V_n\big)\,
\mathrm ds\ge 0.
\]
Since $\lambda_n\,\nabla\phi(x_n)$ is $H^1$--convergent to 0, and
$V_n = -\gamma_n + \lambda_n\nabla\phi(x_n)$,
\[
\liminf_{n\to\infty}\int_{\widehat a}^{\widehat b}
-g\big(\dot x_n,\dot \gamma_n\big)\,\mathrm ds\ge 0,
\]
that is
\[
\limsup_{n\to\infty}\int_{\widehat a}^{\widehat b}g\big(\dot x_n,\dot x_n-\dot x\big)\,\mathrm ds\le 0.
\]
Moreover, by $L^2$--weak convergence of $\dot x_n$ to $\dot x$, we have
\[
\lim_{n\to\infty}\int_{\widehat a}^{\widehat b}g\big(\dot x,\dot x_n-\dot x\big)\,\mathrm ds= 0,
\]
and from the last two relations above it follows that
\[
\limsup_{n\to\infty}\int_{\widehat a}^{\widehat b}g\big(\dot x_n-\dot x,\dot x_n-\dot x\big)\,\mathrm ds\le 0,
\]
which proves \eqref{eq:5.8new}, and the proof is complete.
\end{proof}

\bigskip

Now we need to extend the above
descent
direction, obtained on a portion of the curve $x_{|[a,b]}$,
to a neighborhood of it. Towards this goal some preliminary notations and results are needed.

First observe that,
due to the compactness of $\phi^{-1}(]-\infty,\delta_0])$ and the form of Christoffel symbols, there exists
two positive constants
$\ell_0$, $L_0$ such that, denoting by $\Vert\cdot\Vert_E$ the Euclidean norm and by $\Vert\cdot\Vert$ the $g$--norm,
\begin{equation}\label{eq:5.10bisnew}
\ell_0\Vert v\Vert_E^2\le \Vert v\Vert^2\le L_0\Vert v \Vert_E^2,
\end{equation}
for any $v \in \R^N$, for any $x \in \phi^{-1}(]-\infty,\delta_0])$.
Moreover
denoting by $g_x$ the metric tensor $g$ evaluated on $T_{x}M$,
there exists a constant
$G_0>0$  such that
\begin{equation}\label{eq:5.11new}
\vert g_x(v_1,v)-g_z(v_2,v)\vert\le G_0\left(\Vert v_1-v_2\Vert_E\,\Vert v\Vert_E+\Vert x-z\Vert_E\,\Vert v_1\Vert_E\,\Vert v\Vert_E\right),
\end{equation}
for any $x,z\in\phi^{-1}(]-\infty,\delta_0])$ and for any $v_1$, $v_2$, $v\in\R^N$. Finally we see also that there exists $L_1=L_1(M_0)$ such that
\begin{multline}\label{eq:5.12new}
\left(\int_a^b \Vert\Dds V\Vert^2_E,\mathrm ds\right)^{1/2}\le L_1\Vert V\Vert_{a,b},\\
\text{ for any $x\in\mathfrak M_0$ and $[a,b]\subset [0,1]$ such that
$\tfrac12\int_a^b g(\dot x,\dot x)\,\mathrm ds\le M_0$, }\\
\text { and for any
vector field $V\in H^{1}([a,b],\R^N)$ along $x$.}
\end{multline}

\medskip

Now, for any $a,b\in[0,1]$,  denote by $I_{a,b}$ the interval
$[a,b]$ (possibly reduced to a single point) if $b\ge a$
and the interval $[b,a]$ if $b<a$.
For any intervals $[a,b]$ and $[\alpha,\beta]$ we set:
\begin{equation}\label{eq:defDxalfabeta}
\mathcal
D(x,\alpha,\beta,a,b)=\frac{1}2\int_{I_{a,\alpha}\cup I_{b,\beta}}g(\dot
x,\dot x)\,\mathrm dt.
\end{equation}
The
following preparatory lemma holds.

\begin{lem}\label{thm:lem5.2new}
Fix $K>0$. For any $x,z\in\mathfrak M_0$, \, $\forall [a,b]$ and
$[a_z,b_z]$ in $[0,1]$, \, $\forall V\in H^1([0,1],\R^N)$, if
$\tfrac12\int_a^b g(\dot x,\dot x)\,\mathrm ds\le M_0$,
$\Dcal(x,a_z,b_z,a,b)\le K$ and $[a_z,b_z]\cap[a,b] \neq \emptyset$, it is
\begin{multline*}
\left\vert\int_a^b g_x\big(\dot x,\Dds V\big)\,\mathrm ds-\int_{a_z}^{b_z} g_z\big(\dot z,\Dds V)\,\mathrm ds\right\vert\le\\
{\sqrt2}\left(\sqrt{L_0 K}+G_0\Vert x-z\Vert_{a_z,b_z}\left(1 +\sqrt{\frac{M_0+K}{\ell_0}}\right)\right)L_1\Vert V\Vert_{0,1},
\end{multline*}
\end{lem}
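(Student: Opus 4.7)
The plan is to decompose the difference into a ``comparison piece'' over the interval $[a_z,b_z]$ and a ``boundary piece'' capturing the mismatch between the integration domains $[a,b]$ and $[a_z,b_z]$. Specifically, I would start from the telescoping identity
\[
\int_a^b g_x(\dot x,\Dds V)\,\mathrm ds-\int_{a_z}^{b_z} g_z(\dot z,\Dds V)\,\mathrm ds=\int_{a_z}^{b_z}\bigl[g_x(\dot x,\Dds V)-g_z(\dot z,\Dds V)\bigr]\,\mathrm ds+\biggl(\int_a^b-\int_{a_z}^{b_z}\biggr)g_x(\dot x,\Dds V)\,\mathrm ds,
\]
and then estimate each piece separately.

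For the comparison piece I would apply \eqref{eq:5.11new} pointwise with $v_1=\dot x(s)$, $v_2=\dot z(s)$ and $v=\Dds V(s)$, integrate over $[a_z,b_z]$, and use Cauchy--Schwarz on each of the two resulting terms. The estimate $\bigl(\int_{a_z}^{b_z}\|\dot x-\dot z\|_E^2\bigr)^{1/2}\le\sqrt 2\,\|x-z\|_{a_z,b_z}$ is immediate from~\eqref{eq:1.8fabio}, and $\|x-z\|_{L^\infty([a_z,b_z])}\le\|x-z\|_{a_z,b_z}$ is~\eqref{eq:1.10bis}. To control $\bigl(\int_{a_z}^{b_z}\|\dot x\|_E^2\bigr)^{1/2}$, I would use~\eqref{eq:5.10bisnew} together with the decomposition $[a_z,b_z]\subset[a,b]\cup(I_{a,a_z}\cup I_{b,b_z})$: on $[a,b]$ the hypothesis gives $\int g(\dot x,\dot x)\le 2M_0$, while on the extra piece the definition of $\Dcal$ gives $\int g(\dot x,\dot x)\le 2K$, so the Euclidean $L^2$ bound is $\sqrt{2(M_0+K)/\ell_0}$. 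Finally, \eqref{eq:5.12new} yields $\bigl(\int\|\Dds V\|_E^2\bigr)^{1/2}\le L_1\|V\|_{0,1}$. Collecting the terms produces the bound $\sqrt 2\,G_0\,\|x-z\|_{a_z,b_z}\bigl(1+\sqrt{(M_0+K)/\ell_0}\bigr)L_1\|V\|_{0,1}$ for this piece.

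For the boundary piece, observe that the symmetric difference $[a,b]\triangle[a_z,b_z]$ is contained in $I_{a,a_z}\cup I_{b,b_z}$; hence
\[
\biggl|\biggl(\int_a^b-\int_{a_z}^{b_z}\biggr)g_x(\dot x,\Dds V)\,\mathrm ds\biggr|\le\int_{I_{a,a_z}\cup I_{b,b_z}}|g_x(\dot x,\Dds V)|\,\mathrm ds.
\]
Cauchy--Schwarz in the $g$-inner product bounds the right-hand side by $\bigl(\int g(\dot x,\dot x)\bigr)^{1/2}\bigl(\int g(\Dds V,\Dds V)\bigr)^{1/2}$; the definition of $\Dcal$ controls the first factor by $\sqrt{2K}$, while~\eqref{eq:5.10bisnew} and~\eqref{eq:5.12new} bound the second factor by $\sqrt{L_0}\,L_1\|V\|_{0,1}$. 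This gives the contribution $\sqrt{2L_0 K}\,L_1\|V\|_{0,1}=\sqrt 2\,\sqrt{L_0K}\,L_1\|V\|_{0,1}$.

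Adding the two contributions and factoring out the common $\sqrt 2\,L_1\|V\|_{0,1}$ yields exactly the stated inequality. The work is essentially bookkeeping: passing between Euclidean and $g$-norms via~\eqref{eq:5.10bisnew}, applying Cauchy--Schwarz on the right subintervals, and invoking~\eqref{eq:5.12new} on an interval where an energy bound holds (the extra energy $K$ on $I_{a,a_z}\cup I_{b,b_z}$ is precisely what forces $\sqrt{(M_0+K)/\ell_0}$, rather than $\sqrt{M_0/\ell_0}$, to appear in the final estimate). I do not anticipate any genuinely delicate step beyond this careful accounting.
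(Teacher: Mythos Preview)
Your proposal is correct and follows essentially the same route as the paper: the paper uses the identical telescoping into a ``comparison piece'' on $[a_z,b_z]$ (handled via \eqref{eq:5.11new}, \eqref{eq:1.8fabio}, \eqref{eq:1.10bis}) and a ``boundary piece'' on $I_{a,a_z}\cup I_{b,b_z}$ (handled via Cauchy--Schwarz and the definition of $\Dcal$), and closes with the same energy bookkeeping $\tfrac12\int_{a_z}^{b_z}g(\dot x,\dot x)\le M_0+K$ to pass through \eqref{eq:5.10bisnew} and \eqref{eq:5.12new}.
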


\begin{proof}
We have:
\begin{multline*}
\Big\vert\int_a^b g_x\big(\dot x,\Dds V\big)\,\mathrm ds -\int_{a_z}^{b_z} g_z\big(\dot z,\Dds V\big)\,\mathrm ds\Big\vert\le\\
\Big\vert\int_a^b g_x\big(\dot x,\Dds V\big)\,\mathrm ds -\int_{a_z}^{b_z} g_x\big(\dot x,\Dds V\big)\,\mathrm ds\Big\vert+
\Big\vert\int_{a_z}^{b_z} g_x\big(\dot x,\Dds V\big)\,\mathrm ds -\int_{a_z}^{b_z} g_z\big(\dot z,\Dds V\big)\,\mathrm ds\Big\vert\\ \le
\int_{I_{a,a_z}\cup I_{b,b_z}}\Vert\dot x\Vert\,\Vert\Dds V\Vert\,\mathrm ds+\\
G_0\left[\int_{a_z}^{b_z}\Vert\dot x-\dot z\Vert_E\,\Vert\Dds V\Vert_E\,\mathrm ds+\int_{a_z}^{b_z}\Vert x-z\Vert_E\,\Vert\dot x\Vert_E\,\Vert\Dds V\Vert_E\,\mathrm ds\right]\le\\
\left(\int_{I_{a,a_z}\cup I_{b,b_z}}g(\dot x,\dot x)\,\mathrm ds\right)^{1/2}\left(\int_0^1\Vert\Dds V\Vert_E^2\,\mathrm ds\right)^{1/2}\sqrt{L_0}+\\
G_0\Vert\dot x-\dot z\Vert_{L^2([a_z,b_z],\R^N)}\left(\int_0^1\Vert\Dds V\Vert_E^2\right)^{1/2}+\\
G_0\Vert x-z\Vert_{L^\infty([a_z,b_z],\R^N)}\left(\int_{a_z}^{b_z}\Vert\dot x\Vert^2_E\,\mathrm ds\right)^{1/2}
\left(\int_0^1\Vert\Dds V\Vert_E^2\right)^{1/2}.
\end{multline*}
Second inequality above is due to Schwarz inequality,
the definition of $I_{a,\alpha}$, and \eqref{eq:5.11new}  ,
while the third is given by H\"{o}lder inequality and \eqref{eq:5.10bisnew}.
Now by \eqref{eq:1.8fabio} we have
\[
\Vert\dot x-\dot z\Vert_{L^2([a_z,b_z],\R^N)}\leq{\sqrt 2}\Vert x-z\Vert_{a_z,b_z},\quad
\Vert x-z\Vert_{L^\infty([a_z,b_z],\R^N)}\le\Vert x-z\Vert_{a_z,b_z}.
\]
Therefore by \eqref{eq:defDxalfabeta}, assumption $\Dcal(x,a_z,b_z,a,b) \leq K$ and
\eqref{eq:5.12new} it is :
\begin{multline*}
\left\vert\int_a^b g_x\big(\dot x,\Dds V\big)\,\mathrm ds-\int_{a_z}^{b_z} g_z\big(\dot z,\Dds V)\,\mathrm
ds\right\vert\le\\
\left(\sqrt{2L_0 K}+G_0\Vert x-z\Vert_{a_z,b_z}\left(\sqrt 2+\left(\int_{a_z}^{b_z}\Vert\dot x\Vert_E\,\mathrm ds\right)^{1/2}\right)\right)L_1\Vert V\Vert_{0,1},
\end{multline*}
and finally, since
\begin{align*}
&\left(\int_{a_z}^{b_z}\Vert\dot x\Vert^2_E\,\mathrm ds\right)^{1/2}\le\tfrac{1}{\sqrt{\ell_0}}\left(\int_{a_z}^{b_z}g(\dot x,\dot x)\,\mathrm ds\right)^{1/2},\\
&\left\vert\tfrac12\int_{a_z}^{b_z}g(\dot x,\dot x)\,\mathrm ds-\tfrac12\int_a^b g(\dot x,\dot x)
\,\mathrm ds\right\vert\le\tfrac12\int_{I_{a,a_z}\cup I_{b,b_z}}g(\dot x,\dot x)\,\mathrm ds\le K,\\
\intertext{and}
&\tfrac12\int_a^b g(\dot x,\dot x)\,\mathrm ds\le M_0,
\end{align*}
we obtain the proof.
\end{proof}

Let now $\mu(r)$ be as in Proposition \ref{thm:51new}, and $L_0$ and $L_1$ be the constants given by \eqref{eq:5.10bisnew} and
\eqref{eq:5.12new} respectively. Set

\begin{equation}\label{eq:5.14bis}
E(r)=\frac{\mu^2(r)}{32 L_1^2 L_0}.
\end{equation}
\begin{prop}\label{thm:PS}
Fix $r>0$. Let $\theta(r)$ and $\mu(r)$ be as in Proposition
\ref{thm:51new}. Then there exist $\rho(r) >0$ with the following property.

For all $x\in\mathfrak M_0$ and for all  $[a,b]
\subset [0,1]$ satisfying \eqref{inOmega} \eqref{eq:4.22f} and
\eqref{eq:4.23f}, let $V_x$ be as in  Proposition \ref{thm:51new}.
Extend it to $[0,1]$ so that $V_x(s)=V(x(a))$ on $[0,a]$ and $V_x(s)=V(x(b))$ on $[b,1]$.
Then there exists $\Delta(x) = \Delta(x,[a,b]) > 0$ such that:
\begin{enumerate}

\item\label{itm:PS4f}
for any $z\in\mathfrak M_0$ satisfying $\Vert x-z\Vert_{L^\infty\big([0,1],\R^{N}\big)}<\rho(r)$
the following conditions hold:

\begin{enumerate}
\item\label{itm:PS15-bis}
for any $\delta \in ]0,\delta_0]$, for any  $[c_z,d_z]\subset[a-\Delta(x),b+\Delta(x)] \cap [0,1]$, $\delta$--interval for $z$,
and for all $s\in[c_z,d_z]$ such that
$\phi(z(s)) = -\delta$,
it is  \[g(\nabla\phi(z(s)),V_x(s))\ge \tfrac{\theta(r)}{2}\Vert V_x\Vert_{a,b};\]
\item\label{itm:PS15a}
$g\big(\nabla\phi(z(s)),V_x(s)\big)\ge\tfrac{\theta(r)}2\Vert V_x\Vert_{a,b}$
for all $s\in [a-\Delta(x),b+\Delta(x)] \cap [0,1]$ with $\phi(z(s)) = 0$;
\end{enumerate}

\item\label{itm:PS5f}  for all $z\in\mathfrak M_0$ and
for all $[a_z,b_z] \in \mathcal I_z$, such that
$\Vert x-z\Vert_{a_z,b_z}<\rho(r)$,
$[a_z,b_z] \cap [a,b] \neq \emptyset$,
 and $\mathcal D(x,a_z,b_z,a,b)\le E(r)$, it is
$$\int_{a_z}^{b_z}g\big(\dot z,\Dds V_x\big)\,\mathrm ds\le-\frac{\mu(r)}2\Vert V_x\Vert_{a,b}.$$
\end{enumerate}
\end{prop}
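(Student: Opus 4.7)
The argument is a continuity perturbation of Proposition~\ref{thm:51new}: choose $\rho(r)$ small enough that the pointwise descent inequalities of $V_x$ along $x$ survive the transfer to every $z$ with $\Vert z-x\Vert_{L^\infty}<\rho(r)$, losing only a factor $1/2$, and that Lemma~\ref{thm:lem5.2new} yields the integral estimate in part~(2); the constant $\Delta(x)$ then absorbs the endpoint strips $[a-\Delta(x),a)\cup(b,b+\Delta(x)]$ via the continuity of $x$ at $a$ and $b$. Since $V_x$ is extended constantly outside $[a,b]$, formula~\eqref{eq:1.8fabio} gives $\Vert V_x\Vert_{0,1}=\Vert V_x\Vert_{a,b}$ and the one-dimensional Sobolev embedding yields $\Vert V_x\Vert_{L^\infty}\le C\Vert V_x\Vert_{a,b}$; combined with uniform continuity of $\nabla\phi$ on the compact slab $\phi^{-1}([-\delta_0,\delta_0])$, this lets us choose $\rho(r)$ so small that $\vert g(\nabla\phi(z(s))-\nabla\phi(x(s)),V_x(s))\vert\le\tfrac{\theta(r)}{2}\Vert V_x\Vert_{a,b}$ for every $s\in[0,1]$ whenever $\Vert z-x\Vert_{L^\infty}<\rho(r)$.

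\textbf{Parts (1)(b) and (1)(a).} For (1)(b) and $s\in[a,b]$, choosing $\rho(r)$ small enough that $\vert\phi(x(s))-\phi(z(s))\vert<\kappa(r)$ places $\phi(x(s))\in[-\kappa(r),\kappa(r)]$, so Proposition~\ref{thm:51new}(2) applies and the preparatory estimate transfers the bound to $z$. For $s\in[a-\Delta(x),a)$, $V_x(s)=V_x(a)$; the bound at $x(a)$ from Proposition~\ref{thm:51new}(2) propagates once $\Delta(x)$ is small enough that $x(s)$ stays $\rho(r)$-close to $x(a)$ (possible since $x$ is continuous); symmetrically at $b$. For (1)(a), fix a $\delta$-interval $[c_z,d_z]$ for $z$ and $s\in[c_z,d_z]$ with $\phi(z(s))=-\delta$. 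If $\delta$ is so small that $\phi(x(s))\in[-\kappa(r),\kappa(r)]$, Proposition~\ref{thm:51new}(2) again suffices. Otherwise, set $\delta':=-\phi(x(s))$, which is $>\kappa(r)$ for $\rho(r)$ small, and extract a $(\delta',\mathfrak d')$-interval $[c^*,d^*]\subset[a,b]$ of $x$ containing $s$ as a minimum point of $\phi\circ x$, with $\mathfrak d'\in[0,\kappa(r)]$: this is done by moving from $s$ to the first instants on either side where $\phi\circ x$ reaches a common value $-\mathfrak d'$ close to $0$ (equalizing the two sides by the intermediate value theorem, and truncating at $a$ or $b$ with $\mathfrak d'=0$ if $\phi\circ x$ hits the boundary first). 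Proposition~\ref{thm:51new}(1) at $s$ then yields the bound for $x$, which the preparatory estimate transfers to $z$. Points in the endpoint strips are excluded from this second case by choosing $\Delta(x)$ so small that $\phi\circ z$ cannot drop that deep there.

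\textbf{Part~(2) and main obstacle.} Apply Lemma~\ref{thm:lem5.2new} with $V=V_x$ and $K=E(r)=\mu(r)^2/(32L_1^2L_0)$: the first summand in the estimate becomes $\sqrt{2L_0 E(r)}\,L_1\Vert V_x\Vert_{0,1}=\tfrac{\mu(r)}{4}\Vert V_x\Vert_{a,b}$ by design of $E(r)$, and the second summand $\sqrt 2 G_0\rho(r)\bigl(1+\sqrt{(M_0+E(r))/\ell_0}\bigr)L_1\Vert V_x\Vert_{a,b}$ is also $\le\tfrac{\mu(r)}{4}\Vert V_x\Vert_{a,b}$ once $\rho(r)$ is taken small enough. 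Combined with $\int_a^b g(\dot x,\Dds V_x)\,\mathrm ds\le-\mu(r)\Vert V_x\Vert_{a,b}$ from Proposition~\ref{thm:51new}(3), this yields $\int_{a_z}^{b_z}g(\dot z,\Dds V_x)\,\mathrm ds\le-\tfrac{\mu(r)}{2}\Vert V_x\Vert_{a,b}$, as required. The main obstacle is the interval surgery hidden in part~(1)(a): fabricating, from a $\delta$-interval of $z$, a $(\delta',\mathfrak d')$-interval of $x$ in the exact form required by Proposition~\ref{thm:51new}(1) --- $\delta'\in(\kappa(r),\delta_0]$, $\mathfrak d'\in[0,\kappa(r)]$, contained in $[a,b]$, and having $s$ as a minimum of $\phi\circ x$ --- requires a delicate balancing of the uniform $\rho(r)$-closeness of $z$ to $x$ against the intermediate value theorem applied to $\phi\circ x$.
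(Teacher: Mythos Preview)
Your treatment of parts~(1)(b) and~(2) is correct and essentially identical to the paper's: the constant extension of $V_x$ outside $[a,b]$ giving $\Vert V_x\Vert_{0,1}=\Vert V_x\Vert_{a,b}$, the uniform continuity of $\nabla\phi$ on the compact slab, and the application of Lemma~\ref{thm:lem5.2new} with $K=E(r)$ (so that $L_1\sqrt{2L_0E(r)}=\mu(r)/4$) are exactly what the paper uses.

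The gap is in part~(1)(a). You set $\delta':=-\phi(x(s))$ and claim to extract a $(\delta',\mathfrak d')$-interval $[c^*,d^*]\subset[a,b]$ for $x$ with $s$ realizing the infimum $-\delta'$ of $\phi\circ x$ on $[c^*,d^*]$. But $s$ is only known to be an infimum point of $\phi\circ z$ on $[c_z,d_z]$; since $z$ is merely $L^\infty$-close to $x$ (not $C^1$-close), there is no reason $\phi\circ x$ should have a local minimum at $s$. If $\phi\circ x$ is strictly monotone near $s$, then moving outward from $s$ on one side immediately sends $\phi\circ x$ below $-\delta'$, so your ``move to the first instants on either side where $\phi\circ x$ reaches $-\mathfrak d'$'' produces, at best, an interval $[c^*,d^*]$ whose true infimum is some $-\delta''<-\delta'$. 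That interval is then a $(\delta'',\mathfrak d')$-interval, and Proposition~\ref{thm:51new}(1) yields the bound only at points where $\phi\circ x$ equals $-\delta''$ or $-\mathfrak d'$, not at $s$ where $\phi(x(s))=-\delta'$. Your closing remark that this surgery is ``delicate'' does not close the gap.

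The paper sidesteps this by never forcing $s$ to be an infimum point for $x$. It splits instead on the size of $\delta$: for $\delta\le\kappa(r)/2$, $L^\infty$-closeness forces $\phi\big(x([c_z,d_z])\big)\subset[-\kappa(r),\kappa(r)]$, so Proposition~\ref{thm:51new}(2) alone covers every $s\in[c_z,d_z]$; for $\delta>\kappa(r)/2$, the paper argues that each $s\in[c_z,d_z]$ either has $\phi(x(s))\in[-\kappa(r),\kappa(r)]$ or lies in a $(\delta_1,\kappa(r)/8)$-interval for $x$ with $\delta_1\ge\kappa(r)/4$, and then invokes parts~(1) and~(2) of Proposition~\ref{thm:51new} together, exploiting the freedom to vary both $\delta_1$ and the endpoint level $\mathfrak d$ rather than pinning them to the single value $-\phi(x(s))$.
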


\begin{rem}\label{rem:E(r)}
By property (\ref{itm:PS5f}) of Proposition \ref{thm:PS} and the definition of $D(x,a_z,b_z,a,b)$ (cf.
\eqref{eq:defDxalfabeta}) we see that the number $E(r)$
gives a bound on
the admissible difference between the energy of  $x_{\vert[a,b]}$ and $x_{\vert[a_z,b_z]}$,
to obtain a rate of decrease $\mu(r)/2$ for
the quantity $\tfrac12\int_{a_z}^{b_z}g(\dot z,\dot z)\text ds$, when $\Vert x - z \Vert_{a_z,b_z} < \rho(r)$.
\end{rem}

\begin{rem}\label{rem:equivarianza}
By Proposition \ref{thm:PS} we see that we can choose $V_{{\mathcal R}x}$ so that $V_{{\mathcal R}x} = {\mathcal R}V_x$.
\end{rem}

\begin{proof}[Proof of Proposition \ref{thm:PS}]
Let $\kappa(r)$ given by Proposition \ref{thm:51new}. In order to prove
\eqref{itm:PS15-bis}, let us consider  $[c_z,d_z]$, $\delta$--interval for $z$.

Suppose $\delta \leq \frac{\kappa(r)}2$.
Since $\phi(x(a)=\phi(x(b))=0$ it is immediately seen that we can choose $\Delta_1(x) > 0$ and $\rho_{1}(r) >0$ such that
\begin{multline*}
[c_z,d_z] \subset [a-\Delta_1(x),b+\Delta_1(x)] \cap [0,1] \text{ and }
\Vert x -z \Vert_{L^\infty\big([0,1],\R^{N}\big)}<\rho_{1}(r) \Longrightarrow \\
\phi(x([c_z,d_z]) \subset [-\kappa(r),\kappa(r)].
\end{multline*}
Therefore  by \eqref{itm:51new-a} of Proposition \ref{thm:51new} we have
\begin{equation}\label{eq:star1}
g(\nabla\phi(x(s)),V_{x}(s))\geq \theta(r)\Vert V_x \Vert_{a,b} \text{ for all }s \in [c_z,d_z],
\end{equation}
so there exists $\rho_{2}(r) \in ]0,\rho_{1}(r)]$ such that if $\Vert x -z \Vert_{L^\infty\big([0,1],\R^{N}\big)}<\rho_{2}(r)$ then
\begin{equation}\label{eq:star2}
g(\nabla\phi(z(s)),V_x)s))\geq \frac{\theta(r)}2 \Vert V_x \Vert_{a,b} \text{ for all }s \in [c_z,d_z].
\end{equation}

Now assume $\delta \in ]\frac{\kappa(r)}2,\delta_0]$. Then there exists $\rho_{3}(r)\in ]0,\rho_{2}(r)]$
and $\Delta_2(x)\in ]0,\Delta_1(x)]$ such that
\begin{multline*}
[c_z,d_z] \subset [a-\Delta_2(x),b+\Delta_2(x)] \cap [0,1],
\Vert x -z \Vert_{L^\infty\big([0,1],\R^{N}\big)}<\rho_{3}(r)  \text{ and } s \in [c_z,d_z]\\
\text{ imply }
\phi(x(s)) \in [-\kappa(r),\kappa(r)] \text{ or } \\
s \in [\alpha,\beta] \subset [a,b], \text{ where } [\alpha,\beta] \text{ is a } (\delta_1,\frac{\kappa(r)}8)\text{--interval for $x$ with }
\delta_1 \geq \frac{\kappa(r)}4.
\end{multline*}

Then by \eqref{itm:51new-bis} and \eqref{itm:51new-a} of Proposition \ref{thm:51new}, \eqref{eq:star1} is satisfied
for any $[c_z,d_z] \subset [a-\Delta_2(x),b+\Delta_2(x)] \cap [0,1]$, $\delta$--interval for $z$ and for any $\delta \in ]0,\delta_0]$. Hence
there exists
$\rho_{4}(r) \in ]0,\rho_3(r)]$ such that also \eqref{eq:star2} is satisfied for any $[c_z,d_z] \subset [a-\Delta_2(x),b+\Delta_2(x)] \cap [0,1]$, $\delta$--interval for $z$ and $\delta \in ]0,\delta_0]$. This proves
\eqref{itm:PS15-bis} with $\rho(r) = \rho_4(r)$ and $\Delta(x) = \Delta_2(x)$. Clearly we can also choose $\rho(r)$ and $\Delta(x)$
so that
$\Vert x -z \Vert_{L^\infty\big([0,1],\R^{N}\big)}<\rho(r)$, $s \in [a-\Delta(x),b+\Delta(x)]$, and $\phi(z(s))=0$ imply $\phi(x(s) \in [-\kappa(r),\kappa(r)]$, from which we immediately
deduce that we can choose $\rho(r)$ and $\Delta(x)$ so that also \eqref{itm:PS15a} holds.

\medskip

Finally, to show \eqref{itm:PS5f} observe that, if $z\in\mathfrak M_0$,
$\Vert x-z\Vert_{a_z,b_z}<\rho(r)$,
$[a_z,b_z] \cap [a,b] \neq \emptyset$ and $\Dcal(x,a_z,b_z,a,b)\le E(r)$,
($E(r)$ given by \eqref{eq:5.14bis}), by Lemma
\ref{thm:lem5.2new}, we have
\begin{multline*}
\int_{a_z}^{b_z}g\big(\dot z,\Dds V_x\big)\,\mathrm ds\le\int_a^bg\big(\dot x,\Dds V_x\big)\,
\mathrm ds+\\
\sqrt{2}\left(\sqrt{L_{0}E(r)}+G_0\Vert x-z\Vert_{a_z,b_z}\left(1 + \sqrt{\frac{M_0+E(r)}
{\ell_0}}\right)\right)L_1\Vert V_x\Vert_{0,1}.
\end{multline*}
Since $\Vert V_x\Vert_{0,1}=\Vert V_x\Vert_{a,b}$,
applying \eqref{itm:51new-b}
of Proposition \ref{thm:51new} we have
\begin{multline*}
\int_{a_z}^{b_z}g\big(\dot z,\Dds V_x\big)\,\mathrm ds\le\\\left[-\mu(r)+
\sqrt{2}\left(\sqrt{L_{0}E(r)}+G_0\Vert x-z\Vert_{a_z,b_z}\left(1 + \sqrt{\tfrac{M_0+E(r)}{\ell_0}}\right)\right)\right]L_1
\Vert V_x\Vert_{a,b},
\end{multline*}
therefore we can choose $\rho(r)$ such that property \eqref{itm:PS5f}
is satisfied, because $L_1\sqrt{2L_{0}E(r)}=\tfrac{\mu(r)}4$ (see \eqref{eq:5.14bis}).
\end{proof}

\section{The Palais-Smale for $\mathcal V^-$--critical portions and the notion of topological non essential interval}\label{V-}

Before to describe the class of admissible homotopies we need to give the Palais-Smale  version for "${\mathcal V}^{-}$--critical portions". Towards this goal we need some preliminary notations and results.

Throughout this section we will
denote by \[\pi:\phi^{-1}\big([-\delta_0,0]\big)\longrightarrow\phi^{-1}(0)\] the
"projection" onto $\partial\Omega$ along orthogonal geodesics (cf. \eqref{eq:flusso+}). Thanks to Remark \ref{thm:rem4.11bis}
a simple contradiction argument
shows that the following properties are satisfied by irregular variationally critical portions of first type
(see also Corollary \ref{thm:cor4.11bis}):

\begin{lem}\label{thm:lem4.18}
There exists $\bar\gamma>0$ and $\delta_1\in\left]0,\delta_0\right[$ such that, for all $\delta\in\left]0,\delta_1\right]$, for any $x\in{\mathfrak M}_0$
such that $x_{\vert[a,b]}$ is an irregular variationally critical portion of first type with
$\frac12\int_a^bg(\dot x,\dot x)\,\mathrm ds<M_0$ and for any interval $[\alpha,\beta] \subset [a,b]$ including a cusp interval
$[t_1,t_2]$ satisfying \eqref{eq:alphabeta}
the following inequality holds:
\begin{equation}\label{eq:4.31}
\max\Big\{\Vert x(\beta)-\pi(x(\alpha))\Vert_{E},\,
\Vert x(\alpha)-\pi(x(\beta))\Vert_{E}\Big\}\ge(1+2\bar\gamma)
\Vert\pi(x(\beta))-\pi(x(\alpha))\Vert_{E},
\end{equation}
\end{lem}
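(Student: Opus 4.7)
The plan is to argue by contradiction. Assume no $\bar\gamma>0$ and $\delta_1\in\left]0,\delta_0\right[$ with the stated property exist. By a diagonalization one extracts sequences $\delta_n\to 0^+$, curves $x_n\in\mathfrak M_0$ with $x_n|_{[a_n,b_n]}$ irregular variationally critical portions of first type satisfying the energy bound, intervals $[\alpha_n,\beta_n]\subset[a_n,b_n]$, and cusp intervals $[t_{1,n},t_{2,n}]\subset[\alpha_n,\beta_n]$ with $\Theta_{x_n}(t_{1,n},t_{2,n})\ge d_0$ and $\phi(x_n(\alpha_n))=\phi(x_n(\beta_n))=-\delta_n$, for which \eqref{eq:4.31} fails with $\bar\gamma$ replaced by $1/n$. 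By the uniform energy bound and the compactness of $\partial\Omega$, after passing to subsequences the cusp points $P_n:=x_n(t_{1,n})=x_n(t_{2,n})$ converge to some $P\in\partial\Omega$, and the one-sided velocities $v_n:=\dot x_n(t_{1,n}^-)\to v$, $w_n:=\dot x_n(t_{2,n}^+)\to w$ in $T_PM$, with $\Theta(v,w)\ge d_0$, so $v\ne w$. Passing Remark~\ref{thm:rem4.11bis} to the limit yields that the tangential components of $v$ and $w$ at $P$ coincide, while its second assertion prevents them from both being tangent, so $v-w$ is a nonzero normal vector at $P$.

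Next, work in Fermi coordinates at $P$, in which $\partial\Omega=\{z^N=0\}$, $\phi(z)=z^N$, $g_P$ is Euclidean, and $\pi$ amounts to the projection onto the first $N-1$ coordinates. Taylor-expand the geodesic pieces $x_n|_{[\alpha_n,t_{1,n}]}$ and $x_n|_{[t_{2,n},\beta_n]}$ at the cusp and impose $\phi=-\delta_n$ at $\alpha_n,\beta_n$ to obtain $t_{1,n}-\alpha_n=\delta_n/|v_N|+o(\delta_n)$ and $\beta_n-t_{2,n}=\delta_n/|w_N|+o(\delta_n)$; using $v_T=w_T$,
\[
\Vert\pi(x_n(\beta_n))-\pi(x_n(\alpha_n))\Vert_E^2=\delta_n^2|v_T|_E^2\bigl(\tfrac{1}{|v_N|}+\tfrac{1}{|w_N|}\bigr)^2+o(\delta_n^2),
\]
\[
\max\bigl\{\Vert x_n(\beta_n)-\pi(x_n(\alpha_n))\Vert_E^2,\Vert x_n(\alpha_n)-\pi(x_n(\beta_n))\Vert_E^2\bigr\}=\Vert\pi(x_n(\beta_n))-\pi(x_n(\alpha_n))\Vert_E^2+\delta_n^2+o(\delta_n^2).
\]
The cusp condition $\Theta(v,w)\ge d_0$ combined with $v_T=w_T$ produces a uniform bound $|v_T|_E(1/|v_N|+1/|w_N|)\le C(d_0)$ (a trigonometric estimate: large $|v_T|_E$ relative to both $|v_N|$ and $|w_N|$ would force $v$ and $w$ to be nearly parallel). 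Hence the ratio of the max to $\Vert\pi(x_n(\beta_n))-\pi(x_n(\alpha_n))\Vert_E$ remains above some $1+2\bar\gamma>1$ depending only on $d_0$, contradicting the failing inequality.

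The main obstacle is the degenerate sub-case in which one normal component, say $v_N$, vanishes in the limit---corresponding to a geodesic piece arriving at the cusp tangentially to $\partial\Omega$. There the first-order Taylor estimate degrades to a $\sqrt{\delta_n}$-rate via the quadratic behavior of $\phi$ along a tangent geodesic (strong concavity, Remark~\ref{rem:1.4}), and one must supplement the local argument with the global perpendicular-endpoint structure of Proposition~\ref{thm:irregcritpt} to exclude this configuration. Otherwise the proof reduces to compactness plus a local Taylor expansion organised around Remark~\ref{thm:rem4.11bis}.
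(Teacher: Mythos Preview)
Your contradiction scheme is exactly what the paper intends: the paper's own ``proof'' is a single sentence citing Remark~\ref{thm:rem4.11bis} (equality of tangential components, and that $v,w$ cannot both be tangent when $\Theta>0$) together with Corollary~\ref{thm:cor4.11bis}, so your write-up is strictly more detailed than the paper's. The first-order expansion you carry out in the transversal case is correct; note that from $v_T=w_T$ one gets the clean identity $\Theta=\theta_v+\theta_w$ (with $\theta_v,\theta_w$ the angles of $v,-w$ to $T_P\partial\Omega$), so your ``trigonometric estimate'' amounts to bounding $\cot\theta_v+\cot\theta_w$ under $\theta_v+\theta_w\ge d_0$.

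The gap is in the degenerate sub-case, and your proposed repair does not close it. If $\theta_v\to 0$ along the sequence (so $v_N\to 0$), your own $\sqrt{\delta_n}$ analysis shows that all three quantities in \eqref{eq:4.31} share the leading term $\sqrt{2\delta_n/\kappa}\,|v_T|_E$, and the ratio tends to $1$; no contradiction emerges at this order. You propose to exclude this via the perpendicular-endpoint structure of Proposition~\ref{thm:irregcritpt}, but that proposition only asserts $\dot x(\alpha'^+),\dot x(\beta'^-)\perp T\partial\Omega$ at the \emph{outermost} endpoints of the nonconstant part; it places no constraint on the normal component of the velocity at an \emph{interior} cusp $[t_1,t_2]$. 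In particular, for an isolated cusp $t_1=t_2$ the variational criticality at $P$ yields only $v-w=c\,\nu$ with $c\ge 0$, which is perfectly compatible with $v_N=0$, $w_N<0$; nothing in Proposition~\ref{thm:irregcritpt} propagates the endpoint orthogonality to force $v_N>0$ there. So either one needs a genuinely second-order comparison of the three Euclidean distances in the tangential limit, or an independent argument that $\min(\theta_v,\theta_w)$ is bounded away from $0$ uniformly over cusps with $\Theta\ge d_0$ of irregular critical portions of first type---and the latter is not supplied by Proposition~\ref{thm:irregcritpt}. The paper does not address this point either, so you have in fact isolated a subtlety that the one-line proof glosses over; but as written your proposal still leaves it open.
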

\noindent (recall that $\Vert \cdot \Vert_{E}$ denotes the Euclidean norm).

\bigskip

The following Lemma says that curves satisfying \eqref{eq:4.31} are
far away from  those that  satisfy \eqref{eq:eq5.6}: in other words, they are far to
be critical with respect to $\mathcal V^-$. In particular,
the set of irregular variationally critical portions of first type consists of curves
at which the value of the functional can be decreased by deforming in the directions
of $\mathcal V^-$, as we shall see later.

Let $\bar\gamma$ be as in Lemma \ref{thm:lem4.18}.
\begin{lem}\label{thm:lem4.19} There exists $\delta_2\in\left]0,\delta_0\right[$ with the following property: for any $\delta\in\left]0,\delta_2\right]$,
for any $[a,b]\subset\R$ and for any $y\in H^1([a,b],\overline\Omega)$ satisfying \eqref{eq:eq5.6} and such that
\[
\phi(y(a))=\phi(y(b))=-\delta,
\phi(y(\bar t))=0 \text{ for some }\bar t\in ]a,b[,
\]
the following inequality holds:
\begin{equation}\label{eq:4.32}
\max\Big\{\Vert y(b)-\pi(y(a))\Vert_{E},\,\Vert y(a)-\pi(y(b))\Vert_{E}\Big\}
\le \left(1+\frac{\bar\gamma}2\right)\Vert\pi(y(b))-\pi(y(a))\Vert_{E}.
\end{equation}
\end{lem}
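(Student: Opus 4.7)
The plan is contradiction via a blow-up at a contact point. Assume no such $\delta_2$ exists; passing to sequences $\delta_n \to 0^+$ and $y_n$ satisfying the hypotheses of the lemma but violating \eqref{eq:4.32}, Lemma~\ref{thm:lem5.3} gives $y_n \in C^1$, and Lemma~\ref{thm:lem4.16} gives constant $g$-speed $c_n$ and $-\Dds\dot y_n = \mu_n\nu(y_n)$ a.e.\ with $\mu_n \le 0$. Let $t_{*,n} < t^{*,n}$ be the first and last instants in $[a_n, b_n]$ at which $\phi(y_n)=0$; the $C^1$ regularity together with $y_n\subset\overline\Omega$ forces $\dot y_n(t_{*,n})$ and $\dot y_n(t^{*,n})$ to be tangent to $\partial\Omega$ there. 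A Taylor expansion of $\phi$ in Riemannian normal coordinates at $p_n := y_n(t_{*,n})$, combined with the strong concavity lower bound $-\second_{\nabla\phi}(v,v) \ge c_3 > 0$ for unit tangent $v$, then shows that the first and last free geodesic segments each have arc length of order $\sqrt{\delta_n}$, that $\|\pi(y_n(a_n))-p_n\|_E$ and $\|\pi(y_n(b_n))-y_n(t^{*,n})\|_E$ are of order $\sqrt{\delta_n}$, and that the orthogonal corrections $\|y_n(a_n)-\pi(y_n(a_n))\|_E$ and $\|y_n(b_n)-\pi(y_n(b_n))\|_E$ are only of order $\delta_n$.

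If, along a subsequence, $\|\pi(y_n(b_n))-\pi(y_n(a_n))\|_E \ge c\sqrt{\delta_n}$ for some $c>0$, then the triangle inequality using the $O(\delta_n)$ orthogonal corrections yields \eqref{eq:4.32} with margin $1 + O(\sqrt{\delta_n}) < 1 + \bar\gamma/2$ for $n$ large, contradicting the failure of \eqref{eq:4.32}. Hence one may assume $\|\pi(y_n(b_n))-\pi(y_n(a_n))\|_E = o(\sqrt{\delta_n})$. Note moreover that by the strong concavity property \eqref{eq:1.1bis} together with $\delta_n<\delta_0$, the middle portion $y_n|_{[t_{*,n},t^{*,n}]}$ cannot contain any free-geodesic excursion leaving and returning to $\partial\Omega$, so this middle portion lies entirely on $\partial\Omega$ as a constrained geodesic.

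In this case I perform an anisotropic blow-up centered at $p_n$: in normal coordinates with $q^N$-axis along $\nabla\phi(p_n)$, set $\hat q' = q'/\sqrt{\delta_n}$, $\hat q^N = q^N/\delta_n$, $\hat\sigma = \sigma/\sqrt{\delta_n}$. Under this rescaling, the equation $\Dds\dot{\tilde y}_n = -\mu_n\nu$ in normal coordinates together with the Christoffel bounds $\Gamma = O(|q|) = O(\sqrt{\delta_n})$ give $\|d^2\hat y_n'/d\hat\sigma^2\|_E = O(\delta_n)\to 0$ throughout the curve. The total rescaled arc length $\hat L_n = L_n/\sqrt{\delta_n}$ is bounded above by the $\sqrt{\delta_n}$-order of the three segments and below by the contribution $\hat L_1 + \hat L_2 \ge c' > 0$ of the first and last free segments. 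Hence by Arzel\`a--Ascoli there is a $C^1$-subsequential limit $\hat y_\infty$ on $[0,\hat L_\infty]$ whose horizontal projection is affine with unit Euclidean tangent $v'$ (the tangent of $y_n$ at $t_{*,n}$, which lies tangentially to $\partial\Omega$). Combining the affine horizontal with the fact that the horizontal component of $\hat\pi(y_n(\cdot)) - \hat y_n(\cdot)$ is $o(1)$, one gets in the limit
\[
\tfrac{1}{\sqrt{\delta_n}}\,\|\pi(y_n(b_n))-\pi(y_n(a_n))\|_E \ \longrightarrow\ \hat L_\infty \ge c' > 0,
\]
contradicting the assumption $\|\pi(y_n(b_n))-\pi(y_n(a_n))\|_E = o(\sqrt{\delta_n})$.

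The main obstacle is the choice of blow-up scale: the natural horizontal length of a boundary-tangent free geodesic reaching depth $\delta_n$ is $\sqrt{\delta_n}$ rather than $\delta_n$, so an anisotropic rescaling is needed. Once this is in place, the vanishing of the rescaled horizontal curvature together with the bounded-from-below rescaled arc length of the first and last free segments produces the final contradiction.
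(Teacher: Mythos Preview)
Your overall strategy (contradiction, arc-length parameterization, Taylor expansion around the first and last contact instants $t_{*,n},t^{*,n}$, and a case split on the size of $\Vert\pi(y_n(b_n))-\pi(y_n(a_n))\Vert_E$) is the same as the paper's, and your Case~1 is fine. The gap is in Case~2.

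You assert that ``the total rescaled arc length $\hat L_n=L_n/\sqrt{\delta_n}$ is bounded above by the $\sqrt{\delta_n}$-order of the three segments.'' The first and last free segments indeed have arc length $\asymp\sqrt{\delta_n}$, but you give no reason why the \emph{middle} constrained-geodesic segment on $\partial\Omega$ has arc length $O(\sqrt{\delta_n})$. All you know is that its endpoints $p_n=y_n(t_{*,n})$ and $q_n=y_n(t^{*,n})$ satisfy $\Vert p_n-q_n\Vert_E=O(\sqrt{\delta_n})$; a constrained geodesic on the compact hypersurface $\partial\Omega$ can have endpoints this close while having arc length bounded away from zero. In that situation $\hat L_n\to\infty$, your Arzel\`a--Ascoli step on a compact $\hat\sigma$-interval is unavailable, and the blow-up limit you describe does not exist. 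Moreover, the normal coordinate chart at $p_n$ and the estimate $\Gamma=O(|q|)=O(\sqrt{\delta_n})$ are only valid in a ball of fixed radius around $p_n$; if the middle segment has length of order one, the curve leaves that ball and the bound $\|d^2\hat y'_n/d\hat\sigma^2\|_E=O(\delta_n)$ breaks down as well.

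This is precisely the scenario the paper's proof isolates and disposes of by a different mechanism. The paper shows that the only way the ratio in \eqref{eq:4.32} can fail to tend to~$1$ is when $y_n(t_n^-)-y_n(t_n^+)\to0$ \emph{and} $\dot y_n(t_n^-)+\dot y_n(t_n^+)\to0$; the second condition, together with $\Vert\ddot y_n\Vert_E\le C$, forces $t_n^+-t_n^-$ to stay bounded away from zero. Passing to the limit one obtains a $\mathcal V^-$-critical curve $y$ on $[t^-,t^+]$ with $y(t^-)=y(t^+)$ and $\dot y(t^-)=-\dot y(t^+)$, and uniqueness for the free and constrained geodesic equations yields the reflection symmetry $y((t^++t^-)-s)=y(s)$, hence $\dot y\big(\tfrac{t^++t^-}{2}\big)=0$, contradicting $g(\dot y,\dot y)\equiv1$. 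Your blow-up cannot replace this step; to repair your argument you would need either an independent bound $t^{*,n}-t_{*,n}=O(\sqrt{\delta_n})$ (which is false in general) or a separate treatment of the case where the middle segment has positive limiting length, which is exactly what the paper does.
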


\begin{proof}
Let $y\in H^1([0,1],\overline\Omega)$ be fixed. If $y$ satisfies
\eqref{eq:eq5.6} then it is of class $C^1$. Moreover $\phi(y(s))\le
0$ on $[a,b]$ so that $g\big(\nabla\phi(y(s)),\dot y(s)\big)=0$ for
all $s$ with $\phi(y(s))=0$. Thanks to the conservation law $g(\dot
y,\dot y)=\text{constant}$ (see Lemma~\ref{thm:lem4.16}), we have
that $\dot y(s)\ne 0$ for all $s\in [a,b]$, since
$\phi(x(a))=-\delta$ and $\phi(x(\bar t))=0$. Denote by $\tilde y$ the
arc-length reparameterization of $y$ and let $[\tilde a,\tilde b]$
be the interval such that $\phi(\tilde y(\tilde a))=\phi(\tilde
y(\tilde b))=-\delta$.  Note that $[\tilde a,\tilde b]$ is uniquely
determined, (up to translations of the affine parameter), since the
strong concavity condition implies $g\big(\nabla\phi(y(a)),\dot
y(a)\big)>0$ and $g\big(\nabla\phi(y(b)),\dot y(b)\big)<0$. In
addition,  the strong concavity condition also ensures the existence
of $\tilde c>0$ such that $|\tilde b-\tilde a|\le\tilde c$ for any
$\tilde y$ as above. Then, for any $y$, we can assume $g(\dot y,\dot
y)=1\,\forall s\in[a,b]$ without loss of generality (observe that
\eqref{eq:4.32} is a condition independent on the parameterization
of $y$).

By contradiction, assume now that the claim does not hold. Then, there exists a  sequence of  intervals $[a_n,b_n]$
of bounded length and a sequence of functions $y_n$ such that ${y_n}_{\vert[a_n,b_n]}$ satisfies \eqref{eq:eq5.6},
$g(\dot y_n,\dot y_n)=1$ on $[a_n,b_n]$, $-\delta_n=\phi(y_n(a_n))=\phi(y_n(b_n))$, ($\delta_n\to 0^+$),
$\phi(y_n(t_n))=0$ for some $t_n\in\left]a_n,b_n\right[$
and (possibly reversing the orientation of $y_n$):
\begin{equation}\label{eq:8.7bis}
\Vert y_n(a_n)-\pi(y_n(b_n))\Vert_{E}>\left(1+\frac{\bar\gamma}2\right)\Vert\pi(y_n(a_n))-\pi(y_n(b_n))\Vert_{E}.
\end{equation}
Since $|b_n-a_n|$ is bounded, up to a translation of the parameter, we can assume $a_n$ and $b_n$ bounded.
Moreover $\overline\Omega$ is compact, so by Lemma \ref{thm:lem4.16}, using local coordinates expression of the covariant derivative
(involving Christoffel symbols, cf. \cite{docarmo}) we obtain the existence of a constant
$C > 0$ independent of $n$ such that
\[
\Vert\ddot y_n(s)\Vert_{E}\le C g(\dot y_n(s), \dot y_n(s)) \text{ for a.e. } s \in [a_n,b_n].
\]
But $g(\dot y_n(s), \dot y_n(s) = 1$ for any $s \in [a_n,b_n]$, then
\[
\Vert\ddot y_n(s)\Vert_{E}\le C   \text{ for a.e. } s \in [a_n,b_n].
\]
Now, let $t_n^-\in\left]a_n,b_n\right[$ be the first instant at which the function $t\mapsto\phi(y_n(t))$ vanishes.
By Taylor expansion we have
\begin{equation}\label{eq:4.33}
\Vert y_n(s)-y_n(t_n^-)-\dot y_n(t_n^-)(s-t_n^-)\Vert_{E}\le\frac C2(s-t_n^-)^2
\end{equation}
for any $s\in[a_n,t_n^-]$. The argument is analogous in the interval $[t_n^+,b_n]$, where $t_n^+$ denotes the last
instant when $\phi(y_n(\cdot))$ vanishes.
Now, the same estimates, applied to $\pi(y_n)$, give the existence of a constant
$C_1>0$ such that:
\[\Vert\pi(y_n(s))-\pi(y_n(t_n^-))-\mathrm d\pi(y_n(t_n^-))[\dot y_n(t_n^-)](s-t_n^-)\Vert_{E}\le C_1(s-t_n^-)^2.\]
But for all $z\in\partial\Omega$, $\pi(z)=z$ and $\mathrm d\pi(z)$ is the identity map, and therefore
\begin{equation}\label{eq:4.34}
\Vert\pi(y_n(s))-y_n(t_n^-)-\dot y_n(t_n^-)(s-t_n^-)\Vert_{E}\le C_1(s-t_n^-)^2
\end{equation}
for all $s\in[a_n,t_n^-]$, and, analogously, for all $s$ in $[t_n^+,b_n]$.
Now
\begin{multline*}
\Vert\pi(y_n(a_n))-\pi(y_n(b_n))\Vert_{E}=\\
\Vert[\pi(y_n(a_n))-y_n(t_n^-)-\dot y_n(t_n^-)(a_n-t_n^-)]+\left[y_n(t_n^-)+\dot y_n(t_n^-)(a_n-t_n^-)\right]\\
-\left[y_n(t_n^+)+\dot y_n(t_n^+)(b_n-t_n^+)\right]+[y_n(t_n^+)+\dot y_n(t_n^+)(b_n-t_n^+)-\pi(y_n(b_n))]\Vert_{E},
\end{multline*}
and, similarly,
\begin{multline*}
\Vert y_n(a_n)-\pi(y_n(b_n))\Vert_{E}=\\
\Vert [y_n(a_n)-y_n(t_n^-)-\dot y_n(t_n^-)(a_n-t_n^-)]+\left[y_n(t_n^-)+\dot y_n(t_n^-)(a_n-t_n^-)\right]\\
-\left[y_n(t_n^+)+\dot y_n(t_n^+)(b_n-t_n^+)\right]+[y_n(t_n^+)+\dot y_n(t_n^+)(b_n-t_n^+)-\pi(y_n(b_n))]\Vert_{E}.
\end{multline*}
The strong concavity assumption implies that $(t_n^--a_n)\to 0$ and $(b_n-t_n^+)\to 0$ as $n\to+\infty$ (since $\delta_n \to 0$).
Then, by \eqref{eq:4.33} and \eqref{eq:4.34} (and their analogues in $[t_n^+,b_n]$) we have
\begin{equation}\label{eq:4.35}
\lim_{n\to+\infty}\frac{\Vert y_n(a_n)-\pi(y_n(b_n))\Vert_{E}}{\Vert\pi(y_n(a_n))-\pi(y_n(b_n))\Vert_{E}}=1
\end{equation}
unless (up to subsequences)
\begin{equation}\label{eq:4.36}
\lim_{n\to+\infty}(y_n(t_n^-)-y_n(t_n^+))=0,\quad\lim_{n\to+\infty}(\dot y_n(t_n^-)+\dot y_n(t_n^+))=0,
\end{equation}
(recall that $a_n-t_n^-<0$ and $b_n-t_n^+>0$)
that is the only case when \eqref{eq:4.35} above cannot be immediately deduced.
Therefore, to conclude the proof ---recalling \eqref{eq:8.7bis}--- it suffices to prove that \eqref{eq:4.36} cannot hold, using the fact
that $g(\dot y_n,\dot y_n)\equiv 1$ on $[a_n,b_n]$. Suppose by contradiction that \eqref{eq:4.36} holds. First, observe that
\[
\Vert\dot y_n(t_n^+)-\dot y_n(t_n^-)\Vert_{E}\le\int_{t_n^-}^{t_n^+}\Vert\ddot y_n(s)\Vert_{E}\,\text ds\le C (t_n^+-t_n^-)
\]
and therefore, by the second property in \eqref{eq:4.36} and the fact
that $g(\dot y_n,\dot y_n)\equiv 1$ on $[a_n,b_n]$, we deduce that
\[
\inf_{n\in\N}(t_n^+-t_n^-)> 0.
\]
 Since the sequence $y_n$
is bounded in $H^{2,\infty}([a_n,b_n],\overline\Omega)$,
$[t_n^-,t_n^+]\subset [a_n,b_n]$ and $b_n$, $a_n$ are bounded, up to
subsequence we deduce the existence of an interval $[t^-,t^+]$ ($t^- < t^+$) and
$y\in H^{2,\infty}([t^-,t^+],\overline\Omega)$ such that (recall we
are assuming  that \eqref{eq:4.36} holds)
\begin{align*}
&y_{\vert[t^-,t^+]}\ \text{satisfies\ } \eqref{eq:eq5.6},\\
&g(\dot y,\dot y)\equiv 1\quad\text{on\ } [t^-,t^+],\\
&y(t^-)=y(t^+), \qquad\text{(by first property in \eqref{eq:4.36})},\\
&\dot y(t^+)=-\dot y(t^-),\qquad\text{(by second property in \eqref{eq:4.36})}.
\end{align*}
Then by the uniqueness of the solution of the Cauchy problem for the "free" geodesic equation ($\Ddt\dot y= 0$)
and for the "constrained" geodesic equation \eqref{eq:4.30bis}, we have:
\[
y((t^++t^-)-s)=y(s)\qquad\forall s\in [t^-,t^+]
\]
since the functions on the two side of the equality are solutions of the same differential equation
with the same initial data. But $y$ is of class $C^1$ while
\[
\dot y\left(\frac{t^++t^-}2\right)=-\dot y\left(\frac{t^++t^-}2\right),
\]
from which we deduce $\dot y\left(\frac{t^++t^-}2\right)=0$, in contradiction with $g(\dot y,\dot y)\equiv 1$ on
$[t^-,t^+]$.
\end{proof}
Using vector fields in $\mathcal V^-(x),\,x\in\mathfrak M$, we shall build a flow $H_0$ moving away from
the set of irregular variationally critical portions of first type,
without increasing the energy functional. To this aim let $\pi,\,\bar\gamma,\,\delta_1,\,\delta_2$ be chosen
as in Lemma \ref{thm:lem4.18} and \ref{thm:lem4.19}, and set
\begin{equation}\label{eq:4.37}
\bar\delta=\min\{\delta_1,\delta_2\}.
\end{equation}
Let us give the following:
\begin{defin}\label{thm:defcostapp}
Let $x\in\mathfrak M_0$, $[a,b]\in \mathcal J_x^{0}$ and $[\alpha,\beta]\subset[a,b]$.
We say that \emph{$x$ is $\bar\delta$-close to $\partial\Omega$ on $[\alpha,\beta]$} if the following situation occurs:
\begin{enumerate}
\item\label{itm:app1} $\phi(x(\alpha))=\phi(x(\beta))=-\bar\delta$;
\item\label{itm:app2} $\phi(x(s))\ge-\bar\delta$ for all $s\in[\alpha,\beta]$;
\item\label{itm:app3}  there exists $s_0\in\left]\alpha,\beta\right[$ such that $\phi(x(s_0))>-\bar\delta$;
\item $[\alpha,\beta]$ is minimal with respect to properties \eqref{itm:app1}, \eqref{itm:app2} and \eqref{itm:app3}.

\end{enumerate}
If $x$ is $\bar\delta$-close to $\partial\Omega$ on $[\alpha,\beta]$, the \emph{maximal proximity} of
$x$ to $\partial\Omega$ on $[\alpha,\beta]$ is defined to be the quantity
\begin{equation}\label{eq:maxprox}
\mathfrak p^x_{\alpha,\beta}=\max_{s\in[\alpha,\beta]}\phi(x(s)).\end{equation}
\end{defin}
Given an interval $[\alpha,\beta]$  where $x$ is $\bar\delta$-close to $\partial\Omega$, we define
the following constant, which is a sort of measure of how much the curve $x_{\vert[\alpha,\beta]}$
fails to flatten along $\partial\Omega$:
\begin{defin}\label{thm:defflat}
The \emph{bending constant} of $x$ on $[\alpha,\beta]$ is defined by:
\begin{equation}\label{eq:bendconst}
\mathfrak b^x_{\alpha,\beta}=
\frac{\max\big\{
\Vert x(\beta)-\pi(x(\alpha))\Vert_{E},\Vert x(\alpha)-\pi(x(\beta))\Vert_{E}\big\}}{\Vert \pi(x(\alpha))-\pi(x(\beta))\Vert_{E}}\in\R^+\cup\{+\infty\},
\end{equation}
where $\pi$ denotes the projection onto $\partial\Omega$ along orthogonal geodesics.
\end{defin}

We observe that $\mathfrak b^x_{\alpha,\beta}=+\infty$ if and only if $x(\alpha)=x(\beta)$.

Let $\bar\gamma$ be as in Lemma~\ref{thm:lem4.18}.
Our next result is the counterpart of the result in Propositions~\ref{thm:51new} and \ref{thm:PS}, and it
will be used to define a flow that averts from paths in
the set of irregular
variationally critical portions of first type: if the bending constant of a path $y_{\vert[\alpha,\beta]}$
is greater than or equal to $1+\bar\gamma$, then the energy functional $f_{\alpha,\beta}$ can be decreased in a
neighborhood of $y_{\vert[\alpha,\beta]}$ keeping the endpoints $y(\alpha)$ and $y(\beta)$
fixed and moving far away from $\partial \Omega$.

First we need the following
\begin{defin}\label{def:summary-interval}
An interval $[\tilde\alpha,\tilde\beta]$ is called summary interval for $x \in \mathfrak M_0$ if it is the smallest interval included in
$[a,b]\in\mathcal I_{x}^{0}$ and including all the intervals $[\alpha,\beta]$ such that
\begin{itemize}
\item $x$ is $\bar\delta$--close to $\partial\Omega$ on $[\alpha,\beta]$,
\item $b^{x}_{\alpha,\beta} \geq 1 + \bar \gamma$.
\end{itemize}
\end{defin}

\begin{prop}\label{thm:prop4.20}
There exist positive constants $\sigma_0\in\left]0,{\bar\delta}/2\right[$,
$\varepsilon_0 \in ]0,\bar \delta - 2\sigma_0[$,
$\rho_0,\,\theta_0$ and $\mu_0$ such that for all
$y\in\mathfrak M_0$, for all $[a,b]\in \mathcal I_y^{0}$ satisfying $\frac12\int_{a}^{b}g(\dot y, \dot y)ds \leq M_0$
and for all $[\tilde\alpha,\tilde\beta]$ summary interval for $y$ including an interval $[\alpha,\beta]$ such that :
\[
y \text{ is $\bar\delta$--close to $\partial\Omega$ on }[\alpha,\beta], \,
\mathfrak b^y_{\alpha,\beta}\ge1+\bar\gamma, \, \mathfrak p^y_{\alpha,\beta}\ge-2\sigma_0,
\]
there exists $V_y\in H_0^1\big([\tilde\alpha,\tilde\beta],\R^N\big)$ with the following property:

for all $z\in H^1([\tilde\alpha,\tilde\beta],\R^N)$ with $\Vert z-y\Vert_{\tilde\alpha,\tilde\beta}\le\rho_0$ it is:
\begin{enumerate}
\item\label{itm:teruno} $V_y(s)=0$ for all $s\in[\tilde\alpha,\tilde\beta]$ such that $\phi(z(s))\le-\bar\delta + \varepsilon_0$;
\item\label{itm:terdue}
$g\big(\nabla\phi(z(s)),V_y(s)\big)\le-\theta_0\Vert V_y\Vert_{\tilde\alpha,\tilde\beta}$, if $s\in[\tilde\alpha,\tilde\beta]$
and $\phi(z(s))\in[-2\sigma_0,2\sigma_0]$
\item\label{itm:tertre} $\int_{\tilde\alpha}^{\tilde\beta} g(\dot z,\Ddt V_y)\,\text dt\le-\mu_0\Vert V_y\Vert_{\tilde\alpha,\tilde\beta}.$
\end{enumerate}
\end{prop}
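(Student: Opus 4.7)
The plan is to mimic the two-step structure of Propositions~\ref{thm:51new} and~\ref{thm:PS}, but working with the cone $\mathcal V^-$ instead of $\mathcal V^+$ and using Lemma~\ref{thm:lem4.19} as the source of the contradiction. First I would fix the existence of $V_y$ for each single curve $y$ by contradiction: assume there exist sequences $\sigma_n\downarrow 0$, $\varepsilon_n\downarrow 0$, $\theta_n\downarrow 0$, $\mu_n\downarrow 0$ together with $y_n\in\mathfrak M_0$, intervals $[a_n,b_n]\in\mathcal I_{y_n}^0$, summary intervals $[\tilde\alpha_n,\tilde\beta_n]$ and inner subintervals $[\alpha_n,\beta_n]$ such that $y_n$ is $\bar\delta$-close to $\partial\Omega$ on $[\alpha_n,\beta_n]$ with $\mathfrak b^{y_n}_{\alpha_n,\beta_n}\ge 1+\bar\gamma$ and $\mathfrak p^{y_n}_{\alpha_n,\beta_n}\ge -2\sigma_n$, and no $V\in H_0^1([\tilde\alpha_n,\tilde\beta_n],\R^N)$ satisfies simultaneously \eqref{itm:teruno}--\eqref{itm:tertre} with those constants. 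The energy bound and Lemma~\ref{thm:lem2.3} give $\tilde\beta_n-\tilde\alpha_n$ bounded away from $0$; using the energy bound $M_0$ again, and passing to an affine common domain as in Remark~\ref{rem:rep}, I would extract $H^1$-weak, uniform limits $\tilde\alpha_n\to\tilde\alpha$, $\tilde\beta_n\to\tilde\beta$, $\alpha_n\to\alpha$, $\beta_n\to\beta$, and $y_n\to y$.

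The second step is to show that this limit curve $y$ must satisfy \eqref{eq:eq5.6} on $[\tilde\alpha,\tilde\beta]$ for the set $\mathcal A=\phi^{-1}(\,]-\bar\delta,+\infty[\,)$. For this I would argue that, if there existed $V\in\mathcal V^-_{\mathcal A}(y)$ with $V(\tilde\alpha)=V(\tilde\beta)=0$ and $\int g(\dot y,\Ddt V)\,\text ds<0$, then the perturbed fields $V_n=V+\lambda_n\,\chi(\phi(y_n))\,(-\nabla\phi(y_n))$, with a cut-off $\chi$ killing $V_n$ where $\phi\le-\bar\delta+\varepsilon_n$ and $\lambda_n\downarrow 0$ chosen appropriately, would realize all three required conditions \eqref{itm:teruno}--\eqref{itm:tertre} for large $n$ (conditions \eqref{itm:teruno}--\eqref{itm:terdue} by construction, and \eqref{itm:tertre} by the strict negativity of the limit integral and Lemma~\ref{thm:lem5.2new}), contradicting the choice of $y_n$. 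Hence $y_{\vert[\tilde\alpha,\tilde\beta]}$ is a $\mathcal V^-_{\mathcal A}$-variationally critical portion; by Lemma~\ref{thm:lem5.3} it is $C^1$ and by Lemma~\ref{thm:lem4.16} it moves at constant speed and satisfies the constrained geodesic equation on $\partial\mathcal A=\phi^{-1}(-\bar\delta)$.

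Now the contradiction closes via Lemma~\ref{thm:lem4.19}. Because $\mathfrak p^{y_n}_{\alpha_n,\beta_n}\ge -2\sigma_n\to 0$, the limit subinterval $[\alpha,\beta]\subset[\tilde\alpha,\tilde\beta]$ satisfies $\phi(y(\alpha))=\phi(y(\beta))=-\bar\delta$ and $\phi(y(\bar t))=0$ at some inner point, so Lemma~\ref{thm:lem4.19} applies with $\delta=\bar\delta\le\delta_2$ and yields
\[
\max\!\left\{\Vert y(\beta)-\pi(y(\alpha))\Vert_E,\,\Vert y(\alpha)-\pi(y(\beta))\Vert_E\right\}\le\Big(1+\tfrac{\bar\gamma}{2}\Big)\Vert\pi(y(\beta))-\pi(y(\alpha))\Vert_E,
\]
i.e.\ $\mathfrak b^y_{\alpha,\beta}\le 1+\bar\gamma/2$. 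On the other hand, uniform convergence of $y_n$ and continuity of $\pi$ on $\phi^{-1}([-\delta_0,0])$ give $\mathfrak b^y_{\alpha,\beta}=\lim\mathfrak b^{y_n}_{\alpha_n,\beta_n}\ge 1+\bar\gamma$ (the denominator in \eqref{eq:bendconst} is bounded away from $0$ because if $\pi(y(\alpha))=\pi(y(\beta))$ the ratio is $+\infty$, still $\ge 1+\bar\gamma$), contradiction.

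Finally, once $V_y$ is produced for each individual $y$, the robustness in $z$ with $\Vert z-y\Vert_{\tilde\alpha,\tilde\beta}\le\rho_0$ is obtained exactly as in Proposition~\ref{thm:PS}: property \eqref{itm:teruno} holds trivially by continuity of $\phi\circ z$ if $\varepsilon_0$ is taken a bit smaller than the threshold used for $y$; property \eqref{itm:terdue} is preserved by $C^0$-closeness of $\nabla\phi(z)$ to $\nabla\phi(y)$ on the $\bar\delta$-close band; and property \eqref{itm:tertre} is preserved via Lemma~\ref{thm:lem5.2new} at the cost of halving $\mu_0$. I expect the main obstacle to be the compactness argument giving the summary interval a non-degenerate limit and, above all, the verification that the limit inner subinterval $[\alpha,\beta]$ still records the same bending lower bound as $[\alpha_n,\beta_n]$ (the denominator in \eqref{eq:bendconst} could a priori collapse), which forces a careful separate analysis of the case $\pi(y(\alpha))=\pi(y(\beta))$ before applying Lemma~\ref{thm:lem4.19}.
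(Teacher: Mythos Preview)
Your approach is correct in spirit and rests on the same key ingredient as the paper (the limit curve is a $\mathcal V^-_{\mathcal A}$--critical portion, which is forbidden by Lemma~\ref{thm:lem4.19}), but the architecture differs from the paper's in one substantial way.

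You propose a two-stage argument modelled on the pair Proposition~\ref{thm:51new}/Proposition~\ref{thm:PS}: first a contradiction argument producing $V_y$ with the three properties \emph{at} $z=y$, then a separate robustness step in $z$ via Lemma~\ref{thm:lem5.2new}. The paper instead folds $\rho_n\to 0$ into the single contradiction hypothesis: it assumes that for \emph{every} $V$ there is some $z$ with $\Vert z-y_n\Vert\le\rho_n$ for which (1)--(3) fail together, then builds (for each fixed cutoff index $k$) a field $V_{n,k}$ that forces (1) and (2) for \emph{all} such $z$, so that (3) must fail for some $\hat z_n$; since $\rho_n\to 0$ one can pass from $\hat z_n$ back to $y_n$ and then to the weak limit $y$. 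This single-stage route avoids having to argue afterwards that the robustness radius $\rho_0$ is uniform in $y$, which in your scheme requires checking that the slack in the inequalities produced in stage one is bounded below independently of $y$ (it is, via the explicit Lipschitz bound in Lemma~\ref{thm:lem5.2new}, so your plan does close).

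Two smaller points where the paper's execution differs from your sketch. First, the paper works on the inner interval $[\alpha_n,\beta_n]$ (extending $V$ by zero to the summary interval) rather than directly on $[\tilde\alpha_n,\tilde\beta_n]$; this localises the argument and makes the compactness step cleaner. Second, in the limit the set $(\phi\circ y)^{-1}(-\bar\delta)$ may thicken to a nondegenerate interval near $\alpha$ or $\beta$, so the paper passes to $\alpha'=\sup C_\alpha$, $\beta'=\inf C_\beta$ and shows $y$ is $\mathcal V^-$--critical on $[\alpha',\beta']$ rather than on $[\alpha,\beta]$; your sketch glosses over this, though you do flag the non-degeneracy issue at the end. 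The bending-constant persistence you worry about is handled in the paper simply by noting that the inequality in \eqref{eq:88} passes to the limit for \emph{every} pair $s_1\in C_\alpha$, $s_2\in C_\beta$, so in particular for $\alpha',\beta'$.
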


\begin{proof}
We argue by contradiction, assuming the existence of sequences
$\sigma_n\to0^+ (\sigma_n<\bar\delta/2),\,
\varepsilon_n \to0^+, \varepsilon_n < \bar \delta -2\sigma_n,
\rho_n\to 0^+,\,\theta_n\to0^+,\,\mu_n\to 0^+,
\,y_n\in\mathfrak M_{0},\,[a_n,b_n]
\in \mathcal I_{y_n}^{0}$ satisfying the property
\[\frac12\int_{a_n}^{b_n}g(\dot y_n,\dot y_n)ds \leq M_0,\]
and
$[\alpha_n,\beta_n]\subset[a_n,b_n]$ such that:
\begin{equation*}
\begin{cases}
&\phi(y(\alpha_n))=\phi(y(\beta_n))=-\bar\delta,\quad\phi(y_n(s))\ge-\bar\delta\,\text{\ in\ }[\alpha_n,\beta_n],\\
&\exists s_n\in[\alpha_n,\beta_n]\,:\,\phi(y_n(s_n))\ge-\sigma_n,\\
&\phi(y_n(s))\le 0,\quad\text{for all}\ s\in[\alpha_n,\beta_n]\,\text{(since\ } [\alpha_n,\beta_n]\subset[a_n,b_n]
\in {\mathcal J}_{y_n}^{0}\text{),}\\
&\max\{\Vert y_n(\alpha_n)-\pi(y_n(\beta_n))\Vert_{E},\,\Vert\pi(y_n(\alpha_n))-y_n(\beta_n)\Vert_{E}\}\ge\\
&\hspace{6cm}(1+\bar\gamma)\Vert\pi(y_n(\alpha_n))-\pi(y_n(\beta_n))\Vert_{E},
\end{cases}
\end{equation*}
with $[\alpha_n,\beta_n]$ minimal with respect to the above properties
and such that the following holds true:

\begin{center}
\begin{minipage}{11cm}
for all $V\in H^1_{0}\big([\alpha_n,\beta_n],\R^N\big)$, there exists $z=z(y_n,V)$
such that\hfill\break $\Vert z-y_n\Vert_{\alpha_n,\beta_n}\le\rho_n$ and the conditions
\begin{enumerate}
\item\label{itm:proofuno} $V(s)=0$ for all $s\in[\alpha_n,\beta_n]$ such that $\phi(z(s))\le-\bar\delta + \varepsilon_n$;
\item\label{itm:proofdue}
$g(\nabla\phi(z(s)),V(s))\le-\theta_n\Vert V\Vert_{\alpha_n,\beta_n}$, if $s\in[\alpha_n,\beta_n]$ and $\phi(z(s))\in[-2\sigma_n,2\sigma_n]$
\item\label{itm:prooftre} $\int_{\alpha_n}^{\beta_n} g(\dot z,\Ddt V)\,\text dt\le-\mu_n\Vert V\Vert_{\alpha_n,\beta_n},$
\end{enumerate}
do not hold true at the same time. (Note that $V$ can be consider equal to $0$ outside $[\alpha_n,\beta_n]$).
\end{minipage}
\end{center}

In the following, we will first build a vector field $V$
satisfying \eqref{itm:proofuno}--\eqref{itm:proofdue} \emph{for
every} $z$ such that $\Vert z-y_n\Vert_{\alpha_n,\beta_n}\le\rho_n$.
Therefore, there must exist $\widehat z=\widehat z(y_n,V)$
such that property \eqref{itm:prooftre} does not hold, namely
\begin{equation}\label{eq:4.40}
\int_{\alpha_n}^{\beta_n}g(\dot{\hat z},\Ddt V)\,\text dt>-\mu_n\Vert V\Vert_{\alpha_n,\beta_n}.
\end{equation}

Towards this goal first note that
\[\int_{\alpha_n}^{\beta_n}g(\dot y_n,\dot y_n)\,\text ds\le
\int_{a_n}^{b_n}g(\dot y_n,\dot y_n)\,\text ds \leq 2M_0,\]
while $|\beta_n-\alpha_n|\ge
|\beta_n-s_n|$, which is bounded away from 0 because
$\phi(x(\beta_n))=-\bar\delta$ and $\phi(x(s_n))=-\sigma_n\to 0$.
Then up to subsequences there exists an interval
$[\alpha,\beta]\subset[0,1]$ with $\alpha<\beta$ and a curve $y\in
H^1([\alpha,\beta],\R^N)$ such that
\[
{y_{n}}_{\vert[\alpha_n,\beta_n]}\to y_{\vert[\alpha,\beta]}\quad \text{\ weakly in\ }H^1 \text{\ and uniformly}.
\]
By the properties satisfied by ${y_{n}}_{\vert[\alpha_n,\beta_n]}$ and uniform convergence, we have
\begin{equation}\label{eq:88}
\begin{cases}
&\phi(y(\alpha))=\phi(y(\beta))=-\bar\delta,\,\phi(y(s))\ge-\bar\delta,\,\forall s\in[\alpha,\beta],\\
&\text{there exists}\ s_0\in\left]\alpha,\beta\right[:\,\phi(y(s_0))=0,\\
&\phi(y(s))\le 0,\quad\forall s\in [\alpha,\beta],\\
&\max\{\Vert y(s_1)-\pi(y(s_2))\Vert_{E},\,\Vert\pi(y(s_1))-y(s_2)\Vert_{E}\}\ge\\
&\hspace{6cm}(1+\bar\gamma)\Vert\pi(y(s_1))-\pi(y(s_2))\Vert_{E},\\
\end{cases}
\end{equation}
for all $s_1\in C_{\alpha}$ and all $s_2\in C_{\beta}$, where $C_\alpha$ (resp.: $C_\beta$) is the connected
component of $(\phi\circ y)^{-1}(-\bar\delta)$ containing $\alpha$ (resp.: $\beta$).

Take $\alpha'=\sup\{s_1\,:\,s_1\in C_\alpha\}$ and $\beta'=\inf\{s_2\,:\,s_2\in C_\beta\}$.
Clearly $\phi(y(s))>-\bar\delta$ for all $s\in\left]\alpha',\beta'\right[$, while $\phi(y(\alpha'))=\phi(y(\beta'))=-\bar\delta$.

Now fix a sequence $V_k\in H_0^1([0,1],\R^N)$  such that (taking $k$ sufficiently large) $V_k(s)=0$
for all $s\not\in[\alpha'+\tfrac1k,\beta'-\tfrac1k]$, $V_k\not\equiv 0$ and  $g\big(V_k(s),\nabla\phi(y(s))\big)\le 0$ for all $s\in[\alpha',\beta']$ with $\phi(y(s))=0$. A simple contradiction argument shows that,
for sufficiently large $n$, if
$\Vert z-y_n\Vert_{\alpha_n,\beta_n}\le\rho_n$, then $V=V_k$ satisfies property \eqref{itm:teruno} with  $[\alpha,\beta]=[\alpha_n,\beta_n]$, because
$\varepsilon_n \rightarrow 0$ and $\phi(y(\delta)) > 0$ in $[\alpha'+\frac1k,\beta'-\frac1k]$.
We can now modify $V_k$ so that also property \eqref{itm:terdue} will be satisfied.

To this aim, set:
\[\begin{split}
\lambda_n=\sup\Big\{\max\big\{ &\frac{g\big(V_k(s),\nabla\phi(w(s))\big)+\theta_n\Vert V_k\Vert_{\alpha_n,\beta_n}}{
g\big(\nabla\phi(y_n(s)),\nabla\phi(w(s))\big)-\theta_n\Vert\nabla\phi(y_n)\Vert_{\alpha_n,\beta_n}},0\big\}:
\\ &  w\in H^1\big([\alpha_n,\beta_n],\R^N\big),\, \Vert y_n-w\Vert_{\alpha_n,\beta_n}\le\rho_n,\\ &
s\in[\alpha_n,\beta_n],\,\phi(w(s))\in\left[-2\sigma_n,2\sigma_n\right]\Big\}.
\end{split}
\]
The supremum above is finite for sufficiently large $n$, due to the facts that
$\nabla\phi\ne0$ in $\partial\Omega$, and that $\theta_n\to 0$ and $\Vert y_n\Vert_{\alpha_n,\beta_n}$ is bounded. Moreover, since $y_n$ is uniformly
convergent to $y$,
$\Vert y_n-w\Vert_{L^\infty[\alpha_n,\beta_n],\R^m)}\to 0$ and $g(V_k(s),\nabla\phi(y(s)))\le 0,\,\forall s\in [\alpha,\beta]$ such that $\phi(y(s))=0$,
we see that
\begin{equation}\label{eq:eq5.23}
\lim_{n\to\infty}\lambda_n=0.
\end{equation}
For all $k$ sufficiently large, consider the piecewise affine function
$\chi_k:[0,1]\to\R$ such that:
\[\chi_k(s)=\left\{\begin{array}{cl}0,&\text{if $s\le\alpha'+\frac1k$;
\vphantom{$\displaystyle\int$}}
\\
1,&\text{if $s\in\left[\alpha'+\frac2k,\beta'-\frac2k\right]$;}\\
0&\text{if $s\ge\beta'-\frac1k$,\vphantom{$\displaystyle\int$}}
\end{array}\right.\]
and the vector field $V_{n,k}\in H^1_0\big([\alpha_n,\beta_n],\R^N\big)$:
\[V_{n,k}(s)=V_k(s)-\lambda_n\chi_{k}(s)\nabla\phi(y_n(s))\quad\text{for $s\in[\alpha_n,\beta_n]$}.\]
Clearly, $\sup\limits_n\Vert V_{n,k}\Vert_{\alpha_n,\beta_n}<+\infty$, and $V_{n,k}\not\equiv0$
if $n$ is sufficiently large.
Note that for $k$ and $n$ sufficiently large it is
\[\phi(z_n(s)) \in [-2\sigma_n,2\sigma_n]\quad\Rightarrow\quad s\in\left[\alpha'+\tfrac2k,\beta'-\tfrac2k\right],\]
therefore (by the choice of $\lambda_n$ and $\chi_k$) $V_{n,k}$ satisfies property \eqref{itm:terdue}.
Moreover, since $V_k$ satisfies \eqref{itm:teruno} (for any $k$ sufficiently large) this is true also for $V_{n,k}$.

Then there exists $\hat z$ such that $\Vert\hat z-y_n\Vert_{\alpha_n,\beta_n}\le\rho_n\to 0$ and
\begin{equation}\label{eq:replace}
\int_{\alpha_n}^{\beta_n}g\big(\dot{\hat z},\Ddt V_{n,k}\big)\,\mathrm dt>
-\mu_n\Vert V_{n,k}\Vert_{\alpha_n,\beta_n}
\end{equation}
and since $\Vert V_{n,k}\Vert_{\alpha_n,\beta_n}$ is bounded,
\[
\liminf_{n\to+\infty}\int_{\alpha_n}^{\beta_n}g\big(\dot{\hat z},\Ddt V_{n,k}\big)\,\text dt\ge 0.
\]
Now, $\Vert\hat z-y_n\Vert_{\alpha_n,\beta_n}\to 0$, then we obtain
\[\liminf_{n\to\infty}\int_{\alpha_n}^{\beta_n}g\big(\dot y_n,\Ddt V_{n,k}\big)\,\mathrm dt\ge0,\]
and in addition $\lambda_n\to 0$, while $\nabla\phi(y_n)$ is bounded in $H^1([\alpha_n,\beta_n],\R^N)$; therefore $V_{n,k}$ tends to
$V_k$ in $H^1([\alpha_n,\beta_n],\R^N)$, so
\[
\liminf_{n\to\infty}\int_{\alpha_n}^{\beta_n}g(\dot y_n,\Ddt V_k)\,\text dt\ge 0.
\]
Moreover $V_k=0$ outside $[\alpha'+\tfrac1k,\beta'-\tfrac1k]$ and $[\alpha_n,\beta_n]\supset[\alpha'+\tfrac1k,\beta'-\tfrac1k]$. Therefore
\[
\liminf_{n\to\infty}\int_{\alpha'+1/k}^{\beta'-1/k}g(\dot y_n,\Ddt V_k)\,\text dt\ge 0
\]
and, since $\dot y_n\rightharpoonup\dot y$ in $L^2\big([\alpha'+\tfrac1k,\beta'-\tfrac1k],\R^N\big)$, we get
\[
0\le\lim_{n\to\infty}\int_{\alpha'+1/k}^{\beta'-1/k}g(\dot y_n,\Ddt V_k)\,\text dt=\int_{\alpha'+1/k}^{\beta'-1/k}g(\dot y,\Ddt V_k)\,\text dt=\int_{\alpha'}^{\beta'}g(\dot y,\Ddt V_k)\mathrm dt
\]
because $V_k=0$ on $[\alpha',\alpha'+\tfrac1k]\cup[\beta'-\tfrac1k,\beta']$.
Finally, by arbitrariness of the interval $[\alpha'+\tfrac1k,\beta'-\tfrac1k]$, we have
\[
\int_{\alpha'}^{\beta'}g(\dot y,\Ddt V)\,\text dt\ge 0,
\]
for all $V\in H_0^1([\alpha',\beta'],\R^N)$ such that
\[g(V(s),\nabla\phi(y(s)))\le 0\,\text{ for all $s$ satisfying }\phi(y(s))=0.\]
But this is contradiction with \eqref{eq:88} and Lemma \ref{thm:lem4.19}, and the proof is complete.
\end{proof}

\begin{rem}\label{rem:sigma1}
To obtain flows moving far from curves having topologically non essential intervals (that we define below) it is crucial to fix a constant $\sigma_1 \in ]0,\sigma_0]$
such that
\[
\sigma_1 \leq \frac27\rho_0\theta_0,
\]
where $\rho_0, \, \theta_0$ are given by Proposition  \ref{thm:prop4.20}.
\end{rem}

\medskip
Proposition \ref{thm:prop4.20} is a crucial ingredient to define the class of the admissible homotopies, whose elements will avoid
irregular variationally critical
points of first type. The description of this class is based of the notion of topologically non essential interval given below.

\medskip
Let $\bar\delta$ be as in \eqref{eq:4.37}, $\bar\gamma$ as in Lemma \ref{thm:lem4.18}
and $\sigma_1$ as in Remark \ref{rem:sigma1}.
\begin{defin}\label{def:4.21}
Let $y\in \mathfrak M$ be fixed. An interval $[\alpha,\beta]\subset[a,b]\in{\mathcal I}_{y}^0$, is called
\emph{topologically not essential interval (for $y$)} if  $y$ is $\bar\delta$-close to $\partial\Omega$ on
$[\alpha,\beta]$, with $\mathfrak p^y_{\alpha,\beta}\ge-\sigma_1$ and
$\mathfrak b^y_{\alpha,\beta}\ge(1+\tfrac32\bar\gamma)$.
\end{defin}

\begin{rem}\label{rem:4.22}
By Lemma \ref{thm:lem4.18} the intervals $[\alpha,\beta]$  containing cusp intervals
$[t_1,t_2]$ of curves $x$, which are
irregular variationally critical portion of first type, and satisfying $\Theta_x(t_1,t_2)\ge d_0$ are topologically not essential intervals with
$\mathfrak p_{\alpha,\beta}^x=0$
and $\mathfrak b_{\alpha,\beta}^x\ge 1+2\bar\gamma$.
This fact will allow us to use  Proposition \ref{thm:prop4.23} in section \ref{sec:firstdeflemma} and move away from the set of irregular variationally critical portions of first type
without increasing the value of the energy functional.
\end{rem}

\section{The admissible homotopies}\label{sec:homotopies}

In the present section
we shall list the properties of the admissible homotopies used
to prove existence result.
This choice is clearly crucial for the proof: the notion of topological critical level,
that we shall use also in this paper to obtain the existence result,
depends indeed by the choice of the admissible homotopies.

We shall consider continuous homotopies
$h:[0,1]\times\Dcal\to\mathfrak M$ where
$\mathcal D$ is a closed $\Rcal$--invariant subset of $\mathfrak C$. (However the following notions can be done at the same way when $h$ is defined
in any $\mathcal R$--invariant subset of $\mathfrak M$, not necessarily included in $\mathfrak C$).

Recall that $\mathfrak C$ and $\mathfrak C_0$ are defined in \eqref{eq:2.6bis}. The following basic properties for our admissible homotopies
holds:

\begin{equation}\label{eq:numero1}
h(0,\cdot) \text{ is the inclusion of } \Dcal
\text{ in }\mathfrak M;
\end{equation}

\begin{equation}\label{eq:numero2}
h(\tau,\gamma) \text{ is a constant curve for all }
\tau\in[0,1], \text{ for all } \gamma\in\mathfrak C_0\cap\Dcal;
\end{equation}

\begin{equation}\label{eq:numero3}
h \text{ is $\Rcal$--equivariant, namely }
h(\tau,\Rcal\gamma) = \Rcal h(\tau,\gamma) \text{ for all }\tau \in [0,1], \gamma \in \mathcal D.
\end{equation}

The homotopies that we shall use are of 3 types: outgoing homotopies, reparameterizazions and ingoing homotpies.
They can be described in the following way.

\begin{defin}\label{thm:tipoA}
Let $0 \leq \tau' < \tau'' \leq 1$. We say that $h$ is of type $A$
in $[\tau',\tau'']$ if it satisfies the following properties for all $\tau_0 \in [\tau',\tau'']$, for all $s_0 \in [0,1]$, for all $x \in \mathcal D$:
\begin{enumerate}
\item\label{unico}
if $\phi(h(\tau_0,x)(s_0)) = 0$, then
$\tau \mapsto \phi(h(\tau,x)(s_0))$ is strictly increasing in a neighborhood of $\tau_0$.
\item\label{doppio} for all $\delta \in ]0,\delta_0]$, if $[\alpha_{\tau_0},\beta_{\tau_0}]$ is a $\delta$--interval for $h(\tau_0,x)$,
$s \in [\alpha_{\tau_0},\beta_{\tau_0}]$
and $\phi(h(\tau_0,x)(s_0))=-\delta$, then $\tau \mapsto \phi(h(\tau,x)(s_0))$ is strictly increasing in a neighborhood of $\tau_0$.
\end{enumerate}
\end{defin}

\begin{rem}\label{rem:monotonia-intervalli}
It is relevant to observe that, by property above of
Definition \ref{thm:tipoA}, if $[a_{\tau},b_{\tau}]$ denotes any
interval in $\mathcal I_{h(\tau,\gamma)}$ we have:
\begin{equation*}\label{eq:4.35f}
\tau' \le\tau_1<\tau_2\le\tau''
\text{ and }
[a_{\tau_1},b_{\tau_1}]\cap
[a_{\tau_2},b_{\tau_2}]\ne\emptyset \Longrightarrow
[a_{\tau_2},b_{\tau_2}] \subset
[a_{\tau_1},b_{\tau_1}].
\end{equation*}
\end{rem}

\begin{defin}\label{thm:tipoB}
Let $0 \leq \tau' < \tau'' \leq 1$. We say that $h$ is of type $B$
in $[\tau',\tau'']$ if it satisfies the following property:
\begin{multline*}
\exists \Lambda : [\tau',\tau''] \times \mathcal H_0^1([0,1],[0,1]) \rightarrow [0,1] \text{ continuous and such that }\\
\Lambda(\tau,\gamma)(0)=0,\,\Lambda(\tau,\gamma)(1)=1, \, \forall \tau \in [\tau',\tau''],\,\forall \gamma \in \mathcal D,\\
s\mapsto\Lambda(\tau,\gamma)(s) \text{is strictly increasing in }[0,1], \; \forall \tau \in [\tau',\tau''],\forall \gamma \in \mathcal D, \\
\Lambda(0,\gamma)(s) = s \text{ for any }\gamma \in \mathcal D, s \in [0,1] \text{ and }\\
h(\tau,\gamma)(s) = (\gamma \circ \Lambda(\tau,\gamma))(s) \;
\forall \tau \in [\tau',\tau''], \forall s \in [0,1], \forall \gamma
\in \mathcal D.
\end{multline*}
\end{defin}

\begin{defin}\label{thm:tipoC}
Let $0 \leq \tau' < \tau'' \leq 1$. We say that $h$ is of type $C$
in $[\tau',\tau'']$ if it satisfies the following properties:
\begin{enumerate}
\item\label{eq:CI}
$h(\tau',\gamma)(s) \neq \overline\Omega \Rightarrow h(\tau,\gamma)(s) =
h(\tau',\gamma)(s)$ for any $\tau \in [\tau',\tau'']$;
\item\label{eq:CIbis}
$h(\tau',\gamma)(s) \in \Omega
\Rightarrow h(\tau,\gamma)(s) \in \Omega$
for any $\tau \in [\tau',\tau'']$;
\end{enumerate}
\end{defin}

\bigskip
The interval $[0,1]$ where $\tau$ lives will be partitioned in the
following way:
\begin{multline}\label{eq:partizione}
\text{There exists a partition of the interval } [0,1],\; 0=\tau_{0} < \tau_1 < \ldots <\tau_k =1 \text{ such that}\\
\text{ on any interval } [\tau_i, \tau_{i+1}], i=0,\ldots,k-1,
\text{ the homotopy $h$ is of type A,B, or C.}
\end{multline}

Moreover, in order to move far from topologically not essential
intervals (cf. Definition \ref{def:4.21}) we need the
following further property:

\begin{multline}\label{eq:numero8}
\text{for any }[a,b]\in {\mathcal J}_{h(\tau,\gamma)} \text{ and for
all }
[\alpha,\beta]\subset[a,b] \text{ topologically non essential, }\\
\phi(h(\tau,\gamma)(s))\le-\frac{\sigma_1}2 \text {for all }s
\in[\alpha,\beta],
\end{multline}
where ${\sigma_1}$ is defined in Remark \ref{rem:sigma1}.
\bigskip

We finally define the following classes of admissible homotopies:

\begin{multline}\label{eq:class0}
\mathcal H = \{(\mathcal D, h): \mathcal D \text{ is closed, $\mathcal R$--invariant subset of $\mathfrak C$ and } \\
h:[0,1] \times \mathcal D \rightarrow \mathfrak M \text{ satisfies
\eqref{eq:numero1}-\eqref{eq:partizione}}\},
\end{multline}

\begin{equation}\label{eq:class1}
{\mathcal H}_1 = \{(\mathcal D, h) \in \mathcal H \text{ such that }
h:[0,1] \times \mathcal D \rightarrow \mathfrak M \text{ satisfies also \eqref{eq:numero8} }\}.
\end{equation}

\begin{rem}\label{rem:6.6}
Observe that, thanks to the properties of $\mathfrak C$, denoting by
$h_0$ the constant identity homotopy it is $(\mathfrak C,h_0) \in
\mathcal H_1$.
\end{rem}

\bigskip

Homotopies of type A will be used far from variationally critical portions, homotopies of type B nearby variationally critical portions
of II type, while homotopies of type C will be used nearby variationally critical portions
of I type.

We shall use the following functional defined
for any $(\mathcal D,h) \in \mathcal H$:
\begin{equation}\label{eq:funzionepreparatoria}
{\mathcal F}(\mathcal D,h) =
\sup\{\frac{b-a}2\int_{a}^{b}g(\dot y,\dot y)\,\mathrm ds:
y=h(1,x), x\in{\mathcal D}, [a,b] \in {\mathcal I}_y\}.
\end{equation}

\begin{rem}\label{rem:7.0}
It is to pointed
out that any $\tfrac{(b-a)}2\int_a^bg(\dot y,\dot y)\,\text dt$ coincides with  the integral $\tfrac12\int_0^1g(\dot y_{a,b},\dot y_{a,b})\,
\mathrm dt$, where $y_{a,b}$ is the affine reparameterization of $y$ on the interval $[0,1]$.
\end{rem}

Given continuous maps $h_1:[0,1]\times F_1\to\mathfrak M$ and
$h_2:[0,1]\times F_2\to\mathfrak M$ such that $h_1(1,F_1)\subset
F_2$, then we define the \emph{concatenation} of $h_1$ and $h_2$ as
the continuous map $h_2\star h_1:[0,1]\times F_1\to\Lambda$ given by
\begin{equation}\label{eq:defconcatenationhomotop}
h_2\star h_1(t,x)=
\begin{cases}
h_1(2t,x),&\text{if\ } t\in[0,\tfrac12],\\
h_2(2t-1,h_1(1,x)),&\text{if\ } t\in[\tfrac12,1].
\end{cases}
\end{equation}

\section{A deformation lemma with homotopies of type A}\label{sec:first}
In this section we deal with homotopy of type A. From a technical point of view this is the more difficult case. For this reason
our approach to deformation lemmas starts with the function al $\mathcal F$ defined just on the class $\mathcal H$, instead of $\mathcal H_1$.

The proof of Proposition \ref{thm:prop4.17f} below is based on the construction of integral flows of vector fields in $\mathcal V^{-}$,
that can be carried on thanks to the results of Propositions
\ref{thm:51new} and \ref{thm:PS}.

It is important to point out that among the energy integrals used to define the functional $\mathcal F$
it is possible to have some energy integral that is not decreasing along the flow. This is the
reason of the use of Lemma \ref{thm:differenze}. It will allow us to obtain a global flow where $\mathcal F$ realizes
a smaller value.

\bigskip

Now for all $[a,b]\subset [0,1]$ let $\mathcal Z_{a,b}$ be as in \eqref{eq:defZab}.
\begin{prop}\label{thm:prop4.17f}
Fix $c_1 \in [\frac12(\frac{3\delta_0}{4K_{0}})^2,M_0[$ and $r > 0$. Then, there exists $\varepsilon_1=\varepsilon_1(r,c_1)\in ]0,\frac{c_1}4[$
such that for any $\varepsilon \in ]0,\varepsilon_1]$, for any $(\mathcal D,h) \in \mathcal H$  satisfying:
\begin{equation}\label{eq:4.28f}
\mathcal F(\mathcal D,h) \leq c_1 + \varepsilon
\end{equation}
and
\begin{multline}\label{eq:4.28ff}
\big\Vert y-x_{|[a,b]}\big\Vert_{a,b} \geq r \\
\text{ for any }y\in\mathcal Z_{a,b}, \text{ for any }
x = h(1,\xi),  \xi \in {\mathcal D}, \text{ such that }\\
\frac{b-a}2\int_{a}^{b}g(\dot x,\dot x)\,\mathrm ds \geq c_1-\varepsilon, \text{ where }[a,b] \in \mathcal I_x.
\end{multline}
there exists a continuous map
$H_\varepsilon:[0,1]\times h(1,\mathcal D)\to\mathfrak M$ of type A in $[0,1]$ with the following properties:
\smallskip

\begin{enumerate}
\item\label{itm:1lemdef1} $(\mathcal D,H_\varepsilon \star h)\in\mathcal H$;

\item\label{itm:1lemdef2}  $\mathcal F(\mathcal D,H_\varepsilon \star h)\le c_1 - \varepsilon$;

\item\label{itm:1lemdef3} there exists $T_\varepsilon>0$, with
$T_\varepsilon\to0$ as $\varepsilon\to0$, such
that
for any $z \in h(1,\mathcal D)$
$\Vert H_\varepsilon(\tau,z)-z\Vert_{a,b}
\le\tau T_\varepsilon$ for all $\tau\in[0,1]$, for all $[a,b] \in {\mathcal I}_z$.
\end{enumerate}
\end{prop}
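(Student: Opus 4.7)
The plan is to construct $H_\varepsilon$ as the time-$\tau$ integral flow on $h(1,\mathcal D)$ of a suitably patched ``pseudo-gradient'' vector field whose local pieces are the fields provided by Propositions~\ref{thm:51new} and~\ref{thm:PS}. First I would fix $\varepsilon_1 \in \bigl]0,\min\{c_1/4, E(r)\}\bigr[$, with $E(r)$ as in~\eqref{eq:5.14bis}, reserving the right to shrink $\varepsilon_1$ later to absorb a few continuity losses that will appear when controlling how maximal intervals evolve along the flow.

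For each $z = h(1,\xi)\in h(1,\mathcal D)$ and each ``high-energy'' interval $[a,b]\in\mathcal I_z$ with $\tfrac{b-a}{2}\int_a^b g(\dot z,\dot z)\,\mathrm ds \ge c_1-\varepsilon$, hypothesis~\eqref{eq:4.28ff} together with $c_1\ge\tfrac12\bigl(\tfrac{3\delta_0}{4K_0}\bigr)^2$ allows me to invoke Proposition~\ref{thm:51new}, producing a vector field $V_z^{[a,b]}$ on $[a,b]$; extending it as in Proposition~\ref{thm:PS} yields a $\rho(r)$-neighborhood in the $L^\infty$-topology and a parameter $\Delta(z,[a,b])>0$ on which the conclusions~\eqref{itm:PS15-bis}--\eqref{itm:PS5f} hold. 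By compactness of $h(1,\mathcal D)$ I choose finitely many such neighborhoods covering it, and a continuous $\mathcal R$-symmetric partition of unity, which (using Remark~\ref{rem:equivarianza} to ensure $V_{\mathcal R z}=\mathcal R V_z$) glues the local fields into a continuous $\mathcal R$-equivariant vector field $\mathcal W:h(1,\mathcal D)\to H^1([0,1],\R^N)$; one then defines $H_\varepsilon$ by $\tfrac{\mathrm d}{\mathrm d\tau}H_\varepsilon = T_\varepsilon\,\mathcal W(H_\varepsilon)/\|\mathcal W(H_\varepsilon)\|_{0,1}$ with $H_\varepsilon(0,\cdot)=\mathrm{id}$, and with $T_\varepsilon$ of order $\varepsilon/\mu(r)$.

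Type~A behaviour in the sense of Definition~\ref{thm:tipoA} should then be immediate: items~\eqref{itm:51new-bis}--\eqref{itm:51new-a} of Proposition~\ref{thm:51new}, propagated to a neighborhood of $x$ by Proposition~\ref{thm:PS} and thence through the partition of unity, force $\tfrac{\mathrm d}{\mathrm d\tau}\phi(H_\varepsilon(\tau,z)(s_0))>0$ at every $s_0$ for which $\phi(H_\varepsilon(\tau,z)(s_0))=0$ or $s_0$ is an endpoint of a $\delta$-interval for $H_\varepsilon(\tau,z)$. In particular, by Remark~\ref{rem:monotonia-intervalli}, any maximal interval $[a_\tau,b_\tau]\in\mathcal I_{H_\varepsilon(\tau,z)}$ only shrinks with $\tau$. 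The displacement bound~\eqref{itm:1lemdef3} is automatic, with $T_\varepsilon\to 0$ as $\varepsilon\to 0$, and~\eqref{itm:1lemdef1} follows because the concatenation $H_\varepsilon\star h$ simply appends a type-A block to the type A/B/C partition of~$h$, while the $\mathcal R$-equivariance and the fixing of $\mathfrak C_0\cap\mathcal D$ pass through by construction of~$\mathcal W$.

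The main obstacle is the energy estimate~\eqref{itm:1lemdef2}. For a maximal interval $[a_\tau,b_\tau]\in\mathcal I_{H_\varepsilon(\tau,z)}$, item~\eqref{itm:PS5f} of Proposition~\ref{thm:PS} gives, provided the $\tau=0$ ancestor $[a_0,b_0]\supset[a_\tau,b_\tau]$ is ``high-energy'' (so $V_z$ was produced on it), the decay rate $\tfrac{\mathrm d}{\mathrm d\tau}\bigl[\tfrac{b_\tau-a_\tau}{2}\int_{a_\tau}^{b_\tau}g(\dot H_\varepsilon,\dot H_\varepsilon)\,\mathrm ds\bigr]\le -\tfrac{\mu(r)}{2}\|V_z\|_{a_0,b_0}\cdot T_\varepsilon/\|\mathcal W\|_{0,1}$, which for $T_\varepsilon\sim\varepsilon/\mu(r)$ produces a net drop of at least $2\varepsilon$. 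The delicate point is intervals $[a_\tau,b_\tau]$ whose ancestor $[a_0,b_0]$ was \emph{not} high-energy: one must preclude that the reparameterised energy $\tfrac{b_\tau-a_\tau}{2}\int_{a_\tau}^{b_\tau}g(\dot H_\varepsilon,\dot H_\varepsilon)\,\mathrm ds$ jumps through the threshold $c_1-\varepsilon$ from below as the interval shrinks. This is where a uniform-continuity argument (of exactly the kind announced by the paper to control differences of energies on nested intervals) is required; it relies on the uniform $T_\varepsilon$-smallness of the displacement, the monotonicity $b_\tau-a_\tau\le b_0-a_0$, and the fact that $\varepsilon_1<E(r)$, so that ancestor intervals with energy close to $c_1-\varepsilon$ are still captured by the high-energy case. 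Shrinking $\varepsilon_1$ further if necessary to absorb the resulting error yields $\mathcal F(\mathcal D,H_\varepsilon\star h)\le c_1-\varepsilon$, completing the argument.
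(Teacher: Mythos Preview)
Your overall architecture matches the paper's: local descent fields from Propositions~\ref{thm:51new}--\ref{thm:PS}, glued by an $\mathcal R$-symmetric partition of unity, then integrated for a short time. However, there are two genuine gaps.

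\medskip

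\textbf{The missing $\nabla\phi$ correction.} Your field $\mathcal W$ is built only from the $V_z^{[a,b]}$'s, and you then \emph{normalize} by $\|\mathcal W\|_{0,1}$. This fails on two counts. First, at curves $z\in h(1,\mathcal D)$ with no high-energy interval, every local piece $V_{x_j}^{[a,b]}$ vanishes on the relevant portions and $\mathcal W(z)=0$, so the normalization is undefined. Second, and more seriously, even where $\mathcal W\neq 0$, Proposition~\ref{thm:PS} only guarantees $g(\nabla\phi(z(s)),V_x(s))\ge \tfrac{\theta(r)}{2}\|V_x\|_{a,b}$ at points $s$ lying in the windows $[a-\Delta(x),b+\Delta(x)]$ attached to the \emph{high-energy} intervals of $x$; at a boundary-touching point $s_0$ belonging to a \emph{low-energy} interval of $z$, the convex combination of the $V_{x_j}$'s gives no sign information at all, so the strict increase required by Definition~\ref{thm:tipoA}\eqref{unico} is not ``immediate''. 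The paper repairs both issues simultaneously by adding a small term $\lambda\nabla\phi(\eta)$ (with $\lambda>0$ chosen later, see~\eqref{eq:vectorfield}): this makes the $\phi$-derivative strictly positive at every point with $\phi=0$ (Step~4), avoids any normalization, and is harmless for the energy estimate because its contribution is $O(\lambda)$ and is absorbed in Steps~6--7.

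\medskip

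\textbf{The ``uniform-continuity'' argument is not enough.} For the low-energy ancestors and, more generally, for any interval $[a_\tau,b_\tau]$ that drifts away from its original $[a_{x_i},b_{x_i}]$, the paper does \emph{not} argue by continuity. It uses a dichotomy: either $\mathcal D(x_i,a_\tau,b_\tau,a_{x_i},b_{x_i})\le E(r)$, in which case Proposition~\ref{thm:PS}\eqref{itm:PS5f} gives the decay rate $-\mu(r)/2$; or $\mathcal D(\cdot)>E(r)$, in which case Lemma~\ref{thm:differenze} (applied with $K_*=E(r)$) forces an outright drop of the energy by a fixed amount $c_1E(r)/(32M_0)$, after which Lemma~\ref{thm:controllocrescita} bounds any subsequent growth over the remaining short time. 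Your sketch conflates these two mechanisms into ``uniform continuity plus shrinking $\varepsilon_1$'', but the second branch is not a small-error phenomenon: the interval $[a_\tau,b_\tau]$ can differ substantially from $[a_{x_i},b_{x_i}]$, and the point is precisely that this large discrepancy already costs a definite amount of energy. Relatedly, you do not address why the flow stays in $\mathfrak M$ (i.e.\ why $\tfrac12\int_{a_\tau}^{b_\tau}g(\dot y_\tau,\dot y_\tau)\,\mathrm ds<M_0$ persists); the paper devotes Step~6 to this, again via the $E(r)$-dichotomy and Lemma~\ref{thm:controllocrescita}, and it requires a separate choice of $\lambda$ depending on the gap $M_0-M_{h,\mathcal D}$.
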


For the proof of Proposition \ref{thm:prop4.17f} some preliminary results are needed. To state the first Lemma observe that, because of the compactness
$\phi^{-1}(]-\infty,\delta_0])$, analogously to \eqref{eq:5.11new}, there exists constant $K_1, K_2, K_3 \in \mathbb{R}^+$ such that
\begin{multline}\label{eq:kappas}
\vert g_x(v_1,v_1)-g_z(v_2,v_2)\vert\le K_1\Vert x-z\Vert_E\,{\Vert v_1\Vert_E}^2+K_2\Vert v_1-v_2\Vert_E\,\Vert v_1\Vert_E\\
+K_3\Vert v_1-v_2\Vert_E\,\Vert v_2 \Vert_E \text{ for any }x,z\in\phi^{-1}(]-\infty,\delta_0]) \text{ and }v_1,v_2 \in \mathbb{R}^N.
\end{multline}

Set

\begin{equation}\label{eq:kappa0}
\widehat K = \frac{K_1}{\ell_0}(M_0+1) + \frac{K_2}{\sqrt{\ell_0}}\sqrt{M_0+1} + \frac{K_3}{\sqrt{\ell_0}}\sqrt{M_0},
\end{equation}
where $\ell_0$ is given in
\eqref{eq:5.10bisnew}. We have

\begin{lem}\label{thm:differenze}
Fix $K_{*} > 0$ and $\rho_{*} = \frac{c_{1}K_{*}}{32M_0\widehat K}$. Fix $x \in \mathfrak M$ and $[a_x,b_x] \in \mathcal I_x$ (and therefore
$\int_{a_x}^{b_x}\frac12g(\dot x, \dot x)ds < M_0)$.
Take $[\alpha_x,\beta_x] \supset [a_x,b_x]$ and included in $[0,1]$ such that:
\begin{equation}\label{eq:seconda}
\int_{\alpha_x}^{a_x}\frac12g(\dot x, \dot x)ds \leq \frac{K_{*}}4, \int_{b_x}^{\beta_x}\frac12g(\dot x, \dot x)ds \leq \frac{K_{*}}4,
\int_{\alpha_x}^{\beta_x}\frac12g(\dot x, \dot x)ds \leq M_0 + 1
\end{equation}
\begin{equation}\label{eq:terza}
(a_x - \alpha_x)(M_0 + \frac{K_{*}}4) \leq  \frac{c_{1}K_{*}}{16M_0}, \; (\beta_x - b_x)(M_0 + \frac{K_{*}}4) \leq  \frac{c_{1}K_{*}}{16M_0}.
\end{equation}
Take $z \in \mathfrak M$ and $[a_z,b_z]$ satisfying:
\begin{equation}\label{eq:primaz}
[a_z,b_z] \subset [\alpha_x,\beta_x] \text{ and }[a_z,b_z] \cap [a_x,b_x] \neq \emptyset,
\end{equation}
\begin{equation}\label{eq:secondaz}
D(x,a_z,b_z,a_x,b_x) > K_{*} \text{ (cf. \eqref{eq:defDxalfabeta})},
\end{equation}
\begin{equation}\label{eq:terzaz}
\Vert z - x\Vert_{a_z,b_z} < \rho_{*},
\end{equation}
(so by \eqref{eq:1.10bis} we have also $\Vert z - x\Vert_{L^{\infty}([a_z,b_z],\mathbb{R}^N)} < \rho_{*}$).

Then the following property is satisfied:
\begin{equation}\label{eq:7.1ter}
\frac12\int_{a_z}^{b_z}g(\dot z,\dot z)\,\mathrm ds\le\frac12\int_{a_x}^{b_x}g(\dot x,\dot x)\,\mathrm ds -\frac{K_*}8.
\end{equation}
Moreover if
\begin{equation}\label{eq:prima}
(b_x - a_x)\int_{a_x}^{b_x}\frac12g(\dot x, \dot x)ds \geq \frac{c_1}2,
\end{equation}
we also have
\begin{equation}\label{eq:7.1bis}
\frac{b_z-a_z}2\int_{a_z}^{b_z}g(\dot z,\dot z)\,\mathrm ds\le\frac{b_x-a_x}2\int_{a_x}^{b_x}
g(\dot x,\dot x)\,\mathrm ds - \frac{c_1K_{*}}{32M_0}.
\end{equation}
\end{lem}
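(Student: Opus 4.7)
The plan for Lemma \ref{thm:differenze} is to bound $\tfrac12\int_{a_z}^{b_z}g(\dot z,\dot z)\,\mathrm ds$ in two stages: first I would replace $z$ by $x$ using the closeness estimate \eqref{eq:terzaz} together with the metric comparison \eqref{eq:kappas}, and then compare $\tfrac12\int_{a_z}^{b_z}g(\dot x,\dot x)\,\mathrm ds$ to $E_x:=\tfrac12\int_{a_x}^{b_x}g(\dot x,\dot x)\,\mathrm ds$ by a short case analysis on the four possible positional configurations of $[a_z,b_z]$ relative to $[a_x,b_x]$, exploiting $D(x,a_z,b_z,a_x,b_x)>K_*$ and the upper bounds \eqref{eq:seconda}. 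Since $[a_z,b_z]\cap[a_x,b_x]\ne\emptyset$, each of $I_{a_x,a_z}$, $I_{b_x,b_z}$ lies in a single symmetric-difference component. The configuration $[a_x,b_x]\subset[a_z,b_z]$ is ruled out, because both intervals would then be contained in $[\alpha_x,a_x]\cup[b_x,\beta_x]$, whose $x$-energy is $\le K_*/2$ by \eqref{eq:seconda}, contradicting $D>K_*$. If $[a_z,b_z]\subset[a_x,b_x]$ one reads off $\tfrac12\int_{a_z}^{b_z}g(\dot x,\dot x)\,\mathrm ds=E_x-D<E_x-K_*$. In each of the two shifted configurations (say $a_z\ge a_x$ and $b_z\ge b_x$), \eqref{eq:seconda} forces $\tfrac12\int_{b_x}^{b_z}g(\dot x,\dot x)\le K_*/4$, hence $D>K_*$ gives $\tfrac12\int_{a_x}^{a_z}g(\dot x,\dot x)>3K_*/4$, and the signed difference yields $\tfrac12\int_{a_z}^{b_z}g(\dot x,\dot x)\,\mathrm ds\le E_x-K_*/2$. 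Thus in all admissible configurations $\tfrac12\int_{a_z}^{b_z}g(\dot x,\dot x)\,\mathrm ds\le E_x-K_*/2$.

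Next I would integrate \eqref{eq:kappas} pointwise with $v_1=\dot x$, $v_2=\dot z$, applying H\"older together with $\Vert\dot x-\dot z\Vert_{L^2([a_z,b_z])}\le\sqrt{2}\,\rho_*$ and $\Vert x-z\Vert_{L^\infty([a_z,b_z])}\le\rho_*$ (from \eqref{eq:1.8fabio}, \eqref{eq:1.10bis}), $\int_{a_z}^{b_z}\Vert\dot x\Vert_E^2\le 2(M_0+1)/\ell_0$ from \eqref{eq:seconda}, and $\int_{a_z}^{b_z}\Vert\dot z\Vert_E^2\le 2M_0/\ell_0$ from $z\in\mathfrak M$; the arithmetic matches exactly the definition \eqref{eq:kappa0} of $\widehat K$ and gives $\bigl\vert\tfrac12\int_{a_z}^{b_z}g(\dot x,\dot x)\,\mathrm ds-E_z\bigr\vert\le 2\rho_*\widehat K=\tfrac{c_1K_*}{16M_0}$, writing $E_z:=\tfrac12\int_{a_z}^{b_z}g(\dot z,\dot z)\,\mathrm ds$. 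Combined with the previous step, and using $c_1<M_0$ so that $\tfrac{c_1K_*}{32M_0}\le K_*/32$, this gives $E_z\le E_x-K_*/2+K_*/32\le E_x-K_*/8$, i.e.\ \eqref{eq:7.1ter}.

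For \eqref{eq:7.1bis} I would decompose $(b_z-a_z)E_z-(b_x-a_x)E_x=d\,E_z+(b_x-a_x)(E_z-E_x)$ with $d:=(b_z-a_z)-(b_x-a_x)$. Since $[a_z,b_z]\subset[\alpha_x,\beta_x]$ one has $|d|\le(a_x-\alpha_x)+(\beta_x-b_x)\le\tfrac{c_1K_*}{8M_0(M_0+K_*/4)}$ by \eqref{eq:terza}, while $E_z<M_0$, so $d\,E_z\le|d|\,M_0\le\tfrac{c_1K_*}{8M_0}$. On the other hand \eqref{eq:prima} yields $b_x-a_x\ge c_1/(2M_0)$, and keeping the sharper drop $E_z-E_x\le-K_*/2+K_*/32=-15K_*/32$ from the previous paragraph, $(b_x-a_x)(E_z-E_x)\le-\tfrac{15c_1K_*}{64M_0}$. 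Summing, $(b_z-a_z)E_z-(b_x-a_x)E_x\le\tfrac{8c_1K_*}{64M_0}-\tfrac{15c_1K_*}{64M_0}=-\tfrac{7c_1K_*}{64M_0}\le-\tfrac{c_1K_*}{32M_0}$, which is \eqref{eq:7.1bis}. The main technical obstacle is precisely the length mismatch $d\ne 0$: the $K_*/8$ drop stated in \eqref{eq:7.1ter} is \emph{not} enough to absorb the error term $d\,E_z$ in the product estimate, so one has to retain the full $K_*/2$ loss coming from the symmetric-difference argument; the calibration of the constants \eqref{eq:terza} and of $\rho_*$ through $\widehat K$ is designed exactly to make this balance close.
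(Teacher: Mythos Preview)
Your argument is correct and, for \eqref{eq:7.1ter}, essentially identical to the paper's: both combine the pointwise metric comparison \eqref{eq:kappas} (yielding an $O(\widehat K\rho_*)$ error) with a symmetric--difference energy analysis. You carry out a finer four--way case split and obtain the sharper intermediate drop $\tfrac12\int_{a_z}^{b_z}g(\dot x,\dot x)\le E_x-K_*/2$, whereas the paper takes a WLOG ($\tfrac12\int_{I_{a_x,a_z}}>K_*/2$) and arrives at $E_x-K_*/4$; either suffices for \eqref{eq:7.1ter}. For \eqref{eq:7.1bis} your route genuinely differs: you use the additive decomposition $(b_z-a_z)E_z-(b_x-a_x)E_x=d\,E_z+(b_x-a_x)(E_z-E_x)$ and feed in your sharp $E_z-E_x$ bound together with the length control from \eqref{eq:terza}, while the paper instead estimates $(b_z-a_z)\tfrac12\int_{a_z}^{b_z}g(\dot x,\dot x)$ directly via a case distinction on $b_z\lessgtr b_x$. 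Your decomposition is cleaner but \emph{requires} the $-K_*/2$ drop; the paper's $-K_*/4$ would not close the balance in your scheme.

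Two minor imprecisions worth fixing. First, the bound ``$|d|\le(a_x-\alpha_x)+(\beta_x-b_x)$'' is false as stated (take $[a_z,b_z]$ a tiny subinterval of $[a_x,b_x]$); what is true---and all you use---is $d\le(a_x-\alpha_x)+(\beta_x-b_x)$, since $d\le0$ when $[a_z,b_z]\subset[a_x,b_x]$ and in the shifted cases the outer endpoint is trapped in $[\alpha_x,a_x]$ or $[b_x,\beta_x]$. Second, there is a numerical slip: you write $2\rho_*\widehat K=\tfrac{c_1K_*}{16M_0}$ but then use $K_*/32$ (i.e.\ $\rho_*\widehat K$) in the subsequent lines. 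In fact the careful bookkeeping gives exactly $\widehat K\rho_*$ (the $1/2$ in front of the energies cancels against the factors $2$ coming from $\int g(\dot x,\dot x)\le 2(M_0+1)$ and $\Vert\dot x-\dot z\Vert_{L^2}\le\sqrt2\,\rho_*$), so your downstream arithmetic with $K_*/32$ is the correct one; just drop the spurious factor~$2$.
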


\begin{proof} By  \eqref{eq:5.10bisnew}, \eqref{eq:kappa0}, assumption \eqref{eq:terzaz} and H\"{o}lder inequality
\begin{multline*}
\left|\frac{1}{2}\int_{a_z}^{b_z}g(\dot x, \dot x)\mathrm{d}s -
\frac{1}{2}\int_{a_z}^{b_z}g(\dot z, \dot z)\mathrm{d}s\right| \leq \\
K_{1}\rho_{*}\int_{a_z}^{b_z}\Vert \dot x\Vert_{E}^2 + K_{2}\rho_{*}\sqrt{\int_{a_z}^{b_z}\Vert \dot x\Vert_{E}^2} +
K_{3}\rho_{*}\sqrt{\int_{a_z}^{b_z}\Vert \dot z\Vert_{E}^2} \leq \\
\frac{K_{1}\rho_{*}}{\ell_0}\int_{a_z}^{b_z}g(\dot x, \dot x) +
\frac{K_{2}\rho_{*}}{\sqrt{\ell_0}}\sqrt{\int_{a_z}^{b_z}g(\dot x, \dot x)}+
\frac{K_{3}\rho_{*}}{\sqrt{\ell_0}}\sqrt{\int_{a_z}^{b_z}g(\dot z, \dot z)}.
\end{multline*}
But $[a_z,b_z] \subset [\alpha_x,\beta_x]$ and $[a_z,b_z] \in \mathcal I_z$. Therefore by \eqref{eq:seconda} and \eqref{eq:kappa0},
\begin{equation}\label{eq:stimax-z}
\left|\frac{1}{2}\int_{a_z}^{b_z}g(\dot x, \dot x)\mathrm{d}s -
\frac{1}{2}\int_{a_z}^{b_z}g(\dot z, \dot z)\mathrm{d}s\right| \leq \widehat K\rho_*,
\end{equation}
from which we deduce
\begin{equation}\label{eq:stimaintermedia}
\frac{1}{2}\int_{a_z}^{b_z}g(\dot z, \dot z)\mathrm{d}s \leq
\frac{1}{2}\int_{a_z}^{b_z}g(\dot x, \dot x)\mathrm{d}s + \widehat K\rho_*.
\end{equation}
Now assumption \eqref{eq:secondaz} says that
\[
\frac{1}2\int_{I_{a_x,a_z}\cup I_{b_z,b_x}}g(\dot
x,\dot x)\,\mathrm ds > K_*,
\]
where $I_{a,b}$ denotes the interval $[a,b]$ if $b \geq a$ and the interval $[b,a]$ if $b < a$. Just to fix our ideas we can assume
\begin{equation}\label{eq:fixideas}
\frac{1}2\int_{I_{a_x,a_z}}g(\dot x,\dot x)\,\mathrm ds > \frac{K_*}2.
\end{equation}
In this case, thanks to \eqref{eq:seconda} and \eqref{eq:primaz} we deduce that $a_z \in [a_x,b_x]$. Then
\begin{multline*}
\frac{1}2\int_{a_z}^{b_z}g(\dot x,\dot x)\,\mathrm ds = \frac{1}2\int_{a_x}^{b_x}g(\dot x,\dot x)\,\mathrm ds + \\
- \frac{1}2\int_{a_x}^{a_z}g(\dot x,\dot x)\,\mathrm ds + \frac{1}2\int_{b_x}^{b_z}g(\dot x,\dot x)\,\mathrm ds \text { if }b_z \geq b_x
\end{multline*}
and
\begin{multline*}
\frac{1}2\int_{a_z}^{b_z}g(\dot x,\dot x)\,\mathrm ds = \frac{1}2\int_{a_x}^{b_x}g(\dot x,\dot x)\,\mathrm ds + \\
- \frac{1}2\int_{a_x}^{a_z}g(\dot x,\dot x)\,\mathrm ds - \frac{1}2\int_{b_z}^{b_x}g(\dot x,\dot x)\,\mathrm ds \text { if }b_z < b_x.
\end{multline*}
Then
\begin{equation}\label{eq:stimafinale}
\frac{1}2\int_{a_z}^{b_z}g(\dot x,\dot x)\,\mathrm ds \leq
\frac{1}2\int_{a_x}^{b_x}g(\dot x,\dot x)\,\mathrm ds
- \frac{1}2\int_{I_{a_x,a_z}}g(\dot x,\dot x)\,\mathrm ds +
\frac{1}2\int_{b_x}^{\beta_x}g(\dot x,\dot x)\,\mathrm ds,
\end{equation}
so combining \eqref{eq:stimaintermedia}--\eqref{eq:stimafinale} and using assumptions \eqref{eq:seconda} and \eqref{eq:terza} gives
\[
\frac{1}2\int_{a_z}^{b_z}g(\dot z,\dot z)\,\mathrm ds \leq \frac{1}2\int_{a_x}^{b_x}g(\dot x,\dot x)\,\mathrm ds -\frac{K_*}4 + \widehat K\rho_*,
\]
obtaining \eqref{eq:7.1ter} because of the choice of $\rho_*$ (recall that $c_1 < M_0$.)

In order to prove \eqref{eq:7.1bis} first observe that, by \eqref{eq:stimaintermedia},
\begin{equation}\label{eq:12.16bis}
(b_z-a_z)\frac{1}{2}\int_{a_z}^{b_z}g(\dot z, \dot z)\mathrm{d}s \leq
(b_z-a_z)\frac{1}{2}\int_{a_z}^{b_z}g(\dot x, \dot x)\mathrm{d}s + (b_z-a_z)\widehat K\rho_*.
\end{equation}
Moreover recall that by assumptions \eqref{eq:seconda} and \eqref{eq:primaz}, since we are assuming \eqref{eq:fixideas}, we have
\[
a_z \in [a_x,b_x].
\]
Then if $b_z \leq b_x$ we have $b_z-a_z \leq b_x-a_x$, so by \eqref{eq:12.16bis},
\[
(b_z-a_z)\frac{1}{2}\int_{a_z}^{b_z}g(\dot z, \dot z)\mathrm{d}s \leq (b_x-a_x)\frac{1}{2}\int_{a_z}^{b_z}g(\dot x, \dot x)\mathrm{d}s +
(b_x-a_x)\widehat K\rho_*,
\]
therefore by \eqref{eq:seconda} and \eqref{eq:stimafinale}
\[
(b_z-a_z)\frac{1}{2}\int_{a_z}^{b_z}g(\dot z, \dot z)\mathrm{d}s \leq (b_x-a_x)\frac{1}{2}\int_{a_x}^{b_x}g(\dot x, \dot x)\mathrm{d}s
+(b_x-a_x)\big(-\frac{K_*}2+ \frac{K_*}4 + \widehat K\rho_*\big),
\]
from which we deduce \eqref{eq:7.1bis} thanks to the choice of $\rho_*$ and property \eqref{eq:prima} that implies $b_x-a_x \geq \frac{c_1}{2M_0}$.

Now assume $b_z > b_x$. In this case we have
\begin{multline*}
(b_z-a_z)\frac{1}{2}\int_{a_z}^{b_z}g(\dot x, \dot x)\mathrm{d}s = (b_z-b_x)\frac{1}{2}\int_{a_z}^{b_z}g(\dot x, \dot x)\mathrm{d}s
+ (b_x-a_z)\frac{1}{2}\int_{a_z}^{b_z}g(\dot x, \dot x)\mathrm{d}s \leq\\
(\beta_x -b_x)\left(\frac{1}{2}\int_{a_x}^{b_x}g(\dot x, \dot x)\mathrm{d}s + \frac{1}{2}\int_{b_x}^{\beta_x}g(\dot x, \dot x)\mathrm{d}s\right)+
(b_x-a_x)\frac{1}{2}\int_{a_z}^{b_z}g(\dot x, \dot x)\mathrm{d}s \leq\\
\frac{c_1K_*}{16M_0} + (b_x-a_x)\frac{1}{2}\int_{a_z}^{b_z}g(\dot x, \dot x)\mathrm{d}s
\end{multline*}
by \eqref{eq:seconda} and \eqref{eq:terza}.

Then by \eqref{eq:12.16bis} we have
\[
(b_z-a_z)\frac{1}{2}\int_{a_z}^{b_z}g(\dot z, \dot z)\mathrm{d}s \leq \frac{c_1K_*}{16M_0} +
(b_x-a_x)\frac{1}{2}\int_{a_z}^{b_z}g(\dot x, \dot x)\mathrm{d}s + \widehat K\rho_*,
\]
so by \eqref{eq:fixideas}, \eqref{eq:stimafinale} and assumption \eqref{eq:seconda},
\[
(b_z-a_z)\frac{1}{2}\int_{a_z}^{b_z}g(\dot z, \dot z)\mathrm{d}s \leq \frac{c_1K_*}{4M_0} + \widehat K\rho_* +
(b_x-a_x)\left(\frac{1}{2}\int_{a_x}^{b_x}g(\dot x, \dot x)\mathrm{d}s - \frac{K_*}4\right).
\]
Therefore, recalling that $b_x - a_x \geq \frac{c_1}{2M_0}$, it is
\[
(b_z-a_z)\frac{1}{2}\int_{a_z}^{b_z}g(\dot z, \dot z)\mathrm{d}s \leq \frac{c_1K_*}{16M_0} + \widehat K\rho_* - \frac{c_1K_*}{8M_0}+
(b_x-a_x)\frac{1}{2}\int_{a_x}^{b_x}g(\dot x, \dot x)\mathrm{d}s,
\]
giving \eqref{eq:7.1bis} because of the choice of $\rho_*$.
\end{proof}

\begin{rem}\label{rem:spiegazione}
In the proof of Proposition \ref{thm:prop4.17f} we shall use the above Lemma with $K_{*}=E(r)$, in order to have that
conditions \eqref{eq:7.1ter} and \eqref{eq:7.1bis} are satisfies if
\[
D(x,a_z,b_z,a_x,b_x) > E(r).
\]
In this case it may happen that
$\frac{1}{2}\int_{a_z}^{b_z}g(\dot z, \dot z)\mathrm{d}s$ and
$(b_z-a_z)\frac{1}{2}\int_{a_z}^{b_z}g(\dot z, \dot z)\mathrm{d}s$ are not decreasing along the flow that we shall construct by means of the
vector filed $V_x$ given by Proposition \ref{thm:PS}. But condition \eqref{eq:7.1ter}  says that
$\frac{1}{2}\int_{a_z}^{b_z}g(\dot z, \dot z)\mathrm{d}s$ is much less than $M_0$, while condition \eqref{eq:7.1bis} says that
$(b_z-a_z)\frac{1}{2}\int_{a_z}^{b_z}g(\dot z, \dot z)\mathrm{d}s$ is much less then $(b_x-a_x)\frac{1}{2}\int_{a_x}^{b_x}g(\dot x, \dot x)\mathrm{d}s$,
and this will be useful to get a flow in the space $\mathfrak M$ where $\mathcal F$ reaches smaller values.
\end{rem}

\begin{defin}\label{eq:collezione-monotona}
Fix $\tau_0 \in [0,\delta]$. The set
\[\mathcal C_{\tau_0,\delta} = \{[a_\tau,b_\tau]\subset[0,1] : \tau_0\leq\tau_1\leq\tau_2\leq\delta \Longrightarrow [a_{\tau_2},b_{\tau_2}] \subset [a_{\tau_1},b_{\tau_1}]\}
\]
is called {\sl non increasing intervals collection} (NIIC).
\end{defin}

\begin{lem}\label{thm:controllocrescita}
Let $W$ be a locally Lipschitz continuous vector field in $H^1([0,1],\mathbb{R}^N)$, $\delta > 0$ and $\eta_z:[0,\delta] \rightarrow \mathfrak M_0$ be the solution
of the Cauchy problem
\begin{equation}\label{eq:testCauchy}
\left\{\begin{array}{l}\frac{\mathrm d}{\mathrm d\tau}\eta_z=
W(\eta_z) \\[0.5cm]
\eta_z(0)=z, z\in \mathfrak M_0. \end{array} \right.
\end{equation}
Fix $\tau_0 \in [0,\delta]$ and a NIIC $\mathcal C_{\tau_0,\delta} = \{[a_\tau,b_\tau] : \tau \in [\tau_0,\delta] \}$ such that
\begin{equation}\label{eq:ipotesi-energia}
\int_{a_\tau}^{b_\tau}\frac12g(\dot{\eta}_{z}(\tau),\dot{\eta}_{z}(\tau))ds < M_0, \text{ for any }[a_\tau,b_\tau] \in C_{\tau_0,\delta}
\end{equation}
where $\dot{\eta}_{z}(\tau)$ denotes the derivative with respect to $\tau$ of the curve $\eta_{z}(\tau)$, and
\begin{equation}\label{eq:prima-ipotesi}
\Vert W(\eta_{z}(\tau))\Vert_{a_\tau,b_\tau} \leq 1 \text{ for any }\tau \in [\tau_0,\delta].
\end{equation}
Define
\begin{equation}\label{eq:Fflusso}
F_z(\tau,\mathcal C_{\tau_0,\delta})= \frac{1}{2}\int_{a_\tau}^{b_\tau}
g(\dot{\eta}_{z}(\tau),\dot{\eta}_{z}(\tau))\,\text ds, \,
[a_\tau,b_\tau]\in  C_{\tau_0,\delta},
\end{equation}
and
\begin{equation}\label{eq:Gflusso}
G_z(\tau,\mathcal C_{\tau_0,\delta})=  \frac{b_\tau-a_\tau}{2}\int_{a_\tau}^{b_\tau}
g(\dot{\eta}_{z}(\tau),\dot{\eta}_{z}(\tau))\,\text ds, \,
[a_\tau,b_\tau]\in  C_{\tau_0,\delta}.
\end{equation}

Then for any $\tau \in ]\tau_0,\delta]$ we have:
\begin{equation}\label{eq:stimaFflusso}
F_z(\tau,C_{\tau_0,\delta}) \leq F_z(\tau_0,C_{\tau_0,\delta}) + L_1(\tau-\tau_0)\sqrt{2M_0L_0}
\end{equation}
and
\begin{equation}\label{eq:stimaGflusso}
G_z(\tau,C_{\tau_0,\delta}) \leq G_z(\tau_0,C_{\tau_0,\delta})  + L_1(\tau-\tau_0)\sqrt{2M_0L_0}
\end{equation}
where $L_0,L_1$ are defined by \eqref{eq:5.10bisnew} and \eqref{eq:5.12new}
respectively.
\end{lem}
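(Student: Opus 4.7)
The plan is to exploit the monotonicity of the NIIC rather than differentiate in $\tau$, which is awkward since $\tau\mapsto a_\tau,b_\tau$ are only monotone. For $\tau_0\le\sigma_1<\sigma_2\le\delta$ the inclusion $[a_{\sigma_2},b_{\sigma_2}]\subset[a_{\sigma_1},b_{\sigma_1}]$ lets me write
\begin{multline*}
F_z(\sigma_2)-F_z(\sigma_1)=\tfrac12\int_{a_{\sigma_2}}^{b_{\sigma_2}}\!\bigl[g(\dot\eta_z(\sigma_2),\dot\eta_z(\sigma_2))-g(\dot\eta_z(\sigma_1),\dot\eta_z(\sigma_1))\bigr]\mathrm ds\\
-\tfrac12\int_{[a_{\sigma_1},b_{\sigma_1}]\setminus[a_{\sigma_2},b_{\sigma_2}]}\!g(\dot\eta_z(\sigma_1),\dot\eta_z(\sigma_1))\,\mathrm ds,
\end{multline*}
so the ``boundary'' piece coming from the shrinking interval is non-positive, and only the variation of $g(\dot\eta_z,\dot\eta_z)$ over the fixed smaller interval $[a_{\sigma_2},b_{\sigma_2}]$ has to be controlled.

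The standard identity $\tfrac{d}{d\sigma}g(\dot\eta_z,\dot\eta_z)=2g(\tfrac{\mathrm D}{\mathrm ds}W(\eta_z),\dot\eta_z)$ follows from \eqref{eq:testCauchy}, from the symmetry $\tfrac{\mathrm D}{\mathrm d\tau}\dot\eta_z=\tfrac{\mathrm D}{\mathrm ds}\partial_\tau\eta_z=\tfrac{\mathrm D}{\mathrm ds}W(\eta_z)$ of the Levi-Civita connection, and from metric compatibility. After Fubini on $[\sigma_1,\sigma_2]\times[a_{\sigma_2},b_{\sigma_2}]$, it suffices to estimate, for each $\sigma\in[\sigma_1,\sigma_2]$,
\[
\int_{a_{\sigma_2}}^{b_{\sigma_2}}g\bigl(\tfrac{\mathrm D}{\mathrm ds}W(\eta_z(\sigma)),\dot\eta_z(\sigma)\bigr)\mathrm ds\le\Bigl(\int_{a_{\sigma_2}}^{b_{\sigma_2}}g(\tfrac{\mathrm D}{\mathrm ds}W,\tfrac{\mathrm D}{\mathrm ds}W)\mathrm ds\Bigr)^{\!1/2}\!\Bigl(\int_{a_{\sigma_2}}^{b_{\sigma_2}}g(\dot\eta_z(\sigma),\dot\eta_z(\sigma))\mathrm ds\Bigr)^{\!1/2}
\]
by Cauchy--Schwarz. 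Since both integrands are nonnegative and $[a_{\sigma_2},b_{\sigma_2}]\subset[a_\sigma,b_\sigma]$, I can enlarge the domain of integration on the right-hand side to $[a_\sigma,b_\sigma]$; then \eqref{eq:ipotesi-energia} bounds the second factor by $\sqrt{2M_0}$, and \eqref{eq:5.10bisnew} followed by \eqref{eq:5.12new} (applicable on $[a_\sigma,b_\sigma]$ precisely because of \eqref{eq:ipotesi-energia}) and finally \eqref{eq:prima-ipotesi} bound the first factor by $\sqrt{L_0}\,L_1\Vert W(\eta_z(\sigma))\Vert_{a_\sigma,b_\sigma}\le L_1\sqrt{L_0}$. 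The resulting pointwise bound $L_1\sqrt{2M_0L_0}$ is independent of $\sigma$, so integration over $[\sigma_1,\sigma_2]$ yields $F_z(\sigma_2)-F_z(\sigma_1)\le L_1(\sigma_2-\sigma_1)\sqrt{2M_0L_0}$; with $\sigma_1=\tau_0$, $\sigma_2=\tau$ this is \eqref{eq:stimaFflusso}.

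For \eqref{eq:stimaGflusso} I would write
\[
G_z(\tau)-G_z(\tau_0)=(b_\tau-a_\tau)\bigl[F_z(\tau)-F_z(\tau_0)\bigr]+\bigl[(b_\tau-a_\tau)-(b_{\tau_0}-a_{\tau_0})\bigr]F_z(\tau_0);
\]
the second summand is $\le0$ since the intervals are non-increasing and $F_z(\tau_0)\ge0$, while the first is at most $F_z(\tau)-F_z(\tau_0)$ because $b_\tau-a_\tau\le1$, so the bound just established for $F_z$ transfers directly. The main obstacle throughout is purely the non-smooth dependence of $[a_\tau,b_\tau]$ on $\tau$; the NIIC monotonicity is exactly what lets the argument proceed without any differentiability of the endpoints, because it guarantees the correct sign of the boundary contribution in the decomposition of $F_z(\sigma_2)-F_z(\sigma_1)$ and the inclusions needed to apply \eqref{eq:5.12new} and \eqref{eq:ipotesi-energia} on a common ambient interval.
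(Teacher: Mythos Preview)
Your proof is correct and follows essentially the same route as the paper. The only cosmetic difference is that where you integrate the $\sigma$-derivative of $g(\dot\eta_z,\dot\eta_z)$ over $[\tau_0,\tau]$ via Fubini, the paper applies the mean value theorem to $f_{a_\tau,b_\tau}(\eta_z(\cdot))$ at a single intermediate point $\theta\in\,]\tau_0,\tau[$; both lead to the same Cauchy--Schwarz estimate combined with \eqref{eq:5.10bisnew}, \eqref{eq:5.12new}, \eqref{eq:ipotesi-energia}, \eqref{eq:prima-ipotesi}, and both use the NIIC monotonicity in exactly the way you describe to make the boundary contribution non-positive and to pass to the ambient interval $[a_\sigma,b_\sigma]$.
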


\begin{proof} We shall proof the statement for the map $G$, since the proof for the map $F$ is analogous and simpler.

Take  $\tau > \tau_{0}$, and set $y_{\tau} = \eta_{z}(\tau)$ and $y_{\tau_{0}} =
\eta_{z}(\tau_{0})$. Since $b_{\tau_{0}} - a_{\tau_{0}} \leq 1$, and $[a_\tau,b_\tau] \subset [a_{\tau_0},_{\tau_0}]$,
using  \eqref{eq:fab} we have, for some
$\theta \in ]\tau_{0},\tau[$):
\begin{multline*}
\frac{b_{\tau} - a_{\tau}}{2}\int_{a_{\tau}}^{b_{\tau}}g({\dot y}_{\tau},{\dot y}_{\tau})\mathrm ds \leq
\frac{b_{\tau_{0}} - a_{\tau_{0}}}{2}\int_{a_{\tau}}^{b_{\tau}}g({\dot y}_{\tau},{\dot y}_{\tau})\mathrm ds
= (b_{\tau_{0}} - a_{\tau_{0}})f_{a_{\tau},b_{\tau}}\big(\eta_{z}(\tau)\big) \\
=(b_{\tau_{0}} - a_{\tau_{0}}) \left[(\tau-\tau_{0})\mathrm df_{a_{\tau},b_{\tau}}\big(\eta_{z}(\theta)\big)\left[\frac{\mathrm d\eta_{z}(\theta)}
{\mathrm d\tau}\right] + f_{a_{\tau},b_{\tau}}(\eta_{z}(\tau_{0})) \right] \\
\leq (b_{\tau_{0}} - a_{\tau_{0}})f_{a_{\tau_0},b_{\tau_0}}(\eta_{z}(\tau_0)) +
(\tau-\tau_{0})
\Big|\mathrm df_{a_{\tau},b_{\tau}}\big(\eta_{z}(\theta)\big)\left[\frac{\mathrm d\eta_{z}(\theta)}
{\mathrm d\tau}\right]\Big| \\
= \frac{b_{\tau_{0}} - a_{\tau_{0}}}{2}\int_{a_{\tau_0}}^{b_{\tau_0}}g({\dot y}_{\tau_0},{\dot y}_{\tau_0})\mathrm ds +
(\tau-\tau_{0})\Big|\mathrm df_{a_{\tau},b_{\tau}}(\eta_{z}(\theta))\left[\frac{\mathrm d\eta_{z}(\theta)}
{\mathrm d\tau}\right]\Big|.
\end{multline*}

Note that, by
\eqref{eq:testCauchy} and H\"{o}lder's inequality:
\begin{multline*}
\Big|\mathrm df_{a_{\tau},b_{\tau}}\big(\eta_{z}(\theta)\big)\left[\frac{\mathrm d\eta_{z}(\theta)}
{\mathrm d\tau}\right]\Big| \leq \int_{a_{\tau}}^{b_{\tau}}\Big|g\Big(\frac{\mathrm d\eta_{z}(\theta)}
{\mathrm ds}, \tfrac{\mathrm D}{\mathrm ds}W\Big)\Big| \mathrm ds \\
\leq \left(\int_{a_{\tau}}^{b_{\tau}}g\Big(\frac{\mathrm d\eta_{z}(\theta)}{\mathrm ds},
\frac{\mathrm d\eta_{z}(\theta)}{\mathrm ds}\Big)\mathrm ds\right)^{\tfrac12}
\left(\int_{a_{\tau}}^{b_{\tau}}g\big(\tfrac{\mathrm D}{\mathrm ds}W,\tfrac{\mathrm D}{\mathrm ds}W\big)\mathrm ds\right)^{\tfrac12}.
\end{multline*}
Now
\[[a_{\tau},b_{\tau}] \subset [a_{\theta},b_{\theta}] \subset [a_{\tau_{0}},b_{\tau_{0}}], \]
therefore by \eqref{eq:ipotesi-energia} we have

\begin{multline*}
\left(\int_{a_{\tau}}^{b_{\tau}}g\Big(\frac{\mathrm d\eta_{z}(\theta)}{\mathrm ds},
\frac{\mathrm d\eta_{z}(\theta)}{\mathrm ds}\Big)\mathrm ds\right)^{\tfrac12}\\
\leq
\left(\int_{a_{\theta}}^{b_{\theta}}g\Big(\frac{\mathrm d\eta_{z}(\theta)}{\mathrm ds},
\frac{\mathrm d\eta_{z}(\theta)}{\mathrm ds}\Big)\mathrm ds\right)^{\tfrac12} < \sqrt{2M_{0}},
\end{multline*}
while by \eqref{eq:5.10bisnew}, \eqref{eq:5.12new} and \eqref{eq:prima-ipotesi},
\begin{multline*}
\left(\int_{a_{\tau}}^{b_{\tau}}g\big(\tfrac{\mathrm D}{\mathrm ds}W,\tfrac{\mathrm D}{\mathrm ds}W\big)\mathrm ds\right)^{\tfrac12}
\leq
\left(\int_{a_{\tau}}^{b_{\tau}}L_{0}\Vert\tfrac{\mathrm D}{\mathrm ds}W\Vert_{E}^{2}\right)^{\tfrac12}\\
\leq L_1\sqrt{L_0}\Vert W\Vert_{a_{\tau},b_{\tau}}
\leq L_{1}\sqrt{L_0}.
\end{multline*}

Then
\[
\Big| \mathrm df_{a_{\tau},b_{\tau}}\big(\eta_{z}(\theta)\big)\left[\frac{\mathrm d\eta_{z}(\theta)}
{\mathrm d\tau}\right]\Big| \leq L_1\sqrt{2M_0L_0}
\]
from which we finally deduce \eqref{eq:stimaGflusso}.

\end{proof}

\begin{lem}\label{thm:palle}
Fix $x,z \in H^{1}([0,1],\mathbb{R}^N)$, $\hat \rho > 0$ and $[a,b]\subset [0,1]$ such that
\[
\Vert z-x \Vert_{a,b} < \hat \rho.
\]
Then there exists $\rho = \rho(x,z,a,b)$ such that, for any $w \in H^{1}([0,1],\mathbb{R}^N)$ such that $\Vert z-w \Vert_{a,b} < \rho$,
and for any $[\tilde a,\tilde b] \subset [a,b]$
it is $\Vert w-x \Vert_{\tilde a,\tilde b} < \hat \rho$.
\end{lem}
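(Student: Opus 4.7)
The argument will be a standard compactness plus triangle-inequality routine, with one genuinely delicate step concerning control of the sub-interval norm. First, I would introduce
\[
\Phi(\tilde a,\tilde b):=\Vert z-x\Vert_{\tilde a,\tilde b},\qquad (\tilde a,\tilde b)\in T:=\{(\alpha,\beta):a\le\alpha\le\beta\le b\},
\]
and observe that $\Phi$ is continuous on the compact triangle $T$: this follows from $z-x\in H^1([a,b],\R^N)\hookrightarrow C^0([a,b],\R^N)$ together with the absolute continuity in the endpoints of $\int_\cdot^\cdot\Vert\dot z-\dot x\Vert_E^2\,\mathrm ds$. Consequently $\Phi$ attains its maximum $s:=\max_T\Phi$, and the crucial preparatory claim is $s<\hat\rho$.

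Next I would record a uniform comparison between the $\Vert\cdot\Vert_{\tilde a,\tilde b}$-norm and the $\Vert\cdot\Vert_{a,b}$-norm: combining \eqref{eq:1.10bis} with the trivial bound $\int_a^b\Vert\dot y\Vert_E^2\le 2\Vert y\Vert_{a,b}^2$ (an immediate consequence of \eqref{eq:1.8fabio}) yields, for every $y\in H^1([a,b],\R^N)$ and every $[\tilde a,\tilde b]\subset[a,b]$,
\[
\Vert y\Vert_{\tilde a,\tilde b}^2\le\tfrac12\Vert y\Vert_{L^\infty([a,b],\R^N)}^2+\tfrac12\int_a^b\Vert\dot y\Vert_E^2\le\tfrac32\Vert y\Vert_{a,b}^2,
\]
hence $\Vert y\Vert_{\tilde a,\tilde b}\le\sqrt{3/2}\,\Vert y\Vert_{a,b}$ uniformly in the sub-interval.

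With these two ingredients I would set $\varepsilon:=\hat\rho-s>0$, $\rho:=\varepsilon\sqrt{2/3}$ (so that $\rho=\rho(x,z,a,b)$), and then for any $w$ satisfying $\Vert z-w\Vert_{a,b}<\rho$ and any $[\tilde a,\tilde b]\subset[a,b]$ the triangle inequality together with the uniform comparison gives
\[
\Vert w-x\Vert_{\tilde a,\tilde b}\le\Vert w-z\Vert_{\tilde a,\tilde b}+\Vert z-x\Vert_{\tilde a,\tilde b}\le\sqrt{3/2}\,\Vert w-z\Vert_{a,b}+s<\sqrt{3/2}\,\rho+s=\hat\rho,
\]
which is the desired conclusion. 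The main obstacle is the preparatory claim $s<\hat\rho$: on a proper sub-interval one can have $\Vert z-x\Vert_{\tilde a,\tilde b}>\Vert z-x\Vert_{a,b}$ (take a piecewise-linear $z-x$ whose interior peak dominates its boundary values), so this strict bound does not follow naively from $\Vert z-x\Vert_{a,b}<\hat\rho$; it has to be extracted from the continuity of $z-x$ together with the slack $\hat\rho-\Vert z-x\Vert_{a,b}>0$, exploiting crucially that $\rho$ is allowed to depend on the specific pair $(x,z)$ rather than only on $\hat\rho$.
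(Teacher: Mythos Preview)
Your proposal has a genuine gap, and it is precisely the step you flag as the ``main obstacle'': you assert $s=\max_{T}\Phi<\hat\rho$ but never prove it. Your own observation cuts against you here. If, as you say, one can have $\Vert z-x\Vert_{\tilde a,\tilde b}>\Vert z-x\Vert_{a,b}$ for some proper sub-interval, then there is no reason why $\max_{T}\Phi$ should be bounded by $\hat\rho$ merely because $\Phi(a,b)<\hat\rho$; the ``slack'' $\hat\rho-\Vert z-x\Vert_{a,b}$ controls nothing about the interior values of $\Phi$, and the freedom to let $\rho$ depend on $(x,z)$ is irrelevant, since if $\Vert z-x\Vert_{\tilde a,\tilde b}\ge\hat\rho$ for even one sub-interval then the conclusion already fails for $w=z$ and no positive $\rho$ can rescue it. So your argument either tacitly assumes the very monotonicity you are trying to avoid, or it is incomplete.

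By contrast, the paper's proof is a single line: it uses directly that $\Vert w-x\Vert_{\tilde a,\tilde b}\le\Vert w-x\Vert_{a,b}$ for $[\tilde a,\tilde b]\subset[a,b]$, then the triangle inequality on $[a,b]$, and takes $\rho=\hat\rho-\Vert z-x\Vert_{a,b}$. All of your machinery (the map $\Phi$, compactness of $T$, the $\sqrt{3/2}$ comparison constant) is bypassed. If you accept the sub-interval monotonicity the paper uses, your detour is unnecessary; if you reject it, you have not supplied a substitute, because your bound $s<\hat\rho$ is just a restatement of that monotonicity for the particular function $z-x$.
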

\begin{proof}
\[
\Vert w-x \Vert_{\tilde a,\tilde b}  \leq \Vert w-x \Vert_{a,b} \leq \Vert w-z \Vert_{a,b} + \Vert z-x \Vert_{a,b} < \rho + \Vert z-x \Vert_{a,b},
\]
from which we deduce the thesis.
\end{proof}

\begin{proof}[Proof of Proposition \ref{thm:prop4.17f}]
The proof is divided into two parts: first part requires the construction of the vector field (with the help of Propositions
\ref{thm:51new} and \ref{thm:PS}) generating the flow that we shall study in the second part of the proof.

\bigskip
{\bf Part A: Construction of the vector field.}

Fix $\varepsilon \in ]0,\varepsilon_1], \varepsilon_1 < \frac{c_1}4$.
Let $(\mathcal D,h) \in \mathcal H$, $x = h(1,\xi) $ with $\xi \in \mathcal D$. By assumption \eqref{eq:4.28ff} it is:
\begin{equation}\label{eq:P1}
\Vert y - x_{|[a_x,b_x]}\Vert_{a_x,b_x} \geq r \text{ for any }y \in \mathcal Z_{a_x,b_x},
\end{equation}
for any $[a_x,b_x] \in \mathcal I_x$ such that,
\begin{equation}\label{eq:P2}
(b_x - a_x)\int_{a_x}^{b_x}\frac12g(\dot x, \dot x)ds \geq c_1-\varepsilon,
\end{equation}

Note that by the definition of $\mathfrak M$
we also have
\begin{equation}\label{eq:numeroter}\int_{a_x}^{b_x}g(\dot x,\dot x) ds < M_0, \text{ for any }[a_x,b_x] \in \mathcal I_x.
\end{equation}

For any $x \in h(1,\mathcal D)$ we shall consider the following class of intervals:
\[\mathcal I_{x,r,\varepsilon} = \{[a,b] \in \mathcal I_x: [a,b] \text{ satisfies }\eqref{eq:P1} \text{ and }\eqref{eq:P2}\}.\]

Now for any $x \in h(1,\mathcal D)$ we choose a vector field $V_x$ as follows.
Consider the quantities $\mu(r),\theta(r)$ and $\kappa(r)$ given in
Proposition~\ref{thm:51new}. If $x = h(1,\xi)$ on any $[a_x,b_x] \in \mathcal I_{x,r,\varepsilon}$
we choose as $V_x$  the vector field defined in $[a_x -\Delta(x,[a_x,b_x]),b_x + \Delta(x,[a_x,b_x])]$ and given by Proposition \ref{thm:PS}
(satisfying \eqref{itm:51new-bis}, \eqref{itm:51new-a} and \eqref{itm:51new-b} of Proposition \ref{thm:51new} with $[a,b]$
replaced by $[a_x,b_x]$) and such that
\begin{equation}\label{eq:perorasichia}
\Vert V_x\Vert_{a_x,b_x}=\tfrac12.
\end{equation}
If $[a_x,b_x] \in \mathcal I_x \setminus \mathcal I_{x,r,\varepsilon}$ we choose $V_x=0$ on $[a_x,b_x]$. Clearly we can choose $V_{\mathcal R x}$ so that
\begin{equation}\label{eq:Rscelta}
V_{\mathcal R x}(s) = V_x(1-s) = \mathcal R V_x(s) \text{ for any }s \in [0,1].
\end{equation}
Indeed we have $a_{\Rcal x} = 1-b_x$ and $b_{\Rcal x} = 1-a_x$.
\medskip

Consider also $\rho(r)$ be as in Proposition \ref{thm:PS}. Choose $K_*=E(r)$,
$\rho_{*} = \frac{c_{1}E(r)}{32M_0\widehat K}$ and
\[
\rho_0(r) = \min\{\rho(r),\rho_{*}\}.
\]
Now by Lemma \ref{thm:lem2.3} the number of intervals $[a_x,b_x] \in \mathcal I_{x,r,\varepsilon}$ is finite and bounded by a constant independent of $x$. Then
for any $x \in h(1,\mathcal D)$ and $[a_x,b_x] \in \mathcal I_{x,r,\varepsilon}$, we can choose $\Delta(x)$ in Proposition \ref{thm:PS},
independently by $[a_x,b_x] \in \mathcal I_{x,r,\varepsilon}$,
intervals $[\alpha_x,\beta_x]$, $[\alpha_x',\beta_x']$
corresponding to $[a_x,b_x] \in \mathcal I_{x,r,\varepsilon}$
verifying
\[
\alpha_{\Rcal x} = 1-\beta_x, \alpha_{\Rcal x}' = 1-\beta_x', \beta_{\Rcal x} = 1-\alpha_x, \beta_{\Rcal x}' = 1-\alpha_x',
\]
and
\[
[a_x-\Delta(x),b_x+\Delta(x)]\cap[0,1] \supset [\alpha_x,\beta_x] \supset  [\alpha_x',\beta_x'] \supset [a_x,b_x]
\]
and a number
\[
\rho(x) \in ]0,\frac12\rho_{0}(r)]
\]
such that the following properties are satisfied:
\begin{itemize}
\item
if $[a_{x}^{i},b_{x}^{i}]$ are the intervals in $\mathcal I_{x,r,\varepsilon}$, and $[\alpha_{x}^{i},\beta_{x}^{i}]$
are the corresponding intervals above including $[a_{x}^{i},b_{x}^{i}]$, it is
\[
i \neq j \Longrightarrow [\alpha_{x}^{i},\beta_{x}^{i}] \cap [\alpha_{x}^{j},\beta_{x}^{j}] = \emptyset,
\]
\item $\alpha_x$ and $\beta_x$ satisfy \eqref{eq:seconda} and \eqref{eq:terza} of Lemma \ref{thm:differenze} with $K_{*}=E(r)$,
\item if $a_x > 0$, then $\alpha_x < \alpha_x' < a_x$, while if $a_x=0$ then $\alpha_x = \alpha_x' = 0$,
\item if $b_x < 1$, then $\beta_x > \beta_x' > b_x$, while if $b_x=1$ then $\beta_x = \beta_x' = 0$,
\item $\sup\{\phi(x(s): s \in [\alpha_x,\beta_x]\} \leq \frac{\delta_0}4$ (recall that $\sup\{\phi(x(s): s \in [a_x,b_x]\}=0$),
\item $\inf\{\phi(x(s): s \in [\alpha_x,\beta_x]\setminus[a_x,b_x]\} \geq -\frac{\kappa(r)}2$,
\item $\phi(x(s)) > 0$ for any $s \in [\alpha_x,\beta_x]\setminus [\alpha_x',\beta_x']$,
\end{itemize}
and for any $z \in h(1,\mathcal D)$ also the following properties are satisfied:
\begin{itemize}
\item $\Vert x - z \Vert_{0,1} < \rho(x)$ implies $\int_{a_z'}^{b_z'}\frac12g(\dot z, \dot z)ds < \frac{c_1}2 < c_1 - \varepsilon$,
for any $[a_z',b_z'] \subset [a_z,b_z]\in \Ical _z,  [a_z',b_z'] \subset  [\alpha_x,\beta_x]\setminus]a_x,b_x[$,
\item if $\Vert x - z \Vert_{0,1} < \rho(x)$ and $x = h(1,\xi)$
has intervals in $\mathcal I_{x,r,\varepsilon}$, then $z$ is not constant,
\item $\Vert x - z \Vert_{L^{\infty}([0,1],\mathbb{R}^N)} < \rho(x)$ implies
$\sup\{\phi(z(s): s \in [\alpha_x,\beta_x]\} \leq \frac{\delta_0}2$,
\item $\Vert x - z \Vert_{L^{\infty}([0,1],\mathbb{R}^N)} < \rho(x)$ implies
$\inf\{\phi(x(s): s \in [\alpha_x,\beta_x]\setminus[a_x,b_x]\} \geq -\kappa(r)$,
\item $\Vert x - z \Vert_{L^{\infty}([0,1],\mathbb{R}^N)} < \rho(x)$ implies $\phi(z(s)) > 0$ for any $s \in [\alpha_x,\beta_x]\setminus
[\alpha_x',\beta_x']$, whenever $[\alpha_x,\beta_x] \supset [a_x,b_x]$ with $[a_x,b_x] \in \mathcal I_{x,r,\varepsilon}$.
\end{itemize}

Observe that, by the last property above, it is
\begin{multline}\label{eq:propertyintersezione}
\Vert x-z\Vert_{L^\infty\big([0,1],\R^N\big)}<\rho(x),
[a_z,b_z]\in{\mathcal I}_{z}, [a_z,b_z]\cap[\alpha_x,\beta_x]\ne\emptyset,  \text{ and } [a_x,b_x] \in \mathcal I_{x,r,\varepsilon} \\
\text{ imply } [a_z,b_z]\subset[\alpha_x',\beta_x'].
\end{multline}

\bigskip

Define now a new vector field $\widehat V_x$ as follows.
Suppose $\alpha_x < a_x < b_x < \beta_x$ Let $\varphi_{x}:\R\to[0,1]$ be the continuous piecewise affine function which is zero
outside any $[\alpha_x,\beta_x]$ as above and it is equal to $1$ in any $[\alpha_x',\beta_x']$ as above.
We set
\begin{equation}\label{eq:Vcap}
\widehat V_x(s)=\varphi_{x}(s) V_x(s) \text{ for any }s \in [\alpha_x,\beta_x] \text{ as above }, \text { and $0$ otherwise}.
\end{equation}
If $a_x = 0$ we choose $\varphi_x$ so that $\varphi_x(0)=1$, while if $b_x=1$ we choose $\varphi_x$ so that $\varphi_x(1)=1$.

A straightforward computation shows that there exists $C_x > 0$ such that
\begin{equation}\label{eq:4.32fnew}
\Vert  \widehat V_x(s) \Vert_{0,1}\le C_x,
\end{equation}
and thanks to \eqref{eq:1.10bis}, \eqref{eq:perorasichia} and  the definition of $\varphi_x$ we see that
\begin{equation}\label{eq:elleinfinito}
\quad\Vert \widehat V_x(s)
\Vert_{L^\infty\big([0,1],\R^N\big)}\le \frac12.
\end{equation}

\smallskip
For any $x \in h(1,\mathcal D)$ consider the open subset of $h(1,\Dcal)$ given by
\begin{equation}\label{eq:Dintorno}
\Ucal _{x} =\{z \in h(1,\Dcal) : \Vert z - x\Vert_{0,1} < \rho(x)\}.
\end{equation}

\smallskip

Consider the open covering $\{\Ucal_{x},\Ucal_{\Rcal x}\}_{x \in h(1,\Dcal)}$ of $h(1,\Dcal)$.
Since $h(1,\Dcal)$ is compact there exists a finite covering $\{\Ucal_{x_i},\Ucal_{\Rcal x_i}\}_{i=1,\ldots,k}$ of $h(1,\Dcal)$.

Define, for any $z \in h(1,\Dcal)$,
\[
\varrho_{x_i}(z) = \dist_{0,1}(z,h(1,\Dcal) \setminus \Ucal_{x_i}).
\]
where $\dist_{0,1}$ is the distance induced by the norm $\Vert \cdot \Vert_{0,1}$. Since $\Rcal h(1,\Dcal) = \Dcal$,
$\Rcal \Ucal_{x_i} =\Ucal_{{\Rcal x}_i}$ and $\Rcal \circ \Rcal$ is the identity map, it is
\begin{equation}\label{eq:invarianzadirho}
\varrho_{x_i}(\Rcal z) = \varrho_{\Rcal x_i}(z).
\end{equation}
Finally set
\[
\beta_{x_i}(z) =
\frac{\varrho_{x_i}(z)}{\sum_{j=1,\ldots,k}(\rho_{x_j}(z)+\varrho_{\Rcal x_j}(z))}.
\]
Note that by \eqref{eq:invarianzadirho}
\begin{equation}\label{eq:invarianzadibeta}
\beta_{x_i}(\Rcal z) = \beta_{\Rcal x_i}(z)
\end{equation}
and that $\{\beta_{x_i},\beta_{\Rcal x_i}\}_{i=1,\ldots,k}$ is a continuous partition of the unity. Indeed
\begin{equation}\label{eq:betapartizione}
\sum_{j=1,\ldots,k}(\beta_{x_j}(z)+\beta_{\Rcal x_j}(z)) = 1.
\end{equation}
The vector field that we want to construct is defined as
\begin{equation}\label{eq:vectorfield}
W^{\lambda}(z,\eta) = \left(\sum_{j=1,\ldots,k}(\beta_{x_j}(z)\widehat V_{x_j}+\beta_{\Rcal x_j}(z)\widehat V_{\Rcal x_j}) +
\lambda\nabla\phi(\eta)\right)\chi(\phi(\eta)),
\end{equation}
where $\lambda \in ]0,1], z \in h(1,\Dcal), \eta \in \mathfrak M_0$, and
\begin{equation}\label{eq:chi}
\chi \in C{^2}(\mathbb{R},[0,1]) \text{ satisfies: } \chi(t)=1 \, \forall t \leq \frac{\delta_0}2, \, \chi(t)=0 \, \forall t \geq \frac{3\delta_0}4.
\end{equation}
Note that by the definition of ${\widehat V_{x}}$ , the choice
of $\rho(x)$, the definition of $\chi$,  \eqref{eq:1.10bis}, \eqref{eq:1.9fabio}, \eqref{eq:elleinfinito} and \eqref{eq:5.10bisnew} we have

\begin{equation}\label{eq:7.11ter}
\Vert W^{\lambda}(z,\eta)\Vert_{L^{\infty}([0,1],\R^{N})} \leq \frac12 + \lambda\frac{K_0}{\sqrt{\ell_0}} \leq 1,
\forall \lambda \in ]0,\frac{\sqrt{\ell_0}}{2K_0}],
\end{equation}
and there exists constants $C_1,C_2 \in \mathbb{R}^+$ such that
\begin{equation}\label{eq:7.15bis}
\Vert W^{\lambda}(z,\eta) \Vert_{0,1} \leq C_1+\lambda C_2\Vert \eta \Vert_{0,1}.
\end{equation}

Moreover, by \eqref{eq:Rscelta}, the $\Rcal$--equivariant choice of $\alpha_x,\alpha_x',\beta_x',\beta_x$
and the definition of the map $\varphi_x$, we have ${\widehat V}_{\Rcal x} = \Rcal {\widehat V}_x$, so by
\eqref{eq:invarianzadibeta}
\begin{equation}\label{eq:invarianzadiW}
\Rcal W^{\lambda}(z,\eta) = W^{\lambda}(\Rcal z,\Rcal\eta).
\end{equation}

The existence of the homotopy $H_\varepsilon$ will be proved studying the flow given by
the solution of the initial value problem
\begin{equation}\label{eq:7.12}
\left\{\begin{array}{l}\frac{\mathrm d}{\mathrm d\tau}\eta=
W^{\lambda}(z,\eta) \\[0.5cm]
\eta(0)=z \in h(1,\Dcal). \end{array} \right.
\end{equation}
Note that by \eqref{eq:7.15bis} the solution $\eta(\tau,z)$ of \eqref{eq:7.12} is defined for all $\tau \in \mathbb{R}^+$,
taking values on $H^1([0,1],\mathbb{R}^N$) for any $z \in h(1,\Dcal)$.

\bigskip

{\bf Part B: Study of the flow $\eta(\tau,z)$.}

\noindent
{\bf Step 1.} The flow $\eta$ is $\Rcal $--equivariant, namely
\[
\eta(\tau,\Rcal z) = \Rcal \eta(\tau,z).
\]
\begin{proof}
It is sufficient to observe that $\Rcal \eta(\tau,z)$ solves the Cauchy problem
\[
\left\{\begin{array}{l}\frac{\mathrm d}{\mathrm d\tau}\eta=
W^{\lambda}(\Rcal z,\eta) \\[0.5cm]
\eta(0)=\Rcal z. \end{array} \right.
\]
But this is a simple consequence of \eqref{eq:invarianzadiW} and the $\Rcal$--equivariance of $h$.
\end{proof}
\noindent
{\bf Step 2.} If $z$ is a constant curve, then
$\eta(\tau,z)$ is a constant curve for all
$\tau \geq 0$.
\begin{proof}
If $z$ is a constant curve, by
the choice of $\rho(x)$  and \eqref{eq:Vcap} we have:
\[
z \in \Ucal_{x_i} \Longrightarrow \widehat  V_{x_i} \equiv 0, \text{ and }\widehat V_{\Rcal x_i} \equiv 0.
\]
Then $\eta$ solves
\[
\left\{\begin{array}{l}\frac{\mathrm d}{\mathrm d\tau}\eta=
\lambda\nabla\phi(\eta)\chi(\phi(\eta)) \\[0.5cm]
\eta(0)=z. \end{array} \right.
\]
Denote by $\eta_\tau$ the integral flow in $\mathbb{R}^N$ of the above Cauchy problem, where the constant curve $z$ can be identified with a vector
in $\mathbb{R}^N$. By the local uniqueness property we deduce that, for any $\tau \geq 0$, $\eta(\tau,z)$ is the constant curve $\eta_\tau$.
\end{proof}

{\bf Step 3.} Fix $\lambda \in ]0,\frac{\sqrt{\ell_0}}{2K_0}]$. Then for any $z \in \Ucal_{x_i}$ if $\tau_* \in \mathbb{R}^+$ is the first instant
such that $\Vert \eta(\tau,z) - x_i \Vert_{L^{\infty}([0,1],\R^{N})} = \rho(r)$ (where $\rho(r)$ is given by Proposition
\ref{thm:PS}), then $\tau_* \geq \frac{\rho(r)}2$.
\begin{proof}
Since $\rho(x_i) \leq \frac{\rho(r)}2$ if $z \in \Ucal_{x_i}$, by \eqref{eq:7.11ter} and
\eqref{eq:7.12} we have
\begin{multline*}
\frac{\rho(r)}2 \leq \Vert \eta(\tau_*,z) - x_i \Vert_{L^{\infty}([0,1],\R^{N})} -
\Vert \eta(0,z) - x_i \Vert_{L^{\infty}([0,1],\R^{N})} \leq \\
\Vert \eta(\tau_*,z) - \eta(0,z)\Vert_{L^{\infty}([0,1],\R^{N})}\leq
\int_{0}^{\tau_*} \Vert \frac{\mathrm d}{\mathrm d\sigma}\eta(\sigma,z)\Vert_{L^{\infty}([0,1],\R^{N})}\mathrm d\sigma \leq \tau_*.
\end{multline*}
\end{proof}

{\bf Step 4.} Fix $\lambda \in ]0,\frac{\sqrt{\ell_0}}{2K_0}]$. Then, for any $\tau \in [0,\frac{\rho(r)}2]$, $\eta(\tau,z)$ satisfies
\eqref{unico} and \eqref{doppio}of Definition \ref{thm:tipoA}.
\begin{proof}
To prove \eqref{unico} of Definition \ref{thm:tipoA}, first of all observe that the curve
$[0,1]\ni s\mapsto\eta(\tau,z)(s)\in\R^N$ is of class $H^1$, and the map
$\tau\mapsto\eta(\tau,z)(\cdot)$
is of class $C^1$ and taking values in $H^1\big([0,1],\R^N\big)$.

Therefore, for any $s \in [0,1]$ we have:

\begin{multline*}
\frac\partial{\partial\tau}\,\phi(\eta(\tau,z)(s))\;
=g\big(\nabla\phi(\eta(\tau,z)(s)),\frac\partial{\partial\tau}\,
\eta(\tau,z)(s)\big)= \\
g\big(\nabla\phi(\eta(\tau,z)(s)),W^{\lambda}(\eta(\tau,z)(s))\big) =\\
g\big(\nabla\phi(\eta(\tau,z)(s),
\sum_{j=1}^{k}\beta_{x_j}(z)\varphi_{x_j}(s)V_{x_j}(s)\big)\chi(\phi(\eta(\tau,z)(s))+\\
g\big(\nabla\phi(\eta(\tau,z)(s),
\sum_{j=1}^{k}\beta_{\Rcal x_j}(z)\varphi_{\Rcal x_j}(s)V_{\Rcal x_j}(s)\big)
\chi(\phi(\eta(\tau,z)(s))) +\\
\lambda g\big(\nabla\phi(\eta(\tau,z)(s)),\nabla\phi(\eta(\tau,z)(s))\big)\chi(\phi(\eta(\tau,z)(s))).
\end{multline*}

Now $\phi = 0$ implies $g(\nabla \phi, \nabla \phi) > 0$ and $\chi(\phi)=1$. Therefore, since $\lambda > 0$, we have
\begin{multline*}
\phi(\eta(\tau,z)(s)) = 0 \Longrightarrow \frac\partial{\partial\tau}\,\phi(\eta(\tau,z)(s)) >\\
\sum_{j=1}^{k}\beta_{x_j}(z)\varphi_{x_j}(s)g(\nabla\phi(\eta(\tau,z)(s)),V_{x_j}(s))+\\
\sum_{j=1}^{k}\beta_{\Rcal x_j}(z)\varphi_{\Rcal x_j}(s)g(\nabla\phi(\eta(\tau,z)(s)),V_{\Rcal x_j}(s)).
\end{multline*}

Then by Step 3, \eqref{itm:PS15a} of Proposition \eqref{thm:PS} and the choice of $V_{x_i}$ and $V_{\Rcal x_i}$, we obtain the proof
since, for any $j=1,\ldots,k$, $\varphi_{x_j}, \varphi_{\mathcal R x_{j}} \geq 0$ and  $\beta_{x_j}, \beta_{\Rcal x_j} \geq 0$. The proof of property \eqref{doppio} is completely analogous.
\end{proof}

Set
\begin{equation}\label{eq:No}
N_0 =  \sup\{\Vert H^{\phi}(y)\Vert : y \in \phi^{-1}(]-\infty,\delta_0])\},
\end{equation}
where $\Vert H^{\phi}(y)\Vert$ is the norm induced by the metric $g$ of the hessian of the map $\phi$ at the point $y$.
\begin{rem}
Let $[a_\tau,b_\tau] \in \mathcal I_{\eta(\tau,z)}$ and $\eta(\tau,z) \in \mathfrak M$. Since, by step 4, $[a_\tau,b_\tau] \subset
[a_z,b_z]$, by the definition of $W^\lambda$ we have
\[
\Vert W^{\lambda}(z,\eta(\tau,z)) \Vert_{a_\tau,b_\tau} \leq \frac12 + \lambda \Vert\nabla\phi(\eta(\tau,z))\Vert_{a_\tau,b_\tau}.
\]
Now by \eqref{eq:1.8fabio}, \eqref {eq:1.9fabio}, \eqref{eq:5.10bisnew} and \eqref{eq:No} (recalling that we are assuming that
$\eta(\tau,z) \in \mathfrak M$)
\[
\Vert\nabla\phi(\eta(\tau,z)\Vert_{a_\tau,b_\tau} < \sqrt{\frac{K_0^2}{2\ell_0} + \frac{M_0N_0^2}{\ell_0}},
\]
so
\[
\Vert W^{\lambda}(z,\eta(\tau,z)) \Vert_{a_\tau,b_\tau} \leq \frac12 + \lambda\sqrt{\frac{K_0^2}{2\ell_0} + \frac{M_0N_0^2}{\ell_0}}.
\]
\end{rem}

\medskip

Then if we set
\begin{equation}\label{eq:lambda1}
\lambda_1 = \min(\frac{\sqrt{\ell_0}}{2K_0},\frac{\sqrt{\ell_0}}{\sqrt{2K_0^2+4M_0N_0^2}}),
\end{equation}
we have (recall \eqref{eq:7.11ter}),
\begin{equation}\label{eq:normaW}
\Vert W^{\lambda}(z,\eta(\tau,z)) \Vert_{L^{\infty}([0,1],\mathbb{R}^{N})} \leq 1, \Vert W^{\lambda}(z,\eta(\tau,z))\Vert_{a_\tau,b_\tau} \leq 1
\end{equation}
for any $\lambda \in ]0,\lambda_1]$, for any $\tau$ such that $\eta(\tau,z) \in \mathfrak M$.

{\bf Step 5.} Fix $\lambda \in ]0,\lambda_1]$ and $z \in \Ucal_{x_i}$ for some $i=1,\ldots,k$. Suppose that
$\tau_* \in ]0,\frac{\rho(r)}2]$ is such that:
\begin{itemize}
\item $\Vert \eta(\tau_*,z) - x_i) \Vert_{a_{\tau_*},b_{\tau_*}} \geq \rho(r)$,
\item $\Vert \eta(\tau,z) - x_i) \Vert_{a_{\tau},b_{\tau}} < \rho(r)$, for any $\tau \in [0,\tau_*[$,
\item $\eta(\tau_*,z) \in \mathfrak M$, for any $\tau \in [0,\tau_*[$,
\end{itemize}
where $[a_\tau,b_\tau] \in \Ical_{\eta(\tau,z)}$ for any $\tau \in [0,\tau_*]$. Then $\tau_* \geq \frac12 \rho(r)$.
\begin{proof}
Since $\rho(x_i) \leq \frac12 \rho(r)$ and (by Step 4) $[a_{\tau_*},b_{\tau_*}] \subset [a_0,b_0] = [a_z,b_z]$, it is
\begin{multline*}
\frac12 \rho(r) \leq \Vert \eta(\tau_*,z) - x_i) \Vert_{a_{\tau_*},b_{\tau_*}} - \Vert \eta(0,z) - x_i) \Vert_{a_{0},b_{0}} \leq\\
\Vert \eta(\tau_*,z) - x_i) \Vert_{a_{\tau_*},b_{\tau_*}} - \Vert \eta(0,z) - x_i) \Vert_{a_{\tau_*},b_{\tau_*}}.
\end{multline*}
Then
\[
\frac12 \rho(r) \leq \Vert \eta(\tau_*,z) - \eta(0,z)) \Vert_{a_{\tau_*},b_{\tau_*}} \leq
\int_{0}^{\tau_*} \Vert \frac{\mathrm d}{\mathrm d\sigma}\eta(\sigma,z)\Vert_{a_{\tau_*},b_{\tau_*}}ds.
\]
Now, by Step 4, $[a_{\tau_*},b_{\tau_*}] \subset [a_{\tau},b_{\tau}]$ for any $\tau \in [0,\tau_*]$. Then by
\eqref{eq:normaW} and \eqref{eq:lambda1} it is $\Vert \frac{\mathrm d}{\mathrm d\sigma}\eta(\sigma,z)\Vert_{a_{\tau_*},b_{\tau_*}} \leq 1$
for any $\sigma \in [0,\tau_*]$, from which we deduce the thesis.
\end{proof}

Define
\begin{equation}\label{eq:mhd}
M_{h,\Dcal} = \sup\{\frac12\int_{a}^{b}g(\dot z, \dot z)ds: [a,b] \in \mathcal I_{z}, z \in h(1,\Dcal) \}
\end{equation}
(and note that $M_{h,\Dcal} < M_0$ since $h(1,\Dcal) \subset \mathfrak M$ and it is compact).

{\bf Step 6.} Fix
\begin{equation}\label{eq:tau1}
\tau_1(r) = \min\left(\frac{\rho(r)}2, \frac{E(r)}{8L_1\sqrt{2M_0L_0}}, \frac{M_0-\frac{c_1}2}{L_1\sqrt{2M_0L_0}},1\right).
\end{equation}
Then for any
$\lambda \in ]0,\min(\lambda_1,\frac{M_0-M_{h,\Dcal}}{2N_0M_0})]$ and for any $\tau \in [0,\tau_1(r)]$ we have $\eta(\tau,z) \in \mathfrak M$.
\begin{proof}
Since $W^{\lambda}(z,\eta(\tau,z))(s)=0$ whenever $\phi(\eta(\tau,z))\geq \frac{3\delta_0}4$ (cf. \eqref{eq:vectorfield} and
\eqref{eq:chi}),
by step 4 $\eta(\tau,z) \in \mathfrak M_0$
for any $\tau \in [0,\frac{\rho(r)}2]$. We have therefore to prove that, for any
$\tau \in [0,\tau_1(r)]$:
\begin{equation}\label{eq:in_mathfrakM}
\int_{a_\tau}^{b_\tau}\frac12g(\dot y_\tau,\dot y_\tau)ds < M_0, \, y_\tau=\eta(\tau,z), \;
\forall [a_\tau,b_\tau] \in \Ical_{\eta(\tau,z)}.
\end{equation}

First of all
consider a NIIC $\mathcal C_{0,\tau_1(r)} = \{[a_\tau',b_\tau'] : \tau \in [0,\tau_1(r)] \}$ such that
\[
[a_{\tau}',b_{\tau}'] \subset [a_\tau,b_\tau] \in \Ical_{\eta(\tau,z)}
\]
and there exists $\tau_0 \in [0,\tau_1(r)[$ such that  $[a_{\tau_0}',b_{\tau_0}'] \in \mathcal C_{0,\tau_1(r)}$ satisfies
\begin{multline}\label{eq:intervallo-iniziale}
\exists i \in \{1,\ldots,k\}: [a_{\tau_0}',b_{\tau_0}']\subset [a_z,b_z]\in\Ical_{z}, z \in \Ucal_{x_i}, \\
\text{ and }[a_{\tau_0}',b_{\tau_0}'] \subset [\alpha_{x_i}',\beta_{x_i}'] \setminus ]a_{x_i},b_{x_i}[, \text{ for some }[a_{x_i},b_{x_i}]
\in {\mathcal I}_{x_i}.
\end{multline}
Note that, by the choice of $\rho(x_i)$ it is $\int_{a_{\tau_0}'}^{b_{\tau_0}'}\frac12g(\dot z, \dot z)ds < \frac{c_1}2$,
therefore since $\tau_1(r)$ satisfies
\[
\frac{c_1}2 + L_1\tau_1(r)\sqrt{2M_0L_0} \leq M_0,
\]
by Lemma \ref{thm:controllocrescita} we have:
\begin{multline}\label{eq:stima-intervalli-piccoli}
\text{if } [a_{\tau_0}',b_{\tau_0}'] \in \mathcal C_{0,\tau_1(r)} \text{ satisfies \eqref{eq:intervallo-iniziale} for some } i \in \{1,\ldots,k\} \text{ then }\\
\int_{a_\tau'}^{b_\tau'}\frac12g(\dot y_\tau,\dot y_\tau)ds < M_0, \, y_\tau=\eta(\tau,z), \;
\forall [a_\tau',b_\tau'] \in \mathcal C_{0,\tau_1(r)}.
\end{multline}

Now choose a NIIC $\widehat {\mathcal C}_{0,\tau_1(r)}$ consisting of intervals $[a_\tau.b_\tau]$ such that
\[
[a_\tau.b_\tau] \in \Ical_{\eta(\tau,z)} \text{ for any }\tau \in [0,\tau_1(r)],
\]
and consider the following partition of the set of indexes $\{1,\ldots,k\}$ (recalling that, by \eqref{eq:propertyintersezione}, any $[a_\tau,b_\tau]$ intersects
at most one interval in $\Ical_{x_i,r,\varepsilon}$):
\[
I_1^{\tau} = \{i=1,\ldots,k: [a_\tau,b_\tau]\cap[a_{x_i},b_{x_i}] \neq \emptyset \text{ for some } [a_{x_i},b_{x_i}] \in \mathcal I_{x_i,r,\varepsilon}\},
\]
\begin{multline*}
I_2^{\tau} = \{j=1,\ldots,k:  [a_\tau,b_\tau] \subset [\alpha_{x_j}',\beta_{x_j}'] \\
\text{ and }
[a_\tau,b_\tau]\cap[a_{x_j},b_{x_j}] = \emptyset \text{ for some } [a_{x_j},b_{x_j}] \in \mathcal I_{x_j,r,\varepsilon}\},
\end{multline*}
\begin{multline*}
I_3^{\tau} =\{m=1,\ldots,k:  [a_\tau,b_\tau] \cap [\alpha_{x_m},\beta_{x_m}]   = \emptyset \\
\text{ for any }[\alpha_{x_m},\beta_{x_m}] \text { having the corresponding }[a_{x_m},b_{x_m}] \in \mathcal I_{x_m,r,\varepsilon}\},
\end{multline*}
\[
J_1^{\tau} = \{i=1,\ldots,k: [a_\tau,b_\tau]\cap[a_{\Rcal x_i},b_{\Rcal x_i}] \neq \emptyset \text{ for some }
[a_{\Rcal x_i},b_{\Rcal x_i}] \in \mathcal I_{\Rcal x_i,r,\varepsilon}\},
\]
\begin{multline*}
J_2^{\tau} = \{j=1,\ldots,k:  [a_\tau,b_\tau] \subset [\alpha_{\Rcal x_j}',\beta_{\Rcal x_j}'] \\
\text{ and }
[a_\tau,b_\tau]\cap[a_{\Rcal x_j},b_{\Rcal x_j}] = \emptyset \text{ for some }
[a_{\Rcal x_j},b_{\Rcal x_j}] \in \mathcal I_{\Rcal x_j,r,\varepsilon}\},
\end{multline*}
\begin{multline*}
J_3^{\tau} =\{m=1,\ldots,k:  [a_\tau,b_\tau] \cap [\alpha_{\Rcal x_m},\beta_{\Rcal x_m}]   = \emptyset \\
\text{ for any }[\alpha_{\Rcal x_m},\beta_{\Rcal x_m}] \text { having the corresponding }[a_{\Rcal x_m},b_{\Rcal x_m}] \in \mathcal I_{\Rcal x_m,r,\varepsilon}\}.
\end{multline*}

\medskip
Note that, by \eqref{eq:stima-intervalli-piccoli}, we can just consider the case $I_2^{\tau} \cup J_2^{\tau} = \emptyset$ for any $\tau \in [0,\tau_1(r)]$,
studying first the case:
\begin{multline}\label{eq:I}
\text{ there exist }\tau_* \in [0,\tau_1(r)] \text{ and }i \in I_1^{\tau_*} \cup J_1^{\tau_*} \text{ such that:}\\
[a_{\tau_*},b_{\tau_*}] \cap [a_{x_i},b_{x_i}] \neq \emptyset \text{ for some }[a_{x_i},b_{x_i}] \in \Ical_{x_i,r,\varepsilon} \\
\text { and } D(x_i,a_{\tau_*},b_{\tau_*},a_{x_i},b_{x_i}) > E(r) \text{ or } \\
[a_{\tau_*},b_{\tau_*}] \cap [a_{\Rcal x_i},b_{\Rcal x_i}] \neq \emptyset \text{ for some }[a_{\Rcal x_i},b_{\Rcal x_i}] \in \Ical_{\Rcal x_i,r,\varepsilon} \\
\text{ and }
D(\Rcal x_i,a_{\tau_*},b_{\tau_*},a_{\Rcal x_i},b_{\Rcal x_i}) > E(r).
\end{multline}

In this case by Lemma \ref{thm:differenze} applied with $K_* = E(r)$ we deduce that
\[
\int_{a_{\tau_*}}^{b_{\tau_*}}\frac12g(\dot y_{\tau_*},\dot y_{\tau_*})ds < M_0 - \frac{E(r)}8, \, y_\tau=\eta({\tau_*},z).
\]
Then by Lemma \ref{thm:controllocrescita}, for any $\tau \geq \tau_*$ we have
\[
\int_{a_{\tau}}^{b_{\tau}}\frac12g(\dot y_{\tau},\dot y_{\tau})ds < M_0 - \frac{E(r)}8 + L_1(\tau-\tau_*)\sqrt{2M_0L_0},
\]
provided that $M_0 - \frac{E(r)}8 + L_1(\tau-\tau_*)\sqrt{2M_0L_0} \leq M_0$.
Then, since  $\tau_1(r) > 0$ satisfies (cf. \eqref{eq:tau1})
\begin{equation}\label{eq:scelta1pertau1}
L_1\tau_1(r)\sqrt{2M_0L_0} - \frac{E(r)}8 \leq 0,
\end{equation}
we have the following property:
\begin{multline*}
\text{if there exist }\tau_* \in [0,\tau_1(r)]  \text{ and }i \in I_1^{\tau_*} \cup J_1^{\tau_*} \text{ such that \eqref{eq:I} is satisfied, then}\\
\int_{a_{\tau}}^{b_{\tau}}\frac12g(\dot y_{\tau},\dot y_{\tau})ds < M_0 \text{ for any }\tau \in [\tau_*,\tau_1(r)],
\text{ for any }[a_\tau,b_\tau]\in \widehat {\mathcal C}_{\tau_*,\tau_1(r)}.
\end{multline*}

Therefore, recalling that we have just to consider the case
$I_2^{\tau} \cup J_2^{\tau} = \emptyset$ for any $\tau \in [0,\tau_1(r)]$,
to conclude the proof of step 6 it will be sufficient to prove that
\[
\int_{a_{\tau}}^{b_{\tau}}\frac12g(\dot y_{\tau},\dot y_{\tau})ds < M_0 \text{ for any }\tau \in [0,\tau_1(r)],
\text{ for any }[a_\tau,b_\tau]\in \widehat {\mathcal C}_{0,\tau_1(r)},
\]
under the assumptions
\begin{multline}\label{eq:ipotesi-finaleper-step6}
\forall \hat \tau \in ]0,\tau_1(r)], \forall \tau \in [0,\hat \tau],\forall i \in I_1^{\tau} \cup J_1^{\tau} \text{ it is }\\
[a_{\tau_*},b_{\tau_*}] \cap [a_{x_i},b_{x_i}] \neq \emptyset \text{ with }[a_{x_i},b_{x_i}] \in \Ical_{x_i,r,\varepsilon} \\
\text { and } D(x_i,a_{\tau},b_{\tau},a_{x_i},b_{x_i}) \leq E(r) \text{ and } \\
[a_{\tau_*},b_{\tau_*}] \cap [a_{\Rcal x_i},b_{\Rcal x_i}] \neq \emptyset \text{ for some }[a_{\Rcal x_i},b_{\Rcal x_i}] \in \Ical_{\Rcal x_i,r,\varepsilon}\\
\text{ and }D(\Rcal x_i,a_{\tau},b_{\tau},a_{\Rcal x_i},b_{\Rcal x_i}) \leq E(r),
\end{multline}
and
\begin{equation}\label{eq:caso2}
I_2^{\tau} \cup J_2^{\tau} = \emptyset \text{ for any }\tau \in [0,\tau_1(r)].
\end{equation}

Towards this goal, for all $z \in h(1,\Dcal)$, set:
\[
\Phi_z(\tau)= F_{h(1,z)}(\tau,\widehat{\mathcal C}_{0,\tau_1(r)}).
\]

Since $z \in \mathfrak M$, $\Phi_z(0) < M_0$. Then by
Lemma \ref{thm:palle}, there exists $\tilde \tau > 0$ such that
$\Phi_z(\tau) < M_0$ for any $\tau \in [0,\tilde\tau]$. Working on $\tau \in [0,\tilde\tau]$, we have,
for any $l$ sufficiently small,

\begin{multline*}
\Phi_{z}(\tau+l)-\Phi_{z}(\tau)=\\
 f_{a_{\tau+l},b_{\tau+l}}(\eta(\tau+l,z))-
f_{a_{\tau},b_{\tau}}(\eta(\tau+l,z))+f_{a_{\tau},b_{\tau}}(\eta(\tau+l,z))
-f_{a_{\tau},b_{\tau}}(\eta(\tau,z))\\
\qquad\stackrel{\text{by Step 4}}{\le}
f_{a_{\tau},b_{\tau}}(\eta(\tau+l,z))
-f_{a_{\tau},b_{\tau}}(\eta(\tau,z)) \\
\qquad=l\,\mathrm df_{a_\tau,b_\tau}(\eta(\tau,z))
\Big[\frac{\mathrm d}{\mathrm d\tau}\eta(\tau,z)\Big]+o(l)\quad\text{as $l\to0$.}
\end{multline*}

Then setting $y_\tau = \eta(\tau,z)$
\begin{multline}\label{eq:limsupPhi}
\limsup_{l\to0^+} \frac{\Phi_z(\tau+l)-\Phi_z(\tau)}{l} \leq
\int_{a_\tau}^{b_\tau}g(\dot y_{\tau},\Dds W^{\lambda}(z,y_\tau))ds = \\
\sum_{i=1}^{k}\left(\beta_{x_i}(z)\int_{a_\tau}^{b_\tau}g(\dot y_{\tau},\Dds \widehat V_{x_i})ds+
\beta_{\Rcal x_i}(z)\int_{a_\tau}^{b_\tau}g(\dot y_{\tau},\Dds \widehat V_{\Rcal x_i})ds\right)+\\
\lambda\int_{a_\tau}^{b_\tau}H^{\phi}(y_\tau)[\dot y_\tau,\dot y_\tau]ds
\end{multline}

Now if $i \in I_3^{\tau}$, $\widehat V_{x_i}(s)=0$ for any
$s \in [a_\tau,b_\tau]$ and if $j \in J_3^{\tau}$, $\widehat V_{\Rcal x_j}(s)=0$ for any
$s \in [a_\tau,b_\tau]$. Therefore, by \eqref{eq:ipotesi-finaleper-step6}, \eqref{eq:caso2}, \eqref{itm:PS5f}
of Proposition \ref{thm:PS} and Step 5 (recalling that $\tau_1(r) \leq \frac{\rho(r)}2$), we have
\[
\limsup_{l\to0^+} \frac{\Phi_z(\tau+l)-\Phi_z(\tau)}{l} \leq \lambda\int_{a_\tau}^{b_\tau}H^{\phi}(y_\tau)[\dot y_\tau,\dot y_\tau]ds
< 2\lambda N_0M_0
\]
for any $\tau \in [0,\tau_1(r)]$ such that $\Phi_z(\tau) < M_0$. Then
\[
\Phi_z(\tau) < M_0 \Longrightarrow \Phi_z(\tau) < \Phi_z(0) + 2N_0M_0\lambda\tau \leq M_{h,\Dcal} + 2N_0M_0\lambda\tau,
\]
where $M_{h,\Dcal}$ is defined by \eqref{eq:mhd}.
Therefore, since $\tau_1(r) \leq 1$ and $\lambda$ satisfies
\[
M_{h,\Dcal} + 2N_0M_0\lambda \leq M_0,
\]
the proof of Step 6 is concluded.
\end{proof}

{\bf Step 7.} Fix $z \in h(1,\mathcal D)$ and a NIIC ${\mathcal C}_{0,\tau_1(r)}$ consisting of intervals $[a_\tau,b_\tau] \in \Ical_{\eta(\tau,z)}$ such that
\begin{multline}\label{eq:step7}
[a_0,b_0] \in \mathcal I_z \text{ and }z \in \mathcal U_{x_i} \Longrightarrow [a_0,b_0] \cap [a_{x_i},b_{x_i}] \neq \emptyset,\\
\text {for some }[a_{x_i},b_{x_i}] \in \mathcal I_{x_i,r,\varepsilon}.
\end{multline}
Set
\[
\Psi_z(\tau)= G_z(\tau,\widehat{\mathcal C}_{0,\tau_1(r)}),
\]
cf. \eqref{eq:Gflusso}. Then there exists $\tau_2(r) \in ]0,\tau_1(r)]$ and $\lambda_2 \in ]0,\min(\lambda_1,\frac{M_0-M_{h,\Dcal}}{2N_0M_0})]$ such that
\begin{equation}\label{eq:limsupPsi}
\limsup_{l\to0^+} \frac{\Psi_z(\tau+l)-\Psi_z(\tau)}{l} \leq -\frac{\mu(r)c_1}{16M_0} \text{ for any } \tau\in [0,\tau_2(r)]
\end{equation}
or
\begin{multline}\label{eq:alternativa-limsupPsi}
\exists \tau_* \in [0,\tau_2(r)] \text{ such that \eqref{eq:limsupPsi} holds for any }\tau \in [0,\tau_*[ \text{ and }\\
\Psi_z(\tau) \leq \Psi_z(0) - \min \big(\frac{c_1E(r)}{64M_0},\frac{c_1}4 \big) \text{ for any }\tau \in [\tau_*,\tau_2(r)].
\end{multline}

\begin{proof} Fix $l > 0$. By Step 4
\[
\Psi_z(\tau + l) - \Psi_z(\tau) \leq (b_\tau - a_\tau)(\Phi_z(\tau + l) - \Phi_z(\tau)),
\]
where $[a_\tau,b_\tau] \in \widehat{\mathcal C}_{0,\tau_1(r)}$. Then setting $y_\tau = \eta(\tau,z)$ by \eqref{eq:limsupPhi} we have
\begin{multline}\label{eq:limsupPsi1}
\limsup_{l\to0^+} \frac{\Psi_z(\tau+l)-\Psi_z(\tau)}{l} \leq
(b_{\tau}-a_{\tau})\int_{a_\tau}^{b_\tau}g(\dot y_{\tau},\Dds W^{\lambda}(z,y_\tau))ds = \\
(b_{\tau}-a_{\tau})\sum_{i=1}^{k}\left(\beta_{x_i}(z)\int_{a_\tau}^{b_\tau}g(\dot y_{\tau},\Dds \widehat V_{x_i})ds+
\beta_{\Rcal x_i}(z)\int_{a_\tau}^{b_\tau}g(\dot y_{\tau},\Dds \widehat V_{\Rcal x_i})ds\right)+\\
\lambda(b_{\tau}-a_{\tau})\int_{a_\tau}^{b_\tau}H^{\phi}(y_\tau)[\dot y_\tau,\dot y_\tau]ds.
\end{multline}

Consider first the case
\[
\exists \tau_* \in [0,\tau_2(r)]: \Psi_z(\tau) < \frac{c_1}2.
\]
By Lemma \ref{thm:controllocrescita} for any $\tau \in [\tau_*,\tau_2(r)]$ it is
\begin{equation}\label{eq:Psi-non-inters}
\Psi_z(\tau) \leq \Psi_z(\tau_*) + L_1\tau_2(r)\sqrt{2M_0L_0} < \frac{c_1}2 + L_1\tau_2(r)\sqrt{2M_0L_0} \leq \frac34 c_1,
\end{equation}
provided that
\[
L_1\tau_2(r)\sqrt{2M_0L_0} \leq \frac14 c_1.
\]
Note that the same estimate made in \eqref{eq:Psi-non-inters}  allows to treat also the case
\[
\exists i \in \{1,\ldots,k\} \text{ and }\tau_* \in [0,\tau_2(r)] \text{ such that }[a_{\tau_*},b_{\tau_*}] \cap [a_{x_i},b_{x_i}] = \emptyset,
\]
where $[a_{x_i},b_{x_i}] \in \Ical_{x_i,r,\varepsilon}$ is the interval satisfying \eqref{eq:step7}, as done in Step 6 to prove
\eqref{eq:stima-intervalli-piccoli}.

Now suppose that
\begin{multline*}
\text{ there exist }\tau_* \in [0,\tau_2(r)] \text{ and }i \in \{1,\ldots,k\} \text{ such that: }
\Psi_z(\tau_*)\geq \frac{c_1}2 \text{ and } \\ [a_{\tau_{*}},b_{\tau_{*}}] \cap [a_{x_i},b_{x_i}] \neq \emptyset \text{ and }
D(x_i,a_{\tau_*},a_{\tau_*},a_{x_i},b_{x_i}) > E(r) \\
\text{ or }
[a_{\tau_{*}},b_{\tau_{*}}] \cap [a_{\Rcal x_i},b_{\Rcal x_i}] \neq \emptyset \text{ and }
D(\Rcal x_i,a_{\tau_*},a_{\tau_*},a_{\Rcal x_i},b_{\Rcal x_i}) > E(r),\\
\text{ where } [a_{x_i},b_{x_i}] \text{ is the interval satisfying \eqref{eq:step7}}.
\end{multline*}
In this case by Lemma \ref{thm:differenze} with $K_{*} = E(r)$ we deduce that
\[
\Psi_z(\tau_*) \leq \Psi_z(0) - \frac{c_1E(r)}{32M_0}.
\]
Then, by Lemma \ref{thm:controllocrescita} for any $\tau \in [\tau_*,\tau_2(r)]$ it is
\begin{multline*}
\Psi_z(\tau) \leq \Psi_z(\tau_*) + L_1\tau_2(r)\sqrt{2M_0L_0} \leq
\Psi_z(0)  - \frac{c_1E(r)}{32M_0} + L_1\tau_2(r)\sqrt{2M_0L_0} \leq \\
\Psi_z(0) - \frac{c_1E(r)}{64M_0},
\end{multline*}
provided that
\[
L_1\tau_2(r)\sqrt{2M_0L_0} \leq \frac{c_1E(r)}{64M_0}.
\]
Then it remains to consider only the case:
\begin{multline*}
\text{for any }\tau \in [0,\tau_2(r)], \text{ for any }i \in \{1,\ldots,k\} \text{ it is:  }\\
\Psi_z(\tau)\geq \frac{c_1}2,
[a_\tau,b_\tau] \cap [a_{x_i},b_{x_i}] \neq \emptyset, \, [a_\tau,b_\tau] \cap [a_{\Rcal x_i},b_{\Rcal x_i}] \neq \emptyset,\\
D(x_i,a_{\tau_*},b_{\tau_*},a_{x_i},b_{x_i}) \leq E(r) \text{ and } D(\Rcal x_i,a_{\tau_*},b_{\tau_*},a_{\Rcal x_i},b_{\Rcal x_i}) \leq E(r),\\
\text{ where }[a_{x_i},b_{x_i}] \in \mathcal I_{x_i,r,\varepsilon} \text{ satisfies \eqref{eq:step7}}.
\end{multline*}

Now recall that $V_{x_i}$ has been chosen so that $\Vert V_{x_i} \Vert_{a_{x_{i}},b_{x_{i}}}
=\frac12$ for any $i$, while
$c_1 \geq \frac12(\frac{3\delta_0}{4K_{0}})^{2}$ so
$\frac{c_1}2 \geq \frac14(\frac{3\delta_0}{4K_{0}})^{2}$. Then
by the choice of $\rho(x_i)$,  \eqref{eq:limsupPsi1}, Steps 5 and 6 and Propositions
\ref{thm:51new} and \ref{thm:PS} we have:

\begin{multline*}
\limsup_{l\to0^+} \frac{\Psi_z(\tau+l)-\Psi_z(\tau)}{l} \leq \\
(b_{\tau}-a_{\tau})\sum_{i=1}^{k}\left(\beta_{x_i}(z)(-\frac{\mu(r)}4) +
\beta_{\Rcal x_i}(z)(-\frac{\mu(r)}4) \right)+\\
\lambda(b_{\tau}-a_{\tau})N_0\int_{a_\tau}^{b_\tau}g(\dot y_\tau,\dot y_\tau)ds \leq
(b_{\tau}-a_{\tau})(-\frac{\mu(r)}4) + 2\lambda N_0M_0.
\end{multline*}
Now $b_{\tau}-a_{\tau} \geq \frac{c_1}{2M_0}$, therefore
\[
\limsup_{l\to0^+} \frac{\Psi_z(\tau+l)-\Psi_z(\tau)}{l} \leq -\frac{c_1}{2M_0}\frac{\mu(r)}4 + 2\lambda N_0M_0 \leq -\frac{\mu(r)c_1}{16M_0},
\]
choosing $\lambda_2$£ such that $2\lambda_2 N_0M_0 \leq \frac{\mu(r)c_1}{16M_0}$.
\end{proof}
We can finally conclude the proof of Proposition \ref{thm:prop4.17f} defining
\[
H_{\varepsilon}(\tau,z) = \eta(\tau T_{\varepsilon},z),
\]
which is of type $A$ in $[0,1]$, $T_{\varepsilon} = 2\frac{\varepsilon 16M_0}{c_1\mu(r)}$, $\varepsilon \in ]0,\varepsilon_1(r)]$,
and $\varepsilon_1(r)$ such that $T_{\varepsilon_{1}(r)} \leq \tau_2(r)$.

By Steps 1,2,4 and 6, $(\mathcal D, H_{\varepsilon} \star h) \in \mathcal H$, so \eqref{itm:1lemdef1} of Proposition \ref{thm:prop4.17f} is satisfied.
By Step 7  it follows  that also \eqref{itm:1lemdef2} of Proposition \ref{thm:prop4.17f} is satisfied.
Finally property \eqref{eq:normaW} gives \eqref{itm:1lemdef3}.
\end{proof}

\section{Deformation results for homotopies of type B}\label{sec:riparametrizzazione}
In this section we study how to proceed when there are curves nearby variationally critical portions
of second type. On these curves we use a suitable reparameterization flow to push down the functional $\mathcal F$. The construction
of the reparameterizations is based on the following simple idea. Fix a curve $x:[a,b] \rightarrow H^{1}([a,b],\mathbb{R}^N)$ and fix $c \in ]a,b[$.
Consider the integral average of the energy in the intervals $[a,c]$ and $[c,b]$. Suppose that they are different and take an affine reparameterization
of $[a,c]$ on $[a,c+\tau]$ and of $[c,b]$ on $[c+\tau,b]$. In such a way we can move $c$ towards $a$ if $\tau < 0$ and $c$ towards $b$
if $\tau > 0$, and consider the corresponding reparameterization $x_\tau$ of the curve $x$. Moving $c$ towards the extreme point of the interval
where the integral average is smaller, the energy functional decreases. This fact is rigorously explained in Remark \ref{rem:contophi}.

The above property is clearly satisfied for any irregular variationally critical portion $x$ of II type if we choose as $c$ an instant nearby an
interval where $x$ is constant. We can therefore to push down $\mathcal F$ in a neighborhood of any irregular variationally critical portion of II type,
obtaining Proposition \ref{thm:prop4.19f}. We shall do this trying to follow as much as possible the frame of the previous section.

Note that using reparameterizations there is the possibility that pieces of curves outside
$\Omega$ move inside $\Omega$. Anyway this is allowed for homotopies of type B
and it is not a problem since we are just dealing with reparametrization of intervals.

\medskip

Let $x\in\mathfrak M$ be fixed; for all $[a,b]\in {\mathcal I}_x$ such that
$x_{\vert[a,b]}$ is an irregular variationally critical portion of
second type, we set:
\begin{eqnarray}\label{eq:4.45f}
\ell_-(x)=\max\big\{s\in\left]0,b-a\right[:x_{\vert[a,a+s]}\ \text{is constant}\big\},
\\ \label{eq:4.46f}
\ell_+(x)=\max\big\{s\in\left]0,b-a\right[:x_{\vert[b-s,b]}\ \text{is constant}\big\}.
\end{eqnarray}
\begin{rem}\label{thm:rem4.18f}
Note that, by Proposition~\ref{thm:regcritpt}
if $x_{\vert[a,b]}$ is of first type, it may be $\ell_-(x)+\ell_+(x)=0$,
while if $x_{\vert[a,b]}$ is of second type we have $\ell_-(x)+\ell(x)>0$, and
such a quantity is, in a sense, a measure of the distance from $x_{\vert[a,b]}$
to the set of OGC's affinely reparameterized in $[a+\ell_-(x),b-\ell_+(x)]$.
\end{rem}

Instead of considering only reparameterization flows we prefer to
give the analogous of Proposition~\ref{thm:prop4.17f}
when we are far only from
regular variationally critical portions and
from irregular variationally critical portions of first type. Nearby irregular variationally critical portion of II type we use reparameteriztions
of the type described in Remark \ref{rem:contophi}, while far from them we use the results of Proposition~\ref{thm:prop4.17f}.

To this aim, we set:
\begin{equation}\label{eq:4.50f}
\begin{split}
\mathcal Z_{a,b}^1=\Big\{y\in
H^1([a,&b],\overline\Omega):\text{either}\ y_{\vert[a,b]}\ \text{is an OGC },\\
&\text{or $y_{\vert[a,b]}$ is an irregular variationally critical portion of first type.}\Big\}
\end{split}
\end{equation}

\begin{rem}\label{rem:contophi}
Assume $[a,b]\subset[0,1]$ and let $c\in\left]0,1\right[$ be fixed.
Let  $\tau_0>0$ be such that
$c+\tau\in\left]a,b\right[$  for all $\tau\in]-\tau_0,\tau_0[$, and for
all $x\in\mathfrak M$ denote by $x_{a,c,\tau}$ the (orientation preserving)
affine reparameterization on  the interval
$[a,c+\tau]$ of $x_{\vert[a,c]}$. Similarly, define $x_{c,\tau,b}$ to be the affine reparameterization
of $x_{\vert[c,b]}$ on $[c+\tau,b]$. Set:
\begin{equation}\label{eq:4.42f}
\varphi(\tau)=\frac12\int_a^{c+\tau}g\big(\dot x_{a,c,\tau},\dot x_{a,c,\tau}\big)\,\mathrm ds+
\int_{c+\tau}^bg\big(\dot x_{c,\tau,b},\dot x_{c,\tau,b}\big)\,\mathrm ds.
\end{equation}
A straightforward calculation gives:
\begin{equation}\label{eq:4.43f}
\varphi(\tau)=\frac{c-a}{2(c-a+\tau)}\int_a^cg(\dot x,\dot x)\,\mathrm ds+\frac{b-c}{2(b-c-\tau)}
\int_c^bg(\dot x,\dot x)\,\mathrm ds.
\end{equation}
Hence,
\[\varphi'(\tau)=-\frac{(c-a)\int_a^cg(\dot x,\dot x)\,\mathrm ds}{2(c-a+\tau)^2}+\frac{%
(b-c)\int_c^bg(\dot x,\dot x)\,\mathrm ds}{2(b-c-\tau)^2},\]
and in particular:
\begin{equation}\label{eq:eqLem4.18}
\varphi'(0)=-\frac{1}{2(c-a)}\int_a^cg(\dot x,\dot x)\,\mathrm ds+\frac1{2(b-c)}\int_c^bg(\dot x,\dot x)\,\mathrm ds.
\end{equation}

\end{rem}

\bigskip

\begin{prop}\label{thm:prop4.19f}
Fix $c_1 \in [\frac12(\frac{3\delta_0}{4K_{0}})^2,M_0[$ and $r > 0$. Then, there exists $\varepsilon_2=\varepsilon_2(r,c_1)\in ]0,\frac{c_1}4[$
such that for any $\varepsilon \in ]0,\varepsilon_2]$, for any $(\mathcal D,h) \in \mathcal H$ satisfying:
\begin{equation}\label{eq:4.28f-a}
\mathcal F(\mathcal D,h) \leq c_1 + \varepsilon
\end{equation}
and
\begin{multline}\label{eq:4.28ff-b}
\big\Vert y-x_{|[a,b]}\big\Vert_{a,b} \geq r \\
\text{ for any }y\in\mathcal Z_{a,b}^1, \text{ for any }
x = h(1,\xi),  \xi \in {\mathcal D}, \text{ such that }\\
\frac{b-a}2\int_{a}^{b}g(\dot x,\dot x)\,\mathrm ds \geq
c_1-\varepsilon, \text{ where }[a,b] \in \mathcal I_x.
\end{multline}
there exists a continuous map
$H_\varepsilon:[0,1]\times h(1,\mathcal D)\to\mathfrak M$, concatenation of a homotopy of type A in $[0,1]$ with another of type B,
with the following properties:
\smallskip

\begin{enumerate}
\item\label{itm:1lemdef1-a} $(\mathcal D,H_\varepsilon \star h)\in\mathcal H$;

\item\label{itm:1lemdef2-b}  $\mathcal F(\mathcal D,H_\varepsilon \star h)\le c_1 - \varepsilon$;

\item\label{itm:1lemdef3-c} there exists $T_\varepsilon>0$, with
$T_\varepsilon\to0$ as $\varepsilon\to0$, such
that
for any $z \in h(1,\mathcal D)$
$\Vert H_\varepsilon(\tau,z)-z\Vert_{a,b}
\le\tau T_\varepsilon$ for all $\tau\in[0,1]$, for all $[a,b] \in {\mathcal I}_z$.
\end{enumerate}
\end{prop}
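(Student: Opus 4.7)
The plan is to construct $H_\varepsilon$ as a concatenation $H^B_\varepsilon \star H^A_\varepsilon$, where $H^A_\varepsilon$ comes from Proposition~\ref{thm:prop4.17f} applied on a slightly enlarged scale, and $H^B_\varepsilon$ is a new reparameterization-based homotopy of type B designed to handle curves close to irregular variationally critical portions of second type. First I would apply Proposition~\ref{thm:prop4.17f} with a modified parameter $r'\in(0,r]$ and obtain a type-A homotopy $H^A_\varepsilon$. The curves $z=H^A_\varepsilon(1,h(1,\xi))$ that still admit an interval $[a,b]\in\mathcal I_z$ with $\tfrac{b-a}{2}\int_a^b g(\dot z,\dot z)\ge c_1-\varepsilon$ must, by the conclusion of Proposition~\ref{thm:prop4.17f} applied with $\mathcal Z_{a,b}$, be within distance $r'$ of some element $y\in\mathcal Z_{a,b}$. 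Since the original curve was $r$-far from $\mathcal Z^1_{a,b}$ and the displacement $\|H^A_\varepsilon(\tau,\cdot)-\mathrm{id}\|$ is controlled by $T_\varepsilon$ (property (\ref{itm:1lemdef3})), by choosing $r'$ and $\varepsilon_2$ small we guarantee that $y\in\mathcal Z_{a,b}\setminus\mathcal Z^1_{a,b}$, i.e.\ $y$ is an irregular variationally critical portion of second type.

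Next, for each such $y$, Proposition~\ref{thm:irregcritpt} yields $[\alpha,\beta]\subset[a,b]$ with $y|_{[a,\alpha]}$ and $y|_{[\beta,b]}$ constant on $\partial\Omega$, $y|_{[\alpha,\beta]}$ an OGC, and $\ell_-(y)+\ell_+(y)>0$ (Remark~\ref{thm:rem4.18f}). Pick an instant $c_y\in(\alpha,\beta)$ close to $\alpha$ (say, if $\ell_-(y)\ge\ell_+(y)$); then
\[
\tfrac{1}{2(c_y-a)}\int_a^{c_y}g(\dot y,\dot y)\,\mathrm ds \;\ll\; \tfrac{1}{2(b-c_y)}\int_{c_y}^b g(\dot y,\dot y)\,\mathrm ds,
\]
so by \eqref{eq:eqLem4.18} the derivative $\varphi'(0)$ of the map in Remark~\ref{rem:contophi} is strictly positive, with a quantitative lower bound depending only on $c_1$ and $\ell_-(y)+\ell_+(y)$. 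Translating the partition point $c_y$ in the direction $\tau<0$ therefore decreases $\varphi(\tau)$, and since $\mathcal F$ on the interval $[a,b]$ equals $(b-a)\varphi(\tau)$, it decreases $\mathcal F$ at a definite rate. By the compactness property in Proposition~\ref{thm:Zrcompatto} and Remark~\ref{rem:rem4.12}, this rate can be made uniform over the set of all relevant $y$, yielding positive constants $\mu^B,\theta^B$ analogous to those in Proposition~\ref{thm:51new}.

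The type-B homotopy $H^B_\varepsilon$ is then constructed by the same partition-of-unity machinery used in Part A of the proof of Proposition~\ref{thm:prop4.17f}: cover $H^A_\varepsilon(1,h(1,\mathcal D))$ by finitely many neighborhoods $\mathcal U_{y_i}\cup\mathcal U_{\mathcal R y_i}$ of the relevant type-II points, choose an $\mathcal R$-equivariant partition of unity $\{\beta_{y_i},\beta_{\mathcal Ry_i}\}$, and on each $\mathcal U_{y_i}$ use the reparameterization centered at $c_{y_i}$. The $\mathcal R$-equivariance is enforced by requiring $c_{\mathcal R y}=1-c_y$, so that the reparameterization and its image under $\mathcal R$ coincide. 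The resulting flow is of type B because it is a pure reparameterization in the variable $s$ (Definition~\ref{thm:tipoB} with the continuous $\Lambda$ piecewise affine on $[0,c_y+\tau]\cup[c_y+\tau,1]$). Invariance of $\mathfrak M$ is automatic since reparameterization leaves $\phi\circ z$ pointwise unchanged and the product $(b-a)\int_a^b g(\dot z_\tau,\dot z_\tau)$ decreases. For curves handled already by Step 1 (far from all of $\mathcal Z_{a,b}$) one takes $c_y$ in a trivial region so that the reparameterization is the identity and no further change occurs.

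The main obstacle is the \emph{uniformity} of the reparameterization direction and rate: after the type-A step one must prove, via Proposition~\ref{thm:Zrcompatto} combined with Remark~\ref{rem:rem4.18f}, that $\inf\bigl(\ell_-(y)+\ell_+(y)\bigr)>0$ over the compact family of type-II irregular critical portions reachable as limits, so that $\varphi'(0)$ admits a uniform lower bound. A secondary difficulty is ensuring the compatibility of the two concatenated homotopies: the type-A step must strictly separate all those curves still needing treatment from $\mathcal Z^1_{a,b}$, since only then may the subsequent type-B reparameterization be guaranteed to act on curves close to type-II portions. This is handled by tightening $r'$ and $\varepsilon_2$ so that the cumulative displacement $T_{\varepsilon}$ in (\ref{itm:1lemdef3}) remains below $r/2$, after which the verification of properties (\ref{itm:1lemdef1-a})--(\ref{itm:1lemdef3-c}) of Proposition~\ref{thm:prop4.19f} follows the same template as in Proposition~\ref{thm:prop4.17f}.
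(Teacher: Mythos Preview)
Your overall strategy is the same as the paper's—use the type-A construction from Proposition~\ref{thm:prop4.17f} on curves far from all of $\mathcal Z_{a,b}$, and a reparameterization flow (type B) on curves close to irregular type-II portions, with uniformity coming from a compactness argument yielding a lower bound $\ell>0$ on $\ell_-(w)+\ell_+(w)$. However, there is a genuine gap in how you invoke Proposition~\ref{thm:prop4.17f}. That proposition is not a ``filter'' that processes curves and leaves behind those close to $\mathcal Z_{a,b}$: its hypothesis \eqref{eq:4.28ff} requires \emph{every} curve $x=h(1,\xi)$ with a high-energy interval to be $r$-far from $\mathcal Z_{a,b}$. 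If some such curve is close to a type-II portion $w\in\mathcal Z_{a,b}\setminus\mathcal Z^1_{a,b}$, the hypothesis fails and the proposition simply does not apply, so you obtain no $H^A_\varepsilon$ at all. The sentence ``the curves \ldots\ that still admit an interval \ldots\ must, by the conclusion of Proposition~\ref{thm:prop4.17f}, be within distance $r'$ of some $y\in\mathcal Z_{a,b}$'' misreads the proposition: no such ``leftover'' statement is part of its conclusion.

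The paper's fix (and the correct one) is to go into the \emph{proof} of Proposition~\ref{thm:prop4.17f} rather than use it as a black box: on intervals $[a_x,b_x]\in\mathcal I_x$ that are close to a type-II portion one replaces the vector field $V_x$ by the reparameterization $\varphi_{x,\pm}$ centered at the point $c_x$ of Lemma~\ref{thm:nearbyIItype}; on intervals far from all of $\mathcal Z_{a,b}$ one keeps the $V_x$ of Proposition~\ref{thm:51new}. The paper first establishes Palais--Smale type estimates for the reparameterization flow (Propositions~\ref{thm:PSrip-} and~\ref{thm:PS-rip1}), then glues the two constructions via the same $\mathcal R$-equivariant partition-of-unity scheme as in Part~A of Proposition~\ref{thm:prop4.17f}, and controls the energy on intervals where neither mechanism acts via the analogues of Lemmas~\ref{thm:differenze} and~\ref{thm:controllocrescita}. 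A smaller point: your reparameterization $\Lambda$ should be the identity outside $[a,b]$ (as in the paper's $\varphi_{x,-}$, formula~\eqref{eq:rip-}), not piecewise affine on all of $[0,1]$; otherwise it would disturb the other intervals in $\mathcal I_z$.
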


In order to prove Proposition \ref{thm:prop4.19f} some preliminary results are needed. Thanks to Remark \ref{rem:rem4.12} a simple contradiction argument
gives
\begin{lem}\label{thm:stima-ell}
There exist $\rho_2 > 0$ and $\ell > 0$ such that,
if $x \in \mathfrak M$, $[a,b] \in \Ical_{x}$ and
\[
\Vert x_{|[a,b]} - y\Vert_{a,b} \geq r \text{ for any }y \in Z^1_{a,b},
\]
then
\begin{equation}\label{vicinanza}
\ell_-(w)+\ell_+(w)\ge\ell,
\end{equation}
\begin{equation}\label{eq:energiaIItype}
\frac12\int_{a}^{b}g(\dot w,\dot w)\,\mathrm ds \leq M_0 + 1,
\end{equation}
for any $w_{|[a,b]}$ irregular variational portion of II type satisfying
$\Vert x_{|[a,b]}-w_{|[a,b]} \Vert_{a,b} \leq \rho_2$.
\end{lem}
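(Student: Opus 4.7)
\medskip

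The plan is to prove both conclusions by contradiction, handling them in parallel through a single sequential compactness argument, as the hint ``a simple contradiction argument'' suggests.

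\medskip

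For the energy bound $\tfrac12\int_a^b g(\dot w,\dot w)\,\mathrm ds\le M_0+1$: this is the easier of the two and follows from an elementary Cauchy--Schwarz estimate. Since $x\in\mathfrak M$ and $[a,b]\in\mathcal I_x$ one has $\tfrac12\int_a^b g(\dot x,\dot x)\,\mathrm ds<M_0$, while \eqref{eq:1.8fabio} yields $\Vert \dot x-\dot w\Vert_{L^2([a,b],\mathbb{R}^N)}\le\sqrt 2\,\Vert x-w\Vert_{a,b}$. Combined with \eqref{eq:5.10bisnew} and \eqref{eq:kappas} applied to $x$ and $w$ (both lying in the compact set $\phi^{-1}(]-\infty,\delta_0])$), one controls $|\tfrac12\int_a^b g(\dot x,\dot x)-\tfrac12\int_a^b g(\dot w,\dot w)|$ by a constant times $\Vert x-w\Vert_{a,b}$, so choosing $\rho_2$ small enough (independently of $x,w$) will give the bound $M_0+1$.

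\medskip

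For the length bound, suppose by contradiction that no such $\ell>0$ exists (for any $\rho_2$ we have fixed, say via the previous paragraph). Then there are sequences $x_n\in\mathfrak M$, $[a_n,b_n]\in\mathcal I_{x_n}$ satisfying $\Vert x_{n|[a_n,b_n]}-y\Vert_{a_n,b_n}\ge r$ for every $y\in\mathcal Z^1_{a_n,b_n}$, and irregular variationally critical portions $w_n$ of second type on $[a_n,b_n]$ with $\Vert x_n-w_n\Vert_{a_n,b_n}\to 0$ and $\ell_-(w_n)+\ell_+(w_n)\to 0$. By Proposition~\ref{thm:irregcritpt}\eqref{itm:stop} there exist $[\alpha_n,\beta_n]\subset[a_n,b_n]$ with $b_n-a_n-(\beta_n-\alpha_n)\to 0$ such that $w_{n|[\alpha_n,\beta_n]}$ is an OGC in $\overline\Omega$. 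Since $w_n\in\mathcal Z$, $[\alpha_n,\beta_n]\in\mathcal I^0_{w_n}$, and the energies $\tfrac12\int_{\alpha_n}^{\beta_n}g(\dot w_n,\dot w_n)\,\mathrm ds$ are bounded by $M_0+1$ (by the first paragraph), Remark~\ref{rem:rem4.12} applies: up to subsequences $\alpha_n\to\alpha$, $\beta_n\to\beta$ and $w_{n|[\alpha_n,\beta_n]}\to y_{|[\alpha,\beta]}$ in $H^1$ (in the sense of Remark~\ref{rem:rep}) for an OGC $y:[\alpha,\beta]\to\overline\Omega$. Since $\ell_-(w_n)+\ell_+(w_n)\to 0$ and $w_n$ is constant on the vanishing end-intervals, one also has $a_n\to\alpha$, $b_n\to\beta$, so setting $a=\alpha$, $b=\beta$, the full curves $w_{n|[a_n,b_n]}$ converge in $H^1$ to $y_{|[a,b]}$.

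\medskip

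Now define $y_n:[a_n,b_n]\to\overline\Omega$ to be the affine reparameterization of $y$ on $[a_n,b_n]$. Affine reparameterization preserves the geodesic equation and the orthogonality of the velocities at the endpoints, so each $y_n$ is an OGC, hence $y_n\in\mathcal Z^1_{a_n,b_n}$. Since $[a_n,b_n]\to[a,b]$, one has $y_n\to y$ in $H^1$ in the sense of Remark~\ref{rem:rep}, and from $\Vert x_n-w_n\Vert_{a_n,b_n}\to 0$ together with $w_n\to y$ one also gets $x_n\to y$ in the same sense. Hence $\Vert x_n-y_n\Vert_{a_n,b_n}\to 0$, contradicting $\Vert x_{n|[a_n,b_n]}-y\Vert_{a_n,b_n}\ge r$ for every $y\in\mathcal Z^1_{a_n,b_n}$. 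The main technical obstacle, which the compactness result of Proposition~\ref{thm:Zrcompatto}/Remark~\ref{rem:rem4.12} is designed to overcome, is precisely the matching of the variable parameterization intervals $[a_n,b_n]$ with the limit interval $[a,b]$; everything else reduces to keeping track of norm estimates under these affine reparameterizations.
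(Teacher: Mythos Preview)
Your argument is essentially correct and follows precisely the route the paper indicates (the paper's own proof is the single clause ``Thanks to Remark~\ref{rem:rem4.12} a simple contradiction argument gives\ldots'', so you have in fact supplied considerably more detail than the authors do).

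One small quantifier slip: in your second paragraph you fix $\rho_2$ first and then negate the existence of $\ell$, but then claim the resulting sequence satisfies $\Vert x_n-w_n\Vert_{a_n,b_n}\to 0$. With $\rho_2$ fixed, the negation only yields $\Vert x_n-w_n\Vert_{a_n,b_n}\le\rho_2$. This is easily repaired in either of two ways: (i) negate the full joint statement, taking $\rho_2=\ell=1/n$; your direct energy estimate shows the energy bound cannot be the failing clause for large $n$, so the length bound must fail, and now you genuinely have $\Vert x_n-w_n\Vert_{a_n,b_n}\to 0$ as you wrote; or (ii) keep $\rho_2$ fixed but additionally require $\rho_2<r$, and conclude from $\Vert w_n-y_n\Vert_{a_n,b_n}\to 0$ (which your limit argument via Remark~\ref{rem:rem4.12} does establish) that $\Vert x_n-y_n\Vert_{a_n,b_n}\le\rho_2+o(1)<r$ for large $n$, already the desired contradiction. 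Either route closes the gap with no new ideas needed.
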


Now we have the following
\begin{lem}\label{thm:propertiesIItype}
Let $w_{|[a,b]}$ be an irregular critical variational portion of
second type satisfying \eqref{eq:energiaIItype} and
\begin{equation}\label{eq:hp-coda}
\ell_-(w) \geq \frac{\ell}2
\end{equation}
Then there exist $\theta(\ell) > 0$, $\mu(\ell) > 0$ and $\delta(\ell) > 0$ such that, setting
\[
c_{w} = a + \ell_{-}(w) + \theta(\ell) \in ]a,b[
\]
we have
\begin{equation}\label{eq:IItype_propr2}
\tfrac1{2(c_{w}- a)}\int_{a}^{c_{w}}g(\dot w,\dot w)\,\mathrm ds-\tfrac1{2(b-c_{w})}\int_{c_{w}}^{b}g(\dot w,\dot
w)\,\mathrm ds \leq -2\mu(\ell).
\end{equation}
\end{lem}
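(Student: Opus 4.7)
The plan is to exploit the explicit structure of an irregular variationally critical portion of second type. By part~\eqref{itm:stop} of Proposition~\ref{thm:irregcritpt}, there exists a subinterval $[\alpha,\beta]\subset[a,b]$ on which $w_{|[\alpha,\beta]}$ is an OGC, while $w$ is constant on the complementary intervals $[a,\alpha]$ and $[\beta,b]$; in particular $\alpha-a=\ell_-(w)\geq\ell/2$. Since $w_{|[\alpha,\beta]}$ is a geodesic, $g(\dot w,\dot w)$ is a constant, call it $E_w^2$, on $[\alpha,\beta]$, and vanishes elsewhere.

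The next step is to extract, from the global bounds in the hypothesis, uniform positive lower bounds on both $E_w^2$ and on the length $L:=\beta-\alpha$ of the OGC portion. Because $w_{|[\alpha,\beta]}$ is a non-constant geodesic with endpoints on $\partial\Omega$ and interior in $\Omega$, Remark~\ref{rem:1.4bis} ensures that $\phi\circ w$ dips below $-\delta_0$ somewhere on $(\alpha,\beta)$; Lemma~\ref{thm:lem2.3} then yields $L^2E_w^2\geq\delta_0^2/K_0^2$. Combined with the energy bound \eqref{eq:energiaIItype} this produces absolute constants $E_w^2\geq\delta_0^2/K_0^2$ and $L\geq L_*>0$, depending only on $M_0,\delta_0,K_0$.

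With these bounds in hand, the choice $\theta(\ell):=\min\bigl(L_*/2,\,\ell L_*/8\bigr)$ places $c_w=\alpha+\theta(\ell)$ safely inside $(\alpha,\beta)$. A direct calculation, using that $w$ is constant to the left of $\alpha$ and has constant speed on $[\alpha,\beta]$, gives
\[
\frac{1}{2(c_w-a)}\int_a^{c_w}g(\dot w,\dot w)\,\mathrm ds=\frac{\theta(\ell)E_w^2}{2(\ell_-(w)+\theta(\ell))},
\]
which is controlled from above since $\theta(\ell)/\ell_-(w)\leq 2\theta(\ell)/\ell\leq L_*/4$, while
\[
\frac{1}{2(b-c_w)}\int_{c_w}^{b}g(\dot w,\dot w)\,\mathrm ds=\frac{(L-\theta(\ell))E_w^2}{2(b-c_w)}
\]
is controlled from below because $L-\theta(\ell)\geq L_*/2$ and $b-c_w\leq 1$. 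Subtracting and using $E_w^2\geq\delta_0^2/K_0^2$ then yields the desired bound with a suitable $\mu(\ell)>0$.

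The main technical point is ensuring that $\theta$ and $\mu$ can be taken uniformly in $w$, which is precisely the content of the lower bounds $L\geq L_*$ and $E_w^2\geq\delta_0^2/K_0^2$ extracted above; the auxiliary constant $\delta(\ell)$ appearing in the statement encodes a neighborhood of $w$ on which the strict inequality persists by continuity of $z\mapsto\int g(\dot z,\dot z)\,\mathrm ds$, which is inherited at once from the strict inequality already obtained for $w$.
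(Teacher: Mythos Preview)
Your proof is correct and follows essentially the same approach as the paper: both arguments exploit the explicit structure of a second-type irregular portion (constant outside, affinely parameterized OGC inside), invoke Lemma~\ref{thm:lem2.3} together with the energy bound \eqref{eq:energiaIItype} to obtain uniform positive lower bounds on the length and speed of the OGC portion, and then choose $\theta(\ell)$ small enough that the direct computation of the two averages yields a uniform negative gap. Your version is slightly more explicit in specifying $\theta(\ell)$, and you correctly note that the constant $\delta(\ell)$ in the statement plays no role in the inequality \eqref{eq:IItype_propr2} itself (the paper's proof likewise never uses it).
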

\noindent (The case $\ell_+(w) \geq \frac{\ell}2$ is completely analogous).

\begin{proof}
Denote by $E_w$
the energy of the OGC  $w$ on the interval $[a + \ell_-(w), b - \ell_+(w)]$,  so that
\[
g(\dot w(s), \dot w(s)) = E_w \text{ for any }s \in [a + \ell_-(w), b - \ell_+(w)].
\]
Now set $c_{w} = a + {\ell}_{-}(w) + \theta$ with $\theta > 0$.
Since $w$ is constant on $[a,b] \setminus [a + \ell_-(w), b - \ell_+(w)]$ it is
\begin{multline}\label{eq:stimeIItype}
\tfrac1{2({c_{w}}- a)}\int_{a}^{c_{w}}g(\dot w,\dot w)\,\mathrm ds-\tfrac1{2(b-c_{w})}\int_{c_{w}}^{b}g(\dot w,\dot
w)\,\mathrm ds= \\
\Big[\frac{\theta}{2(c_{w} - a)}
+ \frac{\theta}{2(b -c_{w})}
- \frac1{2(b - c_{w})}\Big((b-a) - (\ell_{-}(w) + \ell_{+}(w)\Big)\Big]E_w \leq\\
\Big[\frac{\theta}{\ell} +\frac{\theta}{2(b -c_{w})}-\frac12\Big((b-a) - (\ell_{-}(w) +
\ell_{+}(w)\Big)\Big] E_w,
\end{multline}
since $(b-c_{w})\leq 1$ (recall that $[c_{w}, b] \subset [0,1]$) and $c_{w} - a \geq {\ell}_{-}(w)
\geq \frac{\ell}2$.

Now $\Big((b-a) - (\ell_{-}(w) + \ell_{+}(w))\Big)$ is the length of the interval $[a + \ell_-(w), b - \ell_+(w)]$ where $w$ is an OGC of energy $E_w$.
Since $\frac{1}{2}\int_{a}^{b}g(\dot w,\dot w) \leq M_0 +1$ the strong concavity assumption \eqref{eq:1.1bis} and Lemma
\ref{thm:lem2.3}
imply that
\[(b-a) - (\ell_{-}(w) + \ell_{+}(w)) \geq \frac{\delta_0^2}{2K_0^2(M_0+1)}\]
and
\[
E_w \geq \frac{\delta_0^2}{K_0^2},
\]
because $b-a \leq 1$.

Now $b-c_{w} \geq (b-a)-(\ell_{-}(w) + \ell_{+}(w)) - \theta$ and ${c_{w}}- a \geq \frac{\ell}2$, therefore we can choose $\theta = \theta(\ell)$
(independently of $w$) sufficiently small such that
\begin{multline*}
\tfrac1{2({c_{w}}- a)}\int_{a}^{c_{w}}g(\dot w,\dot w)\,\mathrm ds-\tfrac1{2(b-c_{w})}\int_{c_{w}}^{b}g(\dot w,\dot w)\,\mathrm ds\leq \\
\Big(\frac{\theta}{\ell} + \frac12\frac{\theta}{\frac{\delta_0^2}{2K_0^2(M_0+1)}- \theta} -\frac12\frac{\delta_0^2}{2K_0^2(M_0+1)}\Big)
\frac{\delta_0^2}{K_0^2} \leq -2\mu(\ell),
\end{multline*}
where $\mu(\ell) > 0$.
\end{proof}

\bigskip
Now, by Remark  \ref{rem:rem4.12} and Lemma \ref{thm:propertiesIItype} we immediately obtain the following
\begin{lem}\label{thm:nearbyIItype}
There exist $\rho(\ell) \in ]0,\rho_2]$
such that for any $x \in h(1,\Dcal)$ and $[a,b]\in \Ical_{x}$, if there exists $w_{|[a,b]}$ irregular variationally critical
portion of II type such that
\[
\Vert x - w \Vert_{a,b} \leq \rho(\ell), \text{ and }\ell_{-}(w) \geq \frac{\ell}2,
\]
there exists $c_x \in\left]a,b\right[$ such that
\[
\frac1{2(c_{x}- a)}\int_{a}^{c_{x}}g(\dot x,\dot x)\,\mathrm ds-\frac1{2(b-c_{x})}\int_{c_{x}}^{b}g(\dot x,\dot x)\,\mathrm ds\leq -\mu(\ell).
\]
\end{lem}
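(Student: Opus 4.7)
The plan is to combine Lemma~\ref{thm:propertiesIItype} with a continuity estimate that transfers the strict inequality $-2\mu(\ell)$ for $w$ to the weaker inequality $-\mu(\ell)$ for $x$. First I would use Lemma~\ref{thm:stima-ell} (which rests on Remark~\ref{rem:rem4.12}, i.e.\ on $H^1$--compactness of sequences of OGC's) to guarantee that whenever $\Vert x_{|[a,b]}-w_{|[a,b]}\Vert_{a,b}\le\rho_2$ the critical portion $w$ automatically satisfies the energy bound $\tfrac12\int_a^b g(\dot w,\dot w)\,\mathrm ds\le M_0+1$. Together with the standing hypothesis $\ell_-(w)\ge\ell/2$, this places $w$ exactly in the scope of Lemma~\ref{thm:propertiesIItype}.

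Next I apply that lemma to obtain $c_w=a+\ell_-(w)+\theta(\ell)\in\,]a,b[$ satisfying
\[
\tfrac1{2(c_w-a)}\int_a^{c_w}g(\dot w,\dot w)\,\mathrm ds-\tfrac1{2(b-c_w)}\int_{c_w}^b g(\dot w,\dot w)\,\mathrm ds\le-2\mu(\ell),
\]
and I set $c_x:=c_w$. Note that from the argument in the proof of Lemma~\ref{thm:propertiesIItype} one already has the uniform lower bound $b-c_w\ge \delta_0^2/(2K_0^2(M_0+1))-\theta(\ell)$, which together with $c_x-a\ge\ell/2$ keeps both denominators bounded away from $0$ by a constant depending only on $\ell$, $M_0$, $\delta_0$, $K_0$.

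The remaining task is to bound the discrepancy
\[
\Delta(x,w):=\Big|\tfrac1{2(c_x-a)}\int_a^{c_x}g(\dot x,\dot x)\,\mathrm ds-\tfrac1{2(c_w-a)}\int_a^{c_w}g(\dot w,\dot w)\,\mathrm ds\Big|+\Big|\tfrac1{2(b-c_x)}\int_{c_x}^b g(\dot x,\dot x)\,\mathrm ds-\tfrac1{2(b-c_w)}\int_{c_w}^b g(\dot w,\dot w)\,\mathrm ds\Big|.
\]
Using \eqref{eq:kappas} and the Cauchy--Schwarz inequality, each integrand difference $|g_x(\dot x,\dot x)-g_w(\dot w,\dot w)|$ integrated over a sub-interval of $[a,b]$ is controlled by a constant times $\Vert x-w\Vert_{L^\infty}\cdot\int g(\dot x,\dot x)+\Vert\dot x-\dot w\Vert_{L^2}\cdot(\Vert\dot x\Vert_{L^2}+\Vert\dot w\Vert_{L^2})$, hence by $C_1(\ell)\cdot\Vert x-w\Vert_{a,b}$ thanks to the energy bounds on $w$ (from Lemma~\ref{thm:stima-ell}) and on $x$ (from $x\in\mathfrak M$), together with \eqref{eq:1.8fabio} and \eqref{eq:1.10bis}. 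Dividing by the denominators, which are bounded below by the constants just recorded, yields $\Delta(x,w)\le C(\ell)\Vert x-w\Vert_{a,b}$ for some $C(\ell)>0$.

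Finally I set $\rho(\ell):=\min\{\rho_2,\mu(\ell)/C(\ell)\}$. Then $\Vert x-w\Vert_{a,b}\le\rho(\ell)$ forces $\Delta(x,w)\le\mu(\ell)$, and combining this with the estimate $-2\mu(\ell)$ from Lemma~\ref{thm:propertiesIItype} gives the desired conclusion. There is no substantial obstacle: the only mildly delicate point is ensuring the denominator $b-c_x$ is bounded below by a constant depending only on $\ell$, but this is already built into the concavity/Lemma~\ref{thm:lem2.3} estimates used inside the proof of Lemma~\ref{thm:propertiesIItype}. The case $\ell_+(w)\ge\ell/2$ is handled by the symmetric choice $c_x=b-\ell_+(w)-\theta(\ell)$.
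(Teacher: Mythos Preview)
Your argument is correct and is exactly the direct continuity argument the paper has in mind when it says the lemma follows ``immediately'' from Remark~\ref{rem:rem4.12} and Lemma~\ref{thm:propertiesIItype}: take $c_x=c_w$, use the uniform lower bounds on $c_w-a$ and $b-c_w$ coming from Lemma~\ref{thm:propertiesIItype}, and pass the $-2\mu(\ell)$ inequality from $w$ to $x$ via the pointwise estimate \eqref{eq:kappas}. One small remark: your appeal to Lemma~\ref{thm:stima-ell} for the energy bound on $w$ is not quite clean, since that lemma carries the extra hypothesis that $x$ is $r$--far from $\mathcal Z^1_{a,b}$, which is not assumed here; but the bound $\tfrac12\int_a^b g(\dot w,\dot w)\le M_0+1$ already follows from $x\in\mathfrak M$ together with the very same continuity estimate \eqref{eq:kappas} you use for $\Delta(x,w)$, so no change in the argument is needed.
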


\bigskip

Let $\rho(\ell)$ be as in Lemma \ref{thm:nearbyIItype}. We can state the analogous of Proposition \ref{thm:51new} as follows. Set
\begin{multline}\label{eq:aggiunta}
\Vcal^{-}_{\ell} = \Big\{x \in h(1,\Dcal): \, \exists [a_x,b_x] \in \Ical_{x},\\
\text{ and }
w_{|[a_x,b_x]} \text{ irregular variationally critical portion of II type satisfying }\\
\ell_{-}(w) \geq \frac{\ell}2 \text{ and }\Vert x - w \Vert_{a_x,b_x} < \rho(\ell)\Big\}.
\end{multline}
Let $x \in \Vcal^{-}_{\ell}$. We shall set
\begin{multline*}
\Ical_{x,\rho(\ell)} = \{[a_x,b_x] \in \Ical_{x}: \exists
w_{|[a_x,b_x]} \text{ irregular variationally critical portion of II type }\\
\text{ satisfying }
\ell_{-}(w) \geq \frac{\ell}2 \text{ and }\Vert x - w \Vert_{a_x,b_x} < \rho(\ell)\Big\}.
\end{multline*}
Let $[a_x,b_x]\in \Ical_{x,\rho(\ell)}$ and $c_x \in ]a_x,b_x[$ given by Lemma \ref{thm:nearbyIItype}.
Define for any $\tau \in ]0,c_x - a_x[$ and $s \in [0,1]$:
\begin{equation}\label{eq:rip-}
\varphi_{x,a_x,c_x,b_x,-}(\tau,s)\equiv
\varphi_{x,-}(\tau,s)=\begin{cases}
s,&\text{if\ } s \in [0,a_x]\\[.3cm]
\frac{c_x-a_x}{c_x-\tau-a_x}(s-a_x)+a_x,&\text{if\ } s\in[a_x,c_x-\tau]\\[.3cm]
\frac{b_x-c_x}{b_x+\tau-c_x}(s-c_x+\tau)+c_x,&\text{if\ } s\in[c_x-\tau,b_x]\\[.3cm]
s,&\text{if\ } s\in[b_x,1].
\end{cases}
\end{equation}

\bigskip

if $x \in {\mathcal V}_{\ell}^{+}$, we choose $c_x$ starting from $b - \ell_{+} -\theta(\ell)$ and define
\[
\varphi_{x,a_x,c_x,b_x,+}(\tau,s)\equiv
\varphi_{x,+}(\tau,s)=\begin{cases}
s,&\text{if\ } s \in [0,a_x]\\[.3cm]
\frac{c_x-a_x}{c_x+\tau-a_x}(s-a_x)+a_x,&\text{if\ } s\in[a_x,c_x+\tau]\\[.3cm]
\frac{b_x-c_x}{b_x-\tau-c_x}(s-c_x-\tau)+c_x,&\text{if\ } s\in[c_x+\tau,b_x]\\[.3cm]
s,&\text{if\ } s\in[b_x,1].
\end{cases}
\]

By Lemma \ref{thm:nearbyIItype} a straightforward computation (made in $\tau = 0$) gives the following result, (analogous of Proposition
\ref{thm:51new}).
\begin{prop}\label{thm:PSrip-}
Let $\mu(\ell)$ and $\theta(\ell)$ be as in Lemma \ref{thm:propertiesIItype}. There exists $\tau(\ell) \in]0,\theta(\ell)]$
such that, for any $\tau \in [0,\tau(\ell)]$, for any $x \in \Vcal^{-}_{\ell}$ and for any $[a_x,b_x]$ as in \eqref{eq:aggiunta} we have:
\[
\int_{a_x}^{b_x}g(\dot x(\sigma),\dot x(\sigma))\frac{\partial^2\varphi_{x,-}}{\partial\tau\partial s}(\tau,\varphi_{x,-}^{-1}(\tau,\sigma))d\sigma
\leq -\frac{3\mu(\ell)}4.
\]
\end{prop}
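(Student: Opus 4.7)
The plan is to compute the integral explicitly using the piecewise-affine structure of $\varphi_{x,-}$ displayed in \eqref{eq:rip-}, observe that at $\tau=0$ its value reduces to twice the quantity controlled by Lemma~\ref{thm:nearbyIItype}, and then close the estimate for small $\tau$ by a uniform continuity argument.

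First I would differentiate the piecewise definition. The slopes $\partial\varphi_{x,-}/\partial s$ are $\tfrac{c_x-a_x}{c_x-\tau-a_x}$ on $[a_x,c_x-\tau]$ and $\tfrac{b_x-c_x}{b_x+\tau-c_x}$ on $[c_x-\tau,b_x]$, and are $1$ elsewhere. Differentiating in $\tau$ gives
\[
\tfrac{\partial^2\varphi_{x,-}}{\partial\tau\partial s}(\tau,s)=\begin{cases}\tfrac{c_x-a_x}{(c_x-\tau-a_x)^2},& s\in(a_x,c_x-\tau),\\[2pt]-\tfrac{b_x-c_x}{(b_x+\tau-c_x)^2},& s\in(c_x-\tau,b_x),\\[2pt]0,&\text{otherwise.}\end{cases}
\]
Since $\varphi_{x,-}^{-1}(\tau,\cdot)$ sends $\sigma\in[a_x,c_x]$ into $[a_x,c_x-\tau]$ and $\sigma\in[c_x,b_x]$ into $[c_x-\tau,b_x]$, and since the coefficients above are constant on each subinterval, the integral in the statement collapses to
\[
\mathcal I_x(\tau):=\tfrac{c_x-a_x}{(c_x-\tau-a_x)^2}\!\int_{a_x}^{c_x}\!g(\dot x,\dot x)\,d\sigma-\tfrac{b_x-c_x}{(b_x+\tau-c_x)^2}\!\int_{c_x}^{b_x}\!g(\dot x,\dot x)\,d\sigma .
\]

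Second, I would evaluate at $\tau=0$. The expression becomes
\[
\mathcal I_x(0)=\tfrac{1}{c_x-a_x}\!\int_{a_x}^{c_x}\!g(\dot x,\dot x)\,d\sigma-\tfrac{1}{b_x-c_x}\!\int_{c_x}^{b_x}\!g(\dot x,\dot x)\,d\sigma ,
\]
which is exactly twice the left-hand side of the inequality in Lemma~\ref{thm:nearbyIItype}. Therefore $\mathcal I_x(0)\le -2\mu(\ell)$, and this is precisely the sense in which \emph{the computation is made at} $\tau=0$, as suggested in the text preceding the proposition.

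Third, I would extend this to a strict negative bound on a small interval $[0,\tau(\ell)]$ via uniform continuity in $\tau$. A direct Taylor estimate of the two rational functions $\tau\mapsto\tfrac{c_x-a_x}{(c_x-\tau-a_x)^2}$ and $\tau\mapsto\tfrac{b_x-c_x}{(b_x+\tau-c_x)^2}$ at $\tau=0$ yields $|\mathcal I_x(\tau)-\mathcal I_x(0)|\le C(\ell)\,\tau$, where $C(\ell)$ depends only on $\ell$ once one has the uniform bounds $c_x-a_x\ge \ell/2+\theta(\ell)$ (since $c_x=a_x+\ell_-(w)+\theta(\ell)$ with $\ell_-(w)\ge\ell/2$), $b_x-c_x$ bounded below by a constant depending only on $\ell$ (from the explicit lower bound on the non-constant OGC length derived in the proof of Lemma~\ref{thm:propertiesIItype} via \eqref{eq:1.1bis} and Lemma~\ref{thm:lem2.3}, with $\theta(\ell)$ chosen small enough), together with $\int_{a_x}^{b_x}g(\dot x,\dot x)\le 2M_0$ from $x\in\mathfrak M$. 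Choosing $\tau(\ell)\in\,]0,\theta(\ell)]$ so that $C(\ell)\tau(\ell)\le 5\mu(\ell)/4$ then gives $\mathcal I_x(\tau)\le -2\mu(\ell)+5\mu(\ell)/4=-3\mu(\ell)/4$ on $[0,\tau(\ell)]$, as required.

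The only mild obstacle is securing the uniformity in $x\in\mathcal V_\ell^-$ of the lower bounds on $c_x-a_x$ and $b_x-c_x$; both follow from the already fixed constants $\ell$ and $\theta(\ell)$ and from the concavity estimates already used in Lemma~\ref{thm:propertiesIItype}, so no additional ingredient is needed.
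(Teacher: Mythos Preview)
Your proof is correct and is exactly the ``straightforward computation (made in $\tau=0$)'' that the paper refers to in its one-line justification: the explicit evaluation of the mixed partial, the identification of $\mathcal I_x(0)$ with twice the quantity from Lemma~\ref{thm:nearbyIItype}, and the uniform-in-$x$ continuity estimate are precisely the intended steps. The only point worth making explicit is that the uniform lower bounds on $c_x-a_x$ and $b_x-c_x$ rely on taking $c_x=c_w$ (the natural choice implicit in the proof of Lemma~\ref{thm:nearbyIItype}), which you do assume; with that understood, nothing is missing.
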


\begin{rem}\label{rem:precisazione-l+} An analogous results clearly holds for $x \in {\mathcal V}_{\ell}^{+}$ using the parameterization $\varphi_{x,+}$.

Observe also that
\[
x \in \Vcal^{-}_{\ell} \Longleftrightarrow \Rcal x \in \Vcal^{+}_{\ell}
\]
and we choose
\begin{equation}\label{eq:phiRx}
\varphi_{\Rcal x,1-b_x,1-c_x,1-a_x,+}(\tau,s) = \varphi_{x,a_x,c_x,b_x,-}(\tau,s).
\end{equation}
By $\Phi_x$ we shall denote the map $\varphi_{x,-}$ if $x \in \Vcal^{-}_{\ell} \setminus \Vcal^{+}_{\ell}$
and the map $\varphi_{x,+}$ if $x \in \Vcal^{+}_{\ell} \setminus \Vcal^{-}_{\ell}$, relatively to $[a_x,b_x]$.
Whenever $x \in \Vcal^{-}_{\ell} \cap \Vcal^{+}_{\ell}$ we shall denote by $\Phi_x$ just one of the two maps above. It will be only important
to obey to property \eqref{eq:phiRx} to have $\Rcal$--invariance.
\end{rem}

Now we state and proof the analogous of Proposition \ref{thm:PS}, whenever $x \in \Vcal^{-}_{\ell}$ (the case $x \in \Vcal^{+}_{\ell}$ being
completely analogous). Let $\mu(\ell)$ given by Lemma \ref{thm:propertiesIItype}.
\begin{prop}\label{thm:PS-rip1}
There exists $\tau_1(\ell) \in ]0,\tau(\ell)]$, $\rho_1(\ell) \in ]0,\min(1,\rho(\ell))]$ and $E_{1}(\ell) > 0$ such that
for any $x \in \Vcal^{-}_{\ell}$ and $[a_x,b_x]$ a in \eqref{eq:aggiunta}, if $\varphi_{x,-}$ is the parameterization \eqref{eq:rip-},
the following property holds:
\begin{multline*}\label{eq:z-intorno}
\text{for any }z \in h(1,\Dcal), \text{ and for all }[a_z,b_z] \in \Ical_z \text{ such that }\\
\Vert x - z \Vert_{a_z,b_z} < \rho_1(\ell), [a_z,b_z] \cap [a_x,b_x] \neq \emptyset, \text{ and }D(x,a_z,b_z,a_x,b_x)\leq E_{1}(\ell) \text{ it is }\\
\int_{a_z}^{b_z}g(\dot z(\sigma),\dot z(\sigma))\frac{\partial^2\varphi_{x,-}}{\partial\tau\partial s}(\tau,\varphi_{x,-}^{-1}(\tau,\sigma))d\sigma
\leq -\frac{\mu(\ell)}2 \text{ for any }\tau\in[0,\tau_1(\ell)].
\end{multline*}
\end{prop}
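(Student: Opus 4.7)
The plan is to mimic the structure of the proof of Proposition~\ref{thm:PS}: start from the pointwise bound at $\tau=0$ given by Proposition~\ref{thm:PSrip-}, then extend it to a uniform bound in $\tau$ on a small interval $[0,\tau_1(\ell)]$ by continuity in $\tau$, and finally transfer the estimate from $x_{|[a_x,b_x]}$ to nearby curves $z_{|[a_z,b_z]}$ by a Lemma~\ref{thm:lem5.2new}--style estimate that controls the error in terms of $\|x-z\|_{a_z,b_z}$ and $\mathcal D(x,a_z,b_z,a_x,b_x)$.

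The first key observation is that the weight function
\[
W_\tau(\sigma)=\frac{\partial^2\varphi_{x,-}}{\partial\tau\partial s}\big(\tau,\varphi_{x,-}^{-1}(\tau,\sigma)\big)
\]
is piecewise constant in $\sigma$, taking only two values depending on whether $\sigma\in[a_x,c_x-\tau]$ or $\sigma\in[c_x-\tau,b_x]$, and these values depend smoothly on $\tau$ on $[0,\tau(\ell)]$. Consequently $W_\tau$ is uniformly bounded (independently of $\tau\in[0,\tau(\ell)]$), and $\tau\mapsto\int_{a_x}^{b_x}g(\dot x,\dot x)W_\tau\,\mathrm d\sigma$ is continuous. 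Combined with the bound $-\tfrac34\mu(\ell)$ at $\tau=0$ given by Proposition~\ref{thm:PSrip-}, this produces $\tau_1(\ell)\in]0,\tau(\ell)]$ such that
\[
\int_{a_x}^{b_x}g(\dot x,\dot x)\,W_\tau\,\mathrm d\sigma\le-\tfrac58\,\mu(\ell),\qquad\forall\,\tau\in[0,\tau_1(\ell)].
\]

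Next I would perform the perturbation step. Write, for any $z\in h(1,\mathcal D)$ and $[a_z,b_z]\in\mathcal I_z$ with $[a_z,b_z]\cap[a_x,b_x]\neq\emptyset$,
\begin{multline*}
\Big|\int_{a_z}^{b_z}g(\dot z,\dot z)W_\tau\,\mathrm d\sigma-\int_{a_x}^{b_x}g(\dot x,\dot x)W_\tau\,\mathrm d\sigma\Big|
\le\|W_\tau\|_{L^\infty}\Big(\Big|\int_{a_z}^{b_z}g(\dot z,\dot z)\,\mathrm d\sigma\\
-\int_{a_z}^{b_z}g(\dot x,\dot x)\,\mathrm d\sigma\Big|+2\,\mathcal D(x,a_z,b_z,a_x,b_x)\Big).
\end{multline*}
The first term on the right is controlled, using \eqref{eq:kappas}--\eqref{eq:kappa0} exactly as in Lemma~\ref{thm:differenze} (estimate \eqref{eq:stimax-z}), by $\widehat K\,\|x-z\|_{a_z,b_z}$ up to an absolute constant, which becomes arbitrarily small once $\rho_1(\ell)$ is small; the second term is absorbed by choosing $E_1(\ell)$ small. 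Fixing these thresholds so that each contribution is at most $\tfrac{1}{16}\mu(\ell)\|W_\tau\|_{L^\infty}^{-1}$ (and so $\rho_1(\ell)\le\rho(\ell)$ from Lemma~\ref{thm:nearbyIItype}), we conclude
\[
\int_{a_z}^{b_z}g(\dot z,\dot z)\,W_\tau\,\mathrm d\sigma\le-\tfrac58\mu(\ell)+\tfrac18\mu(\ell)=-\tfrac12\mu(\ell),
\]
which is the desired conclusion.

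I do not expect a conceptual obstacle here: both the continuity in $\tau$ and the perturbation in $z$ are straightforward once the weight $W_\tau$ is identified as piecewise constant with bounded jumps depending smoothly on $\tau$. The main technical point to be careful about is that the interval $[a_z,b_z]$ may be strictly larger or strictly smaller than $[a_x,b_x]$, so we must keep track of the contribution of the symmetric difference $I_{a_x,a_z}\cup I_{b_x,b_z}$; this is precisely the role of the hypothesis $\mathcal D(x,a_z,b_z,a_x,b_x)\le E_1(\ell)$, and handling it is a direct repetition of the arguments already used in the proof of Lemma~\ref{thm:lem5.2new} and Proposition~\ref{thm:PS}.
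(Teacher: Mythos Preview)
Your approach is essentially the same as the paper's: bound the weight $W_\tau$ uniformly by some $M(\ell)$, split the error into the $z$-vs-$x$ term on $[a_z,b_z]$ (controlled by $\widehat K M(\ell)\rho_1(\ell)$ via \eqref{eq:kappas}--\eqref{eq:kappa0}) and the symmetric-difference term (controlled by $2M(\ell)E_1(\ell)$), then absorb both into the $-\tfrac34\mu(\ell)$ from Proposition~\ref{thm:PSrip-}. Two small points: first, you misread Proposition~\ref{thm:PSrip-}---it already gives the bound $-\tfrac34\mu(\ell)$ uniformly for \emph{all} $\tau\in[0,\tau(\ell)]$, not just at $\tau=0$, so your continuity-in-$\tau$ step is unnecessary (and as written would only produce a $\tau_1$ depending on $x$); second, your displayed error inequality is slightly off, since one cannot bound $\big|\int(g(\dot z,\dot z)-g(\dot x,\dot x))W_\tau\big|$ by $\|W_\tau\|_\infty\big|\int g(\dot z,\dot z)-\int g(\dot x,\dot x)\big|$ but only by $\|W_\tau\|_\infty\int|g(\dot z,\dot z)-g(\dot x,\dot x)|$---which is exactly what the pointwise estimate behind \eqref{eq:stimax-z} gives you, so the argument you point to is in fact the correct one.
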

\begin{proof}
First note that there exists $M(\ell) > 0$ such that
\[
\Big|\frac{\partial^2\varphi_{x,-}}{\partial\tau\partial s}(\tau,\varphi_{x,-}^{-1}(\tau,s))\Big| \leq M(\ell) \text{ for any }\tau\in[0,\tau(\ell)],
\text{ for any }s \in [0,1].
\]
Then, the same estimate used to prove \eqref{eq:stimax-z} allows to obtain
\begin{multline*}
\Big|
\int_{a_z}^{b_z}g(\dot z(\sigma),\dot z(\sigma))\frac{\partial^2\varphi_{x,-}}{\partial\tau\partial s}(\tau,\varphi_{x,-}^{-1}(\tau,\sigma))d\sigma -\\
\int_{a_z}^{b_z}g(\dot x(\sigma),\dot x(\sigma))\frac{\partial^2\varphi_{x,-}}{\partial\tau\partial s}(\tau,\varphi_{x,-}^{-1}(\tau,\sigma))d\sigma\Big|
\leq \widehat K M(\ell)\rho_1(\ell).
\end{multline*}
where $\widehat K$ is defined by \eqref{eq:kappa0}. Finally
\begin{multline*}
\Big|
\int_{a_z}^{b_z}g(\dot x(\sigma),\dot x(\sigma))\frac{\partial^2\varphi_{x,-}}{\partial\tau\partial s}(\tau,\varphi_{x,-}^{-1}(\tau,\sigma))d\sigma -\\
\int_{a_x}^{b_x}g(\dot x(\sigma),\dot x(\sigma))\frac{\partial^2\varphi_{x,-}}{\partial\tau\partial s}(\tau,\varphi_{x,-}^{-1}(\tau,\sigma))d\sigma
\Big|\leq \\
M(\ell)\int_{I_{a_z,a_x}}g(\dot x,\dot x)\,\mathrm ds + M(\ell)\int_{I_{b_z,b_x}}g(\dot x,\dot x)\,\mathrm ds \leq \\
2M(\ell)D(x,a_z,b_z,a_x,b_x)\leq 2M(\ell)E_1(\ell).
\end{multline*}
Then, if $E_1(\ell)$ is sufficiently small (depending only by $\ell$) by Proposition \ref{thm:PSrip-} we have the thesis.
\end{proof}
\bigskip
\begin{proof}[Proof of Proposition \ref{thm:prop4.19f}.]
For sake of simplicity we shall assume that  any $x \in \Vcal^{-}_{\ell} \cup \Vcal^{+}_{\ell}$ has only one interval $[a_x,b_x] \in \Ical_{x,\rho(\ell)}$.
Indeed the general case can be carried on by small changes.

Starting from any $z \in \overline{\Vcal^{-}_{\ell}} \cup \overline{\Vcal^{+}_{\ell}}$, we apply the reparameterization flow
\begin{equation}\label{eq:rip-flow}
z_{\tau}(s) = z(\Phi_{z}(\tau,s)),
\end{equation}
where
\[
\Phi_{z}(\tau,s) = \sum_{j=1,\ldots,k}(\beta_{x_j}(z)\Phi_{x_j}(\tau,s) + \beta_{\Rcal x_j}(z)\Phi_{\Rcal x_j}(\tau,s)),
\]
$\{\beta_{x_j},\beta_{\Rcal x_j}\}$ is a partition of the unity (defined on
$\overline{\Vcal^{-}_{\ell}} \cup \overline{\Vcal^{+}_{\ell}}$),
as that one used in the proof of Proposition \ref{thm:prop4.17f}, and
$\Phi_{x_j}$ is described in Remark \ref{rem:precisazione-l+}. The partition of the unity is subordinated to the open finite covering
$\{\mathcal U_{x_i},\mathcal U_{\Rcal x_i}\}$ where
$\mathcal U_{x_i} = \{z \in \overline{\Vcal^{-}_{\ell}} \cup \overline{\Vcal^{+}_{\ell}}: \Vert z - x_i \Vert_{0,1} < \rho_1(x_i)\},$
with $\rho_1(x) < \rho_1(\ell)$ (where $x \in \overline{\Vcal^{-}_{\ell}} \cup \overline{\Vcal^{+}_{\ell}}$) that will be chosen later.

\medskip
Fix $[a_z,b_z] \in \Ical_{z}$, $z \in \overline{\Vcal^{-}_{\ell}} \cup \overline{\Vcal^{+}_{\ell}}$ and set
\[
\Gamma_{z}(\tau) = \int_{a_\tau}^{b_\tau}g(\dot z_{\tau},\dot z_{\tau})\,\mathrm ds
\]
where $z_\tau$ is defined in \eqref{eq:rip-flow},
$a_{\tau} = \Phi_{z}(\tau,a_z)$ and $b_{\tau} = \Phi_{z}(\tau,b_z)$.
Since $\Phi_{z}(0,s) = s$ for any $s \in [0,1]$, we have $z_0 = z$, while
\[
\dot z_{\tau}(s) = z(\Phi_{z}(\tau,s))\frac{\partial\Phi_{z}}{\partial s}(\tau,s)
\]
and
\[
\Gamma_{z}(\tau) = \int_{\Phi_{z}(\tau,a_z)}^{\Phi_{z}(\tau,b_z)}g(\dot z(\Phi_{z}(\tau,s)),\dot z(\Phi_{z}(\tau,s))
(\frac{\partial\Phi_{z}}{\partial s}(\tau,s))^2 \,\mathrm ds.
\]
Note that, since  $s \mapsto \Phi_{z}(\tau,s)$ is strictly increasing, the interval
$[\Phi_{z}(\tau,a_z),\Phi_{z}(\tau,b_z)] \in \Ical_{z_{\tau}}$, while setting $\Phi_{z}(\tau,s) = \sigma$ we have
\begin{multline*}
\Gamma_{z}(\tau) =  \int_{a_z}^{b_z}g(\dot z(\sigma),\dot z(\sigma))\frac{\partial\Phi_{z}}{\partial s}(\tau,s)d\sigma =\\
\sum_{j=1,\ldots,k}\beta_{x_j}(z)\int_{a_z}^{b_z}g(\dot z(\sigma),\dot z(\sigma))\frac{\partial\Phi_{x_j}}
{\partial s}(\tau,\Phi_{x_j}^{-1}(\tau,\sigma))d\sigma\\
+ \sum_{j=1,\ldots,k}\beta_{\Rcal x_j}(z)\int_{a_z}^{b_z}g(\dot z(\sigma),\dot z(\sigma))\frac{\partial\Phi_{\Rcal x_j}}
{\partial s}(\tau,\Phi_{\Rcal x_j}^{-1}(\tau,\sigma))d\sigma.
\end{multline*}
Then by Proposition \ref{thm:PS-rip1}, since $\sum_{j=1,\ldots,k}(\beta_{x_j}(z) + \beta_{\Rcal x_j}(z)) = 1$, if
\begin{multline}\label{eq:rip-D}
[a_z,b_z] \cap [a_{x_i},b_{x_i}] \neq \emptyset, \\
D(x_i,a_z,b_z,a_{x_i},b_{x_i})\leq E_{1}(\ell) \text{ and }D(\Rcal x_i,a_z,b_z,a_{\Rcal x_i},b_{\Rcal x_i})\leq E_{1}(\ell) \text{ for any }i
\end{multline}
we have
\begin{equation}\label{eq:rip-Gamma}
\frac{\partial \Gamma_{z}}{\partial \tau}(\tau) \leq -\frac{\mu(\ell)}2 \text{ for any }\tau\in[0,\tau_1(\ell)].
\end{equation}

Note that, (analogously to the proof of Proposition \ref{thm:prop4.17f})  we can choose $\rho_1(x)$ sufficiently small so that if, for some $i$,
$[a_z,b_z] \cap [a_{x_i},b_{x_i}] = \emptyset$ and $a_z$ is close to $a_{x_i}$ or $b_z$ is close to $b_{x_i}$, then the energy
$\frac12\int_{a_z}^{b_z}g(\dot z, \dot z)\,\mathrm ds$ is small. Then using the analogous of Lemmas \ref{thm:differenze} and \ref{thm:controllocrescita}, from
\eqref{eq:rip-Gamma} we can deduce that
\[
z_\tau \in \mathfrak M \, \forall \tau \in [0,\tau_2(\ell)], \text { for some } \tau_2(\ell) \in ]0, \tau_1(\ell)].
\]

Then, to conclude the proof it remains to study the
\[
\Delta_{z}(\tau) = (b_\tau - a_\tau)\Gamma_{z}(\tau).
\]
Since $b_\tau - a_\tau$ is not necessarily decreasing (as we deduce calculating the derivative with respect to $\tau$ of $\varphi_{x_i,-}$
and $\varphi_{x_i,+}$)
it is convenient to recall that
\[
\Gamma_{z}(\tau) < M_0 \text{ for any }\tau \in [0,\tau_1(\ell)]
\]
and observe that
\begin{multline*}
b_{\tau} - a_{\tau} = \sum_{j=1,\ldots,k}\beta_{x_j}(z)\Big[\Phi_{x_j}(\tau,b_z)-\Phi_{x_j}(\tau,a_z)\Big] +\\
\sum_{j=1,\ldots,k}\beta_{\Rcal x_j}(z)\Big[\Phi_{\Rcal x_j}(\tau,b_z)-\Phi_{\Rcal x_j}(\tau,a_z)\Big].
\end{multline*}

Now for any $x$ it is
\[
\frac{\partial \varphi_{x,-}}{\partial \tau}(\tau,s)=\begin{cases}
0,&\text{if\ } s \in [0,a_x]\\[.3cm]
\frac{c_x-a_x}{(c_x-\tau-a_x)^2}(s-a_x),&\text{if\ } s\in[a_x,c_x-\tau]\\[.3cm]
\frac{b_x-c_x}{(b_x+\tau-c_x)^2}(b_x-s),&\text{if\ } s\in[c_x-\tau,b_x]\\[.3cm]
0,&\text{if\ } s\in[b_x,1]
\end{cases}
\]
and analogously for $\varphi_{x,+}$. Then by \eqref{eq:rip-Gamma}, we can deduce the existence of $\sigma(\ell) > 0$ and
$\tau_3(\ell) \in ]0,\tau_2(\ell)]$ such that
\begin{multline}\label{eq:Deltaderiv}
a_z < a_{x_j} + \sigma(\ell),\;b_z > b_{x_j} - \sigma(\ell),\;a_z < a_{\Rcal x_j} + \sigma(\ell),\;b_z > b_{\Rcal x_j} - \sigma(\ell)\;
\forall j=1,\ldots,k \\ \text{ and }\tau \in [0,\tau_3(\ell)] \Longrightarrow
\frac{\partial \Delta_{z}}{\partial \tau}(\tau) \leq (b_\tau - a_\tau)(-\frac{\mu(\ell)}4) \leq -\frac{c_1\mu(\ell)}{8M_0},
\end{multline}
because $b_\tau - a_\tau \geq \frac{c_1}{2M_0}$.

Note that if there exists $i$ such that
\begin{multline*}
a_z \geq a_{x_i} + \sigma(\ell) \text{ or }b_z \leq b_{x_i} - \sigma(\ell), \text{ and }\\
a_z \geq a_{\Rcal x_i} + \sigma(\ell) \text{ or }b_z \leq b_{\Rcal x_i} - \sigma(\ell)
\end{multline*}
and $\rho_1(x_i)$ is chosen sufficiently small we have
\begin{equation}\label{eq:stimavalore}
z \in {\mathcal U}_{x_i} \Longrightarrow
(b_z-a_z)\frac{1}{2}\int_{a_z}^{b_z}g(\dot z, \dot z)\mathrm{d}s
\leq (b_{x_i}-a_{x_i})\frac{1}{2}\int_{a_{x_i}}^{b_{x_i}}g(\dot x, \dot x)\mathrm{d}s -\Delta(\ell),
\end{equation}
where $\Delta(\ell)$ is a positive constant depending only by $\ell$, as we can see arguing as in the proof of Lemma \ref{thm:differenze}.
Therefore, arguing as in the proof of Lemma \ref{thm:controllocrescita}, we see that $\Delta_{z}(\tau)$ is always under the level $c_1-\epsilon$
for any $\tau \in [0,\tau_3(\ell)]$, for a suitable choice of $\tau_3(\ell) \in ]0,\tau_2(\ell)]$.

To conclude the proof we can therefore repeat the proof of Proposition \ref{thm:prop4.17f} outside a small neighborhood of irregular variationally critical
portions of II type, using the flow \eqref{eq:rip-flow} is such a neighborhood. Using essentially the same approach of the proof of
Proposition \ref{thm:prop4.17f} we can arrive to the conclusion thanks in particular to properties \eqref{eq:rip-Gamma}, \eqref{eq:Deltaderiv}
and \eqref{eq:stimavalore}.
\end{proof}
\begin{rem}\label{rem:sez13:ossfinale}
In the proof above we use $\sigma(\ell)$ (involving lengths of intervals) instead of $E_1(\ell)$ because it may happens that
$D(x_i,a_z,b_z,a_{x_i},b_{x_i})$ is small and the differences $a_z - a_{x_i}$ or $b_{x_i} - b_z$ are positive and big, since we are closed to
curve that may be constant in such intervals. Note also that here $b_\tau - a_\tau$ may be increasing, differently from the situation of section
\ref{sec:first} where $b_\tau - a_\tau$ is always not increasing.
\end{rem}

\section{A deformation result for homotopies of type C and the First Deformation Lemma} \label{sec:firstdeflemma}

In this section we shall give statement and proof of the analogous of the classical First Deformation Lemma for the functional $\mathcal F$.
This is the key to obtain the existence result.

Before to do this we give a deformation result involving homotopies of type C used to move far from irregular
variationally critical pieces of I type.

Let $\rho_0$ be given by Proposition \ref{thm:prop4.20}.

\begin{prop}\label{thm:prop4.23} There exists $\bar r>0$ with the following property:
\smallskip

\begin{minipage}{12cm}
for all $(h,\mathcal D)\in {\mathcal H_1}$
there exists a continuous
homotopy $H_0:[0,1]\times h(1,\mathcal D)\to\mathfrak M$ of type C such that
\begin{enumerate}
\item\label{itm:quateruno} $(\mathcal D,H_0\star h)\in \mathcal H_1$;
\item\label{itm:quatertre}
$\mathcal F(\mathcal D,H_{0}\star h) \leq \mathcal F(\mathcal D,h)$;
\item\label{itm:quaterquattro} $\Vert H_0(\tau,x)-x\Vert_{0,1}\le
\tau\frac{\rho_0}2$, for all $x\in h(1,\mathcal D)$, for all $\tau\in[0,1]$;
\item\label{itm:quatercinque}
for every $x\in h(1,\mathcal D)$, and for every  $[a,b]\in \mathcal I_{x}$
it is
\[
\Vert H_0(1,x)-y\Vert_{\alpha,\beta}\ge\bar r
\]
for any $y\in\mathfrak M$, and for any $[\alpha,\beta]\subset[a,b]$ topologically not essential interval for $y$.
\end{enumerate}
\end{minipage}
\end{prop}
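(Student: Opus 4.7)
The plan is to build $H_0$ as the time-one map of the integral flow of a vector field $W$ on $h(1,\mathcal{D})$, obtained by patching together the local vector fields $V_y$ given by Proposition~\ref{thm:prop4.20}, along the same lines of the construction carried out in the proof of Proposition~\ref{thm:prop4.17f}. The crucial fact is that Proposition~\ref{thm:prop4.20} produces a vector field which remains a descent direction for every curve $z$ in a $\rho_0$-neighborhood of $y$, so the local data assemble globally via a partition of unity.

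For each $y \in h(1,\mathcal{D})$ whose image carries a topologically non essential interval $[\alpha,\beta]$ (so $\mathfrak{b}^y_{\alpha,\beta}\ge 1+\tfrac32\bar\gamma>1+\bar\gamma$ and $\mathfrak{p}^y_{\alpha,\beta}\ge -\sigma_1>-2\sigma_0$, hence Proposition~\ref{thm:prop4.20} applies), I obtain $V_y\in H^1_0([\tilde\alpha,\tilde\beta],\mathbb{R}^N)$, extended by zero to $[0,1]$. I extract a finite $\mathcal{R}$-equivariant open cover $\{\mathcal{U}_{y_i},\mathcal{U}_{\mathcal{R}y_i}\}$ of the closure (in $h(1,\mathcal{D})$) of the ``bad'' set, using $\|\cdot\|_{0,1}$-balls of radius $\le \rho_0/2$, and a subordinate $\mathcal{R}$-equivariant partition of unity $\{\beta_i,\beta_{\mathcal{R}i}\}$, then set
$$W(z)=\Bigl(\sum_i \beta_i(z)\,V_{y_i}+\beta_{\mathcal{R}i}(z)\,V_{\mathcal{R}y_i}\Bigr)\chi(\phi),$$
with a cut-off $\chi$ that switches $W$ off at large values of $\phi$, as in \eqref{eq:chi}. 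The integral flow $H_0(\tau,x)$ is $\mathcal{R}$-equivariant by construction. Property~\eqref{itm:teruno} of Proposition~\ref{thm:prop4.20} keeps the deep interior of $\Omega$ frozen, and property~\eqref{itm:terdue} combined with $\chi$ ensures $W$ points into $\Omega$ near $\partial\Omega$, so $H_0$ is of type~C. Property~\eqref{itm:quaterquattro} is an $L^\infty$-bound on $W$ times the flow time, and~\eqref{itm:quatertre} follows by integrating~\eqref{itm:tertre} along the flow on each $[a,b]\in\mathcal{I}_x$. The delicate point in proving $(\mathcal{D},H_0\star h)\in\mathcal{H}_1$ is preservation of~\eqref{eq:numero8}: this is exactly where Remark~\ref{rem:sigma1} (the inequality $\sigma_1\le \tfrac{2}{7}\rho_0\theta_0$) converts the available flow time into a drop of at least $\sigma_1/2$ in $\phi$ on every non essential interval.

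The main obstacle is property~\eqref{itm:quatercinque}: the \emph{uniform} constant $\bar r>0$ independent of $(h,\mathcal{D})\in\mathcal{H}_1$. I would argue by contradiction, in Palais--Smale fashion. Assume sequences $(h_n,\mathcal{D}_n)\in\mathcal{H}_1$, $x_n\in h_n(1,\mathcal{D}_n)$, $y_n\in\mathfrak{M}$, and topologically non essential intervals $[\alpha_n,\beta_n]\subset[a_n,b_n]\in\mathcal{I}_{x_n}$ satisfying $\|H_0(1,x_n)-y_n\|_{\alpha_n,\beta_n}\to 0$. The energies of the $y_n$ on $[\alpha_n,\beta_n]$ stay uniformly bounded, while $\beta_n-\alpha_n$ is bounded below (by Lemma~\ref{thm:lem2.3} and the strong concavity condition, using $\mathfrak{p}^{y_n}_{\alpha_n,\beta_n}\ge -\sigma_1$), so up to subsequences, parameter translations and $H^1$-convergence both $y_n$ and the restriction of $H_0(1,x_n)$ to $[\alpha_n,\beta_n]$ converge to a common limit curve $y^\ast$ still carrying a topologically non essential interval $[\alpha^\ast,\beta^\ast]$ with bending $\ge 1+\bar\gamma$ and proximity $\ge -2\sigma_0$. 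Applying Proposition~\ref{thm:prop4.20} at $y^\ast$, the resulting vector field would have been active during the $n$-th construction of $H_0$ for large $n$ and must have strictly decreased $\mathfrak{p}$ by more than $\sigma_1/2$ on that portion --- contradicting the persistence of a topologically non essential interval arbitrarily close to $H_0(1,x_n)$.
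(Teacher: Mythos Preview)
Your construction is essentially the one in the paper: apply Proposition~\ref{thm:prop4.20}, take a finite $\mathcal R$-equivariant cover, glue the $V_{y_i}$ via a partition of unity, and flow. Two points of departure are worth noting. First, the paper does \emph{not} use the cut-off $\chi(\phi(\eta))$: here the $V_y$ point \emph{into} $\Omega$ (property~\eqref{itm:terdue}), vanish where $\phi\le -\bar\delta+\varepsilon_0$ (property~\eqref{itm:teruno}), and are supported in $[\tilde\alpha,\tilde\beta]$ where $\phi\le 0$, so no outward escape needs to be tamed. Second, the paper's flow is the trivial one $\eta(\tau,z)=z+\tau\,W(z)$, with $W$ depending only on the \emph{initial} point $z$; this avoids any ODE analysis and makes the estimates for \eqref{itm:quaterquattro}, \eqref{itm:quatertre} and the type-C property immediate.

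The substantive difference is in \eqref{itm:quatercinque}. Your contradiction argument has a gap: each $(h_n,\mathcal D_n)$ carries its own $H_0^{(n)}$ built from its own centers $y_i^{(n)}\in h_n(1,\mathcal D_n)$, and there is no reason the vector field $V_{y^\ast}$ attached to the limit curve ``would have been active during the $n$-th construction''. The paper bypasses this entirely with a direct quantitative argument. By construction, for $z$ in the relevant set $\overline{\mathcal V_X}$ and for $s$ in the summary interval one has $\phi(z(s))\le -\sigma_1/4$ and
\[
\frac{\partial}{\partial\tau}\,\phi\bigl(\eta(\tau,z)(s)\bigr)=g\bigl(\nabla\phi(\eta(\tau,z)(s)),W(z)(s)\bigr)\le -\theta_0
\]
as long as $\phi(\eta(\tau,z)(s))\in[-2\sigma_0,0]$. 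Running this for the full time $\tau=\rho_0/2$ and invoking the choice $\sigma_1\le\tfrac{2}{7}\rho_0\theta_0$ of Remark~\ref{rem:sigma1} forces $\max_s\phi\bigl(H_0(1,x)(s)\bigr)\le -2\sigma_1$ on those intervals. Since any curve $y$ with a topologically non essential interval has $\mathfrak p^y_{\alpha,\beta}\ge -\sigma_1$ there, the gap $\sigma_1$ in $\phi$-values translates (via the uniform bound on $\nabla\phi$) into a uniform $\bar r$ independent of $(h,\mathcal D)$. This is where the quantitative form of Remark~\ref{rem:sigma1} is actually cashed in, and it replaces your compactness argument completely.
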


\begin{proof}
The proof is divided into two parts: first part requires the construction of the vector field (with the help of Proposition \ref{thm:prop4.20}) generating
the flow that we shall study in the second part of the proof.

{\bf Part A: Construction of the vector field.}

Set
\begin{multline*}
X = \{x \in h(1,\Dcal): \exists [\alpha,\beta] \subset [a,b] \in \Ical_x, \bar\delta\text{--close to }\partial \Omega\\
\text{and such that } \mathfrak p^x_{\alpha,\beta} \in [-2\sigma_1,-\frac{\sigma_1}3], \; \mathfrak b^x_{\alpha,\beta} \geq 1+ \bar\gamma \},
\end{multline*}
where $\bar\delta$ is given by \eqref{eq:4.37} and $\sigma_1$ by Remark \ref{rem:sigma1}.
Note that $X$ is a compact set. For any $x \in X$ denote by $[\alpha_x^i,\beta_x^i]$, $i=1,\ldots,k(x)$ the intervals $[\alpha,\beta]$
described in the definition above.
Note that, by Lemma \ref{thm:lem2.3}, their number, $k(x)$ is bounded by a constant independent of $x$, $\Dcal$ and $h$. Moreover due to their minimality
property, for any $i \neq j$, $[\alpha_x^i,\beta_x^i] \cap [\alpha_x^j,\beta_x^j]$ has at most one element. Denote by $[\tilde\alpha_x,\tilde\beta_x]$ the summary
interval for $x$ (cf. Definition \ref{def:summary-interval}) including the intervals $[\alpha_x^i,\beta_x^i]$.

\medskip

Let $\rho_0,\theta_0,\varepsilon_0$ and $\mu_0$ be as in Proposition \ref{thm:prop4.20}. Thanks to it, for any $x \in X$ there exist
$\rho(x) \in ]0,\frac{\rho_0}2]$ and $V_x \in H_0^1([0,1],\mathbb{R}^N$) (extended as null vector field outside $[\tilde\alpha_x,\tilde\beta_x]$) such that:
\begin{itemize}
\item $V_{\Rcal x}(s) = V_{x}(1-s) = \Rcal V_{x}(s)$ for all $s \in [0,1]$,
\item $\Vert V_x \Vert_{0,1} = 1$,
\item $V_x(s) = 0$, for any $s$ such that $\phi(x(s) \leq -\bar\delta + \varepsilon_0$,
\end{itemize}
and for any $z$ satisfying $\Vert z-x \Vert_{0,1} < \rho(x)$  it is:
\begin{itemize}
\item $\phi(z(s)) \leq -\frac{\sigma_1}4$ for all $s \in [\tilde\alpha_x,\tilde\beta_x]$,
\item $V_x(s) = 0$, for any $s$ such that $\phi(z(s) \leq -\bar\delta + \varepsilon_0$,
\item $g(\nabla\phi(z(s)),V_x(s)) \leq-\theta_0$ for all $s \in [\tilde\alpha_x,\tilde\beta_x]$ such that  $\phi(z(s)) \in [-2\sigma_0,0]$,
\item $\int_{\tilde\alpha_x^i}^{\tilde\beta_x^i}g(\dot z,\frac{\mathrm D}{\mathrm ds}V_x)\,\mathrm ds \leq -\mu_0$.
\end{itemize}

Now set
\[
\Ucal _x = \{z \in \mathfrak M : \Vert z-x\Vert_{0,1} < \rho(x)\}
\]
and consider the open covering $\{\Ucal_x,\Ucal_{\Rcal x}\}_{x\in X}$ of $X$. Since $X$ is compact there exists a finite covering
$\{\Ucal_{x_i},\Ucal_{\Rcal x_i}\}_{i=1,\ldots,k}$ of $X$. Set
\[
\Ucal _X = \bigcup_{i=1,\ldots,k}(\Ucal_{x_i} \cup \Ucal_{\Rcal x_i}).
\]

Define, for any $x \in \overline{\Ucal_x}$,
\[
\varrho_{x_i} (z) =  \dist_{0,1}(z,\overline{\Ucal_X}\setminus\Ucal_{x_i})
\]
where $\dist_{0,1}$ is the distance induced by the norm $\Vert \cdot \Vert_{0,1}$. Since $\Rcal X = X$,
$\Rcal \Ucal_{x_i} =\Ucal_{{\Rcal x}_i}$ and $\Rcal \circ \Rcal$ is the identity map, it is
\begin{equation*}
\varrho_{x_i}(\Rcal z) = \varrho_{\Rcal x_i}(z).
\end{equation*}
Finally set
\[
\beta_{x_i}(z) =
\frac{\varrho_{x_i}(z)}{\sum_{j=1,\ldots,k}(\varrho_{x_j}(z)+\varrho_{\Rcal x_j}(z))},
\]
which satisfies
\begin{equation*}
\beta_{x_i}(\Rcal z) = \beta_{\Rcal x_i}(z)
\end{equation*}
and
\[
\sum_{j=1,\ldots,k}(\beta_{x_j}(z)+\beta_{\Rcal x_j}(z)) = 1.
\]

The vector field that we shall use in this proof is defined as
\begin{equation}\label{eq:vectorfield1}
W(z)(s) = \sum_{j=1,\ldots,k}(\beta_{x_j}(z)V_{x_j}(s)+\beta_{\Rcal x_j}(z)V_{\Rcal x_j}(s)),
\end{equation}
which is well defined on all $\mathfrak M$.

Note that by the definition of $V_{x}$
\begin{equation}\label{eq:normaWx}
\Vert W(z)\Vert_{0,1} \leq 1.
\end{equation}

Moreover, by  the $\Rcal$--equivariant choice of $V_{x_i}$ and the $\Rcal$--invariant property of $\beta_{x_i}$ it is
\begin{equation}\label{eq:invarianzadiW1}
\Rcal W(z) = W(\Rcal z).
\end{equation}

The existence of the homotopy $H_0$ will be proved studying the flow $\eta(\tau,z)$ given by
the solution of the initial value problem
\begin{equation}\label{eq:flussoC}
\left\{\begin{array}{l}\frac{\mathrm d}{\mathrm d\tau}\eta (s)=
W(z)(s) \\[0.5cm]
\eta(0)=z \in \overline{\Ucal_X}, \end{array} \right.
\end{equation}
whose solution (for any $\tau \geq 0$) is given by
\[
\eta(\tau,z)(s) = z(s) + \tau W(z)(s).
\]

\medskip

{\bf Part B: Properties of the flow $\eta$.}

We shall move by $\eta$ only the open set $\Ucal_X$. More precisely let $\Vcal_X$ an $\Rcal$--equivariant open neighborhood of $X$ such that
\[
X \subset \Vcal_X \subset \overline{\Vcal_X} \subset \Ucal_X,
\]
and consider a $\Rcal$--invariant continuous map $\chi :\mathfrak M \rightarrow [0,1]$ such that
\[
\chi(z) = 1 \text{ if } z \in \overline{\Vcal_X}, \; \chi(z)=0 \text{ if }z \in \mathfrak M \setminus \Ucal_X.
\]
We shall consider
\[
H(\tau,z)=\eta(\chi(z)\tau,z).
\]

Arguing as in proving Step 1 of Proposition \ref{thm:prop4.17f} we see that $H$ is $\Rcal$--equivariant. Moreover choosing $\rho(x)$ sufficiently small $\Ucal_X$
is far from the constat curves, so $H$ does not move them.

Since $\rho(x_i) \leq \frac{\rho_0}2$, arguing as in step 5 of Proposition \ref{thm:prop4.17f}, se see that
\begin{multline*}
\text{ if } \tau_* \text{is the first instant such that }\Vert \eta(\tau,z)-x_i\Vert_{0,1} = \rho_0 \\
\text{ for some $i$ and }z \in \Ucal_{x_i},\text{ then }\tau_* \geq \frac{\rho_0}2.
\end{multline*}

Moreover, any $V_{x_i}$ satisfies
\[
g(\nabla\phi(z(s)),V_{x_i}(s)) \leq-\theta_0 \text{ for all }s \in [\tilde\alpha_{x},\tilde\beta_{x}] \text{ such that } \phi(z(s)) \in [-2\sigma_0,0],
\]
while $V_{x_i}=0$ outside the intervals $[\tilde\alpha_{x},\tilde\beta_{x}]$. Then,
thanks to condition
\eqref{eq:numero8} satisfied by $h$, if we choose $\rho(x) $ sufficiently small we see that $H$ is of type C in $[0,\frac{\rho_0}2]$
(and that any $[a_z,b_z] \in \Ical_z$ does not change along the flow).

Note also that, due to the properties of the $V_{x_i}$'s we have
\[
\frac12\int_{a_z}^{b_z}g(\dot z_{\tau},\dot z_{\tau})\,\mathrm ds \leq \frac12\int_{a_z}^{b_z}g(\dot z,\dot z)\,\mathrm ds,
\]
where $z_{\tau} = \eta(\tau,z)$, for any $\tau \in [0,\frac{\rho_0}2]$, for any $z\in\mathfrak M$.

Then, setting
\[
H_0(\tau,z) = H(\tau\frac{\rho_0}2,z),
\]
we see that  $\Fcal(\Dcal,h\star H_0) \leq \Fcal(\Dcal,h)$, while by \eqref{eq:normaWx}
\begin{equation}\label{eq:controllospeed}
\Vert H_{0}(\tau,x) - x \Vert_{0,1} \leq \tau\frac{\rho_0}2 \text{ for any }x \in h(1,\Dcal).
\end{equation}
To conclude also that $(\Dcal,h\star H_0)\in\Hcal_1$, it is to observe that if the radius of any $\mathcal U_{x_i}$ is sufficiently small
the endpoints of the restriction of the curve $z$ to a topologically not essential
interval are \emph{not} moved by the flow, which implies that the bending
constant is not affected by the flow. Moreover the flow do not creates news topologically non essential intervals since nothing moves nearby the level
$-\bar\delta$.
Then, along the flow, intervals which are far to be topologically not essential still remain far to be topologically not essential.

Finally by the choice of any $V_{x_i}$ we see also that, starting from $z \in  \overline{\Vcal_X}$, the
map $\phi$ decreases along the flow $\eta$ with derivative $\leq -\theta_0$
until the maximal proximity to $\partial\Omega$
gets to the value $-2\sigma_1$ and we have,
for any $\tau \in [0,\frac{\rho_0}2]$ and $z \in \overline{\Vcal_X}$ such that $\phi(\eta(\tau,z)) \geq -2\sigma_1$:
\[
\phi(\eta(\tau,z)) \leq \phi(\eta(0,z)) -\tau\theta_0 \leq -\frac{\sigma_1}4 - \tau\theta_0,
\]
while $\sigma_1$ has been chosen so that $\sigma_1 \leq \frac27\rho_0\theta_0$ (cf. Remark \ref{rem:sigma1}).
Then $\phi(\eta(\cdot,z))$ reaches surely the level $-2\sigma_1$ in a time $\tau \in [0,\frac{\rho_0}2]$ and
 we can finally deduce the existence of $\bar r >0$ satisfying \eqref{itm:quatercinque}.
\end{proof}

We are now ready for the following:

\begin{prop}[First Deformation Lemma]
\label{thm:firstdeflemma} Let $c \geq \frac12(\frac{3\delta_0}{4K_{0}})^2$ be a geometrically  regular
value (cf. Definition \ref{thm:defgeomcrit}). Then it is a topologically regular value of $\mathcal F$, namely
there exists $\varepsilon=\varepsilon(c)>0$ having the following property:
for all  $(\mathcal D,h)\in {\mathcal H}_1$ with
\[
\mathcal F(\mathcal D,h)\leq c+\varepsilon
\]
there exists a
continuous map $\eta\in C^0\big([0,1]\times h(1,\Dcal),\mathfrak M\big)$ such that $(\Dcal,\eta\star h)\in{\Hcal}_1$ and
\[
\mathcal F(\Dcal,\eta\star h)\leq c-\varepsilon.
\]
\end{prop}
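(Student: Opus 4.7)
The plan is to combine the three deformation results by applying them in the order: type C, then type A, then type B. The hypothesis that $c$ is geometrically regular enters as a uniform $H^1$ separation from the set of OGC's; the type C flow of Proposition \ref{thm:prop4.23} sweeps the curves away from irregular variationally critical portions of the first type, and the type A/B flow of Proposition \ref{thm:prop4.19f} then decreases $\mathcal F$ past level $c$.

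First I would establish a uniform separation from OGC's. By Remark \ref{rem:rem4.12} the set of geometrically critical values is closed in $]0,M_0[$ (limits of OGC's of energies $c_n\to c$ are OGC's of energy $c$), so regularity of $c$ provides $\varepsilon_0>0$ with $[c-\varepsilon_0,c+\varepsilon_0]$ containing no geometrically critical value. A second compactness argument via Remark \ref{rem:rem4.12} then yields $r_1>0$ such that for every $x\in\mathfrak M$ and every $[a,b]\in\Ical_x$ with $\tfrac{b-a}{2}\int_a^b g(\dot x,\dot x)\,\mathrm ds\in[c-\varepsilon_0,c+\varepsilon_0]$, one has $\Vert x|_{[a,b]}-y\Vert_{a,b}\ge r_1$ for every OGC $y$ defined on $[a,b]$; otherwise a sequence of counterexamples would, after affine reparameterization, produce a subsequence of OGC's converging to an OGC of energy $c$, contradicting regularity of $c$.

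Next I would apply Proposition \ref{thm:prop4.23} to $(\Dcal,h)$, obtaining a type C homotopy $H_0$ with $(\Dcal,H_0\star h)\in\Hcal_1$, $\mathcal F(\Dcal,H_0\star h)\le\mathcal F(\Dcal,h)\le c+\varepsilon$, and $\Vert H_0(1,x)-y\Vert_{\alpha,\beta}\ge\bar r$ for every topologically non-essential interval $[\alpha,\beta]\subset[a,b]\in\Ical_x$ and every $y\in\mathfrak M$ having $[\alpha,\beta]$ as a TNE interval. By Remark \ref{rem:4.22} together with Corollary \ref{thm:cor4.11bis}, every irregular variationally critical portion of first type contains such a TNE interval, so the end curves $H_0(1,x)$ are uniformly $\bar r$-far in $H^1$ from the set of irregular critical portions of first type. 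Combined with Step 1 and the small-displacement bound $\Vert H_0(1,x)-x\Vert_{0,1}\le\rho_0/2$, one obtains a uniform separation constant $r:=\tfrac12\min(r_1-\rho_0/2,\bar r)$ from the full class $\mathcal Z^1_{a,b}$ of \eqref{eq:4.50f}, valid on every portion of $H_0(1,x)$ whose energy lies in $[c-\varepsilon_0,c+\varepsilon_0]$. I then apply Proposition \ref{thm:prop4.19f} to $H_0\star h$ with this $r$ and $c_1=c$ for $\varepsilon\le\min(\varepsilon_0,\varepsilon_2(r,c))$: hypothesis \eqref{eq:4.28ff-b} holds by the preceding separation, so one obtains the type~A$\star$type~B homotopy $H_\varepsilon$ with $\mathcal F(\Dcal,H_\varepsilon\star(H_0\star h))\le c-\varepsilon$. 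Setting $\eta:=H_\varepsilon\star H_0$ produces the required deformation.

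The main obstacle is verifying that the final concatenation lies in $\Hcal_1$, i.e.\ that condition \eqref{eq:numero8} on topologically non-essential intervals is preserved throughout $H_\varepsilon$. On the piece $H_0\star h$ this is built in. Along $H_\varepsilon$, one exploits the small-displacement estimate giving $\Vert H_\varepsilon(\tau,z)-z\Vert_{L^\infty([0,1],\R^N)}\le T_\varepsilon\to 0$ as $\varepsilon\to 0$: combined with the observation (shown in the proof of Proposition \ref{thm:prop4.23}) that after $H_0$ any would-be TNE interval already lies in the region $\{\phi\le-2\sigma_1\}$, and with the choice $\sigma_1\le\tfrac27\rho_0\theta_0$ of Remark \ref{rem:sigma1}, one concludes that for $\varepsilon$ sufficiently small no TNE interval can form along $H_\varepsilon$, so \eqref{eq:numero8} is vacuously maintained. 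Taking $\varepsilon=\varepsilon(c)$ to be the minimum of all smallness thresholds produced along the way (in particular $\varepsilon\le\varepsilon_0$, $\varepsilon\le\varepsilon_2(r,c)$, and $T_\varepsilon\le\sigma_1/(2K_0/\sqrt{\ell_0})$ or a similar bound controlling the $L^\infty$-drift against the $\sigma_1$-buffer) yields the proposition.
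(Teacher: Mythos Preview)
Your strategy matches the paper's exactly: first invoke Proposition~\ref{thm:prop4.23} (type~C) to push away from curves possessing topologically non-essential intervals, then invoke Proposition~\ref{thm:prop4.19f} (type~A followed by type~B) to lower $\mathcal F$ below $c$. The paper's own proof is extremely terse and does not spell out either the choice of $r$ or the verification that the final homotopy stays in $\mathcal H_1$, so your additional discussion of the latter point is a genuine improvement in rigor rather than a deviation.

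There is one quantitative slip you should fix. Your formula $r:=\tfrac12\min(r_1-\rho_0/2,\bar r)$ can be non-positive, since nothing forces $r_1>\rho_0/2$ (indeed $r_1$ depends on $c$ while $\rho_0$ is a fixed constant from Proposition~\ref{thm:prop4.20}). The subtraction is in fact unnecessary: your separation constant $r_1$ in Step~1 was established for \emph{every} $x\in\mathfrak M$ whose portion energy lies in $[c-\varepsilon_0,c+\varepsilon_0]$, so it applies directly to the curves $H_0(1,h(1,\xi))$ without any correction for the displacement under $H_0$. Since $H_0$ is of type~C the intervals in $\mathcal I_x$ are unchanged and $\mathcal F$ does not increase, so the relevant portions of $H_0(1,\cdot)$ still have energy in $[c-\varepsilon,c+\varepsilon]\subset[c-\varepsilon_0,c+\varepsilon_0]$. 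For the irregular first-type part, note that property~\eqref{itm:quatercinque} gives the bound in the $\Vert\cdot\Vert_{\alpha,\beta}$ norm on the TNE subinterval; you should remark that this controls $\Vert\cdot\Vert_{a,b}$ up to a universal constant via the elementary inequality $\Vert\cdot\Vert_{\alpha,\beta}\le C\Vert\cdot\Vert_{a,b}$ for $[\alpha,\beta]\subset[a,b]$. With these two adjustments, simply take $r$ to be a fixed positive constant below both $r_1$ and a multiple of $\bar r$, and the argument goes through.
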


\begin{proof}
Take $\bar\varepsilon>0$ such that there are no OGC's in $\overline\Omega$
with energy value in
$[c-\bar\varepsilon,c+\bar\varepsilon]$. A simple contradiction
argument shows the existence of $r_c>0$ such that the conditions
$x\in\mathfrak M$, $[a,b]\in{\mathcal I}_x$,
 and
$\frac{(b-a)}2\int_a^bg(\dot x,\dot x)\,\mathrm ds\in[c-\bar\varepsilon,c+\bar\varepsilon]$ imply $\Vert
x-y\Vert_{a,b}\ge r_c$ for all $y:[a,b]\to\overline\Omega$
affinely parameterized OGC in $\overline\Omega$.

Using the
homotopy $H_0$ of Proposition~\ref{thm:prop4.23} we can move
$h(1,\mathcal D)$ in such a way that
\[
\mathcal F(\mathcal D,H_0\star h)\le\mathcal F(\mathcal D,h)
\]
and it becomes far from the set of
paths having topologically not essential intervals.

Therefore, choosing $\varepsilon>0$ sufficiently small and applying Proposition~\ref{thm:prop4.19f},
we obtain the existence of a homotopy $H_\varepsilon$ such that
$(\mathcal D,H_\varepsilon\star H_0\star h) \in \mathcal H_1$ and
\[
\mathcal F(\mathcal D,H_\varepsilon\star H_0\star h) \leq
\mathcal F(\mathcal D,H_0\star h) - 2\varepsilon \leq \mathcal F(\mathcal D,h)-2\varepsilon
\]
which implies
\[
\mathcal F(\mathcal D,H_\varepsilon\star H_0\star h) \leq c - \varepsilon.
\]
Then $H_\varepsilon\star H_0\star h$ is the required
homotopy.

\end{proof}

\section{Proof of the main Theorem}

In view of multiplicity results
the topological invariant that will be employed in our proof is the relative category of a
topological pair $(X,Y)$ as defined in \cite[Definition 3.1]{FW}. For other definition of the relative category
and other relative cohomological indexes see e.g. \cite{FH} and references therein.

\begin{defin}\label{def:10.1}
Let $X$ be a topological space and $Y$ a closed subset of $X$. A closed subset $F$ of $X$ has relative category
equal to $k\in\N$ ($\cat_{X,Y}(F)=k$) if $k$ is the minimal positive integer such that $F\subset\cup_{i=0}^k
A_i$, any $A_i$ is open, $F\cap Y\subset A_0$, and for any $i=0,\ldots,k$ there exists $h_i\in C^0([0,1]\times
A_i,X)$ with the following properties:
\begin{enumerate}
\item\label{itm:def10.1-1} $h_1(0,x)=x,\,\forall x\in A_i,\,\forall i=0,\ldots,k$; \item\label{itm:def10.1-2}
for any $i=1,\ldots,k$:
\begin{enumerate}
\item\label{itm:def10.1-2a} there exists $x_i\in X\setminus Y$ such that $h_i(1,A_i)=\{x_i\}$;
\item\label{itm:def10.1-2b} $h_i:([0,1]\times A_i)\subset X\setminus Y$;
\end{enumerate}
\item\label{itm:def10.1-3} if $i=0$:
\begin{enumerate}
\item\label{itm:def10.1-3a} $h_0(1,A_0)\subset Y$; \item\label{itm:def10.1-3b} $h_0(\tau,A_0\cap Y)\subset
Y,\,\forall\tau\in[0,1]$.
\end{enumerate}
\end{enumerate}
\end{defin}

For any $X\in\Mcal$, we denote by $\widetilde X$ the quotient space with respect to the
equivalence relation induced by $\mathcal R$.
We shall use, as topological invariant, the number
$\cat_{\widetilde{\mathfrak C},\widetilde{\mathfrak C}_0}(\widetilde{\mathfrak C})$.
In Appendix~\ref{sec:app1} we will show that
\begin{equation}\label{eq:10.1}
\cat_{\widetilde{\mathfrak C},\widetilde{\mathfrak C_0}}(\widetilde{\mathfrak C})\ge N,
\end{equation}
using the topological properties of the $(N-1)$--dimensional real projective space.

Denote by $\Dgot$ the class of closed $\Rcal$--invariant subset of $\mathfrak C$. Define, for
any $i=1,\ldots,N$,
\begin{equation}\label{eq:10.2}
\Gamma_i=\{\Dcal\in\Dgot\,:\,\cat_{\widetilde{\mathfrak C},\widetilde{\mathfrak C}_0}(\widetilde\Dcal)\ge i\}.
\end{equation}
Set
\begin{equation}\label{eq:10.3}
c_i=\inf_{\stackrel{\Dcal\in\Gamma_i,}{(\mathcal D,h)\in\Hcal_1}}\Fcal(\mathcal D,h).
\end{equation}

Any number $c_i$ is called topologically critical level or $\Fcal$.
\begin{rem}\label{rem:10.2}
If $\mathrm I_{\mathfrak C}:[0,1]\times\mathfrak C$ denotes the map $\mathrm I_{\mathfrak C}(\tau,x)=x$ for
all $\tau$ and all $x$, the the pair $(\mathfrak C,\mathrm I_{\mathfrak C})\in \Hcal_1$.
Since $\widetilde{\mathfrak C}\in\Gamma_i$ for any $i$ (see \eqref{eq:10.1}),
we get:
\[
c_i\le\Fcal(\mathfrak C,\mathrm I_{\mathfrak C}) < M_0.
\]
Moreover $\Fcal\ge 0$, therefore $0\le c_i\le M_0$ for any $i$ (recall also the definition of $\mathcal F$ and $M_0$).
\end{rem}

We have the following lemmas involving the real numbers $c_i$.

\begin{lem}\label{thm:lem10.3} The following statements hold:
\begin{enumerate}
\item\label{itm:lem10.3-1} $c_1\ge \frac{1}{2}\left(\tfrac{3\delta_0}{4K_0}\right)^2$;
\item\label{itm:lem10.3-2} $c_1\le c_2\le\cdots\le c_N$.
\end{enumerate}
\end{lem}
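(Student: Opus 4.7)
Part (2) is immediate: $\Gamma_{i+1}\subset\Gamma_i$ because relative category $\ge i+1$ implies $\ge i$, so the infimum in \eqref{eq:10.3} over the smaller family of pairs $(\mathcal D,h)$ is at least the infimum over the larger, yielding $c_1\le c_2\le\cdots\le c_N$.

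For (1), the plan is to argue by contradiction. Assume $c_1<\tfrac12(\tfrac{3\delta_0}{4K_0})^2$ and pick $(\mathcal D,h)\in\mathcal H_1$ with $\mathcal D\in\Gamma_1$ and $\mathcal F(\mathcal D,h)$ below this threshold. For any $y=h(1,\xi)$ and any $[a,b]\in\mathcal I_y$ one has $\tfrac{b-a}2\int_a^b g(\dot y,\dot y)\,\mathrm ds<\tfrac12(\tfrac{3\delta_0}{4K_0})^2$, and the contrapositive of Lemma \ref{thm:lem2.3} with $\delta=\tfrac{3\delta_0}4$ forces $\phi(y(s))>-\tfrac{3\delta_0}4$ throughout $[a,b]$; since $\phi(y(\cdot))\ge 0$ off $\bigcup\mathcal I_y$, the whole curve $y$ has image in $\phi^{-1}(]-\tfrac{3\delta_0}4,\delta_0[)\subset\phi^{-1}([-\delta_0,\delta_0])$, the strip on which the boundary projection $\pi$ along the flows $\eta^\pm$ is well defined and continuous.

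The core of the argument is then the construction of a continuous $\Rcal$-equivariant homotopy $\bar h\colon[0,1]\times\mathcal D\to\mathfrak C$, living entirely in $\mathfrak C$, that deforms $\mathcal D$ onto $\mathfrak C_0$ rel $\mathcal D\cap\mathfrak C_0$. For $\tau\in[0,\tfrac12]$ set
\[
\bar h(\tau,\xi)=G\bigl(\pi(h(2\tau,\xi)(0)),\pi(h(2\tau,\xi)(1))\bigr),
\]
which at $\tau=0$ equals $G(\xi(0),\xi(1))=\xi$ (as $\mathcal D\subset\mathfrak C$). This is well defined because the admissibility of types A, B, and C (Definitions \ref{thm:tipoA}--\ref{thm:tipoC}) keeps the endpoints $h(\tau,\xi)(0),h(\tau,\xi)(1)$ in $\phi^{-1}([0,\delta_0))$, inside the domain of $\pi$. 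For $\tau\in[\tfrac12,1]$ collapse the endpoints symmetrically along $h(1,\xi)$ via
\[
\bar h(\tau,\xi)=G\Bigl(\pi\bigl(h(1,\xi)(\tfrac{2\tau-1}2)\bigr),\pi\bigl(h(1,\xi)(1-\tfrac{2\tau-1}2)\bigr)\Bigr),
\]
so that $\bar h(1,\xi)=G(\pi(h(1,\xi)(\tfrac12)),\pi(h(1,\xi)(\tfrac12)))\in\mathfrak C_0$; this is legitimate by the tubular enclosure just established for $h(1,\xi)$. The $\Rcal$-equivariance of $\bar h$ follows from the equivariance of $h$, the identity $\Rcal G(A,B)=G(B,A)$ of Lemma \ref{thm:corde}(4), and the symmetric choice of parameters in both halves; curves in $\mathcal D\cap\mathfrak C_0$ are constant under $h$ by \eqref{eq:numero2}, hence fixed under $\bar h$.

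Passing to the quotient, $\widetilde{\bar h}$ is an $\Rcal$-equivariant nullhomotopy of $\widetilde{\mathcal D}$ rel $\widetilde{\mathfrak C_0}$ inside the compact ANR $\widetilde{\mathfrak C}\cong(\mathds S^{N-1}\times\mathds S^{N-1})/\Scal$. Using a tubular neighborhood of $\widetilde{\mathfrak C_0}$ in $\widetilde{\mathfrak C}$, the standard Borsuk-type extension yields an open neighborhood $A_0\supset\widetilde{\mathcal D}$ on which the deformation to $\widetilde{\mathfrak C_0}$ extends, forcing $\cat_{\widetilde{\mathfrak C},\widetilde{\mathfrak C_0}}(\widetilde{\mathcal D})=0$ and contradicting $\mathcal D\in\Gamma_1$. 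The main obstacles are therefore two: verifying that during the whole homotopy $h$ the endpoints stay in $\phi^{-1}([0,\delta_0))$ (needed so that $\pi$ is applicable in the first stage), which has to be read out of Definitions \ref{thm:tipoA}--\ref{thm:tipoC}; and the extension-to-a-neighborhood step, which is technical but standard given the manifold/ANR structure of the pair $(\widetilde{\mathfrak C},\widetilde{\mathfrak C_0})$.
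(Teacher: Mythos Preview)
Your proposal is correct and follows the same overall strategy as the paper: part~(2) via $\Gamma_{i+1}\subset\Gamma_i$, and part~(1) by contradiction, using the low value of $\mathcal F$ together with Lemma~\ref{thm:lem2.3} to confine $h(1,\xi)$ to the collar $\phi^{-1}([-\delta_0,\delta_0])$, then deforming $\mathcal D$ into $\mathfrak C_0$ to force $\cat_{\widetilde{\mathfrak C},\widetilde{\mathfrak C_0}}(\widetilde{\mathcal D})=0$.

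Two small remarks. First, your ``main obstacle'' about keeping the endpoints of $h(\tau,\xi)$ in $\phi^{-1}([0,\delta_0))$ is not an obstacle at all: this is immediate from $h(\tau,\xi)\in\mathfrak M\subset\mathfrak M_0$ (see \eqref{eq:defM}), and no inspection of types A, B, C is required. Second, your construction of $\bar h$ via $G$ and $\pi$ is actually more explicit than the paper's: the paper simply asserts the existence of an $\Rcal$-equivariant homotopy $h_0$ that first projects $h_\varepsilon(1,\mathcal D_\varepsilon)$ onto $\partial\Omega$ and then collapses along the curves themselves to constants, and from this the existence of $K_\varepsilon$ in $\widetilde{\mathfrak C}$; it does not spell out the passage from a homotopy in the path space to one in $\widetilde{\mathfrak C}$. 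Your use of $G(\pi(\cdot),\pi(\cdot))$ supplies exactly that missing link. The extension-to-an-open-neighborhood step is treated the same (i.e., tacitly) in both arguments.
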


\begin{lem}\label{thm:lem10.4} For all $i=1,\ldots,N$,  $c_i$ is a geometrically critical value.
\end{lem}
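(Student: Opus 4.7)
The plan is a standard minimax argument by contradiction, with all the real analytic work packaged inside the First Deformation Lemma (Proposition~\ref{thm:firstdeflemma}). Suppose, for contradiction, that $c_i$ is a geometrically regular value. I would first verify that Proposition~\ref{thm:firstdeflemma} is applicable at $c = c_i$: by part~\eqref{itm:lem10.3-2} of Lemma~\ref{thm:lem10.3}, $c_i \geq c_1$, and by part~\eqref{itm:lem10.3-1} one has $c_1 \geq \tfrac{1}{2}\bigl(\tfrac{3\delta_0}{4K_0}\bigr)^{2}$. Hence $c_i$ satisfies the lower-bound hypothesis of the First Deformation Lemma, which yields an $\varepsilon = \varepsilon(c_i) > 0$ with the following property: every $(\mathcal{D}, h) \in \mathcal{H}_1$ with $\mathcal{F}(\mathcal{D}, h) \leq c_i + \varepsilon$ can be deformed into a pair $(\mathcal{D}, \eta \star h) \in \mathcal{H}_1$ with $\mathcal{F}(\mathcal{D}, \eta \star h) \leq c_i - \varepsilon$.

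Next, using the definition of $c_i$ as an infimum in \eqref{eq:10.3}, I would pick a competing pair $(\mathcal{D}, h) \in \mathcal{H}_1$ with $\mathcal{D} \in \Gamma_i$, i.e., $\cat_{\widetilde{\mathfrak C},\widetilde{\mathfrak C}_0}(\widetilde{\mathcal D}) \geq i$, and $\mathcal{F}(\mathcal{D}, h) \leq c_i + \varepsilon$. Applying the deformation furnished by Proposition~\ref{thm:firstdeflemma} produces $(\mathcal{D}, \eta \star h) \in \mathcal{H}_1$ with $\mathcal{F}(\mathcal{D}, \eta \star h) \leq c_i - \varepsilon$. The crucial point is that the deformation modifies only the homotopy component and leaves the domain $\mathcal{D}$ untouched, so $\mathcal{D}$ still belongs to $\Gamma_i$. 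Consequently $(\mathcal{D}, \eta \star h)$ is itself an admissible competitor in the infimum defining $c_i$, and one obtains
\[
c_i \;\leq\; \mathcal{F}(\mathcal{D}, \eta \star h) \;\leq\; c_i - \varepsilon,
\]
a contradiction.

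I do not expect any genuine obstacle in executing this plan, since the substantive content has been absorbed into the First Deformation Lemma and into the a priori lower bound for $c_1$ in Lemma~\ref{thm:lem10.3}. The two points worth double-checking are (i) that $c_i$ lies above the threshold $\tfrac{1}{2}\bigl(\tfrac{3\delta_0}{4K_0}\bigr)^{2}$ required for Proposition~\ref{thm:firstdeflemma}---this is exactly what Lemma~\ref{thm:lem10.3} provides---and (ii) that the concatenation $\eta \star h$ remains in the admissible class $\mathcal{H}_1$, which is part of the conclusion of the First Deformation Lemma. No additional estimate on the category invariant is needed, because the deformation preserves the set $\mathcal{D}$ and hence trivially preserves its relative category.
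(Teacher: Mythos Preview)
Your proposal is correct and follows essentially the same contradiction argument as the paper: assume $c_i$ is geometrically regular, invoke the First Deformation Lemma to push an $\varepsilon$-approximate minimizer below $c_i-\varepsilon$, and contradict the definition of $c_i$. Your version is slightly more explicit in verifying the threshold hypothesis $c_i \ge \tfrac12\bigl(\tfrac{3\delta_0}{4K_0}\bigr)^2$ via Lemma~\ref{thm:lem10.3} and in noting that $\mathcal D$ remains in $\Gamma_i$, but these are exactly the implicit checks behind the paper's two-line proof.
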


\begin{proof}[Proof of Lemma \ref{thm:lem10.3}]
Let us prove \eqref{itm:lem10.3-1}. Assume by contradiction $c_1<\frac12 \left(\tfrac{3\delta_0}{4K_0}\right)^2$,
and take $\varepsilon>0$ such that $c_1+\varepsilon<\frac12 \left(\tfrac{3\delta_0}{4K_0}\right)^2$. By
\eqref{eq:10.2}--\eqref{eq:10.3} there exists $\Dcal_\varepsilon\in\Gamma_1$, and $(\mathcal D_\varepsilon,h_\varepsilon)\in\Hcal_1$
such that
\[
\Fcal(\mathcal D_\varepsilon,h_\varepsilon)\le c_1+\varepsilon
<\frac12\left(\tfrac{3\delta_0}{4K_0}\right)^2.
\]
Then there exists an $\Rcal$--equivariant homotopy $h_0$ sending the curve $h_\varepsilon(1,\mathcal D_\varepsilon)$ first on $\partial \Omega$ (sending constant curves in
constant curves) and then on the constant curves of
$\partial \Omega$, moving the extreme points along the curves themselves. So $(h_0 \star h_\varepsilon)(1,\mathcal D_\varepsilon)$ consists of constant curves
in $\partial \Omega$
(and $h_0\star h_\varepsilon$ sends the constant curves of  $\mathcal D_{\varepsilon}$ in constant curves). Then there exist
a homotopy $K_\varepsilon:[0,1]\times\widetilde\Dcal_\varepsilon\to\widetilde{\mathfrak C}$ such that
$K_\varepsilon(0,\cdot)$ is the identity, $K_\varepsilon(1,\widetilde\Dcal_\varepsilon)\subset\widetilde{\mathfrak C}_0$
and
\[
K_\varepsilon(\tau,\widetilde\Dcal_\varepsilon\cap\widetilde{\mathfrak C}_0)\subset\widetilde{\mathfrak C}_0,\,\forall\tau\in[0,1].
\]
Then $\cat_{\widetilde{\mathfrak C},\widetilde{\mathfrak C}_0}(\widetilde\Dcal_\varepsilon)=0$, in contradiction with the
definition of $\Gamma_1$.

To prove \eqref{itm:lem10.3-2}, fix $i\in\{1,\ldots,N-1\}$ and consider $c_i$ and $c_{i+1}$. By \eqref{eq:10.3}
for any $\varepsilon>0$ there exists $\Dcal\in\Gamma_{i+1}$ and $(\mathcal D,h)\in\Hcal_1$ such that
\[
\Fcal(\mathcal D,h)\le c_{i+1}+\varepsilon.
\]
Since $\Gamma_{i+1}\subset\Gamma_i$ by definition of $c_i$ we deduce $c_i\le c_{i+1}+\varepsilon$, and
\eqref{itm:lem10.3-2} is proved, since $\varepsilon$ is arbitrary.
\end{proof}

\begin{proof}[Proof of Lemma \ref{thm:lem10.4}]
Assume by contradiction that $c_i$ is not a  geometrically critical value for some $i$.
Take $\varepsilon=\varepsilon(c_i)$ as in Proposition~\ref{thm:firstdeflemma}, and $(\Dcal_\varepsilon,h)\in\Hcal_1$ such that
\[
\Fcal(\Dcal_\varepsilon,h)\le c_i+\varepsilon.
\]
Now let $\eta$ as in Proposition~\ref{thm:firstdeflemma} and take $h_\varepsilon=\eta\star h$. Since
\[
\Fcal(\Dcal_\varepsilon,h_\varepsilon)\le c_i-\varepsilon,
\]
we get a contradiction with \eqref{eq:10.3} because $(\Dcal_\varepsilon,h_\varepsilon)\in\Hcal_1$.
\end{proof}

\begin{proof}[Proof of Theorem]
It follows from part \eqref{itm:lem10.3-1} of Lemma \ref{thm:lem10.3} and Lemma \ref{thm:lem10.4}.
\end{proof}

\begin{rem}
Note that we have obtained only an existence result; a multiplicity result would follow
from our construction if one provided additional arguments that show that the
$c_i$'s are distinct.
This could be done if one were able to determine open contractible sets of our path space,
containing curves having portions close to OGC's. The details of this construction are rather
involved, and they are the object of further studies.
\end{rem}

\appendix

\section{Computation of the relative category}\label{sec:app1}

Let $2\le N\in\N,\,\mathds D^N$ the unit disk in $\R^N$ with the Euclidean norm and $\mathds{S}^{N-1}=\partial\mathds
D^N$. Let $\Rcal$ be the reversing map  $\Rcal x(s)=x(1-s)$, defined in the set of curves
$x:[0,1]\to\mathds D^N$. With a slight abuse of notation we will denote by
$\Rcal$ also the equivalence relation induced. Fix $\sigma\in\left]0,1\right[$ and set
\begin{align}
&\mathcal C=\{\gamma:[0,1]\to \mathds D^N,\:\,\gamma(t)=(1-t) x_1+t x_2,\,x_1,x_2\in \mathds{S}^{N-1}\}/\Rcal,\label{eq:A.1}\\
&\mathcal C_\sigma=\{[\gamma]\in\mathcal C\,:\,|\gamma(1)-\gamma(0)|\le\sigma\},\label{eq:A.2},\\
\intertext{and} &\mathcal C_0=\{[\gamma]\in\mathcal C\,:\,\gamma(t)=x\in \mathds{S}^{N-1}\,\forall
t\in[0,1]\}.\label{eq:A.3}
\end{align}

Note that $\mathcal C$ is homeomorphic to $(\mathds{S}^{N-1}\times \mathds{S}^{N-1})/\mathds Z_2$ where the action of $\mathds
Z_2$ on $\mathds{S}^{N-1}\times \mathds{S}^{N-1}$ is given by  $\mathcal S(A,B)=(B,A)$.

\begin{rem}\label{rem:A.1}
Since there exists an homeomorphism $\Phi:\Ccal\to\mathfrak C$ such that
$\Phi(\Ccal_0)=\mathfrak C_0$, then
\begin{equation}\label{eq:A.4}
\cat_{\Ccal,\Ccal_0}(\Ccal)=\cat_{\widetilde{\mathfrak C},\widetilde{\mathfrak C_0}}
(\widetilde{\mathfrak C)},
\end{equation}
therefore, to prove \eqref{eq:10.1}, we need to show   that
$\cat_{\mathcal C,\mathcal C_0}(\mathcal C)\ge N$.
\end{rem}

\begin{rem}\label{rem:A.2}
Note that property \eqref{itm:def10.1-2b} of Definition \ref{def:10.1} is essential in our case,
to guarantee that the relative category is at least $N$. Indeed in \cite{G} it is proved that the
Ljusternik-Schnirelman category of $\Ccal$ is at most equal to three, and if we did not require
\eqref{itm:def10.1-2b} in the definition of relative category we would have
$\cat_{\Ccal,\Ccal_0}(\Ccal)\le\cat(\Ccal)$.
\end{rem}

Now we are going to prove the following result.

\begin{prop}\label{thm:propA.3} $\cat_{\mathfrak C,\mathfrak C_0}(\mathfrak C)\ge N$.
\end{prop}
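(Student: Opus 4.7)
My plan is to prove $\cat_{\mathfrak{C},\mathfrak{C}_0}(\mathfrak{C})\ge N$ via a cup-length type argument with $\mathds{Z}_2$-coefficients, using the fact that $\mathcal{C}\setminus\mathcal{C}_0$ is homotopy equivalent to $\mathds{RP}^{N-1}$. By Remark~\ref{rem:A.1}, it suffices to show $\cat_{\mathcal{C},\mathcal{C}_0}(\mathcal{C})\ge N$, where $\mathcal{C}\cong(\mathds{S}^{N-1}\times\mathds{S}^{N-1})/\mathds{Z}_2$ and $\mathcal{C}_0$ is the diagonal.

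First I introduce the \emph{axis map} $\psi:\mathcal{C}\setminus\mathcal{C}_0\to\mathds{RP}^{N-1}$ by $\psi([A,B])=[A-B]$, well-defined since the swap $\mathcal{R}$ negates $A-B$ while preserving its projective class. The continuous section $s:\mathds{RP}^{N-1}\to\mathcal{C}\setminus\mathcal{C}_0$, $s([v])=\{v,-v\}$ for $v\in\mathds{S}^{N-1}$, satisfies $\psi\circ s=\mathrm{id}$; the composition $s\circ\psi$ is homotopic to the identity via the spherical interpolation from $\{A,B\}$ to $\{v,-v\}$ with $v=(A-B)/|A-B|$. Hence $\psi$ is a homotopy equivalence. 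Letting $\alpha\in H^1(\mathds{RP}^{N-1};\mathds{Z}_2)$ be the generator (with $\alpha^{N-1}\ne 0$), the class $\omega:=\psi^*\alpha\in H^1(\mathcal{C}\setminus\mathcal{C}_0;\mathds{Z}_2)$ satisfies $\omega^{N-1}\ne 0$.

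Next I argue by contradiction: suppose $\cat_{\mathcal{C},\mathcal{C}_0}(\mathcal{C})=k\le N-1$, so there are open sets $A_0,A_1,\ldots,A_k$ covering $\mathcal{C}$ and homotopies $h_i$ as in Definition~\ref{def:10.1}. For each $i\ge 1$, property \eqref{itm:def10.1-2b} gives $A_i\subset\mathcal{C}\setminus\mathcal{C}_0$ and the inclusion $A_i\hookrightarrow\mathcal{C}\setminus\mathcal{C}_0$ is null-homotopic via $h_i$; hence $\omega|_{A_i}=0$. Composing with the section, the intersections $B_i:=s^{-1}(A_i)\subset\mathds{RP}^{N-1}$ are open sets such that the inclusion $B_i\hookrightarrow\mathds{RP}^{N-1}$ is null-homotopic (via $\psi\circ h_i\circ s$, using that $\psi\circ s=\mathrm{id}$).

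The main obstacle is handling $A_0$: the homotopy $h_0$ retracts $A_0$ into $\mathcal{C}_0$, but it passes through $\mathcal{C}_0$, so restriction to the section $s(\mathds{RP}^{N-1})$ is not immediately controlled. To overcome this, I will refine the cover: using the deformation $h_0$ together with a partition-of-unity construction, I shrink $A_0$ into a tubular neighborhood $U$ of $\mathcal{C}_0$ that deformation retracts onto $\mathcal{C}_0$ (enlarging the $A_i$'s correspondingly while preserving all properties of Definition~\ref{def:10.1}). Such a $U$ can be chosen disjoint from the section $s(\mathds{RP}^{N-1})$, since $s([v])=\{v,-v\}$ consists of antipodal pairs, which lie at Euclidean distance $2$ in $\mathds{S}^{N-1}\times\mathds{S}^{N-1}$ from the diagonal. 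Hence $s(\mathds{RP}^{N-1})\cap A_0=\emptyset$, so $B_0=\emptyset$ and $\mathds{RP}^{N-1}=B_1\cup\cdots\cup B_k$.

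Finally, this gives an LS-categorical open cover of $\mathds{RP}^{N-1}$ by $k\le N-1$ sets, each null-homotopic in $\mathds{RP}^{N-1}$, implying $\cat(\mathds{RP}^{N-1})\le k-1\le N-2$. This contradicts the classical identity $\cat(\mathds{RP}^{N-1})=N-1$ (which follows from the cup-length $\alpha^{N-1}\ne 0$). The main technical point, and where I expect most of the work to go, is the refinement step producing $A_0\subset U$: this requires a careful Urysohn-style interpolation, gluing the identity on the "far" portion of $A_0$ with the trajectory of $h_0$ on the portion near $\mathcal{C}_0$, to shrink $A_0$ while keeping the categorical properties of the rest of the cover intact.
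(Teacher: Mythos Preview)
Your identification of the homotopy equivalence $\psi:\mathcal C\setminus\mathcal C_0\to\mathds{RP}^{N-1}$ and the treatment of the sets $A_1,\dots,A_k$ are correct and match the paper's underlying picture. The gap is exactly where you suspected it, the handling of $A_0$, but the proposed fix does not work.

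You want to replace $A_0$ by a collar $U$ of $\mathcal C_0$ while ``enlarging the $A_i$'s correspondingly and preserving all properties of Definition~\ref{def:10.1}''. That last clause is the problem: for $i\ge 1$ the enlarged $A_i$ must remain contractible \emph{in $\mathcal C\setminus\mathcal C_0$}, and there is no mechanism producing this. The only datum you have about $A_0$ is $h_0$, which drags $A_0$ into $\mathcal C_0$ \emph{through} $\mathcal C_0$; it says nothing about the homotopy type of $A_0\cap(\mathcal C\setminus\mathcal C_0)$ inside $\mathcal C\setminus\mathcal C_0$. Concretely, for $N\ge 3$ one has $\pi_1(\mathcal C)=0$ while $\pi_1(\mathcal C\setminus\mathcal C_0)\cong\mathds Z_2$, so a loop in $A_0\setminus\mathcal C_0$ that is null in $\mathcal C$ (as guaranteed by $h_0$) can perfectly well be the nontrivial element of $\pi_1(\mathcal C\setminus\mathcal C_0)$; hence $\omega|_{A_0\setminus\mathcal C_0}$ need not vanish and $A_0\setminus U$ cannot in general be absorbed into categorical sets of $\mathcal C\setminus\mathcal C_0$. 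Your ``Urysohn interpolation'' cannot create the missing null-homotopies. In fact, if such a shrinking were always possible, then together with the rest of your argument it would immediately yield $\cat(\mathds{RP}^{N-1})\le k-1$; so for a putative cover with $k=N-1$ the shrinking must fail, which shows the step is not a technicality but carries the whole weight of the theorem.

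The paper sidesteps this by working algebraically rather than geometrically with $A_0$: it proves
\[
\cat_{\mathcal C,\mathcal C_0}(\mathcal C)\ \ge\ \cl(\mathcal C\setminus\mathcal C_0,\ \mathcal C_\sigma\setminus\mathcal C_0)+1,
\]
where the \emph{relative} cup-length uses one class $\alpha_0\in H^{q_0}(\mathcal C\setminus\mathcal C_0,\mathcal C_\sigma\setminus\mathcal C_0)$ together with absolute classes $\alpha_i\in H^{q_i}(\mathcal C\setminus\mathcal C_0)$. The point is that $A_0\supset\mathcal C_\sigma$ for small $\sigma$ and the map $H^{*}(\mathcal C\setminus\mathcal C_0,\,A_0\setminus\mathcal C_0)\to H^{*}(\mathcal C\setminus\mathcal C_0,\,\mathcal C_\sigma\setminus\mathcal C_0)$ is onto (both $A_0$ and $\mathcal C_\sigma$ retract to $\mathcal C_0$), so $\alpha_0$ lifts to a class relative to $A_0$; this replaces your geometric ``shrinking'' by a cohomological one. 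The pair $(\mathcal C\setminus\mathcal C_0,\mathcal C_\sigma\setminus\mathcal C_0)$ is then identified with the disk bundle $(E,\partial E)$ over $\mathds{RP}^{N-1}$, and the Thom isomorphism produces the needed relative class, giving $\cl(E,\partial E)\ge N-1$ and hence $\cat_{\mathcal C,\mathcal C_0}(\mathcal C)\ge N$. If you want to keep your geometric viewpoint, the missing ingredient is precisely a relative class (essentially the Thom class) rather than a shrinking of $A_0$.
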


The proof will be performed using singular cohomology theory and the cup product (see e.g. \cite{Spa})
with $\mathds Z_2$ coefficients.
For any topological pair
$(X,Y)$ it will be denoted by $H^q(X,Y)$ at any dimension $q\ge 0$.

The notion of relative cuplength, here recalled, will be also used.

\begin{defin}\label{def:A.4}
The number $\cl(X,Y)$ is the largest positive integer $k$ for which there exists $\alpha_0\in H^{q_0}(X,Y)$
($q_0\ge 0$) and $\alpha_i\in H^{q_i}(X)$, $i=1,\ldots,k$ such that
\[
q_i\ge 1\,\forall i=1,\ldots,k,
\]
and
\[
\alpha_0\cup\alpha_1\cup\ldots\cup\alpha_k\ne 0 \text{\ in\ } H^{q_0+q_1+\ldots+q_k}(X,Y),
\]
where $\cup$ denotes the cup product.
\end{defin}

Recall that, if $Y\ne\emptyset$, the absolute cuplenght of $X$ is the largest positive integer $k$ for which
there exists $\alpha_i\in H^{q_i}(X),\,i=1,\ldots,k$ such that
\[
q_i\ge 1,\,\forall i=1,\ldots,k,
\]
and
\[
\alpha_1\cup\ldots\alpha_k\ne 0\text{\ in\ }H^{q_1+\ldots+q_k}(X).
\]

\begin{proof}[Proof of Proposition \ref{thm:propA.3}]
The proof is divided into four steps.

\textbf{Step 1.} $\cat_{\mathcal C,\mathcal C_0}(\mathcal C)\ge\cl(\mathcal C\setminus\mathcal C_0,
\mathcal C_\sigma\setminus\mathcal C_0)+1$.

Assume that $\cat_{\mathcal C,\mathcal C_0}(\mathcal C)=k<+\infty$. Since $\mathcal C_0$ is not a retract of
$\mathcal C$, it is $k\ge 1$. Take $A_0,A_1,\ldots,A_k$ open subsets as in definition \ref{def:10.1}, and let
\[
\imath_r:A_r\to\mathcal C\setminus\mathcal C_0,\qquad\jmath_r:(\mathcal C\setminus\mathcal C_0,\emptyset)\to(\mathcal C\setminus\mathcal C_0,A_r)
\]
be inclusion maps. By property \eqref{itm:def10.1-2} of Definition \ref{def:10.1}, $\imath_r^*:H^q(\mathcal C\setminus\mathcal C_0)\to H^q(A_r)$ is the zero constant map for any $q\ge 1$ and any $r\ge 1$. Then, since
the sequence
\[
\ldots\xrightarrow{} H^{q_r}(\mathcal C\setminus\mathcal C_0,A_r)\xrightarrow{\jmath_r^*}H^{q_r}(\mathcal C\setminus\mathcal C_0)\xrightarrow{\imath_r^*}H^{q_r}(A_r)\xrightarrow{}\ldots
\]
is exact, then $\jmath_r^*$ is surjective if $q_r\ge 1$.
Then for any $\alpha_r\in H^{q_r}(\mathcal C\setminus\mathcal C_0)$, if $q_r\ge 1$,
there exists $\beta_r\in H^{q_r}(\mathcal C\setminus\mathcal C_0,A_r)$ such that
$\jmath_r^*(\beta_r)=\alpha_r$.

Since $A_0 \supset \mathcal C_{0}$, $A_0$ is open and $\mathcal C_0$ is closed, there exists $\sigma\in\left]0,1\right[$ such that
$A_0\supset \mathcal C_{\sigma}$.
Moreover by property \eqref{itm:def10.1-3b} of Definition \ref{def:10.1}, $\sigma$ can be chosen sufficiently
small so that, up to consider a projection on $\mathcal C_0$,
a homotopy $\hat h_0$ can be built such that $\hat h_0(\tau,\mathcal C_\sigma)\subset \mathcal C_\sigma,\,\forall
\tau\in[0,1]$, (while, obviously, $\hat
h_0(1,A_0)\subset\mathcal C_\sigma$).

Now consider the inclusion maps
\[
\jmath_0:(\mathcal C,\mathcal C_\sigma)\to(\mathcal C,A_0),\quad\imath_0:(A_0,\mathcal C_\sigma)\to(\mathcal C,\mathcal C_\sigma),
\]
and the (exact) sequence
\[
\ldots\xrightarrow{}H^{q_0}(\mathcal C,A_0)\xrightarrow{\jmath_0^*}H^{q_0}(\mathcal C,\mathcal C_\sigma)\xrightarrow{\imath_0^*}H^{q_0}(A_0,\mathcal C_\sigma)\xrightarrow{}\ldots
\]
Since $\imath_0^*:H^{q_0}(\mathcal C,\mathcal C_\sigma)\to H^{q_0}(A_0,\mathcal C_\sigma)$ is the constant
zero map then $\jmath_0^*$ is surjective and, for any $\alpha_0\in H^{q_0}(\mathcal C,\mathcal C_\sigma)$
there exists $\beta_0\in H^{q_0}(\mathcal C,A_0)$ such that $\jmath_0^*(\beta_0)=\alpha_0$. Now $\mathcal C_0$
is a strong deformation retract of $\mathcal C_\sigma$ since $\sigma<1$,
and by excision property (recalling
that $\mathcal C_\sigma\subset A_0$) we have that for any $\hat\alpha_0\in H^{q_0}(\mathcal C\setminus\mathcal C_0,\mathcal C_\sigma\setminus\mathcal C_0)$ there exists $\hat\beta_0\in
H^{q_0}(\mathcal C\setminus\mathcal C_0,A_0\setminus\mathcal C_0)$ such that
\[
\jmath_0^*(\hat\beta_0)=\hat\alpha_0,
\]
where $\jmath_0:(\mathcal C\setminus\mathcal C_0,\mathcal C_\sigma\setminus\mathcal C_0)\to(\mathcal C\setminus\mathcal C_0,A_0\setminus\mathcal C_0)$ is the inclusion map.

Finally we have, since $A_i$ are open,
\begin{multline*}
\hat\beta_0\cup\beta_1\cup\ldots\cup\beta_k\in H^{q_0+q_1+\ldots+q_k}(\mathcal C\setminus\mathcal C_0,(A_0\setminus\mathcal C_0)\cup A_1\cup\ldots\cup A_k)=\\ H^{q_o+q_1+\ldots+q_k}
(\mathcal C\setminus\mathcal C_0,\mathcal C\setminus\mathcal C_0)=0.
\end{multline*}
Moreover, by the naturality of the cup product (see \cite{Spa}) we have (denoting by $\jmath$ the inclusion map)
\[
\hat\alpha_0\cup\alpha_1\cup\ldots\cup\alpha_k=\jmath^*(\hat\beta_0\cup\beta_1\cup\ldots\cup\beta_k)=\jmath^*(0)=0,
\]
proving that $\cl(\mathcal C\setminus\mathcal C_0,\mathcal C_{\sigma}\setminus\mathcal C_0)<k$.

\textbf{Step 2.} $\cl(\mathcal C\setminus\mathcal C_0,\mathcal C_\sigma\setminus\mathcal C_0)=\cl(X_\sigma,Y_\sigma)$, where
\[
X_\sigma=\{[\gamma]\in\mathcal C\,:\,|\gamma(1)-\gamma(0)|\ge\sigma\},\quad Y_\sigma=\{[\gamma]\in\mathcal C\,:\,|\gamma(1)-\gamma(0)|=\sigma\}.
\]

This is straightforward, once one gets the existence of $H\in C^0([0,1]\times\mathcal C\setminus\mathcal C_0,\mathcal C
\setminus\mathcal C_0)$ such that $H(0,x)=x\,\forall x\in \mathcal C\setminus {\mathcal C_{0}}$,
$H(\tau,x)=x,\,\forall x\in
X_\sigma,\,\forall\tau\in[0,1]$, and
\[
H(1,\mathcal C\setminus\mathcal C_0)=X_\sigma,\quad H(1,\mathcal C_\sigma\setminus\mathcal C_0)=Y_\sigma.
\]

\textbf{Step 3.} $\cl(X_\sigma,Y_\sigma)=\cl(E,\partial E)$, where $E$ is the closed unit disk bundle over the
manifold $\mathds P^{N-1}$ and $\partial E$ its boundary.

\noindent
This is an immediate consequence of the fact that $(X_\sigma,Y_\sigma)$ is
homeomorphic to $(E,\partial E)$.

\textbf{Step 4.} $\cl(E,\partial E)\ge N-1.$

To prove this, let us observe that
\[
H^q(\mathds D^{N-1},\partial\mathds D^{N-1})=\begin{cases}
0,&\text{if\ }q\ne N-1,\\
\mathds Z_2,&\text{if\ }q= N-1.
\end{cases}
\]
Denoting by $\pi$ the canonical projection of $E$ in $\mathds P^{N-1}$, thanks to the contractibility of
$\mathds D^{N-1}$ we see that
\begin{equation}\label{eq:A.5}
\pi^*:H^q(E)\to H^q(\mathds P^{N-1})\text{\ is an isomorphism\ }\forall q\ge 0.
\end{equation}
Since we are considering $\mathds Z_2$--coefficients there are not problems with orientation, and by
\cite[Corollary 5.7.18]{Spa} the fiber bundle pair $((E,\partial E),\mathds P^{N-1},(\mathds
D^{N-1},\partial\mathds D^{N-1}),\pi)$ has a unique orientation cohomology class $\zeta^{N-1}$ with dimension
$N-1$. Then, by Thom isomorphism Theorem \cite[Theorem 5.7.10]{Spa} the homomorphism
\[
\Phi:H^1(\mathds P^{N-1})\to H^{q+N-1}(E,\partial E)
\]
given by $\Phi(z)=\pi^*(z)\cup\zeta^{N-1}$ is an isomorphism for any $q\ge 0$. From this fact and from
\eqref{eq:A.5} we deduce that
\[
\cl(E,\partial E)\ge\cl(E).
\]
Finally, using \eqref{eq:A.5} and standard results in literature (see e.g.\ \cite{Spa}),
$$\cl(E)=\cl(\mathds P^{N-1})=N-1.$$
\end{proof}


\end{document}